\documentclass[11pt]{amsart}
\usepackage[margin=1.1in]{geometry}
\usepackage[T1]{fontenc}
\usepackage{amsmath,amssymb,amsthm,booktabs,array,microtype}
\usepackage{xcolor,graphicx}
\usepackage[colorlinks=true,linkcolor=blue!60!black,citecolor=blue!60!black,urlcolor=blue!60!black]{hyperref}

\newtheorem{theorem}{Theorem}[section]
\newtheorem{proposition}[theorem]{Proposition}
\newtheorem{lemma}[theorem]{Lemma}
\newtheorem{corollary}[theorem]{Corollary}
\newtheorem{conjecture}[theorem]{Conjecture}
\theoremstyle{definition}

\theoremstyle{remark}
\newtheorem{remark}[theorem]{Remark}

\DeclareMathOperator{\inv}{inv}
\DeclareMathOperator{\Av}{Av}
\DeclareMathOperator{\comp}{comp}
\DeclareMathOperator{\im}{im}
\DeclareMathOperator{\Inv}{Inv}
\newcommand{\sk}{\mathrm{sk}}
\newcommand{\dsum}{\oplus}
\newcommand{\ssum}{\ominus}
\newcommand{\R}{\mathcal{R}}
\newcommand{\NW}{\mathrm{NW}}
\newcommand{\SE}{\mathrm{SE}}
\newcommand{\Sreg}{\mathrm{S}}
\newcommand{\Ereg}{\mathrm{E}}

\title[Residual structure and growing monotonicity regions]{Residual structure and growing inversion-monotonicity regions\\ for $1324$-avoiding permutations}
\author{Lingsen Meng}
\address{Department of Earth, Planetary, and Space Sciences, University of California, Los Angeles, CA 90095, USA}
\email{lsmeng@g.ucla.edu}
\date{September 8, 2026}
\subjclass[2020]{05A05, 05A15, 05A16, 05C62}
\keywords{pattern avoidance; 1324-avoiding permutations; inversions; enumerative combinatorics; inversion graph; computer-assisted proof}

\begin{document}
\sloppy

\begin{abstract}
Let $a(n,k)$ be the number of $1324$-avoiding permutations of length $n$ with
$k$ inversions.  Linusson and Verkama proved $a(n,k)\le a(n+1,k)$ for
$k\le2n-7$.  We study the obstruction beyond that line: the residuals
$\R_{\delta,n}$, namely the indecomposable, non-almost-decomposable avoiders
at defect $\delta=k-2n+7$.  Contracting maximal increasing consecutive runs
reduces residuality to a quadratic equation on a finite family of skeletons.
It follows that, for every fixed $\delta$, the eventual count has the form
\[
 |\R_{\delta,n}|=A_\delta n^2+B_\delta n+C_\delta .
\]
Our central uniform result determines the quadratic coefficient at every
defect.  With $P(q)=\prod_{j\ge1}(1-q^j)^{-1}$,
\[
 \sum_{\delta\ge0}A_\delta q^\delta
   =\frac{4q^3(1+q)}{(1-q)^2}P(q)^2.
\]
This is a formula for the leading coefficient of the residual count, not for
the full count.

The same structural estimates give computer-assisted proofs of
$a(n,k)\le a(n+1,k)$ for every $n\ge1$ and $k\le2n+6$, and of regions whose
width grows with $n$: for $n\ge2^{16},2^{18},2^{20}$ the defect may be as
large as $\lfloor\sqrt n/4\rfloor$, $\lfloor\sqrt n/3\rfloor$,
$\lfloor\sqrt n/2\rfloor$, respectively.  More generally, every fixed
$c<\sqrt2\log(5)/\log(68)$ is admissible for all sufficiently large $n$.
The three added fixed defects $11,12,13$ use complete catalogue and
rational-sum certificates supplied in the accompanying archival supplement.
The leading-coefficient theorem is obtained from a complete finite
classification of marked rank-three cores and all-parameter extension
lemmas; we state the finite certificates and the logical order in which they
enter.  We also determine the exact rank-three stabilization onset, while
keeping it separate from the still unknown onset of the complete residual
count.  The unrestricted Claesson--Jel\'inek--Steingr\'imsson conjecture, the
full residual polynomials, and the sharp global base-length bound remain
open.
\end{abstract}

\maketitle

\section{Introduction}\label{sec:intro}

For a permutation $\pi=\pi_1\cdots\pi_n$ let $\inv(\pi)$ be its number of inversions and let $\Av^k_n(1324)$ be the set of $1324$-avoiding permutations of length $n$ with $k$ inversions, $a(n,k)=|\Av^k_n(1324)|$. Claesson, Jel\'inek and Steingr\'imsson \cite{CJS} conjectured that
\begin{equation}\label{eq:conj}
a(n,k)\le a(n+1,k)\qquad\text{for all }n\ge1,\ k\ge0,
\end{equation}
and showed that \eqref{eq:conj} implies the bound $e^{\pi\sqrt{2/3}}\approx13.002$ for the growth rate of $\Av(1324)$.  Bevan, Brignall, Elvey Price and Pantone proved the upper bound $13.5$ \cite{BBEP}. Linusson and Verkama \cite{LV} proved \eqref{eq:conj} for $k\le2n-7$ and determined $a(n,k)$ exactly in that range:
\begin{equation}\label{eq:LV}
a(n+1,k)-a(n,k)=4p_2(k-n+1)+2p_2(k-n)\qquad(k\le2n-7),\qquad p_2(m)=\sum_{i}p(i)p(m-i),
\end{equation}
$p$ the partition function ($p_2(m)=0$ for $m<0$); equivalently \cite[Theorem~1.2]{LV}
\begin{equation}\label{eq:LVexact}
a(n,k)=p_2(k)-4p_2(k-n+1)-6\sum_{i=0}^{k-n}p_2(i)\qquad(k\le2n-7).
\end{equation} Their proof has two parts. A permutation is \emph{decomposable} if it is a direct sum of two nonempty permutations, and an indecomposable permutation $\pi$ is \emph{almost decomposable} if one of the four deletions $\pi\setminus1$, $\pi\setminus n$, $\pi\setminus\pi_1$, $\pi\setminus\pi_n$ (delete the entry and standardize) is decomposable. First, every indecomposable $1324$-avoider with $k\le2n-7$ inversions is almost decomposable \cite[Theorem~2.5]{LV}. Second, there are maps $g$ on decomposable and $f$ on almost decomposable $1324$-avoiders, defined for every $n$ and $k$, which preserve $1324$-avoidance and the inversion number, increase the length by one, are injective, and have disjoint images \cite[Theorem~3.5]{LV}.

This paper concerns the region $k\ge2n-6$, where indecomposable $1324$-avoiders that are not almost decomposable exist. Write $k=2n-7+\delta$ and call $\delta\ge1$ the \emph{defect}; let
\[
\R_{\delta,n}=\{\pi\in\Av^{2n-7+\delta}_n(1324):\ \pi\ \text{indecomposable and not almost decomposable}\}
\]
be the set of \emph{residuals}. Since $f\sqcup g$ is defined on everything else and is injective,
\begin{equation}\label{eq:identity}
a(n+1,k)-a(n,k)=\big|\Av^k_{n+1}(1324)\setminus\im(f\sqcup g)\big|-|\R_{\delta,n}| ,
\end{equation}
and \eqref{eq:conj} at $(n,k)$ is equivalent to $|\R_{\delta,n}|\le|\Av^k_{n+1}(1324)\setminus\im(f\sqcup g)|$. The complement of the image is large: it always contains every $\sigma$ with $\sigma_1=n+1$, $\sigma_{n+1}=1$, $\sigma_2=n+1$ or $\sigma_{n+1}=2$, and these number $2a(n,k-n)+2a(n,k-n+1)$ as long as $k\le2n-4$ (Lemma~\ref{lem:R1}); moreover $a(n,j)=p_2(j)$ as soon as $j\le n-2$.

We place the three principal conclusions first.  Their proofs use the
structural development below and the finite certificates described in
Section~\ref{sec:newcert}.  The order matters: the finite marked-core class
used in Theorem~\ref{thm:uniformA} is defined and enumerated independently,
before it is identified with canonical rank-three cells.

\begin{theorem}[uniform quadratic coefficient]\label{thm:uniformA}
For every $\delta\ge0$, let $A_\delta$ be the coefficient of $n^2$ in the
eventual polynomial $|\R_{\delta,n}|$, taking $A_\delta=0$ when its degree is
less than two.  Then
\begin{equation}\label{eq:uniformA}
 \sum_{\delta\ge0}A_\delta q^\delta
 =\frac{4q^3(1+q)}{(1-q)^2}P(q)^2,
 \qquad P(q)=\prod_{j\ge1}(1-q^j)^{-1}.
\end{equation}
\end{theorem}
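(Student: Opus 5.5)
The plan is to follow the skeleton reduction from the abstract and isolate exactly those skeletons that contribute to the $n^2$ term.

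Contracting maximal increasing consecutive runs sends each residual $\pi\in\R_{\delta,n}$ to a skeleton $\sk(\pi)$. The skeleton records the pattern of runs, and the run lengths are free parameters. The inversion count is then a quadratic form in the run lengths plus a skeleton-dependent constant. Likewise, the non-almost-decomposability and indecomposability conditions become finitely many inequalities on the parameters, for example that certain runs have length at least two.

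Fixing $\delta$ and writing $k=2n-7+\delta$, I count the solutions to
\[
\inv=2n-7+\delta,\qquad \sum(\text{run lengths})=n .
\]
Since the inversion count is only linear in $n$, the run lengths cannot mostly be large. Only runs that contribute inversions linearly to the total survive, and these are the runs placed against a bounded block. Each skeleton therefore contributes a quasi-polynomial in $n$ whose degree is the number of free "long" parameters minus the number of independent linear constraints. For the eventual count to be quadratic, there must be three long runs subject to one linear relation coming from the length sum, after the inversion equation has been used to pin the total. I call such skeletons the rank-three cores.

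The first main step is a complete classification of marked rank-three cores. I expect the long runs to sit in an arrangement like $\pi_1$ large, $\pi_n$ small, with three increasing runs interlocked so that each long entry of one run inverts exactly twice, which accounts for the slope $2$ in $k=2n-7+\delta$. The bounded remainder consists of finite pieces filling the "corners". Their extra inversions beyond the baseline must total $\delta-3$ after the three mandatory unit costs, which matches the factor $q^3$.

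The generating function should then factor as follows:
\begin{itemize}
\item $(1+q)$ for a binary local choice, namely whether a boundary entry sits at cost $0$ or $1$;
\item $4$ for the dihedral symmetries (reverse-complement and inverse) that preserve $1324$-avoidance and inversions;
\item $P(q)^2$ for two independent Young-diagram-like corner regions, just as in the Linusson--Verkama formula \eqref{eq:LV};
\item $(1-q)^{-2}$ from two further cost parameters. These are run offsets each of which adds one inversion per unit, and their choices are counted without restriction.
\end{itemize}
The counting is justified by checking that, for the leading term, the contribution of each core equals $\tfrac12\cdot(\text{number of lattice points})$ on the triangle $\{a+b+c=n\}$. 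This gives $n^2/2$ per core, so $A_\delta$ is half the number of marked cores of weight $\delta$, with the marking absorbing the factor $2$.

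The second step is the all-parameter extension lemmas. These show that for every core and all sufficiently large run lengths, the resulting permutation is indeed $1324$-avoiding, indecomposable, and not almost decomposable, so the lower-order boundary effects affect only $B_\delta$ and $C_\delta$. Conversely, every residual whose skeleton has rank below three contributes $O(n)$. This gives $A_\delta=\tfrac12\#\{\text{marked cores of weight }\delta\}$. Summing over $\delta$ and identifying the core enumeration with the product above, via a bijection of the corner pieces to pairs of partitions, yields \eqref{eq:uniformA}.

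The main obstacle is the classification of rank-three cores uniformly in $\delta$, since the corner pieces can be arbitrarily complex. I would first prove a decoupling lemma: $1324$-avoidance forces the two corner pieces to interact with the long runs only through their extreme entries. The weight then splits additively and each corner is an arbitrary $1324$-avoider of the $p_2$-regime type, counted by a partition.
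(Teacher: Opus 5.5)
\paragraph{Gap: the rank-three classification.} Your frame matches the paper: canonical cells, a rank-three cell of minimum length $m$ contributing $\binom{n-m+2}{2}$ so that $A_\delta=N_{3,\delta}/2$, and two partition-counted corners. What is missing is the step that actually computes the series. You read the factorization $4\cdot(1+q)\cdot q^3\cdot(1-q)^{-2}\cdot P(q)^2$ off the target and attach plausible causes to each factor, and those causes are not the real mechanisms.

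\begin{itemize}
\item The constant is not a symmetry-orbit factor, because inverse and reverse--complement do not act freely. For example, the marked core $3724615$ with marks at positions $1,4,7$ is fixed by reverse--complement.
\item $(1+q)/(1-q)^2$ is not one binary choice times two unrestricted offsets. In the paper it appears only after summing $q^d/(1-q)^r$ over $100$ accepted finite states, with $3\le d\le 8$ and $r\in\{0,1,2\}$; over $(1-q)^2$ the numerator collapses to $8q^3+8q^4$.
\end{itemize}

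The idea you lack is what makes the classification finite uniformly in $\delta$, which your ``main obstacle'' paragraph leaves open.

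\begin{itemize}
\item Mark one entry in each of the three free runs. These are pairwise non-inverted, of inversion degree two, and have distinct neighbourhoods by Lemma~\ref{lem:samenbhd}.
\item Restrict to the marks together with their neighbours. This pattern has at most nine points and preserves every marked property, so an exhaustive search yields a finite list of cores (in fact $36$, all of length seven).
\item Classify the grid cells around a core. No new point may be inverted with a mark. Apart from the three clone cells, which carry one point each, degree-one cells are empty by Corollary~\ref{cor:mindeg} and degree-two cells are empty by the four-free statement C8.
\item At most two degree-three middle cells remain, and each added entry there raises the defect by exactly one. Occupancies $0$, $1$, $\ge2$ therefore give $196$ finite states with geometric tails, a finite check valid for every $\delta$.
\item You also need the normalization that every free block has lower length one. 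Without it, canonical rank-three cells are not in bijection with the marked class.
\end{itemize}

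\paragraph{Gap: the corner decoupling.} Your decoupling lemma is misstated. If each corner were an arbitrary $1324$-avoider in the $p_2$ regime, it would be counted by pairs of partitions and you would get $P(q)^4$. The correct statement is as follows.

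\begin{itemize}
\item The southwest cell must avoid $132$, since a $132$ there followed by any later, larger core entry is a $1324$. Symmetrically, the northeast cell must avoid $213$.
\item A $132$-avoider has a weakly decreasing Lehmer code. After stripping its terminal isolated increasing suffix, its kernel corresponds bijectively to a single partition.
\item Each added point has exactly two cross inversions, both with the same two unmarked anchors rather than ``extreme entries''. This cancels its $-2$ in the defect, so the defect increment is exactly the kernel's inversion number.
\item Adding or removing a kernel preserves residuality, because the anchors are already joined through the doubled marked run.
\end{itemize}

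\paragraph{Smaller points.} The factor $\tfrac12$ is simply the leading coefficient of $\binom{n-m+2}{2}$. Your ``half the lattice points'' and ``the marking absorbs the factor $2$'' are muddled. Finally, your logic ``for the count to be quadratic there must be three long runs'' is backwards. You need Theorem~\ref{thm:deg2} (no cell with four free blocks), or the fact that positive binomial contributions cannot cancel, to know that only rank-three cells reach $n^2$.
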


\begin{theorem}[fixed and growing low-inversion regions]\label{thm:headline}
For all $n\ge1$,
\[
 a(n,k)\le a(n+1,k)\qquad(0\le k\le2n+6).
\]
The following larger regions also hold:
\begin{align*}
n\ge2^{16}&:\quad 0\le k\le2n-7+\lfloor\sqrt n/4\rfloor,\\
n\ge2^{18}&:\quad 0\le k\le2n-7+\lfloor\sqrt n/3\rfloor,\\
n\ge2^{20}&:\quad 0\le k\le2n-7+\lfloor\sqrt n/2\rfloor.
\end{align*}
For every fixed $0<c<\sqrt2\log(5)/\log(68)$, the same inequality holds for
$0\le k\le2n-7+\lfloor c\sqrt n\rfloor$ once $n\ge N(c)$.
In the three square-root regions and in the asymptotic region, the inequality
is strict whenever $\delta\ge1$.
\end{theorem}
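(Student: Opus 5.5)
The starting point is the identity \eqref{eq:identity}. It reduces \eqref{eq:conj} at $(n,k)$ with $k=2n-7+\delta$ to the inequality
\[
|\R_{\delta,n}|\le\big|\Av^k_{n+1}(1324)\setminus\im(f\sqcup g)\big|.
\]
For $\delta\le0$ the result is Linusson--Verkama, so I only treat $\delta\ge1$. The plan is to bound the right side from below using the explicit complement family of Lemma~\ref{lem:R1}, and to bound the left side from above using the run-contraction/skeleton description of residuals.

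For the lower bound I use the permutations $\sigma$ with $\sigma_1=n+1$, $\sigma_{n+1}=1$, $\sigma_2=n+1$ or $\sigma_{n+1}=2$. While $k\le2n-4$ these contribute $2a(n,k-n)+2a(n,k-n+1)$. Since $k-n=n-7+\delta\le n-2$ for small $\delta$, this equals $2p_2(n-7+\delta)+2p_2(n-6+\delta)$, which grows like $\exp(\pi\sqrt{4n/3})$. For larger $\delta$, up to $2n+6$ and beyond, I instead count the subfamily with $\sigma_1=n+1$ directly. Deleting the first entry gives a bijection onto $\Av^{k-n}_n(1324)$, and these permutations are never in the image, since $f$ and $g$ never produce a leading maximum. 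By \eqref{eq:LVexact}, since $k-n\le n+6+\dots\le 2n-7$ for $n$ large, this count is still $p_2(m)$ minus lower-order corrections, and it is exponentially large in $\sqrt n$.

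For the upper bound, run contraction writes each residual as a skeleton from a finite family $\mathcal S_\delta$ together with run lengths satisfying the quadratic length/inversion equation. I would bound $|\mathcal S_\delta|$ by $C\cdot 68^{\delta}$ or so; this is where the constant $\log 68$ comes from. Each skeleton with $r$ runs contributes at most $O(n^{r-1})$ solutions, and $r$ is bounded linearly in $\delta$. The number of solutions of the inversion equation is really controlled by partition-type counts of size about $\delta$. So I would aim for an estimate $|\R_{\delta,n}|\le K\,68^{\delta}\,n^2$, or a refined version of the shape $\exp(\delta\log 68)$ times a polynomial. Comparing with the lower bound, which in the relevant normalization behaves like $5^{\sqrt{2n}}$ or the crude partition bound $p(m)\ge 5^{\dots}$, gives admissibility exactly when $\delta\log 68<\sqrt2\,\sqrt n\log 5$, i.e. when $c<\sqrt2\log5/\log68$. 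This yields the asymptotic region with strict inequality. The explicit regions $n\ge2^{16},2^{18},2^{20}$ come from making every constant explicit and checking that the resulting inequality holds at the threshold and is monotone beyond it.

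For the fixed region $k\le2n+6$ (i.e. $\delta\le13$), I would use the quadratic form $A_\delta n^2+B_\delta n+C_\delta$, valid for $n$ beyond an explicit onset, against the exponential lower bound, and verify small $n$ by direct enumeration of $a(n,k)$. The certificates for $\delta=11,12,13$ provide the catalogue of skeletons and rational-sum identities. I expect the main obstacle to be the explicit, uniform-in-$\delta$ skeleton count $|\mathcal S_\delta|\lesssim 68^\delta$ with effective constants, together with the onset bound that lets the polynomial formula be used from a computable $n$. I would first try to encode skeletons as words over a bounded alphabet of local configurations between consecutive runs, with each letter costing at least one unit of defect.
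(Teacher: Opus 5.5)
Your outline has the same architecture as the paper's proof: the identity \eqref{eq:identity}, the leading-maximum class of Lemma~\ref{lem:R1} evaluated by \eqref{eq:LVexact}, a residual bound of the form $M_\delta n^2$ with $M_\delta$ exponential in $\delta$, and an explicit exponential lower bound for $p_2$. But the decisive estimate is only announced. The base $68$ and the rate $5^{\sqrt{2m}}$ are read off from the statement rather than derived. The one estimate you do justify, $O(n^{r-1})$ per skeleton of length $r=O(\delta)$, is useless here: for $\delta\asymp\sqrt n$ it bounds the residual count only by $\exp(O(\sqrt n\log n))$, which swamps any $\exp(O(\sqrt n))$ lower bound. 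Two ingredients are missing.

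\emph{First}, you need growth in $n$ controlled uniformly in $\delta$ and at every $n$, not just eventually. This is the canonical paid/free decomposition of Theorem~\ref{thm:poly} together with Theorem~\ref{thm:deg2}: no type has four free coordinates (via the finite check C8), so each type contributes at most $\binom{n-L-t-1}{|F|-1}\le n^2$.

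\emph{Second}, you need an exponential count of types. In the paper this comes from the sharp skeleton bound $|\sk(\pi)|\le2\delta+3$ at every defect, proved by induction along the admissible deletions of Theorem~\ref{thm:deladmissible} and automatic for $n\le22$ once $\delta\ge11$. It is applied to the base residual of defect $r$, together with the cutoff $\inv(\sigma)\le2s-7+r$. Inversion-sequence counting then gives fewer than $31\cdot66^r$ skeletons, the paid vectors and the at most $(2\delta+4)^3$ free supports supply the remaining factors, and coefficient extraction at $z=1/68$ yields $M_\delta=2^{36}(2\delta+4)^4 68^\delta$ in \eqref{eq:Mdelta}. The linear bound $8\delta+25$ of Theorem~\ref{thm:cover} would give a far larger base and a much smaller admissible $c$. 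Your suggested encoding by a bounded alphabet, with one unit of defect per letter, is not substantiated and is not what produces $68$. Moreover, the thresholds $2^{16},2^{18},2^{20}$ cannot be checked without two explicit inputs: a prefactor such as $2^{36}$, and an effective lower bound such as \eqref{eq:p2box} rather than the asymptotic $\exp(2\pi\sqrt{n/3})$.

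For the fixed strip, you cannot take the $\delta=11,12,13$ catalogues as given. They are complete only because of the same all-defect skeleton bound: Theorem~\ref{thm:deladmissible} alone reaches only $\delta\le10$, and the bound $8\delta+25$ would make exhaustive generation hopeless. Also, ``verify small $n$ by direct enumeration'' does not close the range with the available data. The comparison of $E(n,k)$ with the eventual polynomial still fails at $(\delta,n)=(12,26)$, $(13,25)$, $(13,26)$ and $(13,27)$, all outside the archived table $n\le26$, $k\le55$. The paper repairs these points by enlarging the complement: $\R_{\delta-2,n+1}\subseteq\mathcal C_{\delta,n}$, with its overlap with the four elementary classes bounded by $J_\delta(n)$ from the indecomposable counts, which gives \eqref{eq:strengthened-complement}. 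You would need that argument or a new enumeration reaching $k=60$ at $n=28$.
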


The first assertion of Theorem~\ref{thm:headline} extends
Corollary~\ref{cor:2n3} by the three defects $11,12,13$.  Its proof uses only
the already completed sharp skeleton bound and the defect-$11,12,13$
catalogues, rational generating functions and boundary comparisons; it does
not use Theorem~\ref{thm:uniformA} or any later high-inversion construction.
The square-root assertions instead combine a uniform residual upper bound
with an elementary lower bound on two-coloured partitions.

\begin{theorem}\label{thm:A}
For $n\ge8$ the residual set $\R_{1,n}$ consists exactly of the inflations
\[
24153[\iota_a,\,1,\,\iota_c,\,\iota_d,\,\iota_e]\quad\text{and}\quad 31524[\iota_a,\,\iota_b,\,\iota_c,\,1,\,\iota_e]
\]
by increasing blocks ($\iota_m=12\cdots m$) with $a,d\ge1$, $c,e\ge2$, $(a-1)(c-2)+(d-1)(e-2)=0$ in the first case and $a,c\ge2$, $b,e\ge1$, $(a-2)(b-1)+(c-2)(e-1)=0$ in the second; the two families are exchanged by inversion. Consequently $|\R_{1,n}|=8(n-7)$ for $n\ge8$ (and $|\R_{1,7}|=2$, $|\R_{1,n}|=0$ for $n\le6$).
\end{theorem}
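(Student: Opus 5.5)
The plan is to use the run-contraction reduction from the abstract, specialised to $\delta=1$, i.e.\ $k=2n-6$. Write a residual $\pi\in\R_{1,n}$ as an inflation $\sigma[\iota_{m_1},\dots,\iota_{m_r}]$ of its skeleton $\sigma$, where the blocks are the maximal increasing consecutive runs (adjacent positions with consecutive increasing values). The quantities involved transform simply under inflation by increasing blocks:
\begin{itemize}
\item indecomposability passes between $\pi$ and $\sigma$;
\item $1324$-avoidance passes as well, subject to the local condition that no block of length $\ge2$ sits where it could play the roles of $2$ and $3$, or of $1$ together with a later $4$;
\item the inversion count is the bilinear form $\inv(\pi)=\sum_{i<j,\ \sigma_i>\sigma_j} m_im_j$.
\end{itemize}
Almost decomposability is also a local condition. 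It only concerns whether deleting the minimum, the maximum, or the first or last entry disconnects the inversion graph, so it depends on $\sigma$ together with which of the relevant end blocks have size $1$. The condition $\inv(\pi)=2n-6=2\sum m_i-6$ is then a quadratic equation in the $m_i$ attached to each skeleton, which is the ``quadratic equation on a finite family of skeletons'' announced in the abstract.

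The first step bounds the skeleton. Indecomposability of $\pi$ means the inversion graph of $\sigma$ (blocks as vertices, weights $m_im_j$) is connected. The LV argument behind \cite[Theorem~2.5]{LV} gives $\inv\ge 2n-6$ for any indecomposable, non-almost-decomposable avoider. We redo that argument at the level of skeletons, keeping track of slack. A spanning structure contributes at least about $\sum(m_i+m_j-1)$ over its edges, and the excess over $2n-7$ grows with the number of blocks and with extra edges. Since $\delta=1$ allows only one unit of slack, this forces $\sigma$ to have exactly five blocks. It also forces the inversion graph to be essentially a path-like graph of the shape $24153$ or $31524$, where the latter is the inverse of the former. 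All other skeletons of length $\le 5$, and all longer ones, are excluded either by a direct slack count or because they are decomposable, almost decomposable, or contain $1324$. I expect this case elimination to be the main obstacle. The cleanest route is an inequality $\inv(\pi)-2n+7\ge$ (some positive function of $r$ and of the number of non-tree edges), proved by induction on $r$ via deletion of a leaf block, followed by a finite computer-free check of the skeletons with $r\le6$.

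Next, for $\sigma=24153$ we check which blocks may be nontrivial. Blocks $2$ and $4$ (values) would create a $1324$ together with the other entries, so the one in position $2$ must be a single entry; this gives $b=1$. Non-almost-decomposability forces $a,d\ge1$ and $c,e\ge2$: if $c$ or $e$ equals $1$, one of the four deletions splits off a summand. Computing the bilinear form,
\[
\inv = ab+ad+cd+ce+de+\dots,
\]
and simplifying with $n=a+1+c+d+e$, the equation $\inv=2n-6$ reduces exactly to $(a-1)(c-2)+(d-1)(e-2)=0$. Because all factors are nonnegative, this means each product vanishes. The case $\sigma=31524$ follows by applying inversion, which preserves inversions, $1324$-avoidance, indecomposability and the residual property, since inversion permutes the four deletions.

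Finally we count. Each product vanishing gives $\{a=1$ or $c=2\}$ and $\{d=1$ or $e=2\}$. Counting solutions with $a+c+d+e=n-1$ by inclusion–exclusion gives $4(n-7)$ for $n\ge8$, with the boundary cases producing the overlap corrections. The two families are disjoint, since the skeleton is determined by $\pi$, so the total is $8(n-7)$. The small cases $n\le7$ are settled by direct enumeration: at $n=7$ the only solutions are the two permutations with all parameters minimal.
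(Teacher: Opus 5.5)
\textbf{Main gap.} You defer the step showing that a defect-one residual has skeleton $24153$ or $31524$, and that step is the whole content of the theorem. The proposal neither states the slack inequality nor gives a mechanism that could prove it.

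Local counts on the block graph cannot reach $2n-6$. Summing $m_im_j\ge m_i+m_j-1$ over a spanning tree gives only $\inv(\pi)\ge n-r+1$. The fact that every entry of a residual has inversion degree at least two (Corollary~\ref{cor:mindeg}) gives only $\inv(\pi)\ge n$. In fact connectivity and the four deletion conditions alone do not give $2n-7$. The inflation $24163857[1,1,2,1,1,2,1,1]=3\,5\,1\,2\,7\,4\,9\,10\,6\,8$ of an increasing oscillation is indecomposable and not almost decomposable, yet it has $n=10$ and only $11<2n-7$ inversions. Doubling the minimum and maximum of $2,4,1,6,3,\dots,2k,2k-3,2k-1$ with $k\ge4$ gives $\inv=n+1$ in general. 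These examples contain $1324$, so the factor two must come from a global use of $1324$-avoidance. Your sketch uses avoidance only in the final check of short skeletons.

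The leaf-block induction has a further problem. Deleting a block from a residual can make one of the four extremal deletions decomposable, so the smaller permutation need not be a residual. Controlling exactly this is why the paper's general skeleton bound needs the admissible-deletion count of Theorem~\ref{thm:deladmissible} plus exhaustive computer checks. The unconditional bound of Theorem~\ref{thm:cover} gives only $|\sk(\pi)|\le33$ at $\delta=1$, nothing like a hand check of $r\le6$.

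\textbf{What works instead, and the secondary errors.} Tracking the slack of the Linusson--Verkama argument is the right instinct. But that argument lives on positions and regions, not on a spanning structure of the skeleton, and once it is done no skeleton classification is needed. This is the paper's proof:
\begin{itemize}
\item Lemma~\ref{lem:LV}, after possibly inverting, gives $\pi_1<\pi_n$, $\pi^{-1}(1)<\pi^{-1}(n)$ and $\NW\ne\varnothing=\SE$.
\item Lemma~\ref{lem:deletions} makes $\Sreg(i_1)$ and $\Ereg(v_m)$ nonempty.
\item Lemma~\ref{lem:M} forces a single northwestern point.
\item Lemma~\ref{lem:seven} gives $\inv(\pi)=2n-6+s_1+\dots+s_5+u$ with all terms nonnegative.
\end{itemize}
At defect one every term vanishes, and the vanishings read off the shape $24153[\iota_A,1,\iota_C,\iota_D,\iota_E]$ directly. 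The singleton block is the unique northwestern point, and the quadratic constraint comes from $u=0$.

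Your analysis of $24153$ also needs repair. Inflation by increasing blocks never creates a $1324$ (Lemma~\ref{lem:infl}(1)). So there is no local avoidance condition, and avoidance is not why the second block is a singleton. The inversion set of $24153$ is $\{(1,3),(2,3),(2,5),(4,5)\}$, so $\inv(\pi)=ac+bc+be+de$; of the terms you list, only $de$ is an inversion. With $n=a+b+c+d+e$,
\[
 \inv(\pi)-(2n-6)=(a-1)(c-2)+(d-1)(e-2)+(b-1)(c+e-2).
\]
Since $D(24153)=\{3,5\}$ gives $c,e\ge2$ (while $a,d\ge1$ is automatic), defect one forces $b=1$ together with the stated equation. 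The counting and the disjointness of the two families are fine.
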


\begin{theorem}\label{thm:B}
$a(n,k)\le a(n+1,k)$ for all $n\ge1$ and all $k\le2n-6$. More precisely, for $n\ge8$,
\[
a(n+1,2n-6)-a(n,2n-6)\ \ge\ 2p_2(n-6)-8(n-7)\ >0 ,
\]
and in fact $a(n+1,2n-6)-a(n,2n-6)=4p_2(n-5)+2p_2(n-6)-4(n-7)$.
\end{theorem}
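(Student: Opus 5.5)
We split $k\le2n-6$ into the Linusson--Verkama range $k\le2n-7$, where \eqref{eq:conj} is \cite{LV}, and the single new line $k=2n-6$, i.e.\ defect $\delta=1$. There we use the identity \eqref{eq:identity}:
\[
a(n+1,2n-6)-a(n,2n-6)=\big|\Av^{2n-6}_{n+1}(1324)\setminus\im(f\sqcup g)\big|-|\R_{1,n}|.
\]
For $n\ge8$, Theorem~\ref{thm:A} gives $|\R_{1,n}|=8(n-7)$. For the complement term we invoke Lemma~\ref{lem:R1} with $k=2n-6\le2n-4$. It says the complement contains the $\sigma$ with $\sigma_1=n+1$, $\sigma_{n+1}=1$, $\sigma_2=n+1$ or $\sigma_{n+1}=2$, and these number $2a(n,n-6)+2a(n,n-5)$. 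Since $n-5\le n-2$, both values are partition convolutions, so the complement has size at least $2p_2(n-6)+2p_2(n-5)$. Hence the difference is at least $2p_2(n-6)+2p_2(n-5)-8(n-7)$, which in particular is at least the weaker bound $2p_2(n-6)-8(n-7)$ stated in the theorem.

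It remains to show $2p_2(n-6)>8(n-7)$ for $n\ge8$. Write $m=n-6\ge2$ and use $p_2(m)\ge p(m)+p(m-1)+\dots\ge m+1$, since $p(m)+p(0)\ge2$ and the middle terms are $\ge1$. This gives $2p_2(m)\ge 2m+2$, which is not enough for linear growth with slope $8$. So we use a better bound: $p_2(m)$ counts pairs of partitions and grows superpolynomially. A safe explicit estimate is $p_2(m)\ge\sum_i p(i)p(m-i)\ge\sum_i (\lfloor i/2\rfloor+1)(\lfloor (m-i)/2\rfloor+1)$, which is cubic in $m$. We check $2p_2(m)>8(m-1)$ from the table $p_2(2)=5,\ p_2(3)=10,\ p_2(4)=20,\dots$ for small $m$, and from the cubic bound for large $m$. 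For $1\le n\le7$ the line $k=2n-6$ has $|\R_{1,n}|\le2$ (at $n=7$) or $0$, and the inequality follows from the same identity, since the complement is nonempty. Direct computation also works there.

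For the exact formula we need that the complement at $(n+1,2n-6)$ has size exactly $4p_2(n-5)+2p_2(n-6)+4(n-7)$. The natural approach is to redo the Linusson--Verkama counting of $\im(f\sqcup g)$ at $k=2n-6$. Their derivation of \eqref{eq:LV} counts $|\Av_{n+1}^k|-|\im f|-|\im g|$, and the only change at $\delta=1$ is that the domains of $f$ and $g$ now miss exactly $\R_{1,n}$. Equivalently, $a(n,k)$ for $k=2n-6$ differs from the formula \eqref{eq:LVexact} by the residual contribution. This amounts to computing $a(n,2n-6)$ and $a(n+1,2n-6)$ via the decomposable / almost-decomposable / residual partition at both lengths. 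At length $n+1$, $k=2n-6=2(n+1)-8$ lies in the LV range, so \eqref{eq:LVexact} gives $a(n+1,2n-6)$ exactly. At length $n$, we write $a(n,2n-6)$ as the LV-type count of non-residuals plus $8(n-7)$. The non-residual count is obtained by running the LV enumeration one step past its range, and that count would be $p_2(k)-4p_2(k-n+1)-6\sum_{i\le k-n}p_2(i)$ minus a correction of $12(n-7)$.

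This correction is the main obstacle: it requires checking which terms in the LV enumeration break at $\delta=1$. We would extract it from the almost-decomposable structure adjacent to the two inflation families of Theorem~\ref{thm:A}, whose sizes are linear in $n$. Subtracting then yields $4p_2(n-5)+2p_2(n-6)-4(n-7)$.
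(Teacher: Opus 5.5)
Your treatment of the inequality is sound and is essentially the paper's argument: identity \eqref{eq:identity}, Lemma~\ref{lem:R1} and Theorem~\ref{thm:A}. You even keep the term $2p_2(n-5)$ that the paper discards, and you check positivity by an elementary growth bound instead of the paper's observation that $p_2(m+1)-p_2(m)\ge4$ for $m\ge2$. At $n=7$, ``nonempty'' is not quite enough, since $|\R_{1,7}|=2$, but Lemma~\ref{lem:R1} gives at least $E(7,8)=14$ elements.

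The exact formula, however, has a genuine gap, and the numbers you propose for it are wrong. Since $a(n+1,2n-6)$ is given by \eqref{eq:LVexact}, the claimed identity is equivalent to $a(n,2n-6)$ \emph{exceeding} the formal value of \eqref{eq:LVexact} at $(n,2n-6)$ by $4(n-7)$. Equivalently, the non-residuals must fall short of that formal value by $4(n-7)$, not by $12(n-7)$. With your correction $12(n-7)$ and the $8(n-7)$ residuals, $a(n,2n-6)$ comes out $4(n-7)$ \emph{below} the formal value. The difference then becomes $4p_2(n-5)+2p_2(n-6)+4(n-7)$, with the wrong sign.

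More importantly, no mechanism is given, and the one you point to is the wrong place to look. Because $f\sqcup g$ injects the non-residuals of $\Av^{2n-6}_n(1324)$ into $\Av^{2n-6}_{n+1}(1324)$, their number is $a(n+1,2n-6)$ minus the size of $\Av^{2n-6}_{n+1}(1324)\setminus\im(f\sqcup g)$. So ``running the LV enumeration one step past its range'' means computing this complement at length $n+1$ exactly. It has nothing to do with almost-decomposable permutations near the families of Theorem~\ref{thm:A}. The paper does this in three steps.
\begin{itemize}
\item The complement is the disjoint union of the Linusson--Verkama classes $\mathrm{R1},\mathrm{R2a},\mathrm{R2b},\mathrm{R3}$.
\item The counts $2p_2(n-6),\,2p_2(n-5),\,2p_2(n-5)$ of the first three persist at $k=2n-6$. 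Their proofs use $k\le2n-7$ only through the plateau identity $a(m,j)=p_2(j)$ for $m\ge j+2$, which needs at most $k\le2n-5$.
\item $\mathrm{R3}$, empty below the line, is determined at $k=2n-6$. This is the real content.
\end{itemize}
For the third step, Lemma~\ref{lem:bdry}(ii), a boundary version of \cite[Lemma~4.2]{LV} at $k=2m-8$, shows that every member of $\mathrm{R3}$ is decomposable. It is therefore $\sigma^{(1)}\dsum\sigma^{(2)}$ with an indecomposable $132$-avoider and an indecomposable $213$-avoider. The deletion conditions together with \cite[Lemma~4.1]{LV} force $\inv\sigma^{(i)}=2|\sigma^{(i)}|-4$. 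Hence $\sigma=(\iota_x\ssum\iota_y)\dsum(\iota_z\ssum\iota_w)$ with $(x-2)(y-2)=(z-2)(w-2)=0$. A case check through the three clauses of the definition of $f$ shows that these permutations are not in $\im(f)$.

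This gives $|\mathrm{R3}|=4(n-7)$, so the complement has $4p_2(n-5)+2p_2(n-6)+4(n-7)$ elements. Subtracting $|\R_{1,n}|=8(n-7)$ yields the formula. None of these steps appears in your plan.
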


The exact formula in Theorem~\ref{thm:B} says that the
Linusson--Verkama difference \eqref{eq:LV} acquires the correction
$-4(n-7)$ at defect one.  At defect two the same method gives twenty-one
inflation families and $|\R_{2,n}|=32n-214$ (Theorems~\ref{thm:C}
and~\ref{thm:D}).

Beyond defect two, Section~\ref{sec:skel} contracts maximal increasing
consecutive runs.  The resulting skeleton $\sigma$ and block vector $b$
describe a residual exactly by
$\sum_{(i,j)\in\Inv(\sigma)}b_ib_j=2n-7+\delta$ and explicit lower bounds
on the blocks in $D(\sigma)$ (Theorem~\ref{thm:crit}).  Proposition
\ref{prop:master} rewrites this as one quadratic equation in excess block
sizes.  Theorem~\ref{thm:cover} makes the skeleton set finite at each defect,
Theorem~\ref{thm:deladmissible} supplies the sharp all-defect skeleton bound
by strong induction, and Theorems~\ref{thm:poly} and~\ref{thm:deg2} give the
following unconditional polynomiality statement.

\begin{theorem}\label{conj:poly}
For every fixed $\delta\ge1$, $|\R_{\delta,n}|$ agrees for all large $n$ with a polynomial in $n$ of degree at most two (Theorems~\ref{thm:poly} and~\ref{thm:deg2}); by Proposition~\ref{prop:polys} the degree is $1$ for $\delta\le2$ and $2$ for $3\le\delta\le10$.
\end{theorem}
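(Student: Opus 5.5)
The plan is to reduce residual counting to lattice-point counting on a finite set of skeletons and then read polynomiality and the degree bound off the structure of one quadratic equation. By Theorem~\ref{thm:crit}, every $\pi\in\R_{\delta,n}$ corresponds bijectively to a pair $(\sigma,b)$: a skeleton $\sigma$, obtained by contracting maximal increasing consecutive runs, and a block vector $b$. This pair satisfies $\sum_i b_i=n$, the inversion equation $\sum_{(i,j)\in\Inv(\sigma)}b_ib_j=2n-7+\delta$, and the explicit lower bounds on $b_i$ for $i\in D(\sigma)$. By Theorem~\ref{thm:cover}, with the sharp bound of Theorem~\ref{thm:deladmissible}, only finitely many skeletons occur at fixed $\delta$, independently of $n$. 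So $|\R_{\delta,n}|$ is a finite sum over $\sigma$ of the number $N_\sigma(n)$ of admissible block vectors, and it suffices to show that each $N_\sigma(n)$ is eventually a polynomial of degree at most two.

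For a fixed $\sigma$, I will write $b_i=\ell_i+x_i$ with $x_i\ge0$ excess sizes, and use Proposition~\ref{prop:master}. It turns the two constraints into a single equation of the shape
\[
Q_\sigma(x)+L_\sigma(x)=c_\sigma(n,\delta),
\]
where $Q_\sigma$ is a quadratic form with nonnegative coefficients (the inversion pairs) and $L_\sigma$ absorbs the linear contributions after $n$ is eliminated through $\sum b_i=n$. The key observation is that an indecomposable skeleton has a connected inversion graph. Hence if two "free" coordinates $x_i,x_j$ are both large, either they are joined by an inversion edge, or they are linked through a bounded path; in both cases $Q_\sigma$ contributes a term of order $x_ix_j$, which exceeds the budget $2n-7+\delta-(\text{linear part})=O(\delta)$. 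So at most two coordinates can be unbounded as $n\to\infty$, and these must lie in positions whose blocks enter the inversion count only linearly. This is exactly the mechanism seen in Theorem~\ref{thm:A}, where the products $(a-1)(c-2)$ and $(d-1)(e-2)$ force one factor to be minimal. Solutions then split into finitely many cases, according to which coordinates are bounded (by an explicit $B(\delta)$) and their values. In each case the remaining unbounded coordinates satisfy a linear system whose solution set, counted under the constraint $\sum x_i=n-\text{const}$, is a set of lattice points in a dilated polytope of dimension at most two. Inclusion--exclusion over the overlapping cases then gives an eventual quasi-polynomial of degree $\le2$. The linear parts have unit coefficients in the free variables, since each block is in an inversion with exactly the prescribed number of unit-weight neighbours, so the quasi-polynomial is in fact a polynomial. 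This is Theorems~\ref{thm:poly} and~\ref{thm:deg2}.

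For the exact degrees, I will cite Proposition~\ref{prop:polys}, which evaluates the eventual polynomials for $\delta\le10$. For $\delta\le2$ the degree is one, by Theorem~\ref{thm:A} ($8(n-7)$) and Theorems~\ref{thm:C},~\ref{thm:D} ($32n-214$). For $3\le\delta\le10$ the computed $n^2$ coefficient is nonzero, and this is consistent with Theorem~\ref{thm:uniformA}, whose series starts at $4q^3$.

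I expect the main obstacle to be the degree-two bound, specifically ruling out three simultaneously unbounded excess coordinates. The plan is to show that any three unbounded coordinates in a connected inversion graph contain a pair whose product term appears in $Q_\sigma$ with positive coefficient. Failing that, I would show that the linear term forces their sum to be bounded, using the non-almost-decomposability lower bounds on $D(\sigma)$. If this graph argument leaves exceptional skeletons, I would handle them by the finite skeleton catalogue from Theorem~\ref{thm:cover}, checking those cases directly.
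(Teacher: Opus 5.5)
\textbf{There is a genuine gap in the degree bound.} Your frame is the paper's own (Theorem~\ref{thm:poly}): finitely many skeletons by Theorem~\ref{thm:cover}, and for each skeleton a count of excess vectors $x$ with $b=\ell+x$. The failure is in the step that bounds the number of unbounded coordinates, and that step is the whole content of the degree bound.

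\emph{Where the argument fails.} A product term $x_ix_j$ appears in the defect equation only when $(i,j)\in\Inv(\sigma)$. Two non-adjacent blocks joined by a path through a block of zero excess contribute nothing to each other. Already for $\sigma=24153$ at $\delta=1$, blocks $3$ and $5$ are non-adjacent: they are joined through block $2$, which has size one. With $A=D=1$ both grow without bound subject to $C+E=n-3$. Taken literally, your path argument allows only one unbounded coordinate, hence degree $0$, which contradicts $|\R_{1,n}|=8(n-7)$.

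At $\delta\ge3$, three pairwise non-inverted blocks grow simultaneously, which is why $P_3(n)=4n^2+59n-549$ is quadratic. So ``at most two unbounded coordinates'' is false. It is also miscounted: under $\sum_ix_i=n-L$, two unbounded coordinates give degree one, not a two-dimensional polytope. Likewise, succeeding at the obstacle you name, ruling out three free coordinates, would prove degree at most one, which is false. The correct target is ruling out four.

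\emph{What is missing.} Two ideas are needed.
\begin{enumerate}
\item[(1)] The linear coefficients must be shown nonnegative before any budget argument starts. At base point $\mathbf 1$, vertices of inversion degree one carry coefficient $-1$ (Proposition~\ref{prop:master}). The paper instead shifts to $\ell_i=1+[i\in D(\sigma)]$ and proves $a_i=\sum_{j\in N(i)}\ell_j-2\ge0$ via Lemma~\ref{lem:leaf} and Corollary~\ref{cor:mindeg} (Proposition~\ref{prop:base}). The equation becomes $Q(x)=\delta-\delta_0$, with all terms nonnegative and no dependence on $n$. Any coordinate with $a_i\ge1$ or a positive neighbour is then at most $\delta-\delta_0$. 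The unbounded coordinates form an independent set $F$ of zero-cost vertices that do not occur in $Q$ at all. Each canonical triple $(T,x_T,F)$ therefore contributes exactly $\binom{n-L-t-1}{|F|-1}$, so no quasi-polynomials or inclusion--exclusion arise.
\item[(2)] The bound $|F|\le3$ follows neither from connectivity nor from the shape of $Q$; it is a $1324$-avoidance fact. Take one entry from each of four blocks of $F$. These are pairwise non-inverted, of inversion degree two, with pairwise distinct neighbourhoods (distinct by Lemma~\ref{lem:samenbhd}, since $\sigma$ is reduced). Restricting to them and their at most eight neighbours preserves all of this. The result is a $1324$-avoider of length at most $12$, which the exhaustive check C8 excludes (Theorem~\ref{thm:deg2}).
\end{enumerate}
This localisation to a bounded pattern is what makes the bound uniform in $\delta$. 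Your fallback of checking the skeletons of Theorem~\ref{thm:cover} one by one cannot achieve this, because that catalogue changes with $\delta$.

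\emph{Exact degrees.} Proposition~\ref{prop:polys} is verified only for $n\le30$, while Theorem~\ref{thm:poly} bounds the onset only by $8\delta+30$. Identifying the degrees therefore needs Theorem~\ref{thm:deladmissible}. Alternatively, each rank-three cell adds $\tfrac12$ to the $n^2$ coefficient with no cancellation, so exhibiting one rank-three triple at each $3\le\delta\le10$ suffices. Saying the result is ``consistent with Theorem~\ref{thm:uniformA}'' is not an argument.
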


That $|\R_{\delta,n}|$ is at least \emph{bounded} by a polynomial in $n$ is proved here (Corollary~\ref{cor:polybound}), and since $p_2(n-7+\delta)$ grows like $\exp(\frac{2\pi}{\sqrt3}\sqrt n)$ this already yields \eqref{eq:conj} at every fixed defect for all sufficiently large $n$ (Theorem~\ref{thm:largen}). Theorem~\ref{conj:poly} identifies the shape of $|\R_{\delta,n}|$ but not its coefficients, so it does not by itself supply a workable threshold in place of the very large one that the crude bound gives.

Section~\ref{sec:uniform} determines the quadratic coefficients uniformly,
extends the fixed strip from defect ten to defect thirteen, and obtains the
growing square-root strips.  These are separate consequences: the fixed-strip
extension uses only the completed defect-$11,12,13$ catalogues and comparisons,
whereas the uniform leading coefficient does not determine the missing lower
coefficients.

Since the paper mixes proofs, machine checks and conjectures, we summarise the status of its statements.
\begin{center}
\begin{tabular}{@{}p{0.54\textwidth}p{0.40\textwidth}@{}}
\toprule
statement & status\\
\midrule
Theorem~\ref{thm:A}, Lemmas~\ref{lem:bdry}, \ref{lem:onecut}, \ref{lem:twodrop},
\ref{lem:threecut}, \ref{lem:runcover}, \ref{lem:path},
Proposition~\ref{prop:master}, Theorem~\ref{thm:crit} & proved\\
Theorems~\ref{thm:B}, \ref{conj:poly}, \ref{thm:C}, \ref{thm:D}, \ref{thm:gamma0},
\ref{thm:hubs}, \ref{thm:cover}, \ref{thm:largen}, \ref{thm:poly}, \ref{thm:deg2},
\ref{thm:deladmissible},
Lemmas~\ref{lem:noncut}, \ref{lem:leaf}, \ref{lem:samenbhd}, \ref{lem:threeclasses},
Corollaries~\ref{cor:polybound}, \ref{cor:mindeg}, \ref{cor:2n3},
Propositions~\ref{prop:base}, \ref{prop:catfromskel},
\ref{prop:Kind}, \ref{prop:delfree}
& proved, computer-assisted\\
Theorems~\ref{thm:uniformA}, \ref{thm:headline} and
Proposition~\ref{prop:rank3onset} & proved, computer-assisted; new certificate
bundle cited below\\
Propositions~\ref{prop:catalogue}, \ref{prop:polys} & finite computation, stated over its range\\
Corollary~\ref{cor:2n} & hypothesis supplied by Theorem~\ref{thm:deladmissible}\\
Conjectures~\ref{conj:cat}, \ref{conj:del} & proved for $\delta\le10$
(Theorem~\ref{thm:deladmissible})\\
Conjecture~\ref{conj:skel} & proved for every $\delta$ by
Theorem~\ref{thm:deladmissible} and strong induction\\
Conjectures~\ref{conj:hubs}, \ref{conj:K}, \ref{conj:Kdel} & conjectural\\
\bottomrule
\end{tabular}
\end{center}
The phrase \emph{computer-assisted} is used in the precise sense of Section~\ref{sec:data}: the proof is
complete except for finitely many exhaustive searches over $\mathfrak S_6$, $\mathfrak S_7$,
$\mathfrak S_8$ or over explicitly generated finite lists.  The original
checks are in the archive cited in Section~\ref{sec:data}.  The additional
marked-core, catalogue, rational-sum and boundary checks are supplied in the
archival supplement cited below.

\section{The Linusson--Verkama structure}\label{sec:LV}

We recall the notions and results of \cite{LV} that we use, in their notation. Fix $\pi\in\Av_n(1324)$ and write $a=\pi_1$, $b=\pi_n$, $p=\pi^{-1}(1)$, $q=\pi^{-1}(n)$. Suppose $a<b$ and $p<q$. The \emph{northwestern region} is $\NW=\{i<p:\pi_i>b\}$ and the \emph{southeastern region} is $\SE=\{j>q:\pi_j<a\}$. For $i\in\NW$ the \emph{southern region} relative to $i$ is $\Sreg(i)=\{j: i<j<q,\ 1<\pi_j<a\}$ and the \emph{eastern region} is $\Ereg=\{j:q<j<n,\ a<\pi_j<b\}$ (it does not depend on $i$). The \emph{central region} $\{j:i<j<q,\ a<\pi_j<b\}$ is empty, since together with $\pi_1,\pi_i,n$ a point in it would form a $1324$.

\begin{lemma}[{\cite[Lemmas 2.6, 2.7, 2.11]{LV}}]\label{lem:LV}
Let $\pi\in\Av^k_n(1324)$ be indecomposable.
\begin{enumerate}
\item If $a>b$ and $k\le2n-5$, then $\pi\setminus\pi_1$ or $\pi\setminus\pi_n$ is decomposable.
\item If $a<b$ and $p<q$, then $\NW\cup\SE\ne\varnothing$; and if moreover $k\le2n-6$, then $\NW=\varnothing$ or $\SE=\varnothing$.
\end{enumerate}
The map $\pi\mapsto\pi^{-1}$ preserves $1324$-avoidance, $\inv$, indecomposability and almost decomposability, exchanges the conditions $a<b$ and $p<q$, and exchanges $\NW$ and $\SE$.
\end{lemma}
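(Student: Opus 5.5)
The plan is to prove the symmetry statement first and then the two counting statements, in each case by lower-bounding $\inv(\pi)$ through a charging scheme on the $n-2$ interior positions.

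\emph{Inversion symmetry.} Inverting $\pi$ transposes its permutation diagram across the diagonal. Since $1324^{-1}=1324$, this preserves $1324$-avoidance. It preserves $\inv$ because $(i,j)$ is an inversion of $\pi$ exactly when $(\pi_j,\pi_i)$ is one of $\pi^{-1}$. It preserves direct sums, so it preserves indecomposability. The four deletions are permuted: deleting the entry $1$ corresponds to deleting the first position, and deleting $n$ corresponds to deleting the last position. Hence almost decomposability is preserved as well. Under inversion, $(a,b)$ and $(p,q)$ trade roles, and the definitions of $\NW$ and $\SE$ are exchanged. This part is routine bookkeeping.

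\emph{Part (1).} Assume $a>b$ and suppose neither $\pi\setminus\pi_1$ nor $\pi\setminus\pi_n$ is decomposable; the goal is $k\ge 2n-4$. Charge each interior position $j\in\{2,\dots,n-1\}$ as follows.
\begin{itemize}
\item If $\pi_j<a$, it forms an inversion with $\pi_1$.
\item If $\pi_j>b$, it forms an inversion with $\pi_n$.
\item If $b<\pi_j<a$, it is charged twice.
\end{itemize}
Together with the inversion $(\pi_1,\pi_n)$ this already gives $k\ge n-1+|\{j:b<\pi_j<a\}|$.

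The remaining $n-3$ or so inversions must come from pairs among interior entries, and this is where indecomposability of the two deletions enters. If $\pi\setminus\pi_1$ is indecomposable, then for every cut position $t$ of it some entry to the left of the cut exceeds some entry to the right. Sweeping $t$ across and choosing, for each cut, such a crossing pair gives roughly one new interior inversion per cut. The delicate issue is to avoid double counting, and to use $1324$-avoidance to show that these crossing pairs can be chosen distinct from the pairs already charged to $\pi_1$ and $\pi_n$. I expect this to be the main obstacle. The first attempt will be to take, at each cut, the leftmost maximum and the rightmost minimum, and to argue via the cut structure of $\pi\setminus\pi_n$ that distinct cuts yield distinct pairs.

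\emph{Part (2), nonemptiness.} Assume $a<b$ and $p<q$, and suppose $\NW=\SE=\varnothing$. Then every entry left of position $p$ has value $<b$, and every entry right of $q$ has value $>a$. Combining this with the emptiness of the central region and with $1324$-avoidance (using $1$ at $p$ and $n$ at $q$), I would exhibit an explicit cut at which $\pi$ splits as a direct sum. The natural candidate is the position at which the values first cross between $a$ and $b$. This contradicts indecomposability.

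\emph{Part (2), disjointness.} Suppose both regions are nonempty, with $i\in\NW$ and $j\in\SE$. Every interior entry other than these special points is low ($<a$), high ($>b$), or lies outside the window between positions $i$ and $q$. Since the central region is empty, each such entry is inverted with at least two of $\pi_1,\pi_i,\pi_j,\pi_n,1,n$. Counting this way gives $2(n-O(1))$ inversions, and a careful tally of the boundary points should reach $k\ge 2n-5$, contradicting $k\le 2n-6$.
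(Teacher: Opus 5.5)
Your inversion-symmetry paragraph is fine, and both halves of part (2) can be completed along the lines you sketch. Part (1), however, has a genuine gap. The decisive count is left as ``the main obstacle'', and the mechanism you propose for it points the wrong way.

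\emph{The gap in part (1).} There are three problems with the per-cut plan. First, choosing one crossing pair per cut of $\pi\setminus\pi_1$ does not produce inversions away from $\pi_1,\pi_n$: at the last cut, and possibly at others, every crossing pair contains $\pi_n$. Second, the rule ``leftmost maximum, rightmost minimum'' does not give distinct pairs for distinct cuts; in $321$ both cuts receive $(3,1)$. Third, $1324$-avoidance cannot repair this, and in fact part (1) holds for all permutations.

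The missing idea is to work with the middle pattern $\sigma$ of $\pi_2\cdots\pi_{n-1}$, of length $m=n-2$, and with $\Delta=a-b$.
\begin{enumerate}
\item Your charging already shows that exactly $n-2+\Delta$ inversions meet $\pi_1$ or $\pi_n$. Every other inversion lies inside $\sigma$, so $k=n-2+\Delta+\inv(\sigma)$ with no double counting. The target is therefore $\inv(\sigma)\ge m-\Delta$, not ``$n-3$ or so''.
\item If $\sigma$ has $r$ sum components, then $\inv(\sigma)\ge m-r$, because each component has a connected inversion graph.
\item The two deletion hypotheses confine the splitting cuts of $\sigma$. Suppose $\sigma$ splits after $t$ entries; these carry its $t$ smallest values. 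Then $\pi\setminus\pi_1$ fails to split at the same place only if one of them exceeds $b$. Since all of $1,\dots,b-1$ are values of $\sigma$, this means $t\ge b$.
\item Symmetrically, $\pi\setminus\pi_n$ forces $m-t\ge n-a+1$, i.e.\ $t\le a-3$.
\end{enumerate}
Hence $r-1\le\max(0,\Delta-2)$, and $k\ge 2n-5+\min(\Delta,2)\ge 2n-4$. The paper does not reprove this lemma; it quotes \cite{LV}. The same component argument does reappear in Proposition~\ref{prop:agtbdec}.

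\emph{Part (2), disjointness.} Your tally works and needs no $1324$-avoidance. Take $i\in\NW$ and $j\in\SE$, and use $i<p$ and $q<j$. Each of the other $n-6$ entries is inverted with two of the six special entries:
\begin{itemize}
\item a value below $a$ with $\pi_1$, and with $1$ or $\pi_i$;
\item a value above $b$ with $\pi_n$, and with $n$ or $\pi_j$;
\item a value in $(a,b)$ with $1$ or $\pi_i$, and with $n$ or $\pi_j$.
\end{itemize}
The six special entries span seven inversions among themselves. This gives $k\ge 2(n-6)+7=2n-5$; Lemma~\ref{lem:NWSE} records the exact version of this count.

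\emph{Part (2), nonemptiness.} The ``central region'' you invoke is defined relative to a northwestern point, so it is unavailable when $\NW=\varnothing$. What you actually need are two facts.
\begin{itemize}
\item The values in $(a,b)$ increase from left to right; otherwise they form a $1324$ with $\pi_1$ and $\pi_n$.
\item Let $s$ be the first position carrying a value $\ge b$. Then $s>p$, because $\NW=\varnothing$. Moreover no value below $a$ lies after $s$: such a value would lie before $q$ (as $\SE=\varnothing$), and together with $1,\pi_s,n$ it would form a $1324$.
\end{itemize}
Then positions $1,\dots,s-1$ carry exactly $\{1,\dots,s-1\}$, which is a proper splitting of $\pi$ and contradicts indecomposability.
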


From now on assume $a<b$, $p<q$, $\SE=\varnothing$ and $\NW=\{i_1<\dots<i_m\}\ne\varnothing$, and write $v_l=\pi_{i_l}$. The values $v_1<\dots<v_m$ increase: if $v_x>v_y$ for some $x<y$ then $\pi_1,v_x,v_y,n$ is a $1324$. For a value $w$ put
\[
\Ereg(w)=\{j:\ q<j<n,\ a<\pi_j<w\},
\]
so that $\Ereg(b)$ is the eastern region of \cite{LV} and $\Ereg(b)\subseteq\Ereg(v_1)\subseteq\dots\subseteq\Ereg(v_m)$.

\begin{lemma}\label{lem:deletions}
(i) If $\Sreg(i_1)=\varnothing$ then $\pi\setminus1$ is decomposable. (ii) If $\Ereg(v_m)=\varnothing$ then $\pi\setminus\pi_n$ is decomposable.
\end{lemma}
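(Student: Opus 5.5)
The plan is to prove (i) directly by exhibiting an explicit cut after deleting the value $1$. I will then deduce (ii) from (i) through a symmetry of $1324$ that preserves all the standing hypotheses. Throughout, $a=\pi_1<b=\pi_n$, $p<q$, $\SE=\varnothing$ and $\NW=\{i_1<\dots<i_m\}\neq\varnothing$.

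For (i), assume $\Sreg(i_1)=\varnothing$. Let $L=\{1,\dots,i_1-1\}$, which is nonempty because $\pi_1=a<b<\pi_{i_1}$. Note that $i_1<p$, so the value $1$ does not lie in $L$.

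First I locate the values $2,\dots,a-1$. None of them sits after $q$, since $\SE=\varnothing$. None sits in $(i_1,q)$, since $\Sreg(i_1)=\varnothing$. None sits at $i_1$ or $q$. So all of them lie in $L$.

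Next I bound the values in $L$. An entry of $L$ with value $>b$ would be a point of $\NW$ before $i_1$, which is impossible. Hence every value in $L$ is $<b$. Let $M<b$ be the largest value in $L$.

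I then claim that every value $u$ with $a<u<M$ occurs in $L$. Let $r\in L$ be the position of $M$, and suppose $u$ sits at a position $j\notin L$. If $i_1<j<q$, then $u$ lies in the central region relative to $i_1$, which is empty. If $q<j<n$, then $\pi_1=a$, $\pi_r=M$, $\pi_j=u$, $\pi_n=b$ satisfy $a<u<M<b$ at positions $1<r<j<n$, and so form a $1324$. The remaining positions $i_1$, $q$ and $n$ carry the values $\pi_{i_1}>b$, $n$ and $b$, none of which lies in $(a,M)$.

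Therefore the values in $L$ are exactly $\{2,\dots,M\}$. After deleting $1$ and standardizing, $L$ becomes a nonempty prefix whose values form an initial interval. Its complement is also nonempty, since it contains $n$. Hence $\pi\setminus1$ is decomposable.

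For (ii), I will use $\phi(\pi)=(\pi^{-1})^{rc}$, the composition of inversion with reverse-complement. Both maps preserve $1324$-avoidance and indecomposability, so $\phi$ does too.

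By Lemma~\ref{lem:LV}, inversion exchanges $\NW$ and $\SE$ and exchanges the conditions $a<b$ and $p<q$. Reverse-complement also exchanges $\NW$ and $\SE$, and it sends $(a,b,p,q)$ to $(n+1-b,\,n+1-a,\,n+1-q,\,n+1-p)$. So $\phi$ preserves $a<b$, $p<q$, $\SE=\varnothing$ and $\NW\neq\varnothing$. Concretely, an $\NW$ point of $\pi$ at position $i$ with value $v$ becomes a point at position $n+1-v$ with value $n+1-i$.

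Since $\phi$ reverses the order of the $\NW$ points, $i_1$ of $\phi(\pi)$ corresponds to $i_m$ of $\pi$. The deletion of the value $1$ in $\phi(\pi)$ corresponds, via position $1$ of $\pi^{-1}$, to the deletion of $\pi_n$ in $\pi$. It then remains to check one routine translation. Under this transport, the conditions $i_1<j<q$ and $1<\pi_j<a$ defining $\Sreg(i_1)$ for $\phi(\pi)$ should become exactly $a<\pi_j<v_m$ with $q<j<n$ for $\pi$; that is, $\phi$ should carry $\Sreg(i_1)$ of $\phi(\pi)$ onto $\Ereg(v_m)$ of $\pi$. Granting this, (ii) for $\pi$ is (i) for $\phi(\pi)$.

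I expect this bookkeeping to be the main obstacle: one has to verify that the index conventions (strict versus nonstrict inequalities, and the exclusion of position $n$ and of the value $1$) match exactly. If the symmetry turns out to be off by a region, I will instead argue (ii) directly, mirroring (i). The candidate suffix is the block starting at $q$, with position $n$ deleted. Values larger than $v_m$ cannot occur before $p$, because $v_m$ is the largest $\NW$ value. The hypothesis $\Ereg(v_m)=\varnothing$ excludes values in $(a,v_m)$ after $q$. I would then rule out misplaced values between $p$ and $q$ by a $1324$ built from $\pi_{i_m}$ and $n$.
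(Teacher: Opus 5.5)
Your proposal is correct. The translation you left as ``granting this'' does hold exactly.

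Part~(i) is essentially the paper's argument. The paper supposes a prefix entry $\pi_{j_1}$ with $j_1<i_1$ exceeds a later entry $\pi_{j_2}\neq1$, and excludes this using $\Sreg(i_1)=\varnothing$, $\SE=\varnothing$ and the $1324$ formed by $\pi_1,\pi_{j_1},\pi_{j_2},\pi_n$. You instead compute the value set $\{2,\dots,M\}$ of the prefix directly, using the same occurrence for positions after $q$ and the empty central region for positions in $(i_1,q)$.

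Part~(ii) is where you genuinely depart from the paper. The paper proves it directly. It lets $t$ be the first position with $\pi_t>v_m$ and shows $t>p$. It then runs through the value ranges $(1,a)$, $(a,b)$, $(b,v_m)$ at positions in $(t,q)$ and in $(q,n)$, showing that every position strictly between $t$ and $n$ carries a value above $v_m$. This exhibits the cut of $\pi\setminus\pi_n$ just before $t$.

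You instead reduce (ii) to (i) by the anti-diagonal reflection $\tau=\phi(\pi)=(\pi^{-1})^{\mathrm{rc}}$, which sends the point $(x,\pi_x)$ to $(n+1-\pi_x,\,n+1-x)$. The bookkeeping checks out as follows.
\begin{itemize}
\item $\tau$ has $(a',b',p',q')=(n+1-q,\,n+1-p,\,n+1-b,\,n+1-a)$, so $a'<b'$ and $p'<q'$.
\item The northwestern and southeastern regions of $\tau$ are the images of those of $\pi$; in particular the southeastern region of $\tau$ is empty.
\item The first northwestern point of $\tau$ sits at position $n+1-v_m$. This step uses $v_1<\dots<v_m$.
\item The conditions $n+1-v_m<n+1-\pi_x<n+1-a$ and $1<n+1-x<n+1-q$ defining $\Sreg$ for $\tau$ are precisely $a<\pi_x<v_m$ and $q<x<n$, i.e.\ membership in $\Ereg(v_m)$.
\item Finally $\tau\setminus1=\phi(\pi\setminus\pi_n)$, and $\phi$ preserves decomposability.
\end{itemize}

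There is one small slip. The value $1$ of $\tau$ corresponds to the value $n$ of $\pi^{-1}$, equivalently to position $1$ of $\pi^{\mathrm{rc}}=\tau^{-1}$, not to position $1$ of $\pi^{-1}$. Your conclusion that it is $\pi_n$ is right.

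Your route is more economical. It shows that (i) and (ii) are a single statement, and it makes transparent that (ii) needs no hypotheses beyond those of (i), since $\phi$ preserves all the standing assumptions. The paper's direct argument instead names the cut explicitly, as the first position carrying a value above $v_m$. Pulling back the prefix of $\tau$ through $\phi$ recovers the same description: the values above $v_m$ fill a block of consecutive positions ending at $n-1$.
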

\begin{proof}
(i) is the first half of the proof of \cite[Lemma~2.10]{LV}; we repeat it since we need to see that it uses only $\SE=\varnothing$. Let $\sigma=\pi\setminus1$; we claim that the first $i_1-1$ entries of $\sigma$ are its $i_1-1$ smallest values, which makes $\sigma$ decomposable ($i_1\ge2$ because $\pi_1=a<b$). Otherwise there are positions $j_1<i_1<j_2$ of $\pi$, $j_2\ne p$, with $\pi_{j_1}>\pi_{j_2}$. Since $i_1$ is the first northwestern point, $\pi_{j_1}\le b$, hence $\pi_{j_1}<b$. If $\pi_{j_2}<a$ then $j_2$ would lie in $\Sreg(i_1)$ (if $j_2<q$) or in $\SE$ (if $j_2>q$); so $a\le\pi_{j_2}<\pi_{j_1}<b$, whence $j_1\ge2$, $a<\pi_{j_2}$, and $\pi_1,\pi_{j_1},\pi_{j_2},\pi_n$ is a $1324$.

(ii) Let $t$ be the first position with $\pi_t>v_m$ (it exists since $\pi_q=n$). Then $t>p$: a position $t<p$ with $\pi_t>v_m>b$ would be a northwestern point with a value exceeding $v_m$. Every position $l$ with $t<l<n$ carries a value exceeding $v_m$: a value $\pi_l<a$ lies in $\SE$ if $l>q$ and gives the $1324$ $1,\pi_t,\pi_l,n$ if $l<q$; a value in $(a,b)$ lies in the central region if $l<q$ and in $\Ereg(v_m)$ if $l>q$; a value in $(b,v_m)$ at a position $l<q$ (necessarily $l>p$) gives the $1324$ $\pi_1,v_m,\pi_l,n$, and at a position $l>q$ it lies in $\Ereg(v_m)$; the values $a,b,v_m$ sit at $1,n,i_m<t$. Hence in $\pi\setminus\pi_n$ the positions before $t$ carry exactly the values below $v_m$ other than $b$, i.e.\ the $t-1$ smallest values, and $2\le t\le q\le n-1$; so $\pi\setminus\pi_n$ is decomposable.
\end{proof}

The following identity makes the inversion count of \cite[Lemma~2.8]{LV} exact by recording its slack.

\begin{lemma}[seven-index identity]\label{lem:seven}
Let $i\in\NW$ with $v=\pi_i$, and let $j_1\in\Sreg(i)$, $j_2\in\Ereg(v)$. Write $L(j)=\#\{l<j:\pi_l>\pi_j\}$. Then
\[
\inv(\pi)=2n-6+s_1+s_2+s_3+s_4+s_5+u ,
\]
where
\[
\begin{gathered}
s_1=(a-1)+(n-q)-(v-i),\qquad s_2=p-1-i,\qquad s_3=L(j_1)-i,\\ s_4=L(j_2)-(n-v+1),\qquad s_5=v-b-1
\end{gathered}
\]
are all nonnegative and $u\ge0$ is the number of inversions $(x,y)$ of $\pi$ ($x<y$, $\pi_x>\pi_y$) with $x\notin\{1,i,q\}$ and $y\notin\{p,j_1,j_2,n\}$.
\end{lemma}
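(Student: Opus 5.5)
The plan is to prove the identity by sorting the inversions of $\pi$ according to whether they touch the distinguished positions. Put $X=\{1,i,q\}$ for the left ends and $Y=\{p,j_1,j_2,n\}$ for the right ends. Every inversion $(x,y)$ either has $x\in X$ or $y\in Y$, or else it is counted by $u$. Inclusion--exclusion then gives
\[
\inv(\pi)=\sum_{x\in X}\#\{y>x:\pi_y<\pi_x\}+\sum_{y\in Y}L(y)-\#\{(x,y)\in X\times Y\text{ inversion}\}+u .
\]
Before using this I will fix the relative order of the seven positions. We have $1<i<p<q<n$ and $i<j_1<q$ (with $j_1$ on either side of $p$, but $j_1\ne p$), and $q<j_2<n$. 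The values satisfy $\pi_p=1<\pi_{j_1}<a<\pi_{j_2}<v$ and $a<b<v<n$.

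\textbf{Step 1 (the seven counts).} From $x=1$ we get $a-1$, and from $x=q$ we get $n-q$. From $x=i$ we get $v-i$: every position $l<i$ carries a value below $v$. Indeed, either $l<p$ is not northwestern, so $\pi_l\le b$, or $l$ is an earlier northwestern point, whose value is smaller because the $v_l$ increase. On the $Y$ side, $L(p)=p-1$ and $L(n)=n-b$, while $L(j_1)$ and $L(j_2)$ are left as they are.

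\textbf{Step 2 (the overlap).} I list which of the twelve pairs in $X\times Y$ are inversions. These are $(1,p)$, $(1,j_1)$, $(i,p)$, $(i,j_1)$, $(i,j_2)$, $(i,n)$, $(q,j_2)$ and $(q,n)$. The pairs $(1,j_2)$, $(1,n)$, $(q,p)$ and $(q,j_1)$ fail, either by value or by position. Substituting everything and regrouping gives $2n-6+\sum s_r+u$. I will redo this constant bookkeeping carefully: a first pass gave an off-by-one discrepancy, so one overlap pair (or one of the row counts) has to be rechecked against the exact definitions of the $s_r$.

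\textbf{Step 3 (nonnegativity).} Each $s_r\ge0$ is a small 1324 or region argument, using $\SE=\varnothing$, the emptiness of the central region, and the fact that $\NW$ values increase.
\begin{itemize}
\item $s_2=p-1-i\ge0$ holds because $i<p$.
\item $s_5=v-b-1\ge0$ holds because $v>b$.
\item For $s_1$, consider a value below $v$ to the right of $i$. If it lies in $(a,b)$ before $q$, it is central, which is impossible. If it lies in $(b,v)$ before $q$, it gives the $1324$ pattern $\pi_1,v,\cdot,n$. So such a value is either $<a$, giving at most $a-1$ of them, or it sits after $q$. In the latter case it cannot be $<a$ because $\SE=\varnothing$, so these number at most $n-q$. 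Hence $v-i\le (a-1)+(n-q)$.
\item For $s_4$, I will show that all $n-v$ values exceeding $v$, together with $v$ itself at $i$, lie before $j_2$. A value $>v$ after $j_2$ would combine with $a$ and $\pi_{j_2}$ to create a forbidden pattern. The point to check is whether $\pi_1,\pi_{j_2}$, that large value and $b$ give a $1324$, or whether we fall back on the structure forced by $\NW$.
\end{itemize}

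\textbf{Main obstacle.} I expect $s_3=L(j_1)-i\ge0$ to be the hardest step. One needs that all of positions $1,\dots,i$ carry values above $\pi_{j_1}$, which would follow if no position $l<i$ carries a value below $a$. I would first try to derive a $1324$ from such an $l$ using $\pi_1$, the point $l$, $v$ and the later structure. Failing that, I would choose $i$ or $j_1$ more cleverly, or fall back on the corresponding inequality in the proof of \cite[Lemma~2.8]{LV}, which this identity is meant to refine.
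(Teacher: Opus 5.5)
Your inclusion--exclusion frame is correct, and so are the row counts $R(1)=a-1$, $R(i)=v-i$, $R(q)=n-q$, $L(p)=p-1$, $L(n)=n-b$, the eight overlapping pairs, and your arguments for $s_1,s_2,s_5\ge0$. The bookkeeping has no off-by-one. Substitute $R(1)+R(q)=(v-i)+s_1$, $L(p)=i+s_2$, $L(j_1)=i+s_3$, $L(j_2)=(n-v+1)+s_4$ and $L(n)=(n-v+1)+s_5$. The seven counts then sum to $2n+2+\sum_r s_r$, and subtracting the $8$ doubly counted pairs gives exactly $2n-6+\sum_r s_r+u$.

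The genuine gap is that $s_3\ge0$ and $s_4\ge0$ are not proved. Moreover, the route you propose for $s_3$ cannot succeed, because positions before $i$ may carry values below $a$. For example, take $\pi=4\,3\,5\,8\,1\,2\,9\,6\,7$, a defect-two residual from the row ``$s_1=1$ (a), prefix'' of Table~\ref{tab:delta2}. Here $a=4$, $i=4$, $v=8$, $j_1=6$, $j_2=8$, and the value $3<a$ sits at position $2$; one has $s_1=1$ while $s_3=0$. Choosing $i$ or $j_1$ cleverly is not an option either. The lemma is asserted for every $i\in\NW$, $j_1\in\Sreg(i)$, $j_2\in\Ereg(v)$, and Theorem~\ref{thm:A} applies it to all such choices.

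The missing idea, for both inequalities, is to let the northwestern value $v$ play the role of the $3$ in a $1324$. You should compare with $\pi_{j_1}$ and $\pi_{j_2}$ themselves rather than with $a$ or $b$. For $s_3$: if $l<i$ and $\pi_l<\pi_{j_1}$, then $\pi_l,v,\pi_{j_1},n$ at positions $l<i<j_1<q$ is a $1324$, since $\pi_{j_1}<a<b<v<n$. Together with $v>\pi_{j_1}$ this gives $L(j_1)\ge i$. For $s_4$: if some $w$ with $v<w<n$ lay after $j_2$, then $\pi_1,v,\pi_{j_2},w$ would be a $1324$, since $a<\pi_{j_2}<v<w$. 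Hence all $n-v+1$ values $\ge v$ (including $v$ at $i$ and $n$ at $q$) precede $j_2$ and exceed $\pi_{j_2}$, which is $L(j_2)\ge n-v+1$. The quadruple you were testing, $\pi_1,\pi_{j_2},w,b$, is never an occurrence: it has pattern $1243$ or $1342$ according as $\pi_{j_2}<b$ or $\pi_{j_2}>b$.
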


\begin{proof}
Write $R(l)=\#\{m>l:\pi_m<\pi_l\}$ for the number of right-inversions of position $l$. We have $R(1)=a-1$, $R(q)=n-q$, $L(p)=p-1$, $L(n)=n-b$. No entry before position $i$ exceeds $v$ (with $\pi_1$, $v$ and $n$ such an entry would give a $1324$), so $R(i)=(v-1)-(i-1)=v-i$. Every entry before $j_1$ with position $\le i$ exceeds $\pi_{j_1}$: for $l<i$ with $\pi_l<\pi_{j_1}$ the entries $\pi_l,v,\pi_{j_1},n$ form a $1324$, and $v>b>\pi_{j_1}$; hence $L(j_1)\ge i$. Every value $w\ge v$ lies before $j_2$ and exceeds $\pi_{j_2}$: $v$ and $n$ do, and if $v<w<n$ sat after $j_2$ then $\pi_1,v,\pi_{j_2},w$ would be a $1324$; hence $L(j_2)\ge n-v+1$. Right-inversions of $i$ are entries after $i$ with value below $v$; such a value is either at most $a-1$ (at most $a-1$ possibilities) or lies in $(a,v)$, and then its position exceeds $q$ (a position in $(i,q)$ would give the $1324$ $\pi_1,v,\cdot,n$), so there are at most $n-q$ of them; thus $s_1\ge0$. Finally $s_2\ge0$ since $i<p$, and $s_5\ge0$ since $v>b$.

Now sum $R(1)+R(i)+R(q)+L(p)+L(j_1)+L(j_2)+L(n)$. An inversion $(x,y)$ ($x<y$, $\pi_x>\pi_y$) is counted once for each of the two conditions $x\in\{1,i,q\}$ and $y\in\{p,j_1,j_2,n\}$ that it satisfies; the inversions satisfying both are exactly $(1,p),(1,j_1),(i,p),(i,j_1),(i,j_2),(i,n),(q,j_2),(q,n)$ --- eight pairs (the remaining candidates $(1,j_2),(1,n),(q,p),(q,j_1)$ are not inversions or not in the right order), and the inversions satisfying neither are those counted by $u$. Hence
\[
\inv(\pi)=R(1)+R(i)+R(q)+L(p)+L(j_1)+L(j_2)+L(n)-8+u ,
\]
and substituting $R(1)+R(q)=(v-i)+s_1$, $L(p)=i+s_2$, $L(j_1)=i+s_3$, $L(j_2)=(n-v+1)+s_4$, $L(n)=(n-v+1)+s_5$ gives $2(v-i)+2i+2(n-v+1)-8+\sum s+u=2n-6+\sum s+u$.
\end{proof}

\begin{lemma}[inflations]\label{lem:inflate}
Let $S$ be a permutation and $S[\iota_{m_1},\dots,\iota_{m_r}]$ its inflation by increasing blocks. If $S$ avoids $1324$, so does the inflation; if $S$ is indecomposable, so is the inflation.
\end{lemma}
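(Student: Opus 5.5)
Both claims of Lemma~\ref{lem:inflate} will be proved by the same device: transferring a pattern, or a block decomposition, between the inflation $\tau=S[\iota_{m_1},\dots,\iota_{m_r}]$ and the skeleton $S$ via the block map. Each position of $\tau$ lies in a unique block $\beta(x)\in\{1,\dots,r\}$. Positions in different blocks $\beta(x)\neq\beta(y)$ compare in $\tau$ exactly as $S_{\beta(x)}$ and $S_{\beta(y)}$ compare. Positions in the same block are increasing in both position and value, because the blocks are $\iota_m$. We may assume every $m_l\ge1$; empty blocks simply delete entries of $S$, and both conclusions are then understood for the nonempty blocks, as they are used later.

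For avoidance, suppose $\tau$ contains $1324$ at positions $x_1<x_2<x_3<x_4$ with $\tau_{x_1}<\tau_{x_3}<\tau_{x_2}<\tau_{x_4}$. Since $x_2<x_3$ but $\tau_{x_2}>\tau_{x_3}$, and within a block entries increase, we get $\beta(x_2)\neq\beta(x_3)$. More generally, any two of the four indices in the same block must be increasing in both coordinates.

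The step that needs care is showing the four blocks are pairwise distinct; I expect this short case check to be the only real obstacle.
\begin{itemize}
\item[$\circ$] If $\beta(x_1)=\beta(x_2)$, then $\beta(x_3)$ lies strictly after that block, so $\tau_{x_3}$ and $\tau_{x_1}$ compare as $S_{\beta(x_3)}$ and $S_{\beta(x_1)}$. Likewise $\tau_{x_3}$ and $\tau_{x_2}$ compare as $S_{\beta(x_3)}$ and $S_{\beta(x_2)}=S_{\beta(x_1)}$. This forces both $\tau_{x_3}>\tau_{x_1}$ and $\tau_{x_3}<\tau_{x_2}$ to be the same comparison, a contradiction.
\item[$\circ$] The case $\beta(x_3)=\beta(x_4)$ is symmetric: $\tau_{x_2}$ would be compared to both of them through one value $S_{\beta(x_3)}$.
\item[$\circ$] The remaining coincidences $\beta(x_1)=\beta(x_3)$, $\beta(x_2)=\beta(x_4)$, $\beta(x_1)=\beta(x_4)$ force a block to contain the block of $x_2$ or $x_3$ between its positions. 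That is impossible since blocks are consecutive intervals.
\end{itemize}
With the four blocks distinct, $S_{\beta(x_1)},\dots,S_{\beta(x_4)}$ is an occurrence of $1324$ in $S$.

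For indecomposability, I will argue by contrapositive. Suppose $\tau=\alpha\oplus\gamma$ with a split after position $t$, so that the first $t$ entries are exactly the values $1,\dots,t$.

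First I will show that the split cannot fall strictly inside a block. If it did, that block would have an entry at position $\le t$ and a larger entry at position $>t$. Since the block occupies consecutive values, every value between those two entries lies in the same block. Hence the block is still split at a value threshold. Moreover, all earlier blocks must carry smaller values and all later blocks larger values. Then $S$ itself splits at that block with the block on one side, so $S$ is decomposable. Concretely, taking the prefix of $S$ ending at this block, its values are exactly the smallest ones; this needs $r\ge2$, and $r=1$ means $S=1$, whose inflation $\iota_m$ is decomposable only when $m\ge2$. I will treat that degenerate case as the convention: if $S=1$, the indecomposability claim applies only to the trivial inflation.

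If the split falls between blocks $l$ and $l+1$, then blocks $1,\dots,l$ carry the smallest values. Since the order of $S$ is preserved across blocks, $S_1,\dots,S_l$ are $1,\dots,l$, and $S$ is decomposable. This completes both claims.
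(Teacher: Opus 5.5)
Your proof is correct and follows the paper's route: an occurrence of $1324$ must meet four distinct blocks and so projects to $S$, and any sum-cut of the inflation, whether inside a block or between blocks, induces a cut of $S$. Your comparison-through-$S$ argument for the position-adjacent pairs is a valid substitute for the paper's value-interval argument, and your caveat about $S=1$ with a block of size at least two is right: the paper records exactly this exception in Lemma~\ref{lem:infl}(2).
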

\begin{proof}
An occurrence of $1324$ in the inflation with two entries in the same block is impossible: two entries of one block are increasing in position and in value, hence play the roles $(1,3)$, $(1,2)$, $(1,4)$, $(3,4)$ or $(2,4)$, and in each case a third entry of the occurrence lies between them in position, hence in the same block, and then its value is not in the required order. So the occurrence meets four distinct blocks and induces an occurrence in $S$. A sum-decomposition cut of the inflation that falls strictly inside a block forces that block, together with all blocks before it, to carry the smallest values, hence yields a cut of $S$ after that block's point (or before the first point, which is not a cut); so a cut of the inflation induces a cut of $S$.
\end{proof}

\begin{lemma}[two northwestern points]\label{lem:M}
If $m\ge2$, $\Sreg(i_1)\ne\varnothing$ and $\Ereg(v_m)\ne\varnothing$, then $\inv(\pi)\ge2n-4$.
\end{lemma}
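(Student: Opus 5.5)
The plan is to apply the seven-index identity (Lemma~\ref{lem:seven}) with a well-chosen northwestern point and to show that the total slack $s_1+\dots+s_5+u$ is at least $2$. The natural choice is $i=i_m$, $v=v_m$, since $\Ereg(v_m)\ne\varnothing$ supplies a $j_2\in\Ereg(v_m)$ directly. With this choice the term $s_5=v_m-b-1$ is already at least $m-1\ge1$, because $v_1<\dots<v_{m-1}$ are distinct values in the open interval $(b,v_m)$. It remains to find one more unit of slack.

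I split according to whether $\Sreg(i_m)$ is empty.

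\emph{Case $\Sreg(i_m)\ne\varnothing$.} Take $j_1\in\Sreg(i_m)$ and use the identity with $(i_m,j_1,j_2)$. I would look for the second unit in $u$ by exhibiting an inversion $(x,y)$ with $x\notin\{1,i_m,q\}$ and $y\notin\{p,j_1,j_2,n\}$. The candidate is $x=i_{m-1}$, which is not among the excluded left indices. Its partner $y$ should be an entry after $i_{m-1}$ with value below $v_{m-1}$, other than $p$, $j_1$, $j_2$, $n$; such entries include the positions of $\Sreg(i_1)$, the entries strictly between $i_{m-1}$ and $i_m$, or a second eastern or southern point. If the only such entries are exactly $p$, $j_1$ and $j_2$, I would instead show that some other term is positive:
\begin{itemize}
\item $s_2=p-1-i_m$, which is positive when there are entries between $i_m$ and $p$;
\item $s_3$, which is positive when an entry after $i_m$ and before $j_1$ exceeds $\pi_{j_1}$;
\item $s_4$, which is positive when $v_{m-1}>\pi_{j_2}$, since $v_{m-1}$ lies before $j_2$.
\end{itemize}
If all of these vanish, the resulting configuration is rigid enough that I would verify directly that the $1324$ condition or the hypotheses fail.

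\emph{Case $\Sreg(i_m)=\varnothing$.} Since $\Sreg(i_1)\ne\varnothing$, there is a position $j_1$ with $i_1<j_1<i_m$ and $1<\pi_{j_1}<a$. Here I would apply the identity with $i=i_1$, $v=v_1$ when $\Ereg(v_1)\ne\varnothing$. In that case $s_5=v_1-b-1$ may be $0$, but the entry $v_m$ gives an inversion $(i_m,j_2)$ with $i_m\notin\{1,i_1,q\}$, contributing to $u$. The pair $(i_m,p)$, where $p>i_m$ since $\pi_{j_1}$ lies before $i_m$ and $\Sreg(i_m)$ is empty, is excluded because $p$ is a right index, so I would look further, for example at $(i_m,y)$ for another small entry $y$. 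A second unit should come from $s_3\ge1$: the point $j_1$ lies before $i_m$, but $q$ does not precede $j_1$, so instead I would look for an entry between $i_1$ and $j_1$.

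If $\Ereg(v_1)=\varnothing$, I would use mixed data: the point $i_m$ with $j_2\in\Ereg(v_m)$, and the point $j_1$ before $i_m$. Here I would rerun the double-counting of the proof of Lemma~\ref{lem:seven} with the seven indices $1,i_m,q,p,j_1,j_2,n$, where $j_1<i_m$. In this configuration $(i_m,j_1)$ is no longer an inversion, so the count of pairs changes, and I would directly bound the row sums $R(1)$, $R(i_m)$, $R(q)$, $L(p)$, $L(j_1)$, $L(j_2)$, $L(n)$.

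The main obstacle is this last situation, where the southern point and the eastern point are attached to different northwestern points. There the seven-index identity does not apply verbatim, and one has to recount inversions by hand. I expect the extra inversions $(i_1,j_1)$ and $(i_m,j_2)$, together with the gap $v_m-b-1\ge1$, to supply the two units of slack. The difficulty lies in tracking which pairs are double counted.
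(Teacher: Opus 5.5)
\textbf{There is a genuine gap.} The case you yourself flag as the main obstacle, $\Sreg(i_m)=\varnothing=\Ereg(v_1)$, is left as an expectation, and it is exactly where the lemma has content. For example, $\pi=35271864$ satisfies the standing assumptions ($a=3<b=4$, $p=5<q=6$, $\SE=\varnothing$). It has $\NW=\{2,4\}$, $\Sreg(i_1)=\{3\}$ and $\Ereg(v_2)=\{7\}$, yet $\Sreg(i_2)=\Ereg(v_1)=\varnothing$, and $\inv(\pi)=12=2n-4$. So no northwestern point carries both a southern and an eastern point, Lemma~\ref{lem:seven} is unavailable, and the bound is attained with no room to spare.

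Your proposed bookkeeping would not close this case. The pair $(i_m,j_2)$ is one of the doubly counted pairs, not extra slack. The same error appears in your case $\Sreg(i_m)=\varnothing\ne\Ereg(v_1)$: with $i=i_1$ that pair is not counted by $u$, because its right end $j_2$ is excluded, and it already lies inside the baseline $n-v_1+1$ of $L(j_2)$. Likewise $(i_1,j_1)$ is already part of the baseline $L(j_1)\ge i_1$. The real issue is that $L(j_1)$ now only dominates $i_1$ rather than $i_m$. That is a deficit of $i_m-i_1\ge m-1$ relative to the identity at $i=i_m$, so a gap $v_m-b-1\ge1$ cannot suffice.

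The missing idea is to pay for that deficit explicitly. Take $j_1\in\Sreg(i_1)$ and $j_2\in\Ereg(v_m)$ with $j_1<i_m$. Then $(i_m,j_1)$ is not an inversion, so only seven pairs are doubly counted. Use $L(p)\ge i_m$, $L(j_1)\ge i_1$, $L(j_2)\ge n-v_m+1$ and $R(i_m)=v_m-i_m$. Also use $(a-1)+(n-q)\ge(v_m-i_m)+|\Sreg(i_1)\setminus\Sreg(i_m)|$, which holds because the small entries between $i_1$ and $i_m$ are not right-inversions of $i_m$. Together these give
\[
\inv(\pi)\ \ge\ 2n-5-(i_m-i_1)+|\Sreg(i_1)\setminus\Sreg(i_m)|+(v_m-b-1).
\]
Every position strictly between $i_1$ and $i_m$ is northwestern or carries a value in $(1,a)$, so $i_m-i_1-1=(m-2)+|\Sreg(i_1)\setminus\Sreg(i_m)|$. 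Moreover $\Ereg(v_1)=\varnothing$ forces $\pi_{j_2}\in(v_1,v_m)$, a non-northwestern value in $(b,v_m)$, so $v_m-b-1\ge m$. Substituting gives $2n-4$.

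Your other two cases also lack their decisive step.
\begin{itemize}
\item For $\Sreg(i_m)\ne\varnothing$: either $\pi_{j_2}<v_{m-1}$, giving $s_4\ge1$ and $s_5\ge1$, or $\pi_{j_2}\in(v_{m-1},v_m)$, giving $s_5\ge m\ge2$. A promise to \emph{verify the rigid configuration directly} is not an argument.
\item For $\Sreg(i_m)=\varnothing\ne\Ereg(v_1)$: the slack is simply $s_2=p-1-i_1\ge2$, since $j_1$ and $i_m$ both lie strictly between $i_1$ and $p$.
\end{itemize}

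The paper avoids the case split altogether. It makes one recount with $j_1\in\Sreg(i_1)$ and $j_2\in\Ereg(v_m)$, and uses two indicators. The indicator $[i_m<j_1]$ enters both $L(j_1)$ and the double count. The indicator $[\pi_{j_2}<v_1]$ enters $L(j_2)$ and is complementary to the extra value $\pi_{j_2}$ in $(b,v_m)$.
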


This sharpens \cite[Lemma~2.9]{LV}, which rules out the configuration under the hypothesis $k\le2n-7$.
\begin{proof}
Choose $j_1\in\Sreg(i_1)$ and $j_2\in\Ereg(v_m)$ and count, as in Lemma~\ref{lem:seven}, the inversions incident to the positions $1,i_m,q$ (right-inversions) and $p,j_1,j_2,n$ (left-inversions):
\[
\inv(\pi)\ \ge\ R(1)+R(i_m)+R(q)+L(p)+L(j_1)+L(j_2)+L(n)-D ,
\]
where $D=7+[i_m<j_1]$ counts the inversions $(1,p),(1,j_1),(i_m,p),(i_m,j_2),(i_m,n),(q,j_2),(q,n)$ and, if $i_m<j_1$, $(i_m,j_1)$. Here $R(1)=a-1$, $R(q)=n-q$, $L(p)=p-1\ge i_m$, $L(n)=n-b$, and $R(i_m)=v_m-i_m$ because no entry before $i_m$ exceeds $v_m$. Every position $l<i_1$ has $\pi_l>\pi_{j_1}$ (else $\pi_l,v_1,\pi_{j_1},n$ is a $1324$), and $v_m>\pi_{j_1}$, so $L(j_1)\ge i_1+[i_m<j_1]$. All values exceeding $v_m$ lie before $j_2$ (else $\pi_1,v_m,\pi_{j_2},w$ is a $1324$), and so does $v_1$, which exceeds $\pi_{j_2}$ if $\pi_{j_2}<v_1$; so $L(j_2)\ge(n-v_m+1)+[\pi_{j_2}<v_1]$. Finally the right-inversions of $i_m$ are values below $a$ lying after $i_m$ or positions after $q$; the values below $a$ that lie before $i_m$ are not among them, so $(a-1)+(n-q)\ge(v_m-i_m)+|\Sreg(i_1)\setminus\Sreg(i_m)|$. Adding up,
\[
\inv(\pi)\ \ge\ 2n-5-(i_m-i_1)+|\Sreg(i_1)\setminus\Sreg(i_m)|+[\pi_{j_2}<v_1]+(v_m-b-1).
\]
A position strictly between $i_1$ and $i_m$ carries either a northwestern value or a value in $(1,a)$ (a value in $(a,b)$ would lie in the central region, one in $(b,v_m)$ would be northwestern, one exceeding $v_m$ would contradict the monotonicity of the northwestern values), so $i_m-i_1-1=(m-2)+|\Sreg(i_1)\setminus\Sreg(i_m)|$. The interval $(b,v_m)$ contains the $m-1$ values $v_1,\dots,v_{m-1}$, and it contains $\pi_{j_2}$ as a further, non-northwestern value whenever $\pi_{j_2}>v_1$. Hence the right-hand side is at least $2n-5+1$.
\end{proof}

\section{Defect one}\label{sec:delta1}

\begin{proof}[Proof of Theorem~\ref{thm:A}]
Let $\pi\in\R_{1,n}$, so $\inv(\pi)=2n-6$, $\pi$ is indecomposable and none of $\pi\setminus1,\pi\setminus n,\pi\setminus\pi_1,\pi\setminus\pi_n$ is decomposable. By Lemma~\ref{lem:LV}(1) applied to $\pi$ and to $\pi^{-1}$ (note $2n-6\le2n-5$), $a<b$ and $p<q$. By Lemma~\ref{lem:LV}(2) exactly one of $\NW,\SE$ is nonempty; replacing $\pi$ by $\pi^{-1}$ if necessary we assume $\NW\ne\varnothing=\SE$ and show that $\pi=24153[\iota_A,1,\iota_C,\iota_D,\iota_E]$ with $A,D\ge1$, $C,E\ge2$, $(A-1)(C-2)+(D-1)(E-2)=0$; the inverse of such a permutation is $31524[\iota_C,\iota_A,\iota_E,1,\iota_D]$, which gives the second family.

Since $\pi$ is not almost decomposable, Lemma~\ref{lem:deletions} gives $\Sreg(i_1)\ne\varnothing$ and $\Ereg(v_m)\ne\varnothing$, and then Lemma~\ref{lem:M} gives $m=1$ because $\inv(\pi)=2n-6<2n-4$. Write $i=i_1$, $v=\pi_i$. Apply Lemma~\ref{lem:seven} with $j_1\in\Sreg(i)$, $j_2\in\Ereg(v)$: since $\inv(\pi)=2n-6$, all of $s_1,\dots,s_5,u$ vanish, for every admissible choice of $j_1,j_2$. We read off the structure.

\emph{$s_2=0$:} $p=i+1$.

\emph{$s_5=0$:} $v=b+1$, so that $\Ereg(v)=\Ereg(b)$ is the eastern region $\Ereg$ of \cite{LV}.

\emph{$s_1=0$:} every value in $\{1,\dots,a-1\}$ lies after $i$, and every position in $\{q+1,\dots,n\}$ carries a value below $v$. The former means that the values $2,\dots,a-1$ are exactly the southern points (positions after $q$ with small values are excluded by $\SE=\varnothing$); in particular $a\ge3$ since $\Sreg(i)\ne\varnothing$. The latter, with $\SE=\varnothing$, $v=b+1$ and the emptiness of the central region, means that the positions $q+1,\dots,n-1$ are exactly the eastern points and carry values in $(a,b)$.

\emph{Positions before $i$.} No entry before $i$ exceeds $v$, none is below $a$ (those values sit after $i$), and $b$ sits at $n$; so $\pi_2,\dots,\pi_{i-1}\in(a,b)$.

\emph{Positions between $i$ and $q$.} Besides $1$ at $p=i+1$ and the southern points, the only possible values are those exceeding $v$ (values in $(a,b)$ are excluded by the central-region argument and $(b,v)=\varnothing$). All values exceeding $v$, except $n$, lie in $(i,q)$: not before $i$ by the above, not after $q$ by $s_1=0$.

\emph{$s_3=0$ for every southern $j_1$:} the entries between $i$ and $j_1$ are below $\pi_{j_1}$. Hence the southern points are increasing, and every value exceeding $v$ in $(i,q)$ comes after all southern points. \emph{$s_4=0$ for every eastern $j_2$:} the only entries before $j_2$ exceeding $\pi_{j_2}$ are the values $\ge v$; hence the eastern points are increasing and every entry of $\pi_2\cdots\pi_{i-1}$ is smaller than every eastern value.

\emph{$u=0$.} Two entries of $\pi_2\cdots\pi_{i-1}$ in the wrong order, or two values exceeding $v$ in the wrong order inside $(i,q)$ (the later one different from $n$), would give an inversion counted by $u$; so both runs are increasing. Consequently
\[
\pi=\underbrace{a,a+1,\dots,a+i-2}_{A=i-1}\ \ \underbrace{b+1}_{1}\ \ \underbrace{1,2,\dots,a-1}_{C=a-1}\ \ \underbrace{b+2,\dots,n}_{D=n-b-1}\ \ \underbrace{a+i-1,\dots,b}_{E=b-a-i+2}
\]
is the inflation $24153[\iota_A,1,\iota_C,\iota_D,\iota_E]$ with $A\ge1$, $C\ge2$, $D\ge1$ ($v<n$ because $n$ sits after $p$), $E\ge2$. Finally, if $C\ge3$ then some southern point $j$ is not the chosen $j_1$, and any entry $\pi_l$ with $2\le l\le i-1$ forms with $j$ an inversion counted by $u$; $u=0$ thus forces $A=1$. Likewise if $E\ge3$ then an entry exceeding $v$ other than $n$ would form with a non-chosen eastern point an inversion counted by $u$, so $D=1$. This is the equation $(A-1)(C-2)+(D-1)(E-2)=0$.

Conversely let $\pi=24153[\iota_A,1,\iota_C,\iota_D,\iota_E]$ with $A,D\ge1$, $C,E\ge2$ and $(A-1)(C-2)+(D-1)(E-2)=0$. It avoids $1324$ and is indecomposable by Lemma~\ref{lem:inflate}. Its inversion number is $(A+1)C+(D+1)E$, and $(A+1)C+(D+1)E=2(A+C+D+E+1)-6$ is equivalent to the equation. The four deletions are the inflations of $24153$ (or, when a block of size one disappears, of $2413$ or $3142$) by nonempty blocks, hence indecomposable; so $\pi$ is not almost decomposable, and $\pi\in\R_{1,n}$.

The count: with $A+C+D+E=n-1$, the four cases $(A,C)\in\{(1,\cdot),(\cdot,2)\}\times(D,E)\in\{(1,\cdot),(\cdot,2)\}$ each have $n-6$ solutions, the pairwise intersections have one solution each except the pair $\{A=1,D=1\}\cap\{C=2,E=2\}$ which needs $n=7$, and inclusion--exclusion gives $4(n-6)-4=4(n-7)$ for $n\ge8$. The inverse family has the same size and is disjoint (it has $\SE\ne\varnothing$).
\end{proof}

\begin{lemma}\label{lem:R1}
For every $n$ and $k$, no $\sigma\in\im(f\sqcup g)\subseteq\Av^k_{n+1}(1324)$ satisfies any of
\[
\sigma_1=n+1,\qquad \sigma_{n+1}=1,\qquad \sigma_2=n+1,\qquad \sigma_{n+1}=2 .
\]
The number of $\sigma\in\Av^k_{n+1}(1324)$ satisfying at least one of them is
\begin{equation}\label{eq:E}
E(n,k)=2a(n,k-n)+2a(n,k-n+1)-a(n-1,k-2n+3)-2a(n-1,k-2n+2)-a(n-1,k-2n+1),
\end{equation}
where $a(m,j)=0$ for $j<0$. In particular $E(n,k)=2a(n,k-n)+2a(n,k-n+1)$ for $k\le2n-4$.
\end{lemma}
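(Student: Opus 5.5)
The plan has two parts: the image claim, which needs the definitions of $f$ and $g$ from \cite{LV}, and the count, which is inclusion--exclusion. For the image claim I would go through the construction of \cite[Section~3]{LV} map by map. The map $g$ on decomposable avoiders $\pi=\alpha\oplus\beta$ inserts a new entry at the junction of the summands. The map $f$ on almost decomposable avoiders inserts a new entry next to the entry whose deletion is decomposable, choosing among a few cases. In each case I would check that the output either begins with a small entry, or ends with a large one, or keeps the extreme values $1$ and $n+1$ away from the four forbidden spots. The guiding principle is that both maps only act near an internal cut, while $\sigma_1=n+1$ and $\sigma_{n+1}=1$ put an extreme value at an end position. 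For the conditions $\sigma_2=n+1$ and $\sigma_{n+1}=2$, symmetry under the reverse-complement-inverse type symmetries used in \cite{LV} (the maps commute with inversion) lets me reduce to verifying $\sigma_1\neq n+1$ and $\sigma_2\ne n+1$. This case check against the explicit definitions is the main obstacle, since it depends on details of $f$ outside this paper; I would first try to show that every image point $\sigma$ has $\sigma\setminus x$ decomposable for an appropriate extreme entry $x$, which is incompatible with the forbidden configurations.

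For the count, let $X_1,X_2,X_3,X_4$ be the sets of $\sigma\in\Av^k_{n+1}(1324)$ with $\sigma_1=n+1$, $\sigma_{n+1}=1$, $\sigma_2=n+1$, $\sigma_{n+1}=2$ respectively.
\begin{itemize}
\item If $\sigma_1=n+1$, deleting it removes exactly $n$ inversions, and a $1324$ cannot use a leading maximum. So $|X_1|=a(n,k-n)$, and $|X_2|=a(n,k-n)$ in the same way.
\item If $\sigma_2=n+1$, deleting it removes $n-1$ inversions. The remaining permutation $\tau$ must avoid $1324$, and any $\tau$ works: an occurrence of $1324$ using $n+1$ at position $2$ would force $n+1$ to play the role of the $4$, which is impossible at position $2$. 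So $|X_3|=a(n,k-n+1)$, and $|X_4|=a(n,k-n+1)$ symmetrically.
\end{itemize}

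Next come the intersections. The intersections $X_1\cap X_3$ and $X_2\cap X_4$ are empty. For $X_1\cap X_2$, deleting both extremes removes $2n-1$ inversions, giving $a(n-1,k-2n+1)$. For $X_3\cap X_4$, the two deleted entries are not in inversion with each other, so deleting them removes $(n-1)+(n-1)=2n-2$ inversions, giving $a(n-1,k-2n+2)$. Here I would need to confirm that no extra avoidance constraint arises: with $n+1$ at position $2$ and $2$ at the end, an occurrence of $1324$ would need $2$ as its last element, the $4$, which fails. For $X_1\cap X_4$ and $X_2\cap X_3$, the two deleted entries do form an inversion, so $2n-2$ inversions are removed, giving $a(n-1,k-2n+2)$ each.

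Summing, the pairwise intersections contribute $a(n-1,k-2n+1)+3a(n-1,k-2n+2)$. Triple intersections such as $X_1\cap X_2\cap X_4$ force $1$ and $2$ into the same position, so they are empty, apart from degenerate small-$n$ cases I would check separately. The pairwise total must still be reconciled with \eqref{eq:E}, which has $-a(n-1,k-2n+3)-2a(n-1,k-2n+2)-a(n-1,k-2n+1)$. So I would recheck the inversion counts of the mixed pairs: in $X_1\cap X_4$ the pair of deleted entries $(n+1,2)$ is one inversion, and the entry $1$ lies before $2$. Recounting this gives $n+(n-2)=2n-2$ removed inversions, and for $X_3\cap X_4$, $(n-1)+(n-2)=2n-3$, giving $a(n-1,k-2n+3)$. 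Correcting the counts in this way should produce the stated formula. The final assertion is then immediate: for $k\le2n-4$ all arguments $k-2n+3$, $k-2n+2$, $k-2n+1$ are negative, so those terms vanish.
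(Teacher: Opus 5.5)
Your counting half is correct and is the paper's inclusion--exclusion. None of the four distinguished entries can lie in an occurrence of $1324$, so deleting it is a bijection onto $\Av^{k-n}_n(1324)$, $\Av^{k-n}_n(1324)$, $\Av^{k-n+1}_n(1324)$ and $\Av^{k-n+1}_n(1324)$. Only the four left/right pairs can co-occur, and every triple contains one of the two impossible pairs, so there are no degenerate triples to check. One slip: $n+1$ in position $2$ and $2$ in position $n+1$ \emph{are} inverted. So $X_3\cap X_4$ loses $(n-1)+(n-1)-1=2n-3$ inversions, while $X_1\cap X_4$ and $X_2\cap X_3$ lose $2n-2$. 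You should derive these directly, not adjust them until they match the formula.

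The genuine gap is the first assertion, which is the substantive part of the lemma, and your proposal does not prove it. The shortcut you suggest is false: an extremal deletion with many components is compatible with the forbidden configurations. For example, $\sigma=(n+1)\,1\,2\cdots n$ has $\sigma_1=n+1$ and $\sigma\setminus\sigma_1=\iota_n$, and $\sigma=1\,3\,4\cdots(n+1)\,2$ has $\sigma_{n+1}=2$ and $\sigma\setminus\sigma_{n+1}=\iota_n$.

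To close the gap you must know which deletion each case of $f$ acts through and what the rest of the output looks like, and your description of $f$ is inaccurate. In \cite{LV}, $f$ is defined in three cases:
\begin{enumerate}
\item if $\pi\setminus\pi_1$ is decomposable, then $f(\pi)_1=\pi_1$ and $f(\pi)\setminus\pi_1=g(\pi\setminus\pi_1)$;
\item if $\pi\setminus1$ is decomposable, then $f(\pi)=f(\pi^{-1})^{-1}$;
\item otherwise $f(\pi)=f(\pi^{\mathrm{rc}})^{\mathrm{rc}}$.
\end{enumerate}
Moreover, $\im(g)$ is exactly the set of permutations with at least three components.

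The paper uses this as follows. A case-1 output begins with $\pi_1\le n$ and does not end with $1$, since its tail lies in $\im(g)$. The property ``does not begin with the maximum and does not end with $1$'' is invariant under inversion and reverse--complement, so it propagates through cases 2 and 3. Next, $f$ commutes with inversion. This must be proved from the case structure, and only inversion is available, since $f(\pi^{\mathrm{rc}})=f(\pi)^{\mathrm{rc}}$ fails. Hence $\im(f\sqcup g)$ is inversion-invariant and only $\sigma_{n+1}=2$ remains.

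For $\sigma_{n+1}=2$, the permutations $\sigma$, $\sigma\setminus1$, $\sigma\setminus\sigma_1$ and $\sigma\setminus(n+1)$ have at most two components, which excludes $\im(g)$ and case 1. Case 2 fails because $\sigma^{-1}\setminus\sigma^{-1}_1$ begins with its maximum. Case 3 is exactly where component counting gives nothing, because $\sigma\setminus\sigma_{n+1}$ may have three components. Writing $\sigma=\tau^{\mathrm{rc}}$ with $\tau=f(\pi^{\mathrm{rc}})$ forces $\tau_1=n$. This is excluded by the remark in \cite[Section~3]{LV} that $\rho=\pi^{\mathrm{rc}}$ falls under case 1 or 2. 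The case-1 alternative would force $\pi$ itself into case 2. The case-2 alternative contradicts the shape $\gamma\oplus1\oplus\cdots$ of the output of $g$, which puts the value $1$ of $f(\rho^{-1})$ at position at most $n-1$.
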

\begin{proof}
Recall the three cases in the definition of $f$ \cite[Section~3]{LV}: if $\pi\setminus\pi_1$ is decomposable then $f(\pi)_1=\pi_1$ and $f(\pi)\setminus\pi_1=g(\pi\setminus\pi_1)$; if $\pi\setminus1$ is decomposable then $f(\pi)=f(\pi^{-1})^{-1}$; otherwise $f(\pi)=(f(\pi^{\mathrm{rc}}))^{\mathrm{rc}}$. Recall also that $\im(g)$ is exactly the set of permutations with at least three components.

\emph{The first two properties.} A permutation beginning with its maximum or ending with its minimum is indecomposable, hence not in $\im(g)$. In case~1, $f(\pi)$ begins with $\pi_1\le n$ and its remaining entries form a direct sum of at least three components, whose last entry is not its minimum; in cases~2 and~3 the property ``does not begin with the maximum and does not end with $1$'' is invariant under inversion and under reverse--complement, so induction on the recursion applies.

\emph{$f$ commutes with inversion.} Deleting the entry in position $i$ corresponds to deleting the value $i$ from the inverse, so $(\pi\setminus\pi_1)^{-1}=\pi^{-1}\setminus1$ and $(\pi\setminus1)^{-1}=\pi^{-1}\setminus\pi^{-1}_1$. Hence $\pi$ falls under case~1 exactly when $\pi^{-1}$ falls under case~2, and then $f(\pi^{-1})=f(\pi)^{-1}$ by definition; and $\pi$ falls under case~3 exactly when $\pi^{-1}$ does, in which case the claim for $\pi$ reduces, since reverse--complement commutes with inversion, to the claim for $\pi^{\mathrm{rc}}$, which falls under case~1 or~2 by the remark following the definition of $f$ in \cite[Section~3]{LV}. Since $\im(g)$ is invariant under inversion, so is $\im(f\sqcup g)$. Inversion exchanges $\sigma_1=n+1$ with $\sigma_{n+1}=1$ and $\sigma_2=n+1$ with $\sigma_{n+1}=2$, so for the remaining two properties it is enough to treat $\sigma_{n+1}=2$.

\emph{The property $\sigma_{n+1}=2$.} The last component of $\sigma$ occupies a suffix of the positions and carries a top interval of values; that interval contains $2$, so $\sigma$ has at most two components and $\sigma\notin\im(g)$. For the same reason $\sigma\setminus1$, which ends with its minimum, and $\sigma\setminus\sigma_1$, which ends with $2$ or with its minimum, have at most two components. Suppose $\sigma=f(\pi)$. Case~1 gives $\sigma\setminus\sigma_1=g(\pi\setminus\pi_1)$ with at least three components, a contradiction. In case~2 the permutation $\pi^{-1}$ falls under case~1, so $\sigma^{-1}\setminus\sigma^{-1}_1$ has at least three components; but $\sigma_{n+1}=2$ says $\sigma^{-1}_2=n+1$, so $\sigma^{-1}\setminus\sigma^{-1}_1$ begins with its maximum and is indecomposable, a contradiction. So case~3 applies: $\sigma=\tau^{\mathrm{rc}}$ with $\tau=f(\rho)$ and $\rho=\pi^{\mathrm{rc}}$, and $\sigma_{n+1}=n+2-\tau_1$ forces $\tau_1=n$. By the remark following the definition of $f$ in \cite[Section~3]{LV}, $\rho\setminus1$ or $\rho\setminus\rho_1$ is decomposable. If $\rho\setminus\rho_1$ is decomposable then $\tau_1=\rho_1=n$, i.e.\ $\pi_n=1$; but $\rho\setminus\rho_1$ decomposable means $\pi\setminus\pi_n=\pi\setminus1$ is decomposable, so $\pi$ falls under case~2, not case~3. If $\rho\setminus1$ is decomposable then $\tau=\upsilon^{-1}$ with $\upsilon=f(\rho^{-1})$ given by case~1, and $\tau_1$ is the position of the value $1$ in $\upsilon$. Now $\upsilon\setminus\upsilon_1=g(\rho^{-1}\setminus\rho^{-1}_1)$ is a direct sum $\gamma\dsum1\dsum\cdots$ of at least three components and of length $n$, so $|\gamma|\le n-2$ and the value $1$ of $\upsilon\setminus\upsilon_1$ lies in $\gamma$, at a position at most $n-2$; hence the value $1$ of $\upsilon$ lies at a position at most $n-1$ and $\tau_1\le n-1$, a contradiction.

\emph{The count.} None of the four distinguished entries can take part in an occurrence of $1324$: an entry placed first or second is too early to be the $4$ and is not the smallest, and an entry placed last is too late to be the $1$, the $3$ or the $2$ and is not the largest. Deleting it is therefore a bijection of the four sets onto $\Av^{k-n}_n(1324)$, $\Av^{k-n}_n(1324)$, $\Av^{k-n+1}_n(1324)$ and $\Av^{k-n+1}_n(1324)$, the entry contributing $n$, $n$, $n-1$ and $n-1$ inversions respectively. Two of the four conditions can hold at once only for the four pairs consisting of one condition on the left and one on the right, and deleting both entries identifies those with $\Av^{k-2n+1}_{n-1}$, $\Av^{k-2n+2}_{n-1}$, $\Av^{k-2n+2}_{n-1}$ and $\Av^{k-2n+3}_{n-1}$; no three can hold simultaneously. Inclusion--exclusion gives \eqref{eq:E}.
\end{proof}

\begin{lemma}[boundary form of {\cite[Lemma~4.2]{LV}}]\label{lem:bdry}
Let $\sigma\in\Av^k_m(1324)$ be indecomposable with $k\le2m-8$.
\begin{enumerate}
\item If $\sigma\setminus\sigma_1$ has exactly two components, then $\sigma\setminus m$ or
$\sigma\setminus\sigma_m$ has at least four components.
\item At least one of $\sigma\setminus1$, $\sigma\setminus m$, $\sigma\setminus\sigma_1$,
$\sigma\setminus\sigma_m$ has at least three components.
\end{enumerate}
\end{lemma}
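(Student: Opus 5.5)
The plan is to reproduce the argument of \cite[Lemma~4.2]{LV} while tracking the number of components instead of only decomposability, and to use the exact inversion bookkeeping developed above (Lemmas~\ref{lem:LV}, \ref{lem:deletions}, \ref{lem:seven}, \ref{lem:M}) to see where the hypothesis $k\le2m-8$, one less than the Linusson--Verkama threshold $2m-7$, is needed. Write $a=\sigma_1$, $b=\sigma_m$, $p=\sigma^{-1}(1)$, $q=\sigma^{-1}(m)$ as in Section~\ref{sec:LV}. Since $k\le2m-8<2m-6$, \cite[Theorem~2.5]{LV} makes $\sigma$ almost decomposable, so one of the four deletions is decomposable. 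This gives at least two components, and the task is to upgrade it to three in part~(2) and, in the special situation of part~(1), to four.

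For part~(1), suppose $\sigma\setminus\sigma_1=\alpha\dsum\beta$ with $\alpha,\beta$ indecomposable and nonempty. Since $\sigma$ is indecomposable, $\sigma_1$ must lie above all of $\alpha$. Re-inserting it then shows that $\sigma$ has the shape ``$a$, then $\alpha$ (with values below $a$ except possibly a shift), then $\beta$''. The value $a$ creates $|\alpha'|$ inversions, where $\alpha'$ is the part of $\alpha$ below $a$. Because $\beta$ carries the top interval and $\sigma$ is indecomposable, $a$ exceeds at least one value from $\beta$'s range. I would count inversions as $\inv(\alpha)+\inv(\beta)+(a-1)$ and bound $\inv(\alpha)\ge|\alpha|-1$ and $\inv(\beta)\ge|\beta|-1$ using indecomposability. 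A short case split, on whether $m$ lies in $\beta$ at its first position or not and whether $b$ is the last entry of $\beta$, then shows the following. Either the deletion $\sigma\setminus m$ or $\sigma\setminus\sigma_m$ splits $\beta$ and additionally peels off singleton components, or the count already exceeds $2m-8$. The $1324$-avoidance is what forces these peeled pieces to be singletons, so that we get $\ge4$ components: two from the split of the remaining structure plus the isolated prefix and suffix pieces. This mirrors the four-component conclusion of \cite[Lemma~4.2]{LV}.

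For part~(2), by Lemma~\ref{lem:LV}(1) applied to $\sigma$ and to $\sigma^{-1}$ (since $k\le2m-5$), either the case $a>b$ already gives a decomposable $\sigma\setminus\sigma_1$ or $\sigma\setminus\sigma_m$, or $a<b$ and $p<q$. In the first case I would argue that exactly two components is ruled out. By part~(1) and its images under inverse and reverse--complement, a two-component deletion forces a four-component one. In the second case we may assume $\SE=\varnothing\ne\NW$ by symmetry. If $\Sreg(i_1)=\varnothing$ or $\Ereg(v_m)=\varnothing$, Lemma~\ref{lem:deletions} yields a decomposable $\sigma\setminus1$ or $\sigma\setminus\sigma_m$, and we need a third component. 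Lemma~\ref{lem:M} together with the seven-index identity, Lemma~\ref{lem:seven}, with slack now at least two (since $k\le2m-8=2m-6-2$), should show that the structure is rigid enough that the peeled prefix in the proof of Lemma~\ref{lem:deletions}(i) itself splits further, or that again part~(1) applies. If both regions are nonempty, Lemmas~\ref{lem:M} and~\ref{lem:seven} give $\inv\ge2m-6$, which is a contradiction.

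I expect the main obstacle to be part~(1): extracting \emph{four} components requires an exact accounting of slack. The plan there is to write $\inv(\sigma)$ as a sum of nonnegative slacks, as in Lemma~\ref{lem:seven}, and show that slack at most $m$-dependent terms minus $8$ forces two extra singleton components. If a direct argument becomes messy, the fallback is the paper's convention: check all cases with $m\le8$ by exhaustive search over $\mathfrak S_m$, and obtain the general case by inflating the bounded skeleton, using Lemma~\ref{lem:inflate}.
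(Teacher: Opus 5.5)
The gap is in part~(1), at ``a short case split \dots\ then shows''. Your reduction of part~(2) to part~(1) is sound and is the paper's. Once \cite[Theorem~2.5]{LV}, which you already invoke, supplies a decomposable deletion, a deletion with exactly two components is moved to $\sigma\setminus\sigma_1$ by inversion and reverse--complement, and part~(1) applies. So the separate branch $a<b$, $p<q$ through Lemmas~\ref{lem:M} and~\ref{lem:seven} is unnecessary.

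First, nothing in your sketch proves that $\beta$ begins with its maximum or ends with its minimum unless $\inv(\sigma)>2m-8$. Your split is also slightly off: $\sigma_m$ is always the last entry of $\beta$, and the question is whether it is the minimum of $\beta$. The dichotomy fails for general indecomposable permutations: $2413$ has $2\cdot4-5$ inversions and does neither. So it needs a structural input. The paper observes that $\beta$ avoids $213$, because every entry of $\alpha$ precedes and lies below $\beta$. It then applies \cite[Lemma~4.1]{LV} after reverse--complement: an indecomposable $213$-avoider of length $l$ with at most $2l-5$ inversions begins with its maximum or ends with its minimum. The hypothesis comes from the budget $\inv\beta=k-(\sigma_1-1)-\inv\alpha\le2|\beta|-4-\varepsilon$, where $\varepsilon=\sigma_1-|\alpha|\ge1$. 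Combined with $\inv\beta\ge|\beta|-1$, the same budget gives $\sigma_1\le m-4$, which guarantees that the maximum of $\beta$ is the value $m$ of $\sigma$.

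Second, the number four does not come from $1324$-avoidance forcing singletons. Avoidance says nothing about \emph{how many} pieces split off, and $\sigma_1$, which as you note exceeds some values of $\beta$, glues some of them back onto $\alpha$. The count is a trade-off in $\varepsilon$ that your sketch never sets up. Suppose $\beta$ begins with $m$. Deleting it removes $|\beta|-1$ inversions and leaves at most $|\beta|-3-\varepsilon$ inversions on $|\beta|-1$ entries. A permutation of length $l$ with $c$ components has at least $l-c$ inversions, so there are at least $\varepsilon+2$ components. Restoring $\sigma_1=|\alpha|+\varepsilon$ fuses $\alpha$ with at most the $\varepsilon-1$ components meeting the values $|\alpha|+1,\dots,|\alpha|+\varepsilon-1$. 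Hence $\comp(\sigma\setminus m)\ge(\varepsilon+3)-(\varepsilon-1)=4$, and symmetrically for $\sigma\setminus\sigma_m$ when $\beta$ ends with its minimum.

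Your fallback does not close the gap, because the permutations in the lemma are not inflations of boundedly many skeletons. For example, $d+2,d+1,\dots,2,d+3,d+4,\dots,l,1$ is an indecomposable $1324$-avoider with $l+(d^2+d-2)/2$ inversions and a skeleton of length $d+3$. It satisfies $k\le2l-8$ once $l$ is large, while $d$ is arbitrary. Moreover, Lemma~\ref{lem:inflate} controls neither the inversion threshold nor the components of the deletions.
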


\begin{proof}
(i) Write $\tau=\sigma\setminus\sigma_1=\sigma^{(1)}\dsum\sigma^{(2)}$, put $s=|\sigma^{(1)}|$ and
$\varepsilon=\sigma_1-s$. The first component of $\tau$ occupies the positions $2,\dots,s+1$ of $\sigma$ and carries
the $s$ smallest values of $\sigma$ other than $\sigma_1$; if $\sigma_1\le s$ these would be
$\{1,\dots,s+1\}\setminus\{\sigma_1\}$, so the first $s+1$ positions of $\sigma$ would carry the values
$1,\dots,s+1$ and $\sigma$ would be decomposable. Hence $\varepsilon\ge1$. Since $\sigma_1$ is inverted with exactly
the $\sigma_1-1$ values below it and $\sigma^{(1)}$ is indecomposable,
\[
\inv\sigma^{(2)}=k-(\sigma_1-1)-\inv\sigma^{(1)}\le(2m-8)-(s+\varepsilon-1)-(s-1)=2|\sigma^{(2)}|-4-\varepsilon
\le2|\sigma^{(2)}|-5 .
\]
Comparing with $\inv\sigma^{(2)}\ge|\sigma^{(2)}|-1$ gives $\varepsilon\le|\sigma^{(2)}|-3$, so
$\sigma_1=s+\varepsilon\le m-4$; in particular $\sigma_1\ne m$ and the largest value of $\tau$ is the value $m$ of
$\sigma$. By \cite[Section~3]{CJS} and \cite[Lemma~2.1]{Meng}, $\sigma^{(1)}$ is an indecomposable
$132$-avoider and $\sigma^{(2)}$ an indecomposable $213$-avoider, so \cite[Lemma~4.1]{LV} applied to
$(\sigma^{(2)})^{\mathrm{rc}}$ (note $213=\mathrm{rc}(132)$) gives $\sigma^{(2)}_1=|\sigma^{(2)}|$ or
$\sigma^{(2)}_{|\sigma^{(2)}|}=1$.

Suppose $\sigma^{(2)}_1=|\sigma^{(2)}|$, that is, $\sigma^{(2)}$ begins with its maximum, which is the value
$m$ of $\sigma$. Deleting it removes exactly $|\sigma^{(2)}|-1$ inversions, so
$\inv(\sigma^{(2)}\setminus\sigma^{(2)}_1)\le|\sigma^{(2)}|-3-\varepsilon$, and since a permutation of length $l$
with $c$ components has at least $l-c$ inversions,
\[
\comp\big(\sigma^{(2)}\setminus\sigma^{(2)}_1\big)\ \ge\ (|\sigma^{(2)}|-1)-(|\sigma^{(2)}|-3-\varepsilon)=\varepsilon+2 .
\]
Therefore $(\sigma\setminus m)\setminus\sigma_1=\sigma^{(1)}\dsum(\sigma^{(2)}\setminus\sigma^{(2)}_1)$ has at
least $\varepsilon+3$ components, the first of which is $\sigma^{(1)}$. Restoring $\sigma_1$ at the front merges into
a single component those components that carry a value smaller than $\sigma_1=s+\varepsilon$, namely $\sigma^{(1)}$
and at most the $\varepsilon-1$ components meeting $\{s+1,\dots,s+\varepsilon-1\}$; the count therefore drops by at most
$\varepsilon-1$ and $\comp(\sigma\setminus m)\ge4$. If instead $\sigma^{(2)}$ ends with its minimum, that entry is
the last entry of $\tau$, i.e.\ $\sigma_m$, and the same computation applied to
$(\sigma\setminus\sigma_m)\setminus\sigma_1$ gives $\comp(\sigma\setminus\sigma_m)\ge4$.

(ii) By \cite[Theorem~2.5]{LV} an indecomposable member of $\Av^k_m(1324)$ with $k\le2m-7$ is almost
decomposable, so $\comp(\sigma\setminus x)\ge2$ for some $x\in\{1,m,\sigma_1,\sigma_m\}$. If that deletion has
at least three components we are done. Otherwise it has exactly two, and we apply (i): the number of
components is invariant under $\sigma\mapsto\sigma^{-1}$ and $\sigma\mapsto\sigma^{\mathrm{rc}}$, both of which
preserve $\Av(1324)$, $\inv$ and indecomposability while permuting the four deletions transitively
($(\sigma\setminus\sigma_1)^{-1}=\sigma^{-1}\setminus1$ and
$(\sigma\setminus\sigma_1)^{\mathrm{rc}}=\sigma^{\mathrm{rc}}\setminus\sigma^{\mathrm{rc}}_m$), so we may
assume the deletion with two components is $\sigma\setminus\sigma_1$.
\end{proof}

Both statements were also verified directly for all $m\le10$, on the $1026$ indecomposable
$1324$-avoiders with $k\le2m-8$ (check C6 of Section~\ref{sec:data}); the bound four in (i) is attained,
in $28$ of the $94$ instances of the hypothesis of (i).

\begin{proof}[Proof of Theorem~\ref{thm:B}]
For $k\le2n-7$ this is \cite{LV}. Let $k=2n-6$. For $n\le7$ the inequality is a finite check (Section~\ref{sec:data}). For $n\ge8$, by \eqref{eq:identity}, Lemma~\ref{lem:R1} and Theorem~\ref{thm:A},
\[
a(n+1,k)-a(n,k)\ \ge\ 2a(n,n-6)-8(n-7)=2p_2(n-6)-8(n-7),
\]
using $a(n,n-6)=p_2(n-6)$ \cite[Proposition~15]{CJS}. The right-hand side is $2,4,16,40,\dots$ for $n=8,9,10,11$ and increases with $n$, since $p_2(m+1)-p_2(m)\ge4$ for $m\ge2$.

For the exact value we describe the complement of the image completely. Linusson and Verkama \cite[Section~4]{LV} show that this complement is the disjoint union of: $\mathrm{R1}$, the permutations beginning with $n+1$ or ending with $1$; $\mathrm{R2a}$, those with $\sigma_2=n+1$ or $\sigma_{n+1}=2$ (not in $\mathrm{R1}$); $\mathrm{R2b}$, those with at most two components, not in $\mathrm{R1}\cup\mathrm{R2a}$, for which some deletion among $\sigma\setminus1,\sigma\setminus(n+1),\sigma\setminus\sigma_1,\sigma\setminus\sigma_{n+1}$ has at least three components but which are not of the form $f(\pi)$; and $\mathrm{R3}$, those with at most two components none of whose four deletions has three components. Their arguments give $|\mathrm{R1}|=2p_2(k-n)$ and $|\mathrm{R2a}|=|\mathrm{R2b}|=2p_2(k-n+1)$, and these counts remain valid at $k=2n-6$. Indeed each of the three is obtained by deleting a distinguished entry: $\mathrm{R1}$ and $\mathrm{R2a}$ are put in bijection with $\Av^{k-n}_n(1324)$ and $\Av^{k-n+1}_n(1324)$, and $\mathrm{R2b}$ with $\Av^{k-n+1}_{n-2}(1324)$ by $\pi\mapsto\pi\setminus\{\pi_1,\pi_n\}$ \cite[Lemma~4.4]{LV}; the hypothesis $k\le2n-7$ enters these three counts only through the plateau identity $a(m,j)=p_2(j)$ for $m\ge j+2$ \cite[Proposition~15]{CJS}, applied with $(m,j)=(n,k-n)$, $(n,k-n+1)$ and $(n-2,k-n+1)$, which require $k\le2n-2$, $k\le2n-3$ and $k\le2n-5$ respectively --- the last of these is exactly the inequality $n-2-(k-n+1)\ge2$ used in the proof of \cite[Lemma~4.4]{LV}. All three counts, and the value $|\mathrm{R3}|=4(n-7)$ obtained below, were also verified directly for $8\le n\le11$ (Section~\ref{sec:data}), and \cite[Proposition~4.3]{LV} shows $\mathrm{R3}=\varnothing$ for $k\le2n-7$. At $k=2n-6$ we determine $\mathrm{R3}$. Let $\sigma\in\mathrm{R3}$, of length $m=n+1$ and with $2m-8$ inversions. If $\sigma$ were indecomposable then, by Lemma~\ref{lem:bdry}(ii), one of its four deletions would have at least three components, contradicting the definition of $\mathrm{R3}$. So $\sigma=\sigma^{(1)}\dsum\sigma^{(2)}$ with $\sigma^{(1)}$ an indecomposable $132$-avoider and $\sigma^{(2)}$ an indecomposable $213$-avoider (\cite[Section~3]{CJS}, \cite[Lemma~2.1]{Meng}). The conditions on the deletions say that $\sigma^{(1)}\setminus1$, $\sigma^{(1)}\setminus\sigma^{(1)}_1$ and the corresponding deletions of $\sigma^{(2)}$ are indecomposable; by \cite[Lemma~4.1]{LV} and its reverse--complement this forces $\inv\sigma^{(1)}\ge2|\sigma^{(1)}|-4$ and $\inv\sigma^{(2)}\ge2|\sigma^{(2)}|-4$, so both are equalities. An indecomposable $132$-avoider has the form $(\alpha\dsum1)\ssum\beta$ with $\beta\ne\varnothing$, $\alpha,\beta\in\Av(132)$, and $\inv=\inv\alpha+\inv\beta+(|\alpha|+1)|\beta|$; equality with $2(|\alpha|+|\beta|+1)-4$ reads $\inv\alpha+\inv\beta+(|\alpha|-1)(|\beta|-2)=0$. If $|\beta|=1$ the permutation ends with $1$, if $|\alpha|=0$ it begins with its maximum and $\beta$ has $|\beta|-2$ inversions, hence is decomposable; both are excluded by the deletion conditions. Otherwise $\alpha,\beta$ are increasing and $(|\alpha|-1)(|\beta|-2)=0$, i.e. $\sigma^{(1)}=\iota_x\ssum\iota_y$ with $x,y\ge2$, $(x-2)(y-2)=0$; symmetrically $\sigma^{(2)}=\iota_z\ssum\iota_w$ with $z,w\ge2$, $(z-2)(w-2)=0$. Conversely let $\sigma=(\iota_x\ssum\iota_y)\dsum(\iota_z\ssum\iota_w)$ with $x,y,z,w\ge2$, $(x-2)(y-2)=(z-2)(w-2)=0$ and $x+y+z+w=n+1$. Such a $\sigma$ has $2n-6$ inversions, avoids $1324$, and has exactly two components, as do all four of its deletions $\sigma\setminus1$, $\sigma\setminus(n+1)$, $\sigma\setminus\sigma_1$, $\sigma\setminus\sigma_{n+1}$. It remains to check that $\sigma$ lies in the complement of $\im(f\sqcup g)$. Every member of $\im(g)$ has at least three components, so $\sigma\notin\im(g)$. Suppose $\sigma=f(\pi)$. In case~1 of the definition of $f$, $\sigma\setminus\sigma_1=g(\pi\setminus\pi_1)$ has at least three components, a contradiction. In case~2 the permutation $\pi^{-1}$ falls under case~1 and $f(\pi^{-1})=f(\pi)^{-1}$, so $\sigma^{-1}\setminus\sigma^{-1}_1=(\sigma\setminus1)^{-1}$ has at least three components; but the number of components is invariant under inversion, so $\sigma\setminus1$ has at least three components, again a contradiction. In case~3 we have $\sigma=\tau^{\mathrm{rc}}$ with $\tau=f(\rho)$ falling under case~1 or case~2, so $\tau\setminus\tau_1$ or $\tau\setminus1$ has at least three components; reverse--complement preserves the number of components and carries these to $\sigma\setminus\sigma_{n+1}$ and $\sigma\setminus(n+1)$, a contradiction. Hence $\sigma\in\mathrm{R3}$. Counting as in Theorem~\ref{thm:A}, $|\mathrm{R3}|=4(n-7)$ for $n\ge8$. Therefore
\[
a(n+1,2n-6)-a(n,2n-6)=2p_2(n-6)+4p_2(n-5)+4(n-7)-8(n-7) ,
\]
as claimed.
\end{proof}

\begin{remark}
Theorem~\ref{thm:A} identifies the residuals at defect one with the two skeletons $24153$ and $31524=24153^{-1}$; the permutation $3612745=24153[1,1,\iota_2,1,\iota_2]$ used in \cite{LV} to show that $2n-7$ is sharp is the smallest member. The set $\mathrm{R3}$ consists of the permutations $(\iota_x\ssum\iota_y)\dsum(\iota_z\ssum\iota_w)$; there is an obvious bijection $\R^{\NW}_{1,n}\to\mathrm{R3}$, $24153[\iota_A,1,\iota_C,\iota_D,\iota_E]\mapsto(\iota_{A+1}\ssum\iota_C)\dsum(\iota_{D+1}\ssum\iota_E)$, which explains the coefficient $4(n-7)$; the other half of the residuals must be absorbed by $\mathrm{R1}\cup\mathrm{R2a}\cup\mathrm{R2b}$.
\end{remark}

\section{Defect two}\label{sec:delta2}

Throughout this section $\pi\in\R_{2,n}$, so $\inv(\pi)=2n-5$, $\pi$ is indecomposable and none of the four extremal deletions is decomposable. By Lemma~\ref{lem:LV}(1) (applied to $\pi$ and $\pi^{-1}$) we have $a<b$ and $p<q$, and by Lemma~\ref{lem:LV}(2) $\NW\cup\SE\ne\varnothing$; both may now be nonempty. We first record the structural facts that hold for a single northwestern point without any assumption on the slack, then the identity governing the case $\NW\ne\varnothing\ne\SE$, and then the classification.

\begin{lemma}\label{lem:single}
Suppose $\SE=\varnothing$, $\NW=\{i\}$, $v=\pi_i$, $\Sreg=\Sreg(i)\ne\varnothing$ and $\Ereg(v)\ne\varnothing$; let $j_1$ be the first southern point and $j_2$ the first point of $\Ereg(v)$. Then:
\begin{enumerate}
\item[(a)] the positions $2,\dots,i-1$ carry values in $(1,a)\cup(a,b)$;
\item[(b)] the positions in $(i,q)$ other than $p$ carry values in $(1,a)\cup(v,n)$; the positions in $(q,n)$ carry values exceeding $a$; the values in $(b,v)$ lie after $q$;
\item[(c)] the southern points after $p$ are increasing; the \emph{top entries} (positions in $(p,q)$ with values exceeding $v$) are increasing and follow all southern points; the prefix values in $(a,b)$ are increasing, the values of $\Ereg(b)$ are increasing, and every prefix value in $(a,b)$ is below every value of $\Ereg(b)$;
\item[(d)] $s_3=s_4=0$ in Lemma~\ref{lem:seven}, and
\[
\begin{gathered}
s_1=\#\{\text{values below }a\text{ before }i\}+\#\{\text{positions in }(q,n)\text{ with values exceeding }v\},\\
s_2=p-i-1,\qquad s_5=\#\{\text{values in }(b,v)\}.
\end{gathered}
\]
\end{enumerate}
\end{lemma}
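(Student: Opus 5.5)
The proof reruns the case analysis from the proof of Theorem~\ref{thm:A}, but without assuming any slack vanishes; each structural claim comes either from a direct $1324$ occurrence or from $\SE=\varnothing$, the single northwestern point, or the emptiness of the central region. Throughout, $\pi_1=a<b=\pi_n$, $p<q$, and $v>b$ is the unique value before $p$ exceeding $b$. Every $1324$ used below has the shape $\pi_1,v,x,n$ or $x,v,y,n$.

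\emph{Part (a).} Take a position $l\in(1,i)$. Its value is not $n$ and not $v$. If $\pi_l>v$, then $l$ would be a second northwestern point. If $b<\pi_l<v$, then $l<p$ makes $l$ northwestern as well. So $\pi_l<b$. It also cannot be $1$, since $1$ sits at $p>i$.

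\emph{Part (b).} Take a position in $(i,q)$ other than $p$.
\begin{itemize}
\item A value in $(a,b)$ there lies in the central region.
\item A value in $(b,v)$ there gives the $1324$ $\pi_1,v,\cdot,n$.
\end{itemize}
So the allowed values are in $(1,a)\cup(v,n)$. For positions in $(q,n)$, a value below $a$ would be in $\SE$. Values in $(b,v)$ cannot lie before $i$ by (a) and cannot lie in $(i,q)$ by the above, so they lie after $q$.

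\emph{Part (c).} Each claim is a forbidden occurrence.
\begin{itemize}
\item Two southern points after $p$ in decreasing order give $1$ at $p$, then the larger, then the smaller, with $n$ after. This is a $1324$ since both values exceed $1$.
\item Two top entries in the wrong order, with $\pi_1$ before and $b$ at $n$, give a $1324$; similarly a top entry before a southern point $s$ gives $1,\text{top},s$, followed by either $n$ or a later value exceeding both.
\item For the prefix values in $(a,b)$ and the values of $\Ereg(b)$, I would use $\pi_1$ with $v$ or $n$ as the large entries, and the prefix entries before $v$.
\end{itemize}
I expect this bookkeeping to be the main obstacle. Each pair needs the right choice of the ``$1$'' and ``$4$'' entries, and that choice has to exist even when $j_1$ or $j_2$ is not adjacent to $p$ or $q$. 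For example, ``top entries follow all southern points'' needs $1$ at $p$ to precede both, and this holds because both lie in $(p,q)$: a southern point before $p$ lies in $(i,p)$ and is handled via $s_2$.

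\emph{Part (d).} Here $j_1$ is the first southern point, so the entries strictly between $i$ and $j_1$ lie before it. I would show that none of them exceeds $\pi_{j_1}$, using (b) and the ordering in (c). Then the entries exceeding $\pi_{j_1}$ before $j_1$ are exactly the positions $\le i$, all of which exceed $\pi_{j_1}$ by the argument already in Lemma~\ref{lem:seven}. Hence $L(j_1)=i$ and $s_3=0$.

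Similarly, for $j_2$ the first point of $\Ereg(v)$:
\begin{itemize}
\item by (c), prefix values do not exceed $\pi_{j_2}$;
\item positions in $(q,j_2)$ carry values exceeding $v$, or values in $(a,v)$ that would contradict $j_2$ being first;
\item the values exceeding $\pi_{j_2}$ before $j_2$ are therefore exactly those $\ge v$.
\end{itemize}
This gives $L(j_2)=n-v+1$ and $s_4=0$.

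\emph{The formula for $s_1$.} From the proof of Lemma~\ref{lem:seven}, $R(i)$ equals
\[
(a-1)-\#\{\text{small values before }i\}+(n-q)-\#\{\text{positions in }(q,n)\text{ with values}>v\}.
\]
Substituting into $s_1=(a-1)+(n-q)-(v-i)$ gives the stated expression.

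\emph{The formulas for $s_2$ and $s_5$.} These hold by definition, since $(b,v)$ contains no northwestern value.
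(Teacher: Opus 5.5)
Your overall route is the paper's: (a), (b) and the formulas for $s_1,s_2,s_5$ are argued the same way, and the computation of $R(i)$ is exactly right. Part (c), however, fails as written, because the witnesses you choose for two of its subclaims do not give occurrences of $1324$. These are the subclaims you yourself flagged as the obstacle, and your choice of the ``$4$'' is the reverse of what works.

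\emph{Top entries.} For two top entries $t$ before $t'$ with $t>t'$ you take $\pi_1,t,t',\pi_n$. But every top entry exceeds $v>b=\pi_n$, so $b$ cannot play the $4$. The witness that works is the value $n$ at position $q$. Since top entries lie strictly inside $(p,q)$, the sequence $\pi_1,t,t',n$ (or $1,t,t',n$) is a $1324$.

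\emph{Values in $(a,b)$.} Here you propose $\pi_1$ with $v$ or $n$ as the large entry. This works for two inverted prefix values, for instance $\pi_1,x,y,v$. It fails for two inverted points of $\Ereg(b)$, and for a prefix value $x$ lying above a point $y\in\Ereg(b)$. Points of $\Ereg(b)$ lie after $q$, hence after both $v$ (at position $i$) and $n$ (at position $q$), so neither can serve as the $4$. What is needed is $\pi_n=b$, which comes after everything and exceeds every value in $(a,b)$; then $\pi_1,x,y,\pi_n$ is a $1324$ in all three cases. This is how the paper argues. The repair matters downstream: your proof of $s_4=0$ in (d) uses that prefix values in $(a,b)$ lie below $\Ereg(b)$. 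That is needed when $\pi_{j_2}<b$; when $\pi_{j_2}>b$ it is automatic from (a). So (d) is complete only once (c) is fixed.

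Two smaller points. In (d), an entry in $(i,j_1)$ at a position before $p$ with value above $v$ is not a top entry, so (c) does not exclude it. It is excluded because it would be a second northwestern point. Also, the remark that southern points before $p$ are ``handled via $s_2$'' is unnecessary. Top entries lie after $p$ by definition, so such southern points precede them trivially.
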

\begin{proof}
(a) No entry before $i$ exceeds $v$; a value in $(b,v)$ before $i$ would be a second northwestern point; $1$, $a$ and $b$ sit at $p>i$, $1$ and $n$. (b) A value in $(a,v)$ at a position in $(i,q)$ forms the $1324$ $\pi_1,v,\cdot,n$; positions after $q$ carry values above $a$ because $\SE=\varnothing$; so a value in $(b,v)$ is after $q$ by (a). (c) Two southern points after $p$ in the wrong order form a $1324$ with $1$ and $n$; two top entries in the wrong order form one with $\pi_1$ and $n$; a top entry before a southern point $s$ forms $1,t,s,n$; two prefix values in $(a,b)$ in the wrong order, or two values of $\Ereg(b)$ in the wrong order, or a prefix value above a value of $\Ereg(b)$, form a $1324$ with $\pi_1$ and $\pi_n$. (d) The entries strictly between $i$ and $j_1$ are not southern ($j_1$ is the first), not in $(a,v)$ by (b), not above $v$ (a position before $p$ with such a value would be northwestern, and a top entry before $j_1$ is excluded by (c)); hence $L(j_1)=i$. The entries before $j_2$ with values in $(\pi_{j_2},v)$: a prefix value in $(\pi_{j_2},v)$ lies in $(a,b)$ by (a), so $\pi_{j_2}<b$ and $\pi_1,\cdot,\pi_{j_2},\pi_n$ is a $1324$; the positions in $(i,q)$ carry no value in $(a,v)$; the positions in $(q,j_2)$ carry no value in $(a,v)$ because $j_2$ is the first such point. Hence $L(j_2)=n-v+1$. The formulas for $s_1,s_2,s_5$ restate the definitions: the right-inversions of $i$ are the values below $a$ that lie after $i$ together with the positions after $q$ with values below $v$.
\end{proof}

\begin{lemma}[northwestern and southeastern points]\label{lem:NWSE}
Suppose $\NW\ne\varnothing\ne\SE$. If $|\NW|\ge2$ or $|\SE|\ge2$ then $\inv(\pi)\ge2n-4$. If $\NW=\{i\}$ and $\SE=\{j\}$, with $v=\pi_i$ and $w=\pi_j$, then
\[
\inv(\pi)=2n-5+(p-i-1)+(v-b-1)+(a-w-1)+(j-q-1)+u ,
\]
where $u\ge0$ is the number of inversions $(x,y)$ with $x\notin\{1,i,q\}$ and $y\notin\{p,j,n\}$.
\end{lemma}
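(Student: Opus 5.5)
The plan is to run the incidence count of Lemma~\ref{lem:seven} with the southeastern point $j$ in place of the pair $j_1,j_2$. We count right-inversions at the positions $1,i,q$ and left-inversions at the positions $p,j,n$. For a northwestern point $i$ with value $v$ and a southeastern point $j$ with value $w$, the counting identity reads
\[
\inv(\pi)=R(1)+R(i)+R(q)+L(p)+L(j)+L(n)-D+u .
\]
Here $D$ counts the inversions with $x\in\{1,i,q\}$ and $y\in\{p,j,n\}$, and $u$ counts those with $x\notin\{1,i,q\}$ and $y\notin\{p,j,n\}$.

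\emph{Individual terms.} As before, $R(1)=a-1$, $R(q)=n-q$, $L(p)=p-1$ and $L(n)=n-b$. We have $R(i)=v-i$ provided no entry before $i$ exceeds $v$. This holds when $v$ is the largest northwestern value: an earlier larger entry would be northwestern with a larger value, contradicting monotonicity, and otherwise $\pi_1,\cdot,v,n$ gives the usual $1324$. Dually, $L(j)=j-w$ provided no entry after $j$ is below $w$. This holds when $w$ is the smallest southeastern value: such an entry lies after $q$ with value below $a$, so it is southeastern; the value $1$ is excluded since it sits at $p<q$.

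\emph{The set $D$.} The inversions in $D$ are exactly the seven pairs $(1,p),(1,j),(i,p),(i,j),(i,n),(q,j),(q,n)$. The pair $(1,n)$ is not an inversion since $a<b$, and $(q,p)$ is in the wrong order since $p<q$. Substituting and regrouping,
\[
\inv(\pi)=2n-9+a+v+p+j-i-q-w-b+u=2n-5+(p-i-1)+(v-b-1)+(a-w-1)+(j-q-1)+u .
\]
This is the identity claimed when $\NW=\{i\}$ and $\SE=\{j\}$. The four bracketed terms are nonnegative because $i<p$, $v>b$, $w<a$ and $j>q$.

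\emph{The inequality.} If $|\NW|\ge2$, apply the same computation with $i=i_m$ and $j$ the southeastern point of smallest value; the derivation above used only these two choices. The slack $v_m-b-1$ is at least $m-1\ge1$, since $v_1,\dots,v_{m-1}$ lie in $(b,v_m)$. The other terms are nonnegative, so $\inv(\pi)\ge2n-4$. The case $|\SE|\ge2$ follows by inversion via Lemma~\ref{lem:LV}, which swaps $\NW$ and $\SE$ and preserves $\inv$; alternatively, directly, $a-w-1\ge1$.

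The main obstacle is bookkeeping rather than any idea: one must get the overlap set $D$ exactly right. Each pair in $D$ must be checked to be an inversion (for instance $(q,j)$ because $w<n$, and $(1,j)$ because $w<a$), and the non-listed pairs must be excluded. One must also justify $R(i)=v-i$ and $L(j)=j-w$ through the extremal choice of $i$ and $j$ when the sets are not singletons.
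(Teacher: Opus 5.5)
Your proposal is correct and follows the paper's proof: the same count of right-inversions at $1,i,q$ and left-inversions at $p,j,n$ with $i=i_m$, the same seven doubly counted pairs, and the same use of the slack $v-b-1\ge1$ or $a-w-1\ge1$ for the inequality. The only cosmetic difference is that you obtain $L(j)=j-w$ from the minimality of $w$ among the southeastern values, whereas the paper uses the occurrence $1,w,y,\pi_n$ of $1324$. That argument works for any southeastern $j$, and it shows that the first southeastern point is exactly the one you chose.
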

\begin{proof}
Let $i=i_m$ be the last northwestern point and $j$ the first southeastern point. No entry after $j$ is below $w$: such an entry $y$ would give the $1324$ $1,w,y,\pi_n$ (positions $p<j<\cdot<n$). Hence $L(j)=j-w$, and as before $R(i)=v-i$, $R(1)=a-1$, $R(q)=n-q$, $L(p)=p-1$, $L(n)=n-b$. The inversions counted twice in $R(1)+R(i)+R(q)+L(p)+L(j)+L(n)$ are $(1,p),(1,j),(i,p),(i,j),(i,n),(q,j),(q,n)$, and the inversions counted by neither type are those of $u$; so $\inv(\pi)=R(1)+R(i)+R(q)+L(p)+L(j)+L(n)-7+u$, which is the displayed identity. If $|\NW|\ge2$ then $v-b-1\ge1$ because the other northwestern values lie in $(b,v)$; if $|\SE|\ge2$ then $a-w-1\ge1$ symmetrically. In both cases the right-hand side is at least $2n-4$.
\end{proof}

\begin{theorem}\label{thm:C}
For every $n$, $\R_{2,n}$ is the union of the twenty-one families of inflations by increasing blocks listed in Table~\ref{tab:delta2} (twelve families and the inverses of the nine that are not self-inverse). Consequently $|\R_{2,n}|=32n-214$ for all $n\ge10$.
\end{theorem}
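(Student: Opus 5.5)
We follow the proof of Theorem~\ref{thm:A}, now with total slack one instead of zero. Let $\pi\in\R_{2,n}$. As already noted, $a<b$ and $p<q$, and $\NW\cup\SE\ne\varnothing$. We split into two cases according to whether both $\NW$ and $\SE$ are nonempty or exactly one is. Using the inversion symmetry of Lemma~\ref{lem:LV}, we may assume $\NW\ne\varnothing$ in the second case.

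\emph{Case $\NW\ne\varnothing\ne\SE$.} Lemma~\ref{lem:NWSE} forces $\NW=\{i\}$ and $\SE=\{j\}$. The displayed identity then has value $2n-5$, so $p=i+1$, $v=b+1$, $w=a-1$, $j=q+1$ and $u=0$. We read off the structure as in Theorem~\ref{thm:A}:
\begin{itemize}
\item the $1324$ exclusions place every remaining position in one of a few regions (prefix, southern, top, eastern, suffix after $j$);
\item $u=0$ makes each region an increasing run and fixes the relative order between runs;
\item this yields a finite list of skeletons inflated by increasing blocks, and possibly some self-inverse ones;
\item a final $u=0$ analysis with non-chosen points, exactly as in the last step of Theorem~\ref{thm:A}, gives product constraints of the form $(x-1)(y-2)=0$ on the block sizes.
\end{itemize}

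\emph{Case $\SE=\varnothing\ne\NW$.} Lemma~\ref{lem:deletions} gives $\Sreg(i_1)\ne\varnothing\ne\Ereg(v_m)$. Lemma~\ref{lem:M} then gives $m=1$, since $2n-5<2n-4$. Lemma~\ref{lem:single}(d) makes $s_3=s_4=0$ automatic for the first southern and first eastern points. The identity of Lemma~\ref{lem:seven} then reads $s_1+s_2+s_5+u=1$. We branch on which quantity equals one:
\begin{itemize}
\item $s_2=1$: one extra entry sits between $i$ and $p$, and by Lemma~\ref{lem:single}(a) it is a prefix-type value below $a$ or in $(a,b)$.
\item $s_5=1$: one value lies in $(b,v)$, located after $q$ by Lemma~\ref{lem:single}(b).
\item $s_1=1$: either one small value lies before $i$, or one position in $(q,n)$ carries a value above $v$.
\item $u=1$: exactly one extra inversion among the runs of Lemma~\ref{lem:single}(c).
\end{itemize}
Each branch pins down the permutation as an inflation of a skeleton on six or seven points by increasing blocks. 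The residual $u$-budget becomes a linear or quadratic constraint on block sizes. When $u=1$, that single inversion may be absorbed either by a shape change (a new skeleton) or by a block-size relaxation (such as $(A-1)(C-2)=1$ instead of $0$).

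\emph{Converse and counting.} For each candidate family we verify membership in $\R_{2,n}$:
\begin{itemize}
\item $1324$-avoidance and indecomposability follow from Lemma~\ref{lem:inflate};
\item the inversion number is computed as $\sum b_ib_j$ over skeleton inversions;
\item non-almost-decomposability follows by checking that the four deletions are inflations of indecomposable skeletons, with care when a size-one block vanishes.
\end{itemize}
Collecting the families, identifying inverse pairs and self-inverse ones, gives the twelve plus nine families of Table~\ref{tab:delta2}. The count $32n-214$ follows by inclusion--exclusion over the disjunctive product constraints, as in Theorem~\ref{thm:A}, together with the overlaps between different families. The threshold $n\ge10$ is where all intersections attain their generic sizes. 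The small cases $n<10$ and the count should be cross-checked by exhaustive computation (Section~\ref{sec:data}).

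The main obstacle is the $u=1$ branch, and the precise overlap bookkeeping between families. The extra inversion can occur between many pairs of runs, and each option must be tested against $1324$-avoidance and the deletion conditions. The risk is missing or double-counting families whose degenerate block sizes coincide. I would enumerate the possible inversion pairs systematically over the run types of Lemma~\ref{lem:single}(c) and confirm the resulting list against a computer enumeration of $\R_{2,n}$ for $n\le 12$.
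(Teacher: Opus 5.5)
Your architecture matches the paper's. You split into $\NW\ne\varnothing\ne\SE$, where Lemma~\ref{lem:NWSE} gives $p=i+1$, $v=b+1$, $w=a-1$, $j=q+1$, $u=0$, and $\SE=\varnothing$, where Lemmas~\ref{lem:deletions}, \ref{lem:M} and~\ref{lem:single} reduce to $s_1+s_2+s_5+u=1$. Membership is via inflations. But the completeness half of the theorem \emph{is} the branch analysis, and you assert its outcome (``each branch pins down the permutation'') rather than derive it. Where you do commit to specifics, they go wrong.

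In the branch $s_2=1$, Lemma~\ref{lem:single}(a) does not apply: the extra entry $g$ lies strictly between $i$ and $p$, not among positions $2,\dots,i-1$. By Lemma~\ref{lem:single}(b) its value is in $(1,a)\cup(v,n)$, and a value above $v$ would make $g$ a second northwestern point. So $g$ is the first southern point. Then $u=0$ forces $\pi_g=2$, since a later southern point below $\pi_g$ would give an inversion counted by $u$. Further southern points force the prefix to be empty, because each prefix entry is inverted with them. That is what produces $352164$ and $4621375$. Treating $g$ as a prefix-type entry conflates this branch with $s_1=1$(a) and gives the wrong families.

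Your anticipated obstacle is also misplaced. When $s_1=s_2=s_5=0$, the shape derivation of Theorem~\ref{thm:A} needs only these vanishings and the monotonicities of Lemma~\ref{lem:single}(c). Those come from $1324$-avoidance, not from $u$. So $\pi=24153[\iota_A,1,\iota_C,\iota_D,\iota_E]$ is forced, $u=(A-1)(C-2)+(D-1)(E-2)$ exactly, and $u=1$ yields only the first row, with no new skeleton.

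The real work is elsewhere:
\begin{itemize}
\item In $s_5=1$, the value $b+1$ must be the last point of $\Ereg(v)$ when $\Ereg(b)\ne\varnothing$, so that $u=(A-1)(C-2)+(D-1)|\Ereg(b)|$.
\item In $s_1=1$, the exceptional entry is either $a-1$ at position $2$ with $|\Sreg|=1$, or $n-1$ at position $q+1$ with $|\Ereg(b)|=1$.
\end{itemize}

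In the case $\NW\ne\varnothing\ne\SE$, the identity of Lemma~\ref{lem:NWSE} involves no auxiliary points $j_1,j_2$. So there is no ``non-chosen point'' argument and no constraint of the form $(x-1)(y-2)=0$. Instead, one splits the remaining entries into $P_S,P_{ab},M_S,M_T,Z_{ab},Z_T$ and shows each is increasing. Five pairs of parts are mutually exclusive by $1324$-avoidance, and two ($P_{ab}$ with $M_S$, $M_T$ with $Z_{ab}$) by $u=0$. These incompatibilities reduce the case to three shapes with $(A-1)(C-1)=(D-1)(F-1)=0$.

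A computer match for $n\le12$ cannot replace these derivations for all $n$. Conversely, the count needs no correction for overlaps between families. The twenty-one skeletons are distinct and reduced, so the families are disjoint by uniqueness of the skeleton.
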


\begin{table}[ht]
\centering\footnotesize\renewcommand{\arraystretch}{1.2}
\caption{The residuals at defect two, up to inversion. Block sizes are $\ge1$ unless a larger lower bound is indicated; the count is the number of members of the family for $n\ge10$. Families marked $\ast$ (Case II, $\NW\ne\varnothing\ne\SE$) are self-inverse; each of the others contributes twice (the family and its inverse). In the cases $s_1=1$, (a) means that the value $a-1$ is the second entry and (b) that the value $n-1$ follows $n$.}\label{tab:delta2}
\resizebox{\textwidth}{!}{\begin{tabular}{@{}lllc@{}}
\toprule
case & family & constraints & count\\
\midrule
$u=1$ & $24153[\iota_A,1,\iota_C,\iota_D,\iota_E]$ & $C,E\ge2$, $(A-1)(C-2)+(D-1)(E-2)=1$ & $4$\\
$s_5=1$, $\Ereg(b)=\varnothing$ & $251643[\iota_A,1,\iota_C,\iota_D,1,1]$ & $C\ge2$, $(A-1)(C-2)=0$ & $2n-13$\\
$s_5=1$, $\Ereg(b)\ne\varnothing$ & $2617354[\iota_A,1,\iota_C,1,\iota_E,1,1]$ & $C\ge2$, $(A-1)(C-2)=0$ & $2n-15$\\
$s_2=1$, $|\Sreg|=1$ & $352164[\iota_A,1,1,1,\iota_D,\iota_E]$ & $E\ge2$, $(D-1)(E-2)=0$ & $2n-13$\\
$s_2=1$, $|\Sreg|\ge2$ & $4621375[1,1,1,1,\iota_C,\iota_D,\iota_E]$ & $E\ge2$, $(D-1)(E-2)=0$ & $2n-15$\\
$s_1=1$ (b), top entries & $2415763[\iota_A,1,\iota_C,\iota_D,1,1,\iota_2]$ & $C\ge2$, $(A-1)(C-2)=0$ & $2n-17$\\
$s_1=1$ (b), no top entries & $241653[\iota_A,1,\iota_C,1,1,\iota_2]$ & $C\ge2$, $(A-1)(C-2)=0$ & $2$\\
$s_1=1$ (a), prefix & $3246175[1,1,\iota_B,1,\iota_2,\iota_D,\iota_E]$ & $E\ge2$, $(D-1)(E-2)=0$ & $2n-17$\\
$s_1=1$ (a), no prefix & $325164[1,1,1,\iota_2,\iota_D,\iota_E]$ & $E\ge2$, $(D-1)(E-2)=0$ & $2$\\
II $\ast$ & $351624[\iota_A,1,\iota_C,\iota_D,1,\iota_F]$ & $(A-1)(C-1)=(D-1)(F-1)=0$ & $4n-24$\\
II $\ast$ & $3517264[\iota_A,1,\iota_C,1,1,\iota_E,1]$ & $(A-1)(C-1)=0$ & $2n-13$\\
II $\ast$ & $4261735[1,\iota_B,1,1,\iota_D,1,\iota_F]$ & $(D-1)(F-1)=0$ & $2n-13$\\
\bottomrule
\end{tabular}}
\end{table}

\begin{proof}
\emph{Membership.} Each skeleton in Table~\ref{tab:delta2} avoids $1324$ and is indecomposable, so its inflations avoid $1324$ and are indecomposable (Lemma~\ref{lem:inflate}). Writing $\inv$ of an inflation as $\sum b_xb_y$ over the inversions $(x,y)$ of the skeleton, one checks that in each row the equation $\inv=2n-5$ is equivalent to the stated constraint (for instance $\inv(351624[\iota_A,1,\iota_C,\iota_D,1,\iota_F])=AC+A+C+1+F+D+DF$ and $n=A+C+D+F+2$, which gives $(A-1)(C-1)+(D-1)(F-1)=0$). Each of the four extremal deletions of a member removes one entry from a block; when that block has size at least two the result is an inflation of the same skeleton, and when it has size one the result is an inflation of the skeleton with that point deleted; that no member is almost decomposable follows from Theorem~\ref{thm:crit}, which is proved in Section~\ref{sec:skel} independently of this section: an inflation $\sigma[b]$ of a reduced, indecomposable, $1324$-avoiding $\sigma$ with $\inv=2n-5$ lies in $\R_{2,n}$ precisely when $b_i\ge2$ for every $i\in D(\sigma)$, and the sets $D(\sigma)$ of the twenty-one skeletons, printed by check C1 of the certificate (Section~\ref{sec:data}), are exactly the blocks that the constraint of each row requires to be nonsingletons. Hence every listed permutation lies in $\R_{2,n}$. Check C1 enumerates the members by that criterion and verifies, without assuming it, that each of them avoids $1324$, is indecomposable and is not almost decomposable, recording in passing the $43$ skeletons that the four extremal deletions produce, all of them indecomposable. The counts follow by inclusion--exclusion exactly as in Theorem~\ref{thm:A}; for instance in the first row the constraint forces $\{A,C\}=\{2,3\}$ with $(D-1)(E-2)=0$ or symmetrically, which has four solutions once $n\ge10$, and in the tenth row the four cases $A=1$ or $C=1$ combined with $D=1$ or $F=1$ have $n-5$ solutions each with four single overlaps. Summing the last column (with the factor two for the nine families that are not self-inverse) gives $32n-214$.

\emph{Completeness.} Let $\pi\in\R_{2,n}$. We distinguish whether one or both of $\NW$, $\SE$ are nonempty.

\emph{Case I: exactly one of $\NW,\SE$ is nonempty.} Replacing $\pi$ by $\pi^{-1}$ we may assume $\NW\ne\varnothing=\SE$. By Lemma~\ref{lem:deletions}, $\Sreg(i_1)\ne\varnothing$ and $\Ereg(v_m)\ne\varnothing$; by Lemma~\ref{lem:M}, $\NW=\{i\}$. Lemma~\ref{lem:seven} with the canonical $j_1,j_2$ of Lemma~\ref{lem:single} gives $s_1+s_2+s_5+u=1$, since $s_3=s_4=0$. Whenever the three quantities other than the one equal to $1$ vanish, the arguments of Theorem~\ref{thm:A} that only use these vanishings apply verbatim; we only describe what the unit of slack changes. Recall from Theorem~\ref{thm:A} that, when all of $s_1,s_2,s_5$ vanish, the permutation has the shape
\[
\pi=\ [\,a,\ \mathrm{prefix}\,]\,[\,v\,]\,[\,1\,]\,[\,\Sreg\,]\,[\,\mathrm{top},n\,]\,[\,\Ereg(b)\,]\,[\,b\,],\qquad v=b+1,
\]
with the prefix carrying the values $a+1,\dots$ (increasing), $\Sreg$ the values $2,\dots,a-1$ (increasing), the top entries the values $b+2,\dots,n$ (increasing), $\Ereg(b)$ the remaining values of $(a,b)$ (increasing), and that then $u$ equals $(A-1)(C-2)+(D-1)(E-2)$, the number of pairs (prefix entry, non-first southern point) and (top entry other than $n$, non-first point of $\Ereg(b)$), all of which are inversions counted by $u$, no other inversion being counted by $u$ (Lemma~\ref{lem:single}(c)).

\emph{$u=1$.} The shape is as displayed with $(A-1)(C-2)+(D-1)(E-2)=1$: the first row of the table.

\emph{$s_5=1$.} Then $v=b+2$ and the single value $b+1$ lies after $q$ (Lemma~\ref{lem:single}(a),(b)); since $s_1=0$, every position after $q$ carries a value below $v$, so the positions $q+1,\dots,n-1$ carry the points of $\Ereg(b)$ and the entry $b+1$, and $\Ereg(v)=\Ereg(b)\cup\{b+1\}$. If $\Ereg(b)\ne\varnothing$ then $b+1$ is not the first point of $\Ereg(v)$ (otherwise it would form with every point of $\Ereg(b)$ an inversion counted by $u$), hence it is the last one by Lemma~\ref{lem:single}(c). With $s_2=0$ the rest of the shape is as displayed; now $u$ counts, besides $(A-1)(C-2)$, the pairs (top entry other than $n$, point of $\Ereg(b)$ other than the first) and, when $\Ereg(b)\ne\varnothing$, the pairs (top entry other than $n$, entry $b+1$): $u=(A-1)(C-2)+(D-1)|\Ereg(b)|$. Thus $(A-1)(C-2)=0$, and $D=1$ when $\Ereg(b)\ne\varnothing$: rows two and three.

\emph{$s_2=1$.} Exactly one position $g$ lies strictly between $i$ and $p$; its value is below $a$ (a value above $v$ would be northwestern, a value in $(a,v)$ is excluded by Lemma~\ref{lem:single}(b), and $a$, $b$ are elsewhere), so $g=j_1$ is the first southern point and, by Lemma~\ref{lem:single}(c) and $u=0$ (an inversion between $g$ and a later southern point would be counted by $u$), $\pi_g=2$. If there are further southern points, every prefix entry would form with one of them an inversion counted by $u$; so either there are none (row four, $A$ free) or $A=1$ (row five). The top entries and $\Ereg(b)$ are as in the displayed shape with $(D-1)(E-2)=0$.

\emph{$s_1=1$.} Either (a) exactly one value $x<a$ lies before $i$, or (b) exactly one position after $q$ carries a value exceeding $v$. In case (a), every southern point $s$ has $\pi_s<x$ (otherwise $x,v,\pi_s,n$ is a $1324$), so $x=a-1$; a prefix entry above $a$ before $x$ would form with $x$ an inversion counted by $u$, so $x=\pi_2$; and since $x$ exceeds every southern value, $u=0$ forces $|\Sreg|=1$ (rows eight and nine, according to whether further prefix entries exist; the equation then gives $(D-1)(E-2)=0$). In case (b), let $W$ be the exceptional value after $q$. Every point $e$ of $\Ereg(b)$ lies after $W$ (otherwise $\pi_1,v,e,W$ is a $1324$), hence $W$ sits at position $q+1$; $u=0$ forces $|\Ereg(b)|=1$ (each further point of $\Ereg(b)$ would form with $W$ an inversion counted by $u$) and $W$ above every top entry other than $n$, i.e.\ $W=n-1$. This gives rows six and seven.

\emph{Case II: $\NW\ne\varnothing\ne\SE$.} By Lemma~\ref{lem:NWSE}, $\NW=\{i\}$, $\SE=\{j\}$, $p=i+1$, $v=b+1$, $w=a-1$, $j=q+1$ and $u=0$. Split the remaining entries into the prefix $P$ (positions $2,\dots,i-1$), the middle $M$ (positions $p+1,\dots,q-1$) and the tail $Z$ (positions $j+1,\dots,n-1$). Values: $P\subseteq(1,a)\cup(a,b)$ as in Lemma~\ref{lem:single}(a); $M\subseteq(1,a)\cup(v,n)$ (a value in $(a,v)$ gives $\pi_1,v,\cdot,n$); $Z\subseteq(a,b)\cup(v,n)$ (no entry after $j$ is below $w$). Write $P_S,P_{ab},M_S,M_T,Z_{ab},Z_T$ for the corresponding parts. Each part is increasing, $M_S$ precedes $M_T$, $Z_{ab}$ precedes $Z_T$, $P_S$ precedes $P_{ab}$ and $P_{ab}<Z_{ab}$, $P_S<M_S$, $M_T<Z_T$ as sets of values, all by $u=0$ or by a $1324$ with $\pi_1,\pi_n$ or $1,n$. Moreover the following pairs cannot both be nonempty: $Z_{ab},Z_T$ ($\pi_1,v,z_{ab},z_T$); $M_T,Z_T$ ($1,m_T,w,z_T$); $P_S,M_S$ ($p_S,v,m_S,n$); $P_S,Z_T$ ($p_S,v,w,z_T$); $P_S,P_{ab}$ ($p_S,p_{ab},w,\pi_n$); $P_{ab},M_S$ and $M_T,Z_{ab}$ (inversions counted by $u$). Consequently $\pi$ is
\[
\begin{gathered}
[\,a,P_{ab}\,][\,v\,][\,1,M_S\,][\,n\,][\,w\,][\,Z_T\,][\,b\,]\qquad\text{or}\qquad
[\,a\,][\,P_S\,][\,v\,][\,1\,][\,M_T,n\,][\,w\,][\,Z_{ab},b\,]\\
\text{or}\qquad[\,a,P_{ab}\,][\,v\,][\,1,M_S\,][\,M_T,n\,][\,w\,][\,Z_{ab},b\,],
\end{gathered}
\]
with the incompatibilities $P_{ab}=\varnothing$ or $M_S=\varnothing$, and $M_T=\varnothing$ or $Z_{ab}=\varnothing$: the last three rows of the table, where the constraints are exactly these incompatibilities. This completes the proof.
\end{proof}

\begin{theorem}\label{thm:D}
$a(n,k)\le a(n+1,k)$ for all $n\ge1$ and all $k\le2n-5$. For $n\ge10$,
\[
a(n+1,2n-5)-a(n,2n-5)\ \ge\ 2p_2(n-5)+2p_2(n-4)-(32n-214)\ >0 .
\]
\end{theorem}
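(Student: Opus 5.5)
We will follow the proof of Theorem~\ref{thm:B} with $k=2n-5$, so that $k-n=n-5$ and $k-n+1=n-4$. The region $k\le 2n-6$ is already Theorem~\ref{thm:B}. At $k=2n-5$ with small $n$ (we take $n\le 9$) we simply check the inequality by finite enumeration, as in Section~\ref{sec:data}. For $n\ge10$ the identity \eqref{eq:identity} reduces \eqref{eq:conj} at $(n,2n-5)$ to
\[
|\R_{2,n}|\le\big|\Av^{2n-5}_{n+1}(1324)\setminus\im(f\sqcup g)\big| .
\]
We first bound the right-hand side from below. Lemma~\ref{lem:R1} applies because $k=2n-5\le 2n-4$, and it shows that the complement contains a set of size $E(n,k)=2a(n,n-5)+2a(n,n-4)$. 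Next we use the plateau identity $a(m,j)=p_2(j)$ for $j\le m-2$ \cite[Proposition~15]{CJS}. It applies with $(m,j)=(n,n-5)$ and $(n,n-4)$, since $n-4\le n-2$. This gives $E(n,k)=2p_2(n-5)+2p_2(n-4)$. The left-hand side is $32n-214$ by Theorem~\ref{thm:C}. Combining the two gives the displayed inequality
\[
a(n+1,2n-5)-a(n,2n-5)\ge 2p_2(n-5)+2p_2(n-4)-(32n-214).
\]

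It then remains to show that $F(n)=2p_2(n-5)+2p_2(n-4)-32n+214$ is positive for $n\ge10$. At $n=10$ we get $2p_2(5)+2p_2(6)-106$. Using $p_2(5)=36$ and $p_2(6)=65$, this is $72+130-106=96>0$. For monotonicity, note that $F(n+1)-F(n)=2\Delta p_2(n-5)+2\Delta p_2(n-4)-32$, where $\Delta p_2(m)=p_2(m+1)-p_2(m)$. The values of $p_2$ run $1,2,5,10,20,36,65,110,185,300,\dots$, and $\Delta p_2(m)$ is increasing. Already at $n=10$ we have $\Delta p_2(5)+\Delta p_2(6)=29+45=74>16$, so every later increment of $F$ is positive. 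To justify that $\Delta p_2$ is nondecreasing we use the generating function: $\sum_m\Delta p_2(m)q^m$ equals $(P(q)^2-1)/q$ up to the constant term, and $(1-q)P(q)^2=P(q)\prod_{j\ge2}(1-q^j)^{-1}$ has nonnegative coefficients. Hence $\Delta p_2\ge0$. Multiplying by one more factor $(1-q)$, the coefficients of $(1-q)^2P(q)^2=\prod_{j\ge2}(1-q^j)^{-2}$ are nonnegative except at degree $1$. That gives $\Delta^2p_2(m)\ge0$ for $m\ge1$, which is all we need. Alternatively, a direct check up to $n=20$, together with crude exponential growth of $p_2$, would suffice.

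We expect the main obstacle to be hypothesis bookkeeping rather than any new idea. We must confirm that Lemma~\ref{lem:R1}'s simplification $E(n,k)=2a(n,k-n)+2a(n,k-n+1)$ really holds at $k=2n-5$: the correction terms involve $a(n-1,k-2n+3)=a(n-1,-2)=0$ and the other two are likewise $0$, so it does. We must also confirm that Theorem~\ref{thm:C}'s count is valid from $n=10$ on. For $n\le9$ the theorem is settled by the finite check, so the threshold chosen for the computation has to match the threshold $n\ge10$ in Theorem~\ref{thm:C}. We make no attempt to compute the exact difference via $\mathrm{R2b}$ and $\mathrm{R3}$, because the lower bound already proves the statement.
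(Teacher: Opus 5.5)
Your proposal is correct and is essentially the paper's proof: Theorem~\ref{thm:B} handles $k\le 2n-6$ and the table handles $n\le 9$. For $n\ge10$ you compare, via \eqref{eq:identity}, the lower bound $E(n,2n-5)=2p_2(n-5)+2p_2(n-4)$ from Lemma~\ref{lem:R1} and the plateau identity against $|\R_{2,n}|=32n-214$ from Theorem~\ref{thm:C}, and the right-hand side equals $96$ at $n=10$ and increases thereafter.

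You justify that monotonicity, which the paper only asserts, and the argument is sound, but two details in it are misstated. The difference series is $(1-q)P(q)^2=\sum_m\big(p_2(m)-p_2(m-1)\big)q^m$, not $(P(q)^2-1)/q$. Also, $\prod_{j\ge2}(1-q^j)^{-2}$ has coefficient $0$ in degree $1$, not a negative one, so $\Delta^2p_2\ge0$ holds everywhere. Neither slip affects the argument.
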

\begin{proof}
For $k\le2n-6$ this is Theorem~\ref{thm:B}; let $k=2n-5$. By Lemma~\ref{lem:R1} the complement of $\im(f\sqcup g)$ has at least $E(n,k)=2p_2(n-5)+2p_2(n-4)$ elements, the correction terms in \eqref{eq:E} vanishing because $k\le2n-4$. Hence by \eqref{eq:identity} and Theorem~\ref{thm:C}, $a(n+1,k)-a(n,k)\ge2p_2(n-5)+2p_2(n-4)-(32n-214)$ for $n\ge10$; the right-hand side equals $96$ at $n=10$ and increases with $n$. For $n\le9$ the inequality is read off from the table of Section~\ref{sec:data}.
\end{proof}

\section{The skeleton reduction}\label{sec:skel}

The classifications of Sections~\ref{sec:delta1} and \ref{sec:delta2} both produced inflations of a fixed
finite list of patterns by increasing blocks, subject to one quadratic equation. This is not an accident of
small defect. In this section we show that the passage to the underlying pattern is an exact reduction: it
turns the description of $\R_{\delta,n}$, for every $\delta$ and every $n$ at once, into a quadratic
Diophantine problem over a set of patterns that depends on $\delta$ alone.

Write $\iota_m=12\cdots m$ and, for a permutation $\tau$ of length $t$ and $b\in\mathbb Z_{\ge1}^t$, let
$\tau[b]=\tau[\iota_{b_1},\dots,\iota_{b_t}]$ be the inflation of $\tau$ in which the $i$th entry is replaced by
an increasing run of $b_i$ entries. Call $\sigma$ \emph{reduced} if $\sigma_{i+1}\ne\sigma_i+1$ for all
$1\le i<|\sigma|$. Contracting the maximal runs of consecutive positions carrying consecutive values writes
every permutation $\pi$ uniquely as $\pi=\sigma[b]$ with $\sigma$ reduced; we call $\sigma=\sk(\pi)$ the
\emph{skeleton} of $\pi$ and $b$ its \emph{block vector}. This is the special case of the substitution decomposition \cite{AA} in which the intervals allowed are the
maximal increasing runs, and the skeleton is the quotient of $\pi$ by that interval system; what makes it
useful here is Theorem~\ref{thm:crit}, which says that membership in $\R_{\delta,n}$ is visible on the
quotient together with the block sizes. Let $\Inv(\tau)=\{(i,j):i<j,\ \tau_i>\tau_j\}$ and let
$d_i=d_i(\tau)=|\{j:(i,j)\in\Inv(\tau)\ \text{or}\ (j,i)\in\Inv(\tau)\}|$ be the degree of $i$ in the inversion
graph of $\tau$.

\begin{lemma}\label{lem:infl}
Let $\tau$ have length $t\ge1$ and let $b\in\mathbb Z_{\ge1}^t$. Then
\begin{enumerate}
\item[(1)] $\tau[b]$ contains $1324$ if and only if $\tau$ does;
\item[(2)] $\tau[b]$ is decomposable if and only if $\tau$ is decomposable, or $t=1$ and $b_1\ge2$;
\item[(3)] $\inv(\tau[b])=\sum_{(i,j)\in\Inv(\tau)}b_ib_j$.
\end{enumerate}
\end{lemma}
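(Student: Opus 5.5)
The plan is to prove the three parts separately, working throughout with the block structure of $\tau[b]$. Block $i$ of the inflation occupies a contiguous interval of positions and a contiguous interval of values. Two distinct blocks $i<j$ are ordered relative to each other exactly as $\tau_i,\tau_j$ are: every entry of block $i$ precedes every entry of block $j$, and every entry of block $i$ is larger than every entry of block $j$ if and only if $(i,j)\in\Inv(\tau)$. Entries inside one block form an increasing sequence.

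\emph{Part (3).} An inversion of $\tau[b]$ cannot lie inside a single block, because each block is increasing. A pair of entries taken from blocks $i<j$ is an inversion precisely when $(i,j)\in\Inv(\tau)$. There are $b_ib_j$ such pairs, and summing over the inverted pairs gives the formula.

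\emph{Part (1).} If $\tau$ contains $1324$, choose one entry from each of the four blocks involved. These four entries form a $1324$ in $\tau[b]$ by the block-order remark above. The converse is the first half of Lemma~\ref{lem:inflate}. An occurrence of $1324$ cannot use two entries of the same block, so it meets four distinct blocks and induces an occurrence in $\tau$. I will simply cite that argument.

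\emph{Part (2).} This is the only part needing care, and I expect the main obstacle to be handling cuts that fall inside a block, which is where the exception $t=1$ comes from.

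\begin{itemize}
\item Suppose $\tau=\alpha\dsum\beta$ is decomposable. Cutting $\tau[b]$ after the last block belonging to $\alpha$ exhibits a decomposition, since those blocks carry the smallest values.
\item If $t=1$ and $b_1\ge2$, then $\tau[b]=\iota_{b_1}$, which is decomposable.
\item For the converse, suppose $\tau[b]$ has a cut after position $c$, with $1\le c<|\tau[b]|$. There are two cases.
\begin{itemize}
\item If $c$ falls at a block boundary, say after block $r$ with $1\le r<t$, then blocks $1,\dots,r$ carry the smallest values. Hence $\tau_1,\dots,\tau_r$ are the $r$ smallest values of $\tau$, and $\tau$ is decomposable.
\item If $c$ falls strictly inside block $r$, then the prefix consists of blocks $1,\dots,r-1$ together with part of block $r$, and it carries exactly the $c$ smallest values. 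Since block $r$'s values form an interval whose lower part is already in the prefix, every block $j<r$ must have values below those of block $r$. In the same way, every block $j>r$ must have values above those of block $r$.

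If $t\ge2$, then $r<t$ or $r>1$. If $r<t$, cutting $\tau$ after position $r$ works, because blocks $1,\dots,r$ lie below all later blocks. If $r=t>1$, cutting after $r-1$ works instead. Either way $\tau$ is decomposable. If $t=1$, this case is exactly the exception $b_1\ge2$.
\end{itemize}
\end{itemize}

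This mirrors the second half of the proof of Lemma~\ref{lem:inflate}, extended to record the $t=1$ exception.
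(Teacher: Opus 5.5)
Your proposal is correct and follows the paper's proof essentially step for step: the same block-pair count for (3), and the same projection of a $1324$ occurrence onto four distinct blocks for (1), where you cite the statement of Lemma~\ref{lem:inflate} and the paper redoes the case check on which pairs of roles could share a block. Part (2) uses the same split into cuts at a block boundary and cuts strictly inside a block; the only difference is that in the interior case you show block $r$ separates the earlier blocks from the later ones and cut after block $r$ (or $r-1$ when $r=t$), whereas the paper computes the lowest value $v$ of the block and cuts after block $i-1$, using $\tau_1=1$ when $i=1$.
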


\begin{proof}
(3) is immediate: two entries of $\tau[b]$ form an inversion exactly when they lie in blocks $i<j$ with
$(i,j)\in\Inv(\tau)$, since each block is increasing.

(1) Since $\tau\preceq\tau[b]$, one direction is clear. Conversely, each block $B_i$ of $\tau[b]$ is an interval
of positions carrying an interval of values, in increasing order. We claim that no two entries of an occurrence
of $1324$ lie in one block; the occurrence then projects to distinct blocks and yields an occurrence of $1324$
in $\tau$. Label the entries of the occurrence $e_1e_2e_3e_4$, of values $v_1<v_3<v_2<v_4$. Two entries in a
common block must increase in both coordinates, which leaves the pairs $(e_1,e_2)$, $(e_1,e_3)$, $(e_1,e_4)$,
$(e_2,e_4)$, $(e_3,e_4)$. If $e_1,e_2\in B_i$ then $v_3\in[v_1,v_2]$ forces $e_3\in B_i$, and $B_i$ increasing
contradicts the fact that $e_2$ precedes $e_3$ while $v_2>v_3$. If $e_1,e_3\in B_i$ then $B_i$ contains all
positions in between, in particular $e_2$, whose value $v_2\notin[v_1,v_3]$. The pair $(e_1,e_4)$ puts all four
entries in $B_i$, contradicting that $B_i$ is increasing; $(e_2,e_4)$ contains $e_3$ with $v_3\notin[v_2,v_4]$;
and $(e_3,e_4)$ forces $v_2\in[v_3,v_4]$, hence $e_2\in B_i$, but $e_2$ precedes $e_3$.

(2) A splitting point of $\tau[b]$ falling at a block boundary, after block $i$, occurs exactly when
$\{\tau_1,\dots,\tau_i\}=\{1,\dots,i\}$, i.e.\ when $\tau$ is decomposable. Suppose a splitting point falls
inside block $i$, after its $r$th entry with $1\le r<b_i$. The values of $B_i$ form the interval
$[v,v+b_i-1]$ with $v=1+\sum_{j:\tau_j<\tau_i}b_j$, and its first $r$ values are $[v,v+r-1]$; for the values
in the first $\sum_{j<i}b_j+r$ positions to be an initial segment we need the blocks $1,\dots,i-1$ to carry
exactly the values $1,\dots,v-1$. For $i\ge2$ this says $\{\tau_1,\dots,\tau_{i-1}\}=\{1,\dots,i-1\}$, so
$\tau$ is decomposable; for $i=1$ it says $v=1$, i.e.\ $\tau_1=1$, which makes $\tau$ decomposable when
$t\ge2$ and is automatic when $t=1$. In the last case $\tau[b]=\iota_{b_1}$, decomposable iff $b_1\ge2$.
\end{proof}

In particular, if $\sigma$ is reduced and indecomposable with $|\sigma|\ge2$, then $\sigma[b]$ is an
indecomposable $1324$-avoider for every $b$ if and only if $\sigma$ is one. The four deletions that define
almost decomposability are also controlled by the skeleton.

\begin{lemma}\label{lem:extdel}
Let $\sigma$ be reduced and indecomposable of length $s\ge2$, let $b\in\mathbb Z_{\ge1}^s$, $\pi=\sigma[b]$ and
$n=\sum_ib_i$. Put $I(\sigma)=\{1,\,s,\,\sigma^{-1}(1),\,\sigma^{-1}(s)\}$. Each of the entries $1$, $n$,
$\pi_1$, $\pi_n$ of $\pi$ is the first or the last entry of the block $B_i$ for some $i\in I(\sigma)$, and for
that $i$
\[
\pi\setminus v=\begin{cases}\sigma[b-e_i], & b_i\ge2,\\[2pt] (\sigma\setminus i)\big[b^{(i)}\big], & b_i=1,\end{cases}
\]
where $b^{(i)}$ is $b$ with its $i$th coordinate deleted.
\end{lemma}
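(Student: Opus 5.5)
The plan is to locate each of the four distinguished entries of $\pi=\sigma[b]$ directly from the block structure, and then to read off the deletion by a block-level computation.

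\emph{Locating the entries.} The blocks of $\pi$ occupy consecutive position intervals in the order $1,\dots,s$. Their value intervals are ordered according to $\sigma$, so the block with the smallest values is $B_{\sigma^{-1}(1)}$ and the block with the largest values is $B_{\sigma^{-1}(s)}$. Since each block is increasing, the entry of $\pi$ at position $1$ is the first entry of $B_1$, and the entry at position $n$ is the last entry of $B_s$. The value $1$ is the first (smallest) entry of $B_{\sigma^{-1}(1)}$, and the value $n$ is the last entry of $B_{\sigma^{-1}(s)}$. This gives the first assertion, with $i$ equal to $1$, $s$, $\sigma^{-1}(1)$, $\sigma^{-1}(s)$ for the entries $\pi_1$, $\pi_n$, $1$, $n$ respectively. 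These indices need not be distinct, and that does not matter.

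\emph{Computing the deletion.} Let $v$ be one of these entries and $i$ its block. Deleting an extreme entry of an increasing block and standardizing changes nothing except the size of that block. The remaining entries of $B_i$ still occupy consecutive positions and, after standardization, consecutive values in increasing order. All other blocks keep their relative position and value order.

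If $b_i\ge2$, the result is therefore the inflation of the same pattern $\sigma$ with block vector $b-e_i$. Concretely, the positions and values of every other block shift by $0$ or $-1$ uniformly, according to whether they lie before or after the deleted point, so the pattern of blocks is unchanged.

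If $b_i=1$, the block disappears. The remaining $s-1$ blocks are increasing runs on consecutive positions and values, and their relative order is the standardization $\sigma\setminus i$ (the pattern obtained by deleting position $i$ of $\sigma$). Hence $\pi\setminus v=(\sigma\setminus i)[b^{(i)}]$.

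Note that the formula does not claim the result is written in reduced form. That is why reducedness and indecomposability of $\sigma$ play no role here beyond $s\ge2$, which ensures that $\sigma\setminus i$ is nonempty.

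\emph{Main obstacle.} There is essentially none. The only care needed is to make precise that the standardization after deletion commutes with inflation. The cleanest way is to write $\pi$ as the set of points $(P_i+r,\,V_i+r)$, $0\le r<b_i$, and check that removing the point with $r=0$ or $r=b_i-1$ shifts the offsets $P_j,V_j$ monotonically, preserving the relative order of blocks.
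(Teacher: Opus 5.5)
Your proposal is correct and follows the paper's proof: you identify $1$, $n$, $\pi_1$, $\pi_n$ as the first or last entries of $B_{\sigma^{-1}(1)}$, $B_{\sigma^{-1}(s)}$, $B_1$, $B_s$, and observe that deleting an end of an increasing block either shrinks that block, giving $\sigma[b-e_i]$, or removes it, giving $(\sigma\setminus i)[b^{(i)}]$. Your offset bookkeeping with $P_j,V_j$ makes explicit the fact, left implicit in the paper, that standardization after deletion commutes with inflation.
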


\begin{proof}
The value $1$ is the first entry of $B_{\sigma^{-1}(1)}$, the value $n$ the last entry of $B_{\sigma^{-1}(s)}$,
$\pi_1$ the first entry of $B_1$ and $\pi_n$ the last entry of $B_s$. Deleting the first or last entry of an
increasing block of size $b_i\ge2$ leaves an increasing block of size $b_i-1\ge1$ in the same position and
value interval, which is the first display; if $b_i=1$ the block disappears and what remains is the inflation
of $\sigma\setminus i$ by the surviving blocks.
\end{proof}

\begin{theorem}\label{thm:crit}
Let $\sigma$ be reduced, indecomposable and $1324$-avoiding, of length $s\ge2$, and put
\[
D(\sigma)=\{\,i\in I(\sigma):\ \sigma\setminus i\ \text{is decomposable, or }s=2\,\}.
\]
Then for every $b\in\mathbb Z_{\ge1}^s$ with $\sum_ib_i=n\ge3$ and every $\delta$,
\[
\sigma[b]\in\R_{\delta,n}
\iff
\sum_{(i,j)\in\Inv(\sigma)}b_ib_j=2n-7+\delta
\ \ \text{and}\ \ b_i\ge2\ \text{for every } i\in D(\sigma).
\]
Consequently $\R_{\delta,n}$ is the disjoint union, over reduced $\sigma$, of the sets so described.
\end{theorem}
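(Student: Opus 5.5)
Since $\sigma[b]$ has length $n=\sum_ib_i$, the plan is to check the four defining conditions of $\R_{\delta,n}$ one at a time on the inflation. By Lemma~\ref{lem:infl}(1) the inflation avoids $1324$ because $\sigma$ does. By Lemma~\ref{lem:infl}(2) it is indecomposable because $\sigma$ is indecomposable and $s\ge2$. By Lemma~\ref{lem:infl}(3) its inversion number is $\sum_{\Inv(\sigma)}b_ib_j$, so the condition $\inv=2n-7+\delta$ is exactly the displayed equation. What remains is to show that $\sigma[b]$ fails to be almost decomposable if and only if $b_i\ge2$ for every $i\in D(\sigma)$.

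To handle almost decomposability, I would use Lemma~\ref{lem:extdel}. Each of the four deleted entries $1,n,\pi_1,\pi_n$ is an end of a block $B_i$ with $i\in I(\sigma)$, and different deleted entries may share a block.

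\emph{Case $b_i\ge2$.} The deletion is $\sigma[b-e_i]$, which is indecomposable by Lemma~\ref{lem:infl}(2).

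\emph{Case $b_i=1$.} The deletion is $(\sigma\setminus i)[b^{(i)}]$. Lemma~\ref{lem:infl}(2) applies to the possibly non-reduced pattern $\sigma\setminus i$, since that lemma needs no reducedness. So this deletion is decomposable exactly when $\sigma\setminus i$ is decomposable, or when $s-1=1$ and the surviving block has size $\ge2$. For $s=2$ the only indecomposable pattern is $\sigma=21$, and $I(\sigma)=\{1,2\}$. The inflation is $\iota_{b_2}\ssum\iota_{b_1}$. Deleting a singleton block leaves $\iota_m$, which is decomposable iff $m\ge2$. This is exactly where $n\ge3$ enters: if $b_1=1$ then $b_2=n-1\ge2$, so $b_1=1$ gives a decomposable deletion, and symmetrically for $b_2$. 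This explains why $D(\sigma)=I(\sigma)$ when $s=2$.

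Combining the cases: some deletion is decomposable iff there is an $i\in I(\sigma)$ with $b_i=1$ and $i\in D(\sigma)$. I would also note that when $b_i=1$ for $i\in I(\sigma)$, the deleted entry is the single element of $B_i$, so the formula of Lemma~\ref{lem:extdel} covers whichever of the four entries sits there. Conversely, every $i\in I(\sigma)$ arises from one of the four entries, because $I(\sigma)$ is defined as the set of block positions of those entries. This gives the equivalence.

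The final sentence follows from uniqueness of the skeleton decomposition $\pi=\sigma[b]$ with $\sigma$ reduced. For $\pi\in\R_{\delta,n}$ the skeleton has length $s\ge2$: otherwise $\pi=\iota_n$, which is decomposable for $n\ge2$. It is also indecomposable and $1324$-avoiding by Lemma~\ref{lem:infl}, so the theorem applies to it. The main obstacle is bookkeeping rather than depth. One must make sure that the blocks indexed by $I(\sigma)$ are all realized, and handle $s=2$ separately, where Lemma~\ref{lem:infl}(2) has its exceptional clause $t=1$.
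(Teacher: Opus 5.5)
Your proposal is correct and follows the paper's proof essentially step for step. Both use Lemma~\ref{lem:infl} for avoidance, indecomposability and the inversion count, then Lemma~\ref{lem:extdel} with the split $b_i\ge2$ versus $b_i=1$, with the exceptional clause of Lemma~\ref{lem:infl}(2) accounting for $s=2$, $n\ge3$, and uniqueness of the skeleton for the last sentence. There is one cosmetic slip that does not affect the argument: $21[\iota_{b_1},\iota_{b_2}]$ is $\iota_{b_1}\ssum\iota_{b_2}$, not $\iota_{b_2}\ssum\iota_{b_1}$.
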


\begin{proof}
By Lemma~\ref{lem:infl}, $\sigma[b]$ is an indecomposable $1324$-avoider, and its inversion number is the
left-hand side of the displayed equation; so it remains to identify when $\sigma[b]$ is almost decomposable.
Let $v\in\{1,n,\pi_1,\pi_n\}$ and let $i\in I(\sigma)$ be the index supplied by Lemma~\ref{lem:extdel}. If
$b_i\ge2$ then $\pi\setminus v=\sigma[b-e_i]$, which is indecomposable by Lemma~\ref{lem:infl}(2) because
$\sigma$ is indecomposable of length $\ge2$. If $b_i=1$ then $\pi\setminus v=(\sigma\setminus i)[b^{(i)}]$,
which by Lemma~\ref{lem:infl}(2) is decomposable exactly when $\sigma\setminus i$ is decomposable, or when
$\sigma\setminus i$ is a single point and the remaining block has size $\ge2$ --- and the latter happens
precisely when $s=2$ and $n\ge3$. Thus $\pi$ is almost decomposable if and only if $b_i=1$ for some
$i\in D(\sigma)$. The last assertion is the uniqueness of the skeleton.
\end{proof}

Writing $b=\mathbf 1+c$ and $\delta(\tau)=\inv(\tau)-2|\tau|+7$ for the defect of an arbitrary permutation
$\tau$, expanding $b_ib_j=1+c_i+c_j+c_ic_j$ in Lemma~\ref{lem:infl}(3) and using $n=s+\sum_ic_i$ gives the
identity that governs the whole defect ladder.

\begin{proposition}\label{prop:master}
For every reduced $\sigma$ of length $s$ and every $c\in\mathbb Z_{\ge0}^s$,
\begin{equation}\label{eq:master}
\delta\big(\sigma[\mathbf 1+c]\big)=\delta(\sigma)+\sum_{i=1}^{s}(d_i-2)\,c_i+\sum_{(i,j)\in\Inv(\sigma)}c_ic_j .
\end{equation}
In particular, if $x$ is an entry of a permutation $\pi$ and $D(x)$ is the number of entries of $\pi$ that form
an inversion with $x$, then $\delta(\pi\setminus x)=\delta(\pi)+2-D(x)$.
\end{proposition}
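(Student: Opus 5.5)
The identity \eqref{eq:master} is a direct computation from Lemma~\ref{lem:infl}(3), and the deletion formula is a separate, equally direct count; I would prove the two parts in that order.

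For \eqref{eq:master}, put $\pi=\sigma[\mathbf 1+c]$, so that $|\pi|=n=s+\sum_i c_i$. By Lemma~\ref{lem:infl}(3),
\[
\inv(\pi)=\sum_{(i,j)\in\Inv(\sigma)}(1+c_i)(1+c_j)
=\inv(\sigma)+\sum_{(i,j)\in\Inv(\sigma)}(c_i+c_j)+\sum_{(i,j)\in\Inv(\sigma)}c_ic_j .
\]
The key step is to regroup the linear term by vertices of the inversion graph. Each index $i$ occurs in exactly $d_i$ pairs of $\Inv(\sigma)$, either as the first or as the second coordinate, so
\[
\sum_{(i,j)\in\Inv(\sigma)}(c_i+c_j)=\sum_{i=1}^{s}d_ic_i .
\]
Then
\[
\delta(\pi)=\inv(\pi)-2n+7=\bigl(\inv(\sigma)-2s+7\bigr)+\sum_i d_ic_i-2\sum_i c_i+\sum_{(i,j)\in\Inv(\sigma)}c_ic_j ,
\]
which is \eqref{eq:master}. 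Reducedness of $\sigma$ plays no role here; the identity holds for every $\sigma$.

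For the second assertion, deleting the entry $x$ removes exactly the $D(x)$ inversions that involve $x$ and leaves all other inversions intact, since relative order is preserved under standardization. It also lowers the length by one. Hence
\[
\delta(\pi\setminus x)=\bigl(\inv(\pi)-D(x)\bigr)-2(|\pi|-1)+7=\delta(\pi)+2-D(x).
\]
Alternatively, this follows from \eqref{eq:master} read backwards: view $\pi$ as $\pi\setminus x$ with one extra point, whose degree is $D(x)$.

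There is no real obstacle. The only point needing care is the double-counting in the linear term, namely that every inversion pair contributes one unit of degree to each of its two endpoints.
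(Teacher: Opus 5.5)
Your proof is correct and matches the paper's: expand $b_ib_j=1+c_i+c_j+c_ic_j$ in Lemma~\ref{lem:infl}(3), regroup the linear term by inversion degrees, and substitute $n=s+\sum_i c_i$; your direct count of the $D(x)$ lost inversions for the deletion rule is likewise fine. Only your ``read backwards'' aside is loose, since adding an arbitrary point to $\pi\setminus x$ is not an inflation. The deletion rule does follow formally from \eqref{eq:master} with $\sigma=\pi$, $c_x=-1$ and all other $c_i=0$, reading a block of size zero as a deletion, but nothing in your proof rests on the aside.
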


Identity~\eqref{eq:master} explains the shape of Theorems~\ref{thm:A} and \ref{thm:C}. All the terms it adds to
$\delta(\sigma)$ are nonnegative except $-c_i$ at the vertices of inversion degree $1$, and the quadratic part
vanishes exactly on the increasing subsequences of $\sigma$. Enlarging a block all of whose entries have
inversion degree $2$ therefore leaves the defect unchanged: these are the free directions along which a
residual family grows with $n$, and the number of independent free blocks, minus one, is the degree of
$|\R_{\delta,n}|$ as a polynomial in $n$. For $\sigma=24153$ one has $\delta(\sigma)=1$, $d=(1,2,2,1,2)$ and
$D(\sigma)=\{3,5\}$, so Theorem~\ref{thm:crit} and \eqref{eq:master} return exactly the family of
Theorem~\ref{thm:A}: $c_3,c_5\ge1$ and $c_1(c_3-1)+c_4(c_5-1)+c_2(c_3+c_5)=\delta-1$, which for $\delta=1$
forces $c_2=0$ and $(a-1)(c-2)+(d-1)(e-2)=0$. The negative coefficients can be removed altogether by
expanding \eqref{eq:master} around a larger base point instead of $\mathbf 1$; this nonnegative form of the
identity is Proposition~\ref{prop:base}, and Section~\ref{sec:poly} is built on it.

Let
\[
\Sigma_\delta=\{\,\sigma\ \text{reduced}:\ \sigma[b]\in\R_{\delta,n}\ \text{for some }b\ \text{and some }n\,\}
\]
be the set of skeletons occurring at defect $\delta$; by Theorem~\ref{thm:crit} and \eqref{eq:master},
$\sigma\in\Sigma_\delta$ if and only if the quadratic equation \eqref{eq:master} has a solution
$c\in\mathbb Z_{\ge0}^s$ with value $\delta-\delta(\sigma)$ and $c_i\ge1$ on $D(\sigma)$. The following is a
finite computation for each $\delta$, carried out as described in Section~\ref{sec:data}.

For $N\ge6$ write $\Sigma^{(N)}_\delta=\{\sk(\pi):\pi\in\R_{\delta,n}\text{ for some }6\le n\le N\}$, so that
$\Sigma_\delta$ is the union of $\bigcup_N\Sigma^{(N)}_\delta$ with the finitely many skeletons of the
residuals of length at most $5$ and, by Theorem~\ref{thm:cover}, is finite with all its
members of length at most $8\delta+25$. The following records what the enumeration of Section~\ref{sec:data}
determines.

\begin{proposition}\label{prop:catalogue}
Let $1\le\delta\le10$. The sets $\Sigma^{(N)}_\delta$ are equal for $\lambda(\delta)\le N\le22$, where
\[
\lambda(\delta)=7,\ 9,\ 11,\ 12,\ 13,\ 14,\ 15,\ 15,\ 16,\ 16\qquad(\delta=1,\dots,10)
\]
is the length of the shortest residual realising the last skeleton to appear; their common size and maximal
length are listed below. They were collected from the complete lists of residuals of length at most $22$, that
is from $10\,210\,333$ permutations, and the full list of the $86\,770$ skeletons is archived with the paper.
The lengths that enter are $6\le n\le22$: the residuals of length at most $5$ contribute, beyond the
skeletons already listed, exactly three further ones, namely $321$ at $\delta=5$, $4321$ at $\delta=5$ and
$\delta=6$, and $54321$ at $\delta=7$. All three are decreasing, and each is realised only by residuals of
length at most $5$, so none of them occurs in $\Sigma^{(N)}_\delta$ for any $N$ at those defects (check C10).
\begin{center}
\begin{tabular}{lrrrrrrrrrr}
\toprule
$\delta$ & 1 & 2 & 3 & 4 & 5 & 6 & 7 & 8 & 9 & 10\\
\midrule
$|\Sigma^{(22)}_\delta|$ & $2$ & $21$ & $113$ & $422$ & $1234$ & $2941$ & $6295$ & $12495$ & $22978$ & $40269$\\
$\max\{|\sigma|:\sigma\in\Sigma^{(22)}_\delta\}$ & $5$ & $7$ & $9$ & $11$ & $13$ & $13$ & $14$ & $15$ & $15$ & $16$\\
$2\delta+3$ & $5$ & $7$ & $9$ & $11$ & $13$ & $15$ & $17$ & $19$ & $21$ & $23$\\
\bottomrule
\end{tabular}
\end{center}
In particular every $\sigma\in\Sigma^{(22)}_\delta$ satisfies $|\sigma|\le2\delta+3$, with a margin that grows
from $\delta=6$ on, and $-1\le\delta(\sigma)\le\delta$ for $\delta\le8$.
\end{proposition}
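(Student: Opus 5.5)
The statement is a finite computation, so the plan is to organise a certified exhaustive enumeration and then read the listed invariants off its output.

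\emph{Generating the residuals.} For each $\delta\le10$ and each $6\le n\le22$ I would list $\R_{\delta,n}$ completely. A brute search over $\mathfrak S_n$ is out of reach, so I would generate $\Av_n^{k}(1324)$ with $k=2n-7+\delta\le2n+3$ by inserting the maximum into avoiders of length $n-1$. Each insertion adds $n-1-(\text{position})$ inversions, so the inversion count is monotone along the generation tree, and any branch whose inversion count already exceeds $k$ can be pruned. On each survivor I would test indecomposability and the four extremal deletions directly from the definitions. I would not invoke Theorem~\ref{thm:crit} here, so that it serves as an independent cross-check. On a second pass I would regenerate the same sets skeleton-side: enumerate reduced, indecomposable $1324$-avoiders $\sigma$, and solve \eqref{eq:master} for $c\ge0$ with $c_i\ge1$ on $D(\sigma)$ and $s+\sum c_i\le22$. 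Agreement of the two counts (the total $10\,210\,333$) certifies both passes.

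\emph{Extracting the skeletons.} For each residual I compute $\sk(\pi)$ by contracting maximal runs $\pi_{i+1}=\pi_i+1$. I then form $\Sigma^{(N)}_\delta$ cumulatively in $N$ and record the last $N$ at which a new skeleton appears; this value is $\lambda(\delta)$. Stability for $\lambda(\delta)\le N\le22$ is then immediate from the cumulative lists. The tabulated sizes and maximal lengths, the inequality $|\sigma|\le2\delta+3$, and the range of $\delta(\sigma)=\inv\sigma-2|\sigma|+7$ for $\delta\le8$ are all read off directly.

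\emph{Lengths at most $5$.} These I would enumerate separately over $\mathfrak S_{\le5}$. The only skeletons not already present are the decreasing ones $321$, $4321$ and $54321$. To show none of them lies in $\Sigma^{(N)}_\delta$, I would apply Theorem~\ref{thm:crit} to $\sigma=\iota_s^{\mathrm{rev}}$. Every entry is in $I(\sigma)$ with decomposable deletion for $s\ge3$, so that deletion argument does not help; instead I note that for the decreasing skeleton, \eqref{eq:master} gives $\delta$ growing quadratically as soon as any $c_i>0$. Hence each such skeleton is realised at the stated defects only by $c=0$, that is by itself. This is the content of check C10.

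\emph{Main obstacle.} The real difficulty is the scale and reliability of the enumeration at $n=22$, together with certifying the pruning of the generation tree. Note also that the proposition asserts only stability up to $22$, not completeness of $\Sigma_\delta$; completeness comes from Theorem~\ref{thm:cover}.
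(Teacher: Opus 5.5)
Your computational plan is the paper's own: inversion-pruned insertion of the maximum, a direct residual test on every survivor, contraction to skeletons, and reading off $\lambda(\delta)$, the sizes, the maximal lengths and the range of $\delta(\sigma)$ from the cumulative sets $\Sigma^{(N)}_\delta$. Your skeleton-side second pass is only a cross-check; the paper compares sets that way just for $\delta\le4$, $10\le n\le16$.

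\textbf{The gap: the decreasing skeletons.} The one step you argue by hand is wrong, and each of your three claims there is false.

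\emph{The set $D(\sigma)$.} For $\sigma=s(s-1)\cdots1$ one has $I(\sigma)=\{1,s\}$, not every entry. Every deletion of $\sigma$ is decreasing of length $s-1\ge2$, hence indecomposable. So $D(\sigma)=\varnothing$.

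\emph{The growth of the defect.} The inversion graph is complete, so $d_i=s-1$, and \eqref{eq:master} reads
\[
\delta(\sigma[\mathbf 1+c])=\delta(\sigma)+(s-3)\sum_i c_i+\sum_{i<j}c_ic_j,\qquad \delta(321)=4,\quad \delta(4321)=5,\quad \delta(54321)=7 .
\]
Growth in a single coordinate is therefore linear, not quadratic, and for $321$ it is zero. Indeed $321[\iota_k,1,1]$ is a residual of defect $4$ for every $k$. That is exactly why $321$ already lies in $\Sigma^{(22)}_4$ and is listed as an extra only at $\delta=5$.

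\emph{Realisation by $c=0$.} These skeletons are not realised only by $c=0$. The permutations $321$ and $4321$ themselves have defects $4$ and $5$. So $321$ at $\delta=5$ is realised by $45231$, $45312$, $53412$, and $4321$ at $\delta=6$ by $45321$, $53421$, $54231$, $54312$.

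\textbf{The repair.} Use the same identity correctly. Every residual with skeleton $\sigma$ is $\sigma[\mathbf 1+c]$ with $c$ solving the displayed equation.
\begin{itemize}
\item For $321$ at $\delta=5$ the equation is $\sum_{i<j}c_ic_j=1$. This forces exactly two coordinates equal to $1$, giving length $5$.
\item For $4321$ at $\delta=5,6$ it is $\sum_ic_i+\sum_{i<j}c_ic_j\in\{0,1\}$. This forces $c=0$ or a single $c_i=1$, giving length $4$ or $5$.
\item For $54321$ at $\delta=7$ it is $2\sum_ic_i+\sum_{i<j}c_ic_j=0$. This forces $c=0$.
\end{itemize}
Hence every realiser has length at most $5$, which is the claim. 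The paper instead reads this off the bounded search of check C10.

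\textbf{A misattribution.} Your closing remark says completeness comes from Theorem~\ref{thm:cover}. That theorem gives only finiteness, with the bound $8\delta+25$, and the paper says explicitly that this does not certify the catalogue. The equality $\Sigma_\delta=\Sigma^{(22)}_\delta$ for $\delta\le10$ is obtained later, from the sharp bound of Theorem~\ref{thm:deladmissible} via Proposition~\ref{prop:catfromskel}.
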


The stabilisation of $\Sigma^{(N)}_\delta$ over the computed range is evidence, not proof, that the catalogue is
complete: Theorem~\ref{thm:cover} bounds the length of a member of $\Sigma_\delta$ by $8\delta+25$, which at
$\delta=10$ is $105$, and we have not enumerated the reduced permutations of length between $17$ and $105$ to
rule out further skeletons; what the computation shows is that none of them is realised by a residual of
length at most $22$. We therefore record the completeness separately.

\begin{conjecture}\label{conj:cat}
$\Sigma_\delta=\Sigma^{(22)}_\delta$ for $1\le\delta\le10$.
\end{conjecture}

Here and below the three decreasing skeletons of Proposition~\ref{prop:catalogue} are understood to be
adjoined to $\Sigma^{(22)}_\delta$ at $\delta=5,6,7$; they are inessential, since none of them is the skeleton
of a residual of length at least $6$ and so none contributes to $|\R_{\delta,n}|$ for $n\ge6$.
Everything below that uses the catalogue is stated either over the finite range in which it was verified
(Proposition~\ref{prop:polys}) or conditionally (Corollary~\ref{cor:2n}); no unconditional statement in this
paper depends on Conjecture~\ref{conj:cat}.

Conjecture~\ref{conj:cat} is not independent of the length bounds conjectured elsewhere in this paper:
over the range computed here it follows from either of two of them.

\begin{proposition}\label{prop:catfromskel}
Let $1\le\delta\le10$. Each of Conjecture~\ref{conj:skel}, which bounds the length of a member of
$\Sigma_\delta$ by $2\delta+3$, and Conjecture~\ref{conj:K} of Section~\ref{sec:poly}, which bounds it
by $\delta+8$, implies Conjecture~\ref{conj:cat} at that $\delta$: every reduced indecomposable
$1324$-avoider of length at most $\max(2\delta+3,\delta+8)$ that satisfies the membership criterion of
Proposition~\ref{prop:base} belongs to $\Sigma^{(22)}_\delta$ or is one of the three decreasing skeletons
$321$ ($\delta=5$), $4321$ ($\delta=5,6$), $54321$ ($\delta=7$) of Proposition~\ref{prop:catalogue}
(check C10).
\end{proposition}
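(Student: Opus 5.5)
The plan is to reduce the statement to a finite exhaustive search. We cannot yet bound the block sizes, so we fix the skeleton length instead. Fix $\delta\le10$ and let $L=\max(2\delta+3,\delta+8)$; this is $21$ at $\delta=10$, so every bound involved lies below the range $\le22$ of the residual lists. Assume one of the two conjectures holds. Then every $\sigma\in\Sigma_\delta$ is a reduced, indecomposable, $1324$-avoiding permutation of length at most $L$. Moreover, by Theorem~\ref{thm:crit} and \eqref{eq:master} (equivalently, the nonnegative form in Proposition~\ref{prop:base}), $\sigma\in\Sigma_\delta$ exactly when the quadratic equation has a solution $c\in\mathbb Z_{\ge0}^s$ of value $\delta-\delta(\sigma)$ with $c_i\ge1$ on $D(\sigma)$. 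So it suffices to enumerate all reduced indecomposable $1324$-avoiders of length $\le L$ satisfying this membership criterion, and to check that each lies in $\Sigma^{(22)}_\delta$ or is one of the three listed decreasing skeletons. This is check C10.

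To make the enumeration feasible and certifiably complete, generate the skeletons by length. Reducedness, indecomposability and $1324$-avoidance are each inherited in a controlled way under deleting a suitable entry, so candidates can be grown one point at a time. Prune using a lower bound on the defect. In \eqref{eq:master}, the only negative contributions come from coordinates with $d_i=1$. Since $\sigma$ is indecomposable and reduced, the number of such coordinates is bounded. This gives an a priori lower bound on $\delta(\sigma[\mathbf 1+c])$ in terms of $\delta(\sigma)$ minus the number of degree-one vertices that are free to grow. Any candidate whose minimal attainable defect exceeds $10$ can therefore be discarded; by Proposition~\ref{prop:delfree} the defect cannot decrease along extensions.

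For each surviving $\sigma$, decide solvability of the quadratic equation at value $\delta-\delta(\sigma)$. In the nonnegative form all coefficients are $\ge0$ after shifting to the base point, so every variable is bounded by the target value. The search for $c$ is then a finite box search.

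The main obstacle is \emph{transferring a solution from skeleton space to residual length}. Membership in $\Sigma^{(22)}_\delta$ requires a witness $\sigma[b]$ of length at most $22$, so we must show that any solvable $\sigma$ of length $\le L$ has a solution with $\sum_i b_i\le22$. I would handle this by taking a minimal solution. In the nonnegative form, each unit of excess above the base point contributes at least one to the defect budget, except along the free directions of inversion degree $2$, which can be set to their minimum. Hence the minimal $n$ is bounded by $s$ plus the number of $D(\sigma)$-forced units plus at most $\delta-\delta(\sigma)$. This bound must be checked to be at most $22$ for all survivors. Wherever it fails, the machine would instead directly exhibit a short witness, or certify that the decreasing exceptions arise, consistent with the length-$\le5$ remark in Proposition~\ref{prop:catalogue}.
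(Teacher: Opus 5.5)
Your reduction matches the paper's. Under either conjecture, every member of $\Sigma_\delta$ is a reduced indecomposable $1324$-avoider of length at most $\max(2\delta+3,\delta+8)$, and membership is decided by the box search of Proposition~\ref{prop:base}. For Conjecture~\ref{conj:K} you should say why the length bound holds: $|\sigma|\le|\rho|\le\delta_0+8\le\delta+8$, using $\delta_0\le\delta$ from Proposition~\ref{prop:base}.

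\textbf{The gap is the pruning.} The pruning is what makes a search up to length $2\delta+3=23$ both feasible and provably exhaustive, and your justification for it fails.
\begin{itemize}
\item Proposition~\ref{prop:delfree} concerns admissible deletions in long residuals and gives no monotonicity.
\item The monotonicity claim is false for the patterns you grow. By the deletion rule of Proposition~\ref{prop:master}, inserting an entry of inversion degree $D\le1$ lowers the defect by $2-D$. For instance, the parent of a member whose maximum is in position $s-1$ has defect one larger than the member. So a defect cutoff applied to intermediate nodes can discard ancestors of members.
\item Your lower bound is unjustified as stated. The negative term at a degree-one vertex is $-c_i$, not $-1$. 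It is controlled only because the neighbour lies in $D(\sigma)$ by Lemma~\ref{lem:leaf}, which forces $c_j\ge1$.
\end{itemize}

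The paper instead prunes on $\inv$, which never decreases along the insertion recursion. For $s\ge3$, \eqref{eq:master} with $c=\mathbf 1_{D(\sigma)}$ gives
\[
\delta_0-\bigl(\inv(\sigma)-2s+7\bigr)=\sum_{i\in D(\sigma)}(d_i-2)+e(D(\sigma))\ge0,
\]
where $e(D(\sigma))$ counts the inversions inside $D(\sigma)$. This is nonnegative because a degree-one vertex of $D(\sigma)$ has $a_i\ge0$ only if its neighbour is also in $D(\sigma)$, and distinct such vertices give distinct edges when $s\ge3$. Hence every member satisfies $\inv(\sigma)\le2s-7+\delta$, and this bound can be imposed at every node. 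It fails for $s=2$: for $\sigma=21$ one has $\delta_0=3<4$, so $21$ must be added by hand.

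Your ``main obstacle'' is not needed. $\Sigma^{(22)}_\delta$ is an explicit archived list, and C10 simply compares the computed member set against it. If you do want self-contained witnesses, your bound is off in two ways:
\begin{itemize}
\item The proof of Theorem~\ref{thm:poly} gives $\sum_{i\in T}x_i\le2h$ with $h=\delta-\delta_0$, not $\delta-\delta(\sigma)$, on top of the base length $s+|D(\sigma)|$.
\item A witness must have length at least $6$ to count in $\Sigma^{(22)}_\delta$. This is exactly why the three decreasing skeletons are exceptions.
\end{itemize}
Finally, $\max(2\delta+3,\delta+8)=23$ at $\delta=10$, not $21$. That no member is longer than $16$ is an output of C10, not something known in advance.
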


\begin{proof}
By Proposition~\ref{prop:base}, $\sigma\in\Sigma_\delta$ if and only if $h=\delta-\delta_0\ge0$ and
$Q(x)=h$ has a solution $x\in\{0,\dots,h\}^s$. The base residual of such a $\sigma$ has length
$L\ge|\sigma|$ and defect $\delta_0\le\delta$, so Conjecture~\ref{conj:K} gives
$|\sigma|\le L\le\delta_0+8\le\delta+8$, while Conjecture~\ref{conj:skel} gives $|\sigma|\le2\delta+3$;
under either one the whole of $\Sigma_\delta$ is obtained by running that bounded search on every reduced
indecomposable $1324$-avoider of length $s\le\max(2\delta+3,\delta+8)$, a bound that differs from
$2\delta+3$ only for $\delta\le4$, where it is $9,10,11,12$. Those candidates may be generated with the pruning $\inv(\sigma)\le2s-7+\delta$,
which is nondecreasing along the insertion recursion: for $s\ge3$,
$\delta_0-(\inv(\sigma)-2s+7)=\sum_{i\in D(\sigma)}(d_i-2)+e(D(\sigma))\ge0$, because a vertex of $D(\sigma)$
of degree one has $a_i\ge0$ only if its unique neighbour also lies in $D(\sigma)$, and for $s\ge3$ distinct
such vertices contribute distinct edges to $e(D(\sigma))$; hence $\sigma\in\Sigma_\delta$ forces
$\delta\ge\delta_0\ge\inv(\sigma)-2s+7$, while the one case $s=2$, where $\sigma=21$, $\ell=(2,2)$ and
$\delta_0=3<\inv(\sigma)-2s+7=4$, is added by hand. Carrying out the enumeration (check C10) returns, for
each $\delta\le10$, exactly $\Sigma^{(22)}_\delta$ together with the three listed skeletons.
\end{proof}

Enumerating the solutions of \eqref{eq:master} over the catalogue $\Sigma_\delta$ reproduces $\R_{\delta,n}$
set by set (Section~\ref{sec:data}) and gives the counting function in closed form.

\begin{proposition}\label{prop:polys}
For $1\le\delta\le10$ and $n_0(\delta)\le n\le30$, $|\R_{\delta,n}|=P_\delta(n)$ with
\begin{center}
\begin{tabular}{rll@{\qquad}rll}
\toprule
$\delta$ & $n_0$ & $P_\delta(n)$ & $\delta$ & $n_0$ & $P_\delta(n)$\\
\midrule
$1$ & $8$  & $8n-56$              & $6$  & $15$ & $168n^2-888n-2067$\\
$2$ & $10$ & $32n-214$            & $7$  & $16$ & $392n^2-2874n-93$\\
$3$ & $12$ & $4n^2+59n-549$       & $8$  & $17$ & $840n^2-7564n+8430$\\
$4$ & $13$ & $20n^2+40n-1105$     & $9$  & $18$ & $1692n^2-17695n+32934$\\
$5$ & $14$ & $64n^2-162n-1813$    & $10$ & $19$ & $3244n^2-38176n+92439$\\
\bottomrule
\end{tabular}
\end{center}
\end{proposition}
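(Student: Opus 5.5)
This is a finite computation, so the plan is to make the enumeration exact and then certify polynomiality over the stated range; it is not a uniform argument.

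\emph{Step 1: reduce each count to a sum over skeletons.} By Theorem~\ref{thm:crit}, for each $n$ the set $\R_{\delta,n}$ is the disjoint union over reduced skeletons $\sigma$ of the solution sets
\[
\Big\{c\in\mathbb Z_{\ge0}^{|\sigma|}:\ \textstyle\sum_i c_i=n-|\sigma|,\ c_i\ge1\ (i\in D(\sigma)),\ \sum_i(d_i-2)c_i+\sum_{\Inv(\sigma)}c_ic_j=\delta-\delta(\sigma)\Big\},
\]
using \eqref{eq:master}. Since $n\le30$, only residuals of length at most $30$ matter.

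\emph{Step 2: show the catalogue is complete in this range.} I would not rely on Conjecture~\ref{conj:cat}. Instead I would obtain $\R_{\delta,n}$ for $n\le30$ directly. Every skeleton of a residual of length $n$ has length at most $n$. By Theorem~\ref{thm:deladmissible} (the sharp skeleton bound, proved by strong induction), the skeletons in $\Sigma_\delta$ have length at most $2\delta+3\le23$. The route is therefore:
\begin{itemize}
\item enumerate all reduced indecomposable $1324$-avoiders of length at most $2\delta+3$, pruned by $\inv(\sigma)\le2s-7+\delta$ as in Proposition~\ref{prop:catfromskel};
\item test each with the membership criterion of Proposition~\ref{prop:base};
\item confirm that the resulting set equals $\Sigma^{(22)}_\delta$ together with the three decreasing skeletons (check C10).
\end{itemize}

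\emph{Step 3: evaluate and compare.} For each $\sigma$ in the catalogue and each $n\le30$, count the solutions of the quadratic equation with the given coordinate sum, by bounded search since all $c_i$ are small except along free directions. Sum over $\sigma$ and compare with $P_\delta(n)$ for $n_0(\delta)\le n\le30$. As an independent cross-check for $n\le22$, compare with the direct residual lists of Proposition~\ref{prop:catalogue}. For $\delta=1,2$ the polynomials are already Theorems~\ref{thm:A} and~\ref{thm:C}, which validates the pipeline.

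The onsets $n_0(\delta)$ are read off as the first $n$ from which the count agrees with the fitted quadratic. Agreement at $31-n_0$ consecutive values, at least $11$, well over the three needed to determine a quadratic, is what the statement asserts. No claim is made beyond $30$.

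\emph{Main obstacle.} The hard part is Step 2: ensuring no skeleton of length between $17$ and $23$ is missed. This requires that the pruned generation really covers all candidates, i.e.\ that the pruning inequality $\delta_0\ge\inv(\sigma)-2s+7$ is monotone along the insertion recursion, as argued in the proof of Proposition~\ref{prop:catfromskel}. It also requires that the solution enumeration for the larger skeletons at $n$ up to $30$ is feasible. For the latter I would first use the nonnegative form of Proposition~\ref{prop:base} to bound the non-free coordinates by $h=\delta-\delta_0$, and treat the free coordinates as compositions counted in closed form.
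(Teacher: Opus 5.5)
Your route is sound, but it is not how the paper establishes this proposition. The paper treats it as a raw computational fact. The inversion-pruned insertion generator lists every residual directly: the streaming run with $N=30$, $K=63$ covers all $33\,342\,781$ residuals of length at most $30$ and defect at most $10$. The counts $|\R_{\delta,n}|$ are read off and compared with $P_\delta(n)$, and the catalogue sums serve only as a cross-check. That is why the statement stops at $n=30$ and can appear before the catalogue is known to be complete.

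Your argument has two parts. First, completeness of the catalogue, from the sharp skeleton bound (Theorem~\ref{thm:deladmissible} with Proposition~\ref{prop:induction}) and the enumeration of Proposition~\ref{prop:catfromskel}. Second, the canonical paid/free sum. This is essentially what the paper does in the last step of Theorem~\ref{thm:deladmissible} (check C9) to extend the polynomials to all $n\ge n_0(\delta)$.

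\emph{What your route buys.} Its cost does not grow with $n$. Since each cell contributes a binomial from $n=L+t+1$ on, it proves the identity for every $n\ge n_0(\delta)$ at once, so there is no reason to stop at $30$.

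\emph{What it costs.} It inherits the whole dependency chain of Theorem~\ref{thm:deladmissible}: C8 through Lemma~\ref{lem:threeclasses}, the length-$\le9$ base case of Proposition~\ref{prop:induction}, and check C12. C12 already requires the exhaustive residual lists up to length $22$. So you do not avoid exhaustive enumeration; you shorten it from length $30$ to $22$ and replace the rest by the skeleton sum. Nothing is circular, since Theorem~\ref{thm:deladmissible} does not use this proposition.

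\emph{A correction to your ``main obstacle''.} Two different facts are being conflated.
\begin{itemize}
\item The quantity that is monotone along the insertion recursion is $\inv$ itself. That is what justifies pruning at a single final inversion cutoff. Reducedness and indecomposability are not inherited along the recursion, so they must be applied as final filters.
\item The inequality $\delta_0\ge\inv(\sigma)-2s+7$ is what makes $\inv(\sigma)\le2s-7+\delta$ a necessary condition for membership. It holds only for $s\ge3$.
\end{itemize}
For $\sigma=21$ one has $\ell=(2,2)$, $\delta_0=3$ and $\inv(\sigma)-2s+7=4$. So at $\delta=3$ the filter discards $21$, even though it contributes the two residuals $21[\iota_2,\iota_{n-2}]$ and $21[\iota_{n-2},\iota_2]$ at every $n\ge5$. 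It must be adjoined by hand, as check C10 does. Otherwise your defect-three count would come out as $P_3(n)-2$, and your comparison with $\Sigma^{(22)}_3$ would fail.
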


The degree is $1$ for $\delta\le2$ and $2$ for $3\le\delta\le10$; the guess $\lfloor(\delta+1)/2\rfloor$ made in
an earlier version of this paper is therefore wrong from $\delta=5$ on. The reason is visible in
\eqref{eq:master}: the free blocks form an increasing subsequence of $\sigma$ all of whose entries have
inversion degree $2$, and by Theorem~\ref{thm:deg2} no residual, at any defect, has four independent such
blocks.

\begin{corollary}\label{cor:2n}
Suppose $|\R_{\delta,n}|=P_\delta(n)$ for all $n\ge n_0(\delta)$ and $1\le\delta\le10$. Then
$a(n,k)\le a(n+1,k)$ for all $n\ge1$ and all $k\le 2n+3$.
\end{corollary}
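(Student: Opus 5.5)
The plan is to go through the identity \eqref{eq:identity} at each defect $\delta=k-2n+7$, exactly as in Theorems~\ref{thm:B} and~\ref{thm:D}. For $k\le2n-5$ the result is already Theorem~\ref{thm:D}, so it suffices to treat $3\le\delta\le10$, i.e.\ $2n-4\le k\le2n+3$. The inequality \eqref{eq:conj} at $(n,k)$ is equivalent to
\[
|\R_{\delta,n}|\le\big|\Av^k_{n+1}(1324)\setminus\im(f\sqcup g)\big|,
\]
and Lemma~\ref{lem:R1} bounds the right-hand side below by $E(n,k)$. Once $k\ge2n-3$ the correction terms in \eqref{eq:E} no longer vanish, so $E(n,k)$ must be used in its full form. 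With $k-n=n-7+\delta$, the leading terms are $2a(n,n-7+\delta)+2a(n,n-6+\delta)$, and the subtracted terms are values $a(n-1,j)$ with $j\le\delta-4\le6$, which are small constants.

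\textbf{Step 1: the large-$n$ range.} By hypothesis $|\R_{\delta,n}|=P_\delta(n)$ for $n\ge n_0(\delta)$, which is a quadratic with explicit coefficients from Proposition~\ref{prop:polys}. I need a lower bound on $a(n,n-7+\delta)$. The plateau identity $a(m,j)=p_2(j)$ holds only for $j\le m-2$, and here $j=n-7+\delta$ can exceed $n-2$ when $\delta\ge6$. So instead I would use the elementary bound $a(n,j)\ge a(n,j')$ type monotonicity, or more simply the lower bound $a(n,j)\ge p_2(j)-(\text{known correction})$ coming from \eqref{eq:LVexact}. The formula \eqref{eq:LVexact} applies since $j=n-7+\delta\le2n-7$ for $n\ge\delta$, and it gives $a(n,j)=p_2(j)-4p_2(j-n+1)-6\sum_{i\le j-n}p_2(i)$ with $j-n\le\delta-7\le3$. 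This is $p_2(n-7+\delta)$ minus an absolute constant.

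Then $E(n,k)-P_\delta(n)$ is bounded below by $2p_2(n-7+\delta)$ minus an explicit quadratic, and $p_2$ grows like $\exp(2\pi\sqrt{n/3})$. So there is a computable threshold $N_\delta$ beyond which the difference is positive. To make this rigorous I would show that $p_2(m)$ dominates a cubic for $m$ beyond a small point, via a simple convolution estimate: $p_2(m)\ge\sum_i p(i)$ over a range, or $p_2(m+1)-p_2(m)\ge$ a linear function. Then I would check finitely many $n$ up to $N_\delta$ numerically, using the explicit values of $p_2$.

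\textbf{Step 2: the intermediate and small range.} For $n_0(\delta)\le n\le\min(N_\delta,30)$ the comparison is a direct numerical evaluation. For $n<n_0(\delta)$, and in particular $n\le\max n_0=19$ or so, the inequality $a(n,k)\le a(n+1,k)$ is read off the table of exact counts for lengths up to about $23$ (Section~\ref{sec:data}); those counts cover $|\R_{\delta,n}|$ for $n\le22$ and $a(n,k)$ directly.

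\textbf{Main obstacle.} I expect the main obstacle to be ensuring that $N_\delta$ falls inside the computed range, so that no gap remains between $30$, the finite tables, and the asymptotic estimate. At $\delta=10$ the quadratic $3244n^2$ is large, while $p_2(n+3)$ at $n=19$ is only in the tens of thousands. If the crossover lies beyond the table range, I would first try sharpening the lower bound on the complement with the $\mathrm{R2b}$ contribution, which is about $2p_2(k-n+1)$ as in Theorem~\ref{thm:B}. That roughly doubles the bound, and I would then verify monotonic positivity via an explicit inequality $p_2(m+1)\ge p_2(m)+cm$.
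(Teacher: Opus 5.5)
Your plan is essentially the paper's proof: bound the complement below by $E(n,k)$ from Lemma~\ref{lem:R1}, including the correction terms, and evaluate it via \eqref{eq:LVexact}. Then show $E(n,k)>P_\delta(n)$ for $n\ge N(\delta)$ with an increasing difference, and read the remaining $n$ off the exhaustive table. The obstacle you flag does not arise, because the paper's thresholds are $N(\delta)=12,14,16,18,20,21,23,24$ for $\delta=3,\dots,10$, each at least $n_0(\delta)$. Moreover, the table of $a(n,k)$ reaches $n=26$ and $k\le55$, not just length $23$, so it covers every $n\le25$ with $k\le2n+3$. Your $\mathrm{R2b}$ fallback, whose count would need re-justifying beyond $k\le2n-5$, is therefore never needed.
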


\begin{proof}
For $k\le2n-5$ this is Theorem~\ref{thm:D}. Let $k=2n-7+\delta$ with $3\le\delta\le10$. By
\eqref{eq:identity} and Lemma~\ref{lem:R1},
\[
a(n+1,k)-a(n,k)\ \ge\ E(n,k)-P_\delta(n),
\]
and $E(n,k)$ is evaluated from the Linusson--Verkama formula \eqref{eq:LVexact}. One checks that
$E(n,k)>P_\delta(n)$ for all $n\ge N(\delta)$ with
\[
\begin{gathered}
N(3)=12,\quad N(4)=14,\quad N(5)=16,\quad N(6)=18,\\
N(7)=20,\quad N(8)=21,\quad N(9)=23,\quad N(10)=24 ,
\end{gathered}
\]
the difference being increasing in $n$ in each case. For $n<N(\delta)$ the inequality $a(n+1,k)\ge a(n,k)$ is
read off from the exhaustive enumeration of Section~\ref{sec:data}: the table of $a(n,k)$ reaches $n=26$ and
$k\le55$, so it contains both sides of \eqref{eq:conj} for every $n\le25$ and every $k\le2n+3$.
\end{proof}

The one hypothesis of Corollary~\ref{cor:2n} is that the polynomials of
Proposition~\ref{prop:polys} persist beyond $n=30$; Theorem~\ref{thm:deladmissible} below supplies it,
and Corollary~\ref{cor:2n3} restates the conclusion unconditionally. We record here what else would
have sufficed, since the alternatives apply at every defect rather than only at $\delta\le10$. That $|\R_{\delta,n}|$ is eventually \emph{some}
polynomial of degree at most two is proved below (Theorems~\ref{thm:poly} and~\ref{thm:deg2}); by the
criterion recorded after Theorem~\ref{thm:poly} it is $P_\delta$ exactly when no skeleton outside the
catalogue $\Sigma^{(22)}_\delta$ contributes a triple with $F\ne\varnothing$ at defect $\delta$. By
Theorem~\ref{thm:crit} this would also follow from Conjecture~\ref{conj:cat}, which turns it into a fixed
finite list of quadratic equations; and by Proposition~\ref{prop:catfromskel} it therefore follows already
from either Conjecture~\ref{conj:skel} or Conjecture~\ref{conj:K}, restricted to $\delta\le10$, so that one
conjectured length bound---on skeletons, or on base residuals---would by itself have made
Corollary~\ref{cor:2n} unconditional. By Proposition~\ref{prop:Kind} the second alternative reduces further,
to the local statement Conjecture~\ref{conj:Kdel}. Theorem~\ref{thm:cover}
below shows that every member of $\Sigma_\delta$ has length at most $8\delta+25$, so $\Sigma_\delta$ is finite
and can in principle be determined by a finite search; the constant is far from what the data suggest, and we
record the sharp form.

\begin{conjecture}\label{conj:skel}
Every $\sigma\in\Sigma_\delta$ has length at most $2\delta+3$; equivalently, every residual $\pi$ satisfies
$\delta(\pi)\ge\big\lceil(|\sk(\pi)|-3)/2\big\rceil$.
\end{conjecture}

A linear bound is proved in Theorem~\ref{thm:cover}; what is conjectured here is the constant, which
Proposition~\ref{prop:catalogue} confirms for the skeletons of the residuals of length at most $22$ at each
$\delta\le10$. Write $\nu(s)$ for the least defect of a residual
whose skeleton has length $s$, and $\mu(s)$ for the least defect of a residual of length $s$ that is itself
reduced, so that $\nu(s)\le\mu(s)$ and the conjecture reads $\nu(s)\ge\lceil(s-3)/2\rceil$. Both quantities
range over residuals of every length, so a search of bounded length can only bound them from above; write
$\nu_{22}(s)$ and $\mu_{22}(s)$ for the same minima taken over the residuals of length at most $22$, so that
$\nu(s)\le\nu_{22}(s)$. Direct enumeration (Section~\ref{sec:data}) gives
\[
\begin{array}{c|ccccccccccccccccc}
s & 5&6&7&8&9&10&11&12&13&14&15&16&17&18&19&20&21\\\hline
\lceil(s-3)/2\rceil & 1&2&2&3&3&4&4&5&5&6&6&7&7&8&8&9&9\\
\nu_{22}(s) & 1&2&2&3&3&4&4&5&5&7&8&10&-&-&-&-&-\\
\mu_{22}(s) & 3&2&2&3&3&4&4&5&5&7&8&10&11&-&-&-&\ 
\end{array}
\]
A dash records that the search range contains no residual with a skeleton of that length: the enumeration
covers every residual of length at most $22$ and defect at most $10$, and none of them has a skeleton longer
than $16$. What the table verifies is therefore not a lower bound on $\nu$ but the absence of a counterexample:
no residual of length at most $22$ and defect at most $10$ violates $\nu(s)\ge\lceil(s-3)/2\rceil$, and for
$5\le s\le16$ the value $\nu_{22}(s)$ meets the conjectured bound with the margins shown. The last row comes
from a separate enumeration of reduced permutations, and its dashes record that no reduced residual of that
length occurs at the defects enumerated there.
The two quantities agree for $6\le s\le16$, the bound is attained for $5\le s\le13$, and the margin grows
from $s=14$ on. The extremal configurations are rigid: for each $s$ the minimum is attained by
exactly eight reduced residuals, two orbits under the group generated by inversion and reverse--complement,
for instance
\[
\begin{aligned}
s=13,\ \delta=5:&\quad 4,3,5,7,2,8,10,1,11,13,12,9,6,\\
s=15,\ \delta=8:&\quad 4,3,5,7,2,8,10,1,11,13,15,14,12,9,6,\\
s=17,\ \delta=11:&\quad 5,4,6,3,7,9,2,10,12,1,13,15,17,16,14,11,8 .
\end{aligned}
\]
Each is a staircase: a decreasing run of small values interleaved with increasing runs of large ones, closed by
a decreasing tail. Conjecture~\ref{conj:skel} is a sharpening of \cite[Theorem~2.5]{LV}, which is the statement
$\nu(s)\ge1$ for $s\ge5$: it asserts that forbidding two adjacent entries of consecutive value costs one
further inversion for every two entries of the skeleton. Together with Theorem~\ref{thm:crit} it would make
$|\R_{\delta,n}|$ a finite computation for each $\delta$, and hence turn
Proposition~\ref{prop:polys} and Corollary~\ref{cor:2n} into theorems at every defect;
Theorem~\ref{thm:deladmissible} does this for $\delta\le10$.

The deletion rule of Proposition~\ref{prop:master} reduces Conjecture~\ref{conj:skel} to a local statement.
Call the deletion of an entry $x$ from a residual $\pi$ \emph{admissible} if $\pi\setminus x$ is again a residual
and
\begin{equation}\label{eq:admissible}
2D(x)\ \ge\ 4+\big(|\sk(\pi)|-|\sk(\pi\setminus x)|\big),
\end{equation}
$D(x)$ being the number of entries of $\pi$ that form an inversion with $x$.

\begin{proposition}\label{prop:induction}
If every residual of length at least $10$ admits an admissible deletion, then Conjecture~\ref{conj:skel} holds.
\end{proposition}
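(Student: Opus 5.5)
We argue by strong induction on the length of the residual, proving the second form of Conjecture~\ref{conj:skel}: every residual $\pi$ satisfies
\[
\delta(\pi)\ \ge\ \big\lceil(|\sk(\pi)|-3)/2\big\rceil ,
\]
equivalently $2\delta(\pi)\ge|\sk(\pi)|-3$. The two forms are equivalent because $\sigma\in\Sigma_\delta$ means $\sigma=\sk(\pi)$ for some $\pi\in\R_{\delta,n}$, and then $|\sigma|\le2\delta+3$ is exactly the displayed inequality.

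\emph{Base case.} For residuals of length at most $9$ the inequality is a finite check. By Theorem~\ref{thm:crit} it suffices to run over reduced skeletons of length at most $9$ together with their small inflations. Alternatively, read it off the enumeration behind Proposition~\ref{prop:catalogue} and the table of $\nu_{22}(s)$, which covers all residuals of length at most $22$. The only subtlety is the short decreasing residuals $321,4321,54321$. They have $\delta=5,5\text{ or }6,7$ and skeleton lengths $3,4,5$, so the inequality holds for them with room to spare.

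\emph{Inductive step.} Let $\pi$ be a residual of length at least $10$. By hypothesis it has an admissible deletion $x$, so $\pi'=\pi\setminus x$ is a residual of smaller length. The deletion rule of Proposition~\ref{prop:master} gives
\[
\delta(\pi)=\delta(\pi')+D(x)-2 .
\]
Writing $\Delta=|\sk(\pi)|-|\sk(\pi')|$, the admissibility condition \eqref{eq:admissible} reads $2(D(x)-2)\ge\Delta$. By the induction hypothesis, $2\delta(\pi')\ge|\sk(\pi')|-3$. Therefore
\[
2\delta(\pi)=2\delta(\pi')+2(D(x)-2)\ \ge\ |\sk(\pi')|-3+\Delta=|\sk(\pi)|-3 .
\]
This is the claim for $\pi$. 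Note that $\Delta$ may be negative, since deleting an entry can merge runs and shorten the skeleton, or split nothing. The inequality above does not care about the sign of $\Delta$, so no case distinction is needed.

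\emph{Main obstacle.} The arithmetic is trivial; the care goes into two points. First, the residual $\pi'$ may have length below $10$ and must fall into the base case, so the base must genuinely cover every residual of length at most $9$, and the chain of deletions must stay inside the class of residuals. Admissibility guarantees the latter by definition. Second, the deletion formula $\delta(\pi\setminus x)=\delta(\pi)+2-D(x)$ must be applied with the defect computed at the respective lengths. This follows since $x$ contributes exactly $D(x)$ inversions and the length drops by one, changing $2n-7$ by $2$. We would verify that Proposition~\ref{prop:master} gives this correctly for the endpoints $x\in\{1,n,\pi_1,\pi_n\}$ as well as for interior entries.
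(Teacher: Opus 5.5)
Your proof is correct and follows the paper's argument: induction on length with the residuals of length at most $9$ as a finite base case, the deletion rule $\delta(\pi)=\delta(\pi\setminus x)+D(x)-2$, and the admissibility inequality. Working with the integer form $2\delta(\pi)\ge|\sk(\pi)|-3$ neatly replaces the paper's ceiling estimate $\lceil(s-3)/2\rceil-\lceil(s-t-3)/2\rceil\le\lceil t/2\rceil$.

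Two side remarks are wrong but harmless. First, $\Delta$ is never negative: only bonds through $x$ can be lost, and if both are lost the new bond between its neighbours compensates. Second, the enumeration behind Proposition~\ref{prop:catalogue} covers only defect at most $10$; this still suffices, because at length at most $9$ the inequality holds automatically once $\delta\ge3$.
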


\begin{proof}
Induct on $n=|\pi|$. For $n\le9$ the inequality $\delta(\pi)\ge\lceil(|\sk\pi|-3)/2\rceil$ is a finite check:
there are $84\,998$ residuals of length at most $9$, of every defect, and all of them satisfy it. Let $n\ge10$, let $x$ be an admissible deletion and put $s=|\sk\pi|$,
$s'=|\sk(\pi\setminus x)|$, $t=s-s'$. By Proposition~\ref{prop:master},
$\delta(\pi)=\delta(\pi\setminus x)+D(x)-2$, and by induction
$\delta(\pi\setminus x)\ge\lceil(s'-3)/2\rceil$. Now
$\lceil(s-3)/2\rceil-\lceil(s-t-3)/2\rceil\le\lceil t/2\rceil$, while \eqref{eq:admissible} says
$D(x)-2\ge\lceil t/2\rceil$ because $D(x)$ is an integer. Adding the two inequalities gives
$\delta(\pi)\ge\lceil(s-3)/2\rceil$.
\end{proof}

\begin{conjecture}\label{conj:del}
Every residual of length at least $10$ admits an admissible deletion.
\end{conjecture}

Much of Conjecture~\ref{conj:del} is immediate. If $\pi=\sigma[b]$ has a block $i$ whose entries have inversion
degree $2$, i.e.\ $\sum_{j\in N(i)}b_j=2$ for the neighbourhood $N(i)$ of $i$ in the inversion graph of $\sigma$,
and if $b_i\ge3$, or $b_i=2$ and $i\notin D(\sigma)$, then deleting an entry of that block leaves the skeleton
unchanged, preserves the defect and, by Theorem~\ref{thm:crit}, gives a residual: the deletion is admissible with
$t=0$. The content of the conjecture is therefore concentrated on the reduced residuals and their small
inflations. It has been checked on all $33\,342\,781$ residuals of length at most $30$ and defect at most $10$: exactly
$74$ admit no admissible deletion, all of them of length $6$, $7$, $8$ or $9$ and of defect at most $5$, the
smallest being $3,5,1,6,2,4$ at defect two and $3,5,1,6,4,2$ at defect three.

The same observation, applied at the base point of Section~\ref{sec:poly}, disposes of every residual
that is long for its defect.

\begin{proposition}\label{prop:delfree}
Every residual of length $n\ge8\delta+30$ admits an admissible deletion. Consequently
Conjecture~\ref{conj:del} at defect $\delta$ is a statement about the finitely many residuals of length
at most $8\delta+29$, and for $\delta\le10$ it remains open only in the range $25\le n\le8\delta+29$.
\end{proposition}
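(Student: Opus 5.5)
The plan is to reduce the statement to the observation made just before it. If some block $i$ of $\pi=\sigma[b]$ has inversion degree $2$, meaning $\sum_{j\in N(i)}b_j=2$, and either $b_i\ge3$, or $b_i=2$ with $i\notin D(\sigma)$, then deleting one entry of that block is an admissible deletion with $t=0$. So it suffices to show that a residual of length $n\ge8\delta+30$ always has such a block. Write $\pi=\sigma[b]$ with $\sigma=\sk(\pi)$ and $s=|\sigma|$. By Theorem~\ref{thm:cover}, $s\le8\delta+25$, so the total excess $\sum_i(b_i-1)=n-s$ is at least $5$. The task is to show that this excess cannot all sit in blocks that are either of inversion degree at least $3$, or of degree $2$ and pinned at size $2$ by membership in $D(\sigma)$.

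To organise this I will use the nonnegative form of \eqref{eq:master} at the base point, Proposition~\ref{prop:base}. It writes $b=\ell+x$, where $\ell$ is the base block vector and $\ell_i\ge2$ on $D(\sigma)$. The defect is then $\delta=\delta_0+Q(x)$, with $Q$ having nonnegative linear and quadratic coefficients, and every $x_i\le h=\delta-\delta_0$.

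First I split the coordinates. A coordinate is \emph{free} if its linear coefficient in $Q$ is zero; this happens exactly when the block has inversion degree $2$ at the base point and lies in an increasing part of $\sigma$ carrying no quadratic terms. Every other coordinate has a positive integer coefficient, so the excess placed on non-free coordinates is at most $h\le\delta+1$. The length of the base residual, $L=\sum_i\ell_i$, is bounded in terms of $s$ and of the number of forced blocks; I expect the same counting that gives $8\delta+25$ in Theorem~\ref{thm:cover} to bound $L$ by about $8\delta+28$. Consequently, whenever $n\ge8\delta+30$, at least two units of excess lie on free coordinates.

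Next I show that a free coordinate carrying excess gives an admissible deletion. Let $i$ be free with $x_i\ge1$. Freeness means its neighbours contribute total weight exactly $2$, and this stays true after $x$ is added, because $Q$ has no cross term between $x_i$ and the neighbours of $i$. For the size condition: if $i\in D(\sigma)$ then $\ell_i\ge2$, so $b_i\ge3$; if $i\notin D(\sigma)$ then $b_i\ge2$. In either case Theorem~\ref{thm:crit} makes $\pi\setminus x$ a residual with the same skeleton, so the deletion has $t=0$ and $D(x)=2$, and \eqref{eq:admissible} holds with equality. The second assertion of the proposition then follows at once: below the threshold only finitely many residuals remain, and the catalogue checks already reach $n\le24$, so the open range is $25\le n\le8\delta+29$.

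The main obstacle is the counting step, namely showing $L+(\text{non-free excess})\le8\delta+29$. This requires a precise bound on the base length, obtained by re-running the proof of Theorem~\ref{thm:cover} with $\ell$ in place of $\mathbf 1$. It also requires checking that the degree-$1$ vertices, whose negative coefficients the base point absorbs, are accounted for. If the constant does not come out exactly, I would first try charging each forced enlargement $\ell_i-1$ against one unit of $\delta_0$ via the inequality $\sum_{i\in D}(d_i-2)+e(D)\ge0$ used in Proposition~\ref{prop:catfromskel}.
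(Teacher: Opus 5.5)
Your outline follows the paper's route: work at the base point, show that zero-cost coordinates carrying excess give deletions with $t=0$, and bound everything else by $8\delta+29$. But the counting step, which you leave open, cannot be closed the way you suggest. A bound on $L$ in terms of $\delta$ alone, such as $8\delta+28$, plus non-free excess up to $h$, does not reach the threshold $8\delta+30$, and ``at least two units'' does not follow either.

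The missing idea is to measure $L$ against $\delta_0$ rather than $\delta$. The base residual $\rho=\sigma[\ell]$ is itself a residual of defect $\delta_0$ with skeleton $\sigma$. So Theorem~\ref{thm:cover} gives $s\le8\delta_0+25$, and $L=s+|D(\sigma)|\le s+4$ because $D(\sigma)\subseteq I(\sigma)$. Hence
\[
L+h\le 8\delta_0+29+(\delta-\delta_0)\le 8\delta+29 .
\]
This is the estimate the paper takes from the end of the proof of Theorem~\ref{thm:poly}, with $t\le2h$ in place of your sharper $\le h$; no rerun of the cover argument is needed. Note also that Proposition~\ref{prop:base} does not say every $x_i\le h$. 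Coordinates with $a_i=0$ and all neighbours at zero are unbounded, which is why long residuals exist at all.

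The second problem is your claim that a coordinate with $a_i=0$ and $x_i\ge1$ keeps inversion degree exactly two ``because $Q$ has no cross term between $x_i$ and the neighbours of $i$''. The quadratic part of $Q$ is $\sum x_ix_j$ over exactly the edges of the inversion graph of $\sigma$. So the degree of block $i$ in $\pi$ is $2+\sum_{j\in N(i)}x_j$. For instance, take $\sigma=24153$, where $\ell=(1,1,2,1,2)$ and $a_1=0$. The vector $x=(1,0,1,0,0)$ gives the defect-two residual $24153[\iota_2,1,\iota_3,1,\iota_2]$, whose first block has degree $3$. The paper avoids this by using the set $F$ of zero-cost support coordinates that have no neighbour in the support.

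In fact the degree does not matter here. When $t=0$, \eqref{eq:admissible} only asks for $D(z)\ge2$, and $D(z)=\sum_{j\in N(i)}(\ell_j+x_j)\ge a_i+2\ge2$. So deleting an entry of any block with $x_i\ge1$ yields $\sigma[\ell+x-e_i]$, which is a residual with skeleton $\sigma$ by Theorem~\ref{thm:crit}, and the deletion is admissible. The proposition then needs only $x\ne0$, i.e.\ $n>L$, which $L\le8\delta_0+29\le8\delta+29$ supplies; the free/non-free split can be dropped entirely.

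For the final sentence of the statement, the finite input is the exhaustive admissibility test on all residuals with $n\le30$ and $\delta\le10$ (check C12 of Section~\ref{sec:data}), not the skeleton catalogue.
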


\begin{proof}
Let $\pi\in\R_{\delta,n}$, let $\sigma=\sk(\pi)$ and write $\pi=\sigma[\ell+x]$ with $\ell$ the base
block vector and $x\in\mathbb Z_{\ge0}^s$ of Proposition~\ref{prop:base}; let $(T,x_T,F)$ be the triple
attached to $x$ in the proof of Theorem~\ref{thm:poly}, so that $T\cup F$ is the support of $x$ and
$t=\sum_{i\in T}x_i$.

If $F=\varnothing$ then the support of $x$ is $T$, so $n=L+\sum_ix_i=L+t$, and the last paragraph of
that proof bounds $L+t\le8\delta+29$. Thus $n\ge8\delta+30$ forces $F\ne\varnothing$.

So let $i\in F$. By the definition of $F$ we have $a_i=0$ and no neighbour of $i$ in the inversion graph
of $\sigma$ lies in the support of $x$; hence $x_j=0$ for every $j\in N(i)$, and the inversion degree in
$\pi$ of each entry of the $i$th block is
\[
\sum_{j\in N(i)}(\ell_j+x_j)\ =\ \sum_{j\in N(i)}\ell_j\ =\ a_i+2\ =\ 2 .
\]
Moreover $i$ lies in the support, so $x_i\ge1$ and the $i$th block of $\pi$ has size
$b_i=\ell_i+x_i\ge\ell_i+1\ge2$. If $i\in D(\sigma)$ then $\ell_i=2$ and $b_i\ge3$; if $i\notin D(\sigma)$
then $b_i\ge2$. Either way the pair $(\pi,i)$ falls under the case treated above: deleting an entry $z$ of
that block leaves a block of size $b_i-1\ge\ell_i$, which is nonempty and is a nonsingleton whenever
$i\in D(\sigma)$, so by Theorem~\ref{thm:crit} the result is again a residual and its skeleton is still
$\sigma$. Therefore $|\sk(\pi)|-|\sk(\pi\setminus z)|=0$ while $D(z)=2$, and \eqref{eq:admissible} reads
$4\ge4+0$: the deletion of $z$ is admissible.

For the last sentence, \eqref{eq:admissible} was tested on all $33\,342\,781$ residuals of length at most
$30$ and defect at most $10$ (Section~\ref{sec:data}), and the only failures have length at most $9$; so
at defect $\delta\le10$ nothing is open below $n=31$, and nothing above $n=8\delta+29$.
\end{proof}

Before recording what a sweep of the residual lists says about that gap, we prove three of the
bounds it produces.

\begin{lemma}\label{lem:onecut}
Let $\pi\in\R_{\delta,n}$ and let $c$ be an entry such that $\pi\setminus c$ is decomposable (a cut
vertex of the inversion graph). Then, after replacing $\pi$ by its reverse--complement if necessary,
\[
\pi=\ \alpha_{\mathrm b}\ \ c\ \ \alpha_{\mathrm a}\ \ L\ \ S ,
\]
where the four segments are nonempty, every entry of $\alpha_{\mathrm b}$ exceeds every entry of
$\alpha_{\mathrm a}$, the values of $\alpha_{\mathrm b}\alpha_{\mathrm a}$ are $\{1,\dots,m\}$, every entry
of $L$ exceeds $c$ and every entry of $S$ lies strictly between $m$ and $c$, $|\alpha_{\mathrm a}|\ge2$ and
$|S|\ge2$. In particular $c$ is inverted exactly with the entries of $\alpha_{\mathrm a}\cup S$, and $c$ is
the only entry of $\pi$ whose deletion is decomposable.
\end{lemma}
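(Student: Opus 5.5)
We analyse the decomposition of $\pi\setminus c$ directly and use indecomposability of $\pi$, the residual conditions, and $1324$-avoidance to pin down where $c$ sits.

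\emph{Step 1: $c$ straddles a cut.} Write $\pi\setminus c=\gamma\dsum\gamma'$, taking the first cut, so $\gamma$ is indecomposable. Let $m=|\gamma|$. In $\pi$, the entries of $\gamma$ carry values forming $\{1,\dots,m+1\}$ minus possibly one value, and $c$ must break every cut of $\pi\setminus c$; otherwise $\pi$ would be decomposable. So either $c$ lies positionally inside or before the $\gamma$-part with value above $m$, or lies after it with value at most $m$. These two alternatives are exchanged by reverse--complement, which preserves residuality, so we may assume the first. Let $\alpha_{\mathrm b}$ be the entries of $\gamma$ before $c$ and $\alpha_{\mathrm a}$ those after. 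The values of $\alpha_{\mathrm b}\alpha_{\mathrm a}$ are then $\{1,\dots,m\}$ and $c>m$.

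\emph{Step 2: shape of the tail.} The remaining entries (the $\gamma'$-part) have values above $m$; split them into those above $c$ ($L$) and those below $c$ ($S$). We then need the following.
\begin{itemize}
\item[(a)] $\alpha_{\mathrm b}$ is nonempty. If $c$ were first, then $\pi\setminus\pi_1$ would be decomposable.
\item[(b)] Every entry of $\alpha_{\mathrm b}$ exceeds every entry of $\alpha_{\mathrm a}$. If $x\in\alpha_{\mathrm b}$ and $y\in\alpha_{\mathrm a}$ with $x<y$, take $z\in S$ (nonempty, below) and some later large entry; then $x,c,y,\cdot$ or $x,c,z,\ell$ gives a $1324$. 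More precisely, use $x<y<c$ together with an element of $L$, or an element of $S$ placed appropriately, to complete the $1324$.
\item[(c)] $S$ entries come after $L$. An $S$ entry $z$ before an $L$ entry $\ell$ gives $\alpha$-entry, $c$, $z$, $\ell$ as a $1324$ (values $<m<z<c<\ell$).
\item[(d)] $L$ and $S$ are nonempty. If $S=\varnothing$, then $c$ is the top of a prefix block and $\pi$ would decompose after $\alpha_{\mathrm a}$ unless $L=\varnothing$, in which case $c=n$ and $\pi\setminus n$ is decomposable, contradicting residuality. If $L=\varnothing$ then $c=n$ again; here I expect to show that $\pi\setminus n$ is decomposable.
\end{itemize}

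\emph{Step 3: size bounds and uniqueness.} To get $|\alpha_{\mathrm a}|\ge2$ and $|S|\ge2$: if $|\alpha_{\mathrm a}|=1$, that entry is the value $1$, and deleting it leaves $\alpha_{\mathrm b}$ with values $\{2,\dots,m\}$ first, making $\pi\setminus1$ decomposable. Symmetrically, if $|S|=1$, then $S$ is the last entry $\pi_n$ (by (c)), and $\pi\setminus\pi_n$ is decomposable because $\alpha_{\mathrm b}c\alpha_{\mathrm a}L$ then splits. The inversion statement for $c$ is read off from the value layout. For uniqueness, deleting any other entry $x$ leaves $c$ still bridging $\alpha$ and $S$, while $\alpha_{\mathrm b}$–$\alpha_{\mathrm a}$ and $L$–$S$ remain connected through the inversion graph since each part has at least two elements. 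A cut vertex of a connected inversion graph other than $c$ would need to separate a component, and I would check that the graph minus $x$ remains connected using the complete bipartite inversion pairs $\alpha_{\mathrm b}\times\alpha_{\mathrm a}$ and $L\times S$.

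\emph{Main obstacle.} The delicate part is Step 2(b),(c): the pattern arguments require choosing the fourth entry correctly. I would first establish (c) and nonemptiness of $L$ and $S$, then use them to derive (b).
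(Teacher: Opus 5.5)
Your proposal is correct and is essentially the paper's proof. It uses the same position/value dichotomy for $c$ relative to a cut of $\pi\setminus c$ (exchanged by reverse--complement), the same two $1324$-occurrences $x,c,y,\ell$ and $e,c,z,\ell$ giving $\alpha_{\mathrm b}>\alpha_{\mathrm a}$ and $L$ before $S$, the same extremal deletions forcing $|\alpha_{\mathrm a}|,|S|\ge2$, and the same chain $\alpha_{\mathrm b}$--$\alpha_{\mathrm a}$--$c$--$S$--$L$ of complete bipartite joins for uniqueness.

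Three loose ends should be tightened:
\begin{itemize}
\item State $\alpha_{\mathrm a}\ne\varnothing$ explicitly. If $c$ sat after all of $\gamma$, the first $m$ positions of $\pi$ would carry $\{1,\dots,m\}$, so $\pi$ would be decomposable.
\item The case $L=\varnothing$ is immediate: then $c=n$, and $\pi\setminus n=\pi\setminus c$ is decomposable by hypothesis, contradicting residuality.
\item In the uniqueness step, it is $\alpha_{\mathrm a}$ and $S$, not every segment, that are guaranteed two entries. Emptying $\alpha_{\mathrm b}$ or $L$ is harmless to connectivity.
\end{itemize}
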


\begin{proof}
Let $c$ be at position $p$ with value $v$, and let $m$ be an index at which $\pi\setminus c$
splits, so that the first $m$ entries of $\pi\setminus c$ carry the values $1,\dots,m$. If $p\le m+1$
and $v\le m+1$, the first $m+1$ positions of $\pi$ would carry $\{1,\dots,m+1\}$, and if $p\ge m+2$ and
$v\ge m+1$ its first $m$ positions would carry $\{1,\dots,m\}$; both contradict the indecomposability
of $\pi$. Hence either $p\le m+1$ and $v\ge m+2$, or $p\ge m+2$ and $v\le m$; reverse--complement
exchanges the two cases and preserves residuals and cut vertices, so we assume the first. Then the
first $m+1$ positions of $\pi$ carry $\{1,\dots,m\}\cup\{v\}$; write $\alpha$ for the pattern of the $m$
entries other than $c$ among them and $\beta$ for the pattern of the entries at positions
$m+2,\dots,n$, whose values are $\{m+1,\dots,n\}\setminus\{v\}$. Since $c$ is not extremal, $p\ge2$
and $v\le n-1$, so $\alpha$ has an entry before $c$ and $\beta$ has an entry $\ell$ with value above
$v$. The entries of $\alpha$ precede those of $\beta$ and are smaller, so $\pi$ splits at $m$ unless
$c$ is inverted with some entry of $\alpha$, which forces $p\le m$.

Four applications of $1324$-avoidance. (1) If $e_1$ precedes $c$ and $e_3$ follows $c$ within $\alpha$
with $e_1<e_3$, then $e_1,c,e_3,\ell$ is an occurrence of $1324$; so every entry of $\alpha$ before $c$
exceeds every entry of $\alpha$ after $c$. Call these two parts $\alpha_{\mathrm b}$ and
$\alpha_{\mathrm a}$. (2) If an entry $s$ of $\beta$ with value below $v$ precedes an entry $\ell'$ of
$\beta$ with value above $v$, then $e_1,c,s,\ell'$ is an occurrence of $1324$ for any $e_1\in\alpha_{\mathrm b}$;
so $\beta=LS$ with $L$ the entries above $v$ and $S$ those below. (3) Any occurrence of $132$ in
$\alpha$ extends by an entry of $\beta$, and any occurrence of $213$ in $\beta$ extends by an entry of
$\alpha$, to an occurrence of $1324$; these facts are not needed below. Thus
$\pi=\alpha_{\mathrm b}\,c\,\alpha_{\mathrm a}\,L\,S$ with $\alpha_{\mathrm a}$ nonempty (as $p\le m$),
$L$ nonempty and $S\ni m+1$ nonempty, and $c$ is inverted exactly with $\alpha_{\mathrm a}\cup S$.

Now the residual conditions. If $\alpha_{\mathrm a}$ were a single entry, it would carry the value $1$
and its deletion would isolate $\alpha_{\mathrm b}$, whose entries are inverted with nothing outside
$\alpha_{\mathrm a}$; so $|\alpha_{\mathrm a}|\ge2$. If $S$ were a single entry, it would be $\pi_n$
and its deletion would isolate $L$, whose entries are inverted only with $S$; so $|S|\ge2$. Finally let
$c'\ne c$. Every entry of $\alpha_{\mathrm b}$ is inverted with every entry of $\alpha_{\mathrm a}$,
$c$ with every entry of $\alpha_{\mathrm a}\cup S$, and every entry of $L$ with every entry of $S$, so
the segments form a chain $\alpha_{\mathrm b}-\alpha_{\mathrm a}-c-S-L$ of complete bipartite
connections. Deleting $c'$ cannot empty $\alpha_{\mathrm a}$ or $S$, and if it empties the end segment
$\alpha_{\mathrm b}$ or $L$ the rest of the chain is unaffected; so the inversion graph of
$\pi\setminus c'$ stays connected and $c'$ is not a cut vertex.
\end{proof}

\begin{lemma}\label{lem:twodrop}
Let $\pi$ be a $1324$-avoiding permutation of length $n\ge6$. An entry $x$ of inversion degree three
whose deletion shortens the skeleton by three lies at position $2$ or $3$ with $x<\pi_1$, or at
position $n-2$ or $n-1$ with $x>\pi_n$; there is at most one such entry on each side, hence at most
two in all.
\end{lemma}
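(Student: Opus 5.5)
The plan is to first characterise when deleting an entry can shorten the skeleton by three, and then to combine that local picture with the degree-three condition and $1324$-avoidance. Let $x$ sit at position $p$. Deleting $x$ affects $|\sk(\pi)|$ in only three ways. First, $x$ must form a singleton block, since otherwise the skeleton is unchanged; this gives a drop of one. Second, the entries $\pi_{p-1},\pi_{p+1}$ become adjacent and may merge; after standardisation this happens iff $\pi_{p+1}=\pi_{p-1}+1$, because the alternative $\pi_{p-1}<x<\pi_{p+1}$ with consecutive values would put $x$ in their run. Third, the values $x-1,x+1$ become consecutive and may merge; this happens iff they occupy adjacent positions $r,r+1$ with $x-1$ first, and $r\ne p-1$ since that pair was already handled. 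Each merge lowers the count by at most one. A drop of three therefore forces $2\le p\le n-1$ together with all three events.

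Next I would extract the inversions these events force. Since $\pi_{p-1},\pi_{p+1}$ have consecutive values, $x$ lies above both or below both, so $x$ is inverted with exactly one of them. Likewise $x$ is inverted with exactly one of $x-1$ (if $x$ precedes the pair) and $x+1$ (if $x$ follows it). Degree three leaves exactly one further inversion $y$. Using the reverse--complement symmetry, which swaps the two conclusions, I reduce to the case $x<\pi_{p-1}$, so $x$ is inverted with $\pi_{p-1}$.

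The main step is a case analysis on whether $x\pm1$ lies left or right of $x$ and on where $y$ sits. I would write $D(x)=\#\{l<p:\pi_l>x\}+\#\{l>p:\pi_l<x\}=3$. In the favourable subcase, $x-1,x+1$ lie to the right, so $x-1$ contributes the right-hand inversion. Then at most two entries before $x$ exceed $x$, and every other earlier entry is smaller than $x$. I would now use $1324$-avoidance with the merged pair and the entry $x+1$ to show no entry before $x$ can be smaller than $x$. Such a small entry $s$ at position $<p-1$ would give an occurrence $s,\pi_{p-1},x,x+1$ of $1324$, or a variant of it. This pins $p\le3$. Moreover $\pi_1>x$, and counting forces the inversions before $x$ to be exactly with $\pi_1$ and/or $\pi_{p-1}$.

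In the opposite subcase, $x\pm1$ lie to the left and $x$ is inverted with $x+1$ among earlier entries. Here I expect to show, again by $1324$ occurrences built from $\pi_{p-1},x+1$ and late entries, either that the configuration is impossible under $x<\pi_{p-1}$ or that it reflects to the second conclusion. The hypothesis $n\ge6$ is used to guarantee enough entries to build these occurrences. This bookkeeping of which of the at most three inverted entries is $y$, and excluding each placement via a concrete $1324$, is where I expect the real work. I would tabulate the placements of $y$ (before $p-1$, between $x$ and the pair $x\pm1$, after) and dispatch each one.

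Finally, for uniqueness on each side, suppose two entries $x\ne x'$ both sat at positions $2$ or $3$ with values below $\pi_1$ and had the required structure. Then $p=2$ and $p'=3$. The merge condition for one would require its neighbours to have consecutive values, but the other entry is one of those neighbours. The counts $D=3$ would then force $\pi_1,\pi_2,\pi_3,\pi_4$ into a pattern in which one of the two merges fails, and a direct check of these few configurations would finish the argument.
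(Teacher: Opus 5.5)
Your description of a three-step drop ($\pi_{p-1}=a$, $\pi_{p+1}=a+1$, $x$ a singleton run, $x-1$ immediately followed by $x+1$) and your reduction to $x<\pi_{p-1}$ are fine. The gap is that the step which actually bounds $p$ is not established.

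The occurrence you name, $s,\pi_{p-1},x,x+1$, is not a $1324$. In your favourable subcase $x+1$ lies to the right of $x$, so $x+1\ne a$ and $s<x<x+1<a$; the four entries form a $1423$. The opposite subcase ($x\pm1$ to the left) is left as an expectation, to be settled with $x+1$ and late entries or by reflecting to the other conclusion. The tabulation of where $y$ can sit is never carried out.

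Your count "exactly one further inversion $y$" also assumes that $x\pm1$ are different entries from $\pi_{p\pm1}$. In the coincidence $x+1=\pi_{p-1}$ (with $x-1$ at position $p-2$), $x$ is inverted with one entry where you count two, so this case has to be excluded separately.

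The missing idea is to use the merged pair itself as the $3$ and the $4$. If some $w<x$ stands before position $p-1$, then $w,\,a,\,x,\,a+1$ in positions $<p-1$, $p-1$, $p$, $p+1$ is a $1324$. Hence every entry before $x$ exceeds $x$ and is inverted with it. In particular $x-1$ lies after $x$, so your opposite subcase and the coincidence above cannot occur, and $x-1$ is also inverted with $x$. This gives $D(x)\ge(p-1)+1=p$, and $D(x)=3$ yields $p\in\{2,3\}$ and $\pi_1>x$ at once, with no case analysis on $x\pm1$ or on $y$. This is exactly the paper's argument; the case $x>a+1$ is its reverse--complement, via the occurrence $a,x,a+1,z$ for any later $z>x$.

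Uniqueness is then a one-liner rather than a check of configurations. An entry at position $2$ forces $\pi_3=\pi_1+1>\pi_1$, so position $3$ cannot also carry such an entry below $\pi_1$.

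Finally, $n\ge6$ is not needed to build occurrences. In the paper it serves only to make the position sets $\{2,3\}$ and $\{n-2,n-1\}$ disjoint.
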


\begin{proof}
Let $x=\pi_p$ have value $v$. The skeleton shortens by three exactly when $x$ is a run of length one,
its neighbours in position carry consecutive values $\pi_{p-1}=a$, $\pi_{p+1}=a+1$, and the values
$v-1$ and $v+1$ occupy two adjacent positions; in particular $1<p<n$ and $v\notin\{a,a+1\}$.

Suppose $v>a+1$. If some entry $z>v$ followed $x$, then $a,v,a+1,z$ would be an occurrence of $1324$
(with $\pi_{p+1}=a+1$ playing the part of $2$); so every entry after $x$ is smaller than $v$ and
inverted with $x$, which gives $D(x)\ge n-p$. The value $v+1$ therefore precedes $x$ and is inverted
with it, so $D(x)\ge n-p+1$ and $D(x)=3$ forces $p\in\{n-2,n-1\}$. For $p=n-2$ all $n-v$ values
above $v$ precede $x$, so $v=n-1$; for $p=n-1$ exactly two entries above $v$ precede $x$. Both cannot
occur: $p=n-2$ gives $\pi_{n-2}=n-1$, and $p=n-1$ would need $\pi_{n-1}>\pi_{n-2}+1=n$.

Suppose $v<a$. If some entry $w<v$ preceded $\pi_{p-1}$, then $w,a,v,a+1$ would be an occurrence of
$1324$; so every entry before $x$ exceeds $v$ and is inverted with it, $v-1$ follows $x$, and
$D(x)\ge p$ forces $p\in\{2,3\}$, with $v=3$ for $p=2$ and $v=2$ for $p=3$. Both cannot occur, since
$p=2$ gives $\pi_3=\pi_1+1\ge5$ while $p=3$ needs $\pi_3=2$. The two cases are exchanged by
reverse--complement, and for $n\ge6$ the positions $\{2,3\}$ and $\{n-2,n-1\}$ are disjoint.
\end{proof}

The third bound needs no residual hypothesis either, and counts cut vertices without assuming that
there is only one.

\begin{lemma}\label{lem:threecut}
The inversion graph of an indecomposable $1324$-avoiding permutation has at most three cut vertices:
at most one entry that is neither first, last, smallest nor largest, together with possibly the
smallest entry and the last entry, or, after reverse--complement, the first entry and the largest one.
\end{lemma}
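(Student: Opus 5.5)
The plan is to classify every cut vertex $c$ of the inversion graph by reusing the first half of the proof of Lemma~\ref{lem:onecut}. That half never used the residual hypothesis, only indecomposability and $1324$-avoidance of $\pi$.

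\emph{Non-extremal cut vertices.} Let $c$ be a cut vertex that is neither first, last, smallest nor largest. The argument of Lemma~\ref{lem:onecut}, up to the paragraph ``Now the residual conditions'', applies verbatim. After a possible reverse--complement it gives
\[
\pi=\alpha_{\mathrm b}\,c\,\alpha_{\mathrm a}\,L\,S ,
\]
with all four segments nonempty. The segments form a chain $\alpha_{\mathrm b}-\alpha_{\mathrm a}-c-S-L$ of complete bipartite connections.

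Now take another entry $c'\ne c$. Deleting $c'$ leaves the inversion graph connected unless $c'$ is the only entry of one of the two interior segments $\alpha_{\mathrm a}$ or $S$. Deleting an entry of $\alpha_{\mathrm b}$ or $L$, even one that empties the segment, only removes an end of the chain. So the extra cut vertices are at most the single entry of $\alpha_{\mathrm a}$ and the single entry of $S$.

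If $|\alpha_{\mathrm a}|=1$, that entry carries the value $1$, since $\alpha_{\mathrm a}$ lies below $\alpha_{\mathrm b}$ and all of $\alpha$ lies below $\beta$. If $|S|=1$, that entry is $\pi_n$. In particular no second non-extremal cut vertex exists. The possible cut vertices are then $c$, the smallest entry and the last entry, or, in the reverse--complemented case, $c$, the first entry and the largest entry. This is exactly the statement whenever a non-extremal cut vertex is present.

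\emph{Only extremal cut vertices.} It remains to treat the case where every cut vertex lies among $\pi_1$, $\pi_n$, the value $1$ and the value $n$. I expect this case to be the main obstacle. The aim is to show that the set of cut vertices is contained in $\{1,\pi_n\}$ or in $\{\pi_1,n\}$. Using inversion and reverse--complement, which permute the four extremal deletions and preserve $1324$-avoidance and indecomposability, the forbidden pairs reduce to two representatives. One is $\{\pi_1,1\}$, to which $\{\pi_n,n\}$ is equivalent. The other is $\{\pi_1,\pi_n\}$, to which $\{1,n\}$ is equivalent. Entries that coincide, such as $\pi_1=n$, need separate but easy handling.

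For $\{\pi_1,1\}$, I would write $\pi\setminus\pi_1=\alpha\oplus\beta$. As in Lemma~\ref{lem:bdry}(i), $\pi_1$ exceeds $|\alpha|$, so $\pi_1$ is inverted with all of $\alpha$ and with some entry of $\beta$. Then $\pi\setminus 1$ splitting forces an initial segment of values $1,\dots,r$ at the first $r$ positions once the value $1$ is removed. Since $\pi_1$ sits at position $1$, this forces $\pi_1\le r+1$. Comparing the two splittings should produce either a splitting of $\pi$ itself or a pattern $1\,\pi_1\,x\,y$ with $x\in\beta$ below $\pi_1$ and $y\in\beta$ above it, which is an occurrence of $1324$.

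For $\{\pi_1,\pi_n\}$, I would compare the first component of $\pi\setminus\pi_1$ with the last component of $\pi\setminus\pi_n$. Indecomposability of $\pi$ forces $\pi_1$ large and $\pi_n$ small, and then an entry of a middle component, together with $\pi_1$ and $\pi_n$ and one further entry, should yield a $1324$.

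I would carry out these two small case analyses directly and check them against the exhaustive lists of Section~\ref{sec:data} for short lengths before writing them up.
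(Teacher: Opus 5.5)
Your non-extremal case is correct and is the paper's argument. The chain $\alpha_{\mathrm b}$--$\alpha_{\mathrm a}$--$c$--$S$--$L$ leaves only a lone entry of $\alpha_{\mathrm a}$ (the value $1$) or a lone entry of $S$ (the last entry) as further cut vertices.

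\textbf{The gap.} It lies in the all-extremal case. There your goal, that the cut vertices lie in $\{1,\pi_n\}$ or in $\{\pi_1,n\}$, is false, so the planned exclusion of $\{\pi_1,\pi_n\}$ (equivalently $\{1,n\}$) cannot be carried out.
\begin{itemize}
\item Take $3142$. Its inversion graph is the path on the values $1,3,2,4$ in that order, and its cut vertices are exactly $\pi_1=3$ and $\pi_4=2$.
\item More generally, for all $t,s\ge1$ the inflation $3142[1,\iota_t,\iota_s,1]$ is an indecomposable $1324$-avoider; for instance $t=2$, $s=1$ gives $41253$. Its inversion graph is a double star centred at $\pi_1$ and $\pi_n$. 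These are its only cut vertices, and $\pi_1$ is not the maximum and $\pi_n$ is not the minimum.
\item The inverses ($2413$, $23514$, \dots) have exactly the smallest and the largest entries as cut vertices.
\end{itemize}
So no $1324$ can be extracted from $\pi_1$, $\pi_n$ and middle entries. The finer description in the statement (smallest and last, or first and largest) holds only when a non-extremal cut vertex is present.

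\textbf{What suffices.} In the all-extremal case the true statement, and all that the bound of three needs, is that at most two extremal entries are cut vertices. Concretely, $\pi_1$ and $1$ are not both cut vertices, and by reverse--complement neither are $\pi_n$ and $n$. This needs no $1324$-avoidance.
\begin{itemize}
\item If $\pi_1$ is a cut vertex then, as in Lemma~\ref{lem:onecut}, $\pi=\pi_1\,\alpha\,\beta$ with $\alpha$ carrying $\{1,\dots,m\}$ and $\pi_1\ge m+2$.
\item Each sum component of $\beta$ contains a value below $\pi_1$. Otherwise it would be inverted with nothing outside itself, and $\pi$ would be decomposable.
\item Hence in $\pi\setminus1$ every entry of $\alpha\setminus\{1\}$ and every component of $\beta$ is still joined to $\pi_1$, so $1$ is not a cut vertex.
\end{itemize}
Your sketched occurrence $1\,\pi_1\,x\,y$ could not arise anyway, since $\pi_1$ is the first entry.

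Replace the $\{\pi_1,\pi_n\}$ step by this count and the proof is complete, with the all-extremal conclusion weakened accordingly. That weaker form is exactly what the paper's own proof establishes.
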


\begin{proof}
Let $H$ be indecomposable and $1324$-avoiding, of length $N$, and let $c$ be a cut vertex, at position
$p$ with value $v$, so that $H\setminus c$ splits at some index $m$. Exactly as in the proof of
Lemma~\ref{lem:onecut}, either $p\le m+1$ and $v\ge m+2$, or $p\ge m+2$ and $v\le m$, and
reverse--complement exchanges the two cases; in the first case the first $m+1$ positions carry
$\{1,\dots,m\}\cup\{v\}$, and $p\le m$ because $c$ must be inverted with an entry of $\alpha$.

Suppose first that some cut vertex $c$ is not extremal, so $p\ge2$ and $v\le N-1$ in the first case.
The two applications (1) and (2) of $1324$-avoidance in the proof of Lemma~\ref{lem:onecut} used only
these two facts, so $H=\alpha_{\mathrm b}\,c\,\alpha_{\mathrm a}\,L\,S$ with all four segments nonempty and
with the complete bipartite connections $\alpha_{\mathrm b}$--$\alpha_{\mathrm a}$,
$c$--$(\alpha_{\mathrm a}\cup S)$ and $L$--$S$. Deleting an entry $c'\ne c$ keeps this chain connected
unless it empties $\alpha_{\mathrm a}$ or $S$; so the cut vertices of $H$ are $c$, the entry of
$\alpha_{\mathrm a}$ if $|\alpha_{\mathrm a}|=1$ (it carries the value $1$), and the entry of $S$ if
$|S|=1$ (it is $\pi_N$). In particular $c$ is the only non-extremal cut vertex and there are at most
three in all.

Suppose next that every cut vertex is extremal. If $\pi_1$ is a cut vertex, then $p=1$ forces the first
case, so $H=\pi_1\,\alpha\,\beta$ with $\alpha$ carrying $\{1,\dots,m\}$ at positions $2,\dots,m+1$ and
$v=\pi_1\ge m+2$. The entry $\pi_1$ is inverted with every entry of $\alpha$, and every sum component
of $\beta$ contains a value below $v$, since otherwise it would be inverted with nothing outside itself
and $H$ would be decomposable; hence in $H\setminus1$ every entry of $\alpha\setminus\{1\}$ and every
component of $\beta$ is joined to $\pi_1$, and the value $1$ is not a cut vertex. Applying this to the
inverse permutation (inversion preserves $1324$-avoidance, indecomposability and the inversion graph,
and exchanges $\pi_1$ with the value $1$) shows that if $1$ is a cut vertex then $\pi_1$ is not; and
reverse--complement gives the same for $\pi_N$ and $N$. So at most two extremal entries are cut vertices.
\end{proof}

For a residual $\pi$ and an extremal entry $y$, the permutation $\pi\setminus y$ is an indecomposable
$1324$-avoider, so Lemma~\ref{lem:threecut} bounds by three the entries $x$ whose deletion turns $y$ into
a cut vertex; this is the bound $4\cdot3=12$ used in the proof of Theorem~\ref{thm:deladmissible} below.

The last ingredient bounds the entries of inversion degree two that shorten the skeleton.

\begin{lemma}\label{lem:threeclasses}
Let $\pi$ be a residual of length $n\ge8$. Call a maximal set of entries of inversion degree two with
a common inversion neighbourhood a \emph{degree-two class}; by Lemma~\ref{lem:samenbhd} it is a run.
Then $\pi$ has at most three degree-two classes.
\end{lemma}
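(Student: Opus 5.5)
I plan to argue directly from the positional structure of a degree-two entry in a $1324$-avoider, combined with the residual conditions. Let $x=\pi_p$ have inversion degree two. The starting observation is that one of its two neighbours must lie on each side, or both on one side. The possible configurations for the pair of neighbours $(y,z)$ are then: both before $x$ and larger; both after $x$ and smaller; or one larger before and one smaller after.

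\emph{Step 1: locating the classes.} For each configuration, I will use $1324$-avoidance together with indecomposability to pin down where the class can sit. If both neighbours precede $x$ and are larger, then every entry before $x$ other than $y,z$ is smaller than $x$. Every entry after $x$ is larger. Since $\pi$ is indecomposable, some entry smaller than $x$ must come later, unless $x$ is near the end. Using $1324$-avoidance I expect to show that the class consists of the last few positions or is adjacent to the largest value. Symmetrically, both neighbours following and smaller forces the class near the beginning or near the value $1$. The mixed case should force $x$ into one of the ``corridor'' positions between the blocks of Lemma~\ref{lem:LV}, namely the southern, eastern, top or prefix runs of Lemma~\ref{lem:single}.

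\emph{Step 2: reducing to the region picture.} By Lemma~\ref{lem:LV} and inversion symmetry, I will assume $a<b$, $p<q$ and $\NW\ne\varnothing$. Each degree-two class then lies in one of the increasing runs identified in Lemma~\ref{lem:single}(c) and in the Case~II decomposition of Theorem~\ref{thm:C}. These runs are the prefix, southern points, top entries, $\Ereg(b)$, and the tail. Lemma~\ref{lem:samenbhd} says a class is a run, so counting classes amounts to counting runs whose entries have exactly two inversion partners. Next I will show that two of these candidate runs cannot simultaneously have degree two. The mechanism is that their degree-two condition forces a specific neighbour, such as the first southern point and $n$, or $v$ and $b$. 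A fourth class would then require a second northwestern point, or a nonempty central region, or it would create a $1324$ with $\pi_1$ and $\pi_n$. The residual hypothesis enters through Lemma~\ref{lem:deletions}, which guarantees $\Sreg(i_1)$ and $\Ereg(v_m)$ are nonempty, and through Lemma~\ref{lem:M}. The assumption $n\ge8$ excludes the small exceptional residuals.

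\emph{Main obstacle.} The hard part is that the region description of Lemma~\ref{lem:single} was established only under small slack, whereas the present lemma is for arbitrary defect. I therefore cannot rely on the exact block shape. I would first try to prove the more robust claim that each degree-two class has one of four neighbourhood types: $\{\pi_1\text{-block},\ \cdot\}$, $\{n,\cdot\}$, $\{1,\cdot\}$, $\{\pi_n,\cdot\}$. That is, every degree-two entry is inverted with an extremal entry. I would derive this from $1324$-avoidance, noting that a degree-two entry avoiding all four extremes yields an occurrence of $1324$ with two extremes. I would then show that two distinct classes cannot share an extremal neighbour type without having equal neighbourhoods. Finally, I would show that all four types cannot occur at once, since together they force $\pi\setminus x$ to be decomposable for some extremal $x$, contradicting residuality. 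If the last step fails in general, I would fall back on a finite check over skeletons of small length, via Theorem~\ref{thm:crit}, because degree-two classes are determined by the skeleton.
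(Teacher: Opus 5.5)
\textbf{There is a genuine gap.} The counting scheme in your last paragraph does not work, and Steps~1--2 cannot replace it.

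Your first claim is true and easy. If $x\notin\{\pi_1,\pi_n,1,n\}$ were inverted with none of these four entries, then $\pi_1<x<\pi_n$, with $1$ before $x$ and $n$ after it. A larger entry $y$ before $x$ would then give the $1324$ $\pi_1,y,x,n$, and a smaller entry $z$ after $x$ would give $1,x,z,\pi_n$.

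Your second claim is false for $n\ge8$. That claim says two classes sharing an extremal neighbour must have equal neighbourhoods. Take
\[
\pi=3\,4\,10\,2\,5\,6\,9\,1\,7\,8=3724615[2,1,1,2,1,1,2],
\]
an inflation of the first marked core of the appendix.
\begin{itemize}
\item The reduced skeleton $3724615$ has all four extremal deletions indecomposable, so $D(3724615)=\varnothing$.
\item $\pi$ has $17=2\cdot10-7+4$ inversions, so $\pi\in\R_{4,10}$ by Theorem~\ref{thm:crit}.
\item Its degree-two classes are $\{3,4\},\{5,6\},\{7,8\}$, with neighbourhoods $\{1,2\},\{1,10\},\{9,10\}$.
\end{itemize}
The middle class shares the extremal neighbour $1$ with the first class and $n=10$ with the last. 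So no pigeonhole over the four extremal entries bounds the number of classes.

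Your third claim fails as stated. The residual $3612745\in\R_{1,7}$ has the four classes $\{3\},\{1,2\},\{7\},\{4,5\}$. Their neighbourhoods contain $1$, $\pi_1$, $\pi_n$, $n$ respectively, yet no extremal deletion is decomposable. Any proof must therefore use $n\ge8$ essentially, and your mechanism never does.

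The region picture of Step~2 is unavailable at general defect, as you note. The fallback to skeletons does not help either. The lemma covers every defect, and the skeleton length bound $2\delta+3$ grows with $\delta$. Moreover, degree two is not a skeleton property: an entry of block $i$ has degree $\sum_{j\in N(i)}b_j$, which depends on the block sizes.

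The missing idea is to localize first and bring residuality back afterwards.
\begin{itemize}
\item Choose one entry from each of four putative classes, together with their at most eight neighbours. This pattern of length at most $12$ preserves degrees, non-adjacency and distinctness of neighbourhoods.
\item The exhaustive check C8 then shows that two of the classes, $R$ and $R'$, are inverted with each other.
\item Hence $R\subseteq N(R')$ and $R'\subseteq N(R)$. So the two classes are either two singletons or a singleton attached to a pair; two pairs would form a component of size four.
\item In the pair case, the other common neighbour $z$ of the pair is a cut vertex. Lemma~\ref{lem:onecut} gives $\pi=\alpha_{\mathrm b}\,z\,\alpha_{\mathrm a}\,L\,S$, and four classes then force $n=7$.
\item In the two-singleton case, $1324$-avoidance and the residual conditions (including Lemma~\ref{lem:onecut} and the fact that each singleton class is not part of a longer run) pin $\pi$ down. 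Up to reverse--complement, $\pi=P\,n\,a\,(n-2)$ with $n-1\in P$. A segment-by-segment degree count inside $P$ then leaves room for at most one further class.
\end{itemize}
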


\begin{proof}
Suppose $\pi$ has four classes. Choosing one entry from each gives four entries of degree two with
pairwise distinct neighbourhoods; the four entries and their at most eight neighbours form a pattern
of length at most twelve in which their degrees and neighbourhoods are preserved, as in the proof of
Theorem~\ref{thm:deg2}, so by the finite statement C8 two of them are inverted; call the classes $R$ and $R'$. Then $R\subseteq N(R')$ and
$R'\subseteq N(R)$, so both classes have at most two entries; if both had two, $R\cup R'$ would be a
connected component of the inversion graph, and $n=4$.

\emph{Two single entries.} Suppose $R=\{x\}$ and $R'=\{y\}$ with $x$ before $y$, so $x>y$, and write
$N(x)=\{y,a\}$, $N(y)=\{x,b\}$. An entry $w\notin\{a,b\}$ strictly between $x$ and $y$ in position
would satisfy $w>x$ and $w<y$; so there is none. If $a$ preceded $x$ then $a>x>y$ and $a$ would be
inverted with $y$, so $a=b$ and $\{x,y\}$ would be a connected component of $G(\pi)\setminus a$,
contradicting Lemma~\ref{lem:onecut}, whose two components have at least three entries. So $a\ne b$, $a$
follows $x$, with $a<y$ if it lies between $x$ and $y$ and $y<a<x$ if it follows $y$; symmetrically
$b$ precedes $y$, with $b>x$ if it lies between and $y<b<x$ if it precedes $x$. Every other entry
before $x$ is smaller than $y$ and every other entry after $y$ is larger than $x$, and the only
values strictly between $y$ and $x$ are those of $a$ and $b$. If some entry $w<y$ preceded $x$ and
some entry $u>x$ followed $y$, then $w,x,y,u$ would be an occurrence of $1324$; so one of the two
sides is empty, and by reverse--complement we may assume that no entry other than $a$ follows $y$.
Then the only entry that can exceed $x$ is $b$, and only if $b$ lies between $x$ and $y$; in that
case $b=n$ and $x=n-1$, and since no other value lies between $y$ and $x$, either $a$ lies between as
well and $x=y+1$, or $a$ follows $y$ and $a=y+1$. In the first case the orders $n-1,a,n,n-2$ and
$n-1,n,a,n-2$ make $\pi\setminus\pi_n$ end with its maximum or put $x$ in the run $(n-1)\,n$; in the
second case $\pi$ ends with $n-1,n,n-3,n-2$ and again $x$ lies in a run. So $b$ precedes $x$ and
$x=n$. If $a$ followed $y$, the values $a,b$ would be $n-1,n-2$ in some order: $a=n-2$ puts $y=n-3$
in the run $(n-3)\,(n-2)$, and $a=n-1$ makes $\pi\setminus n$ end with its maximum. So $a$ lies
between $x$ and $y$ with $a<y$, hence $y=n-2$ and $b=n-1$, and $a\ne n-3$ since otherwise $a,y$ would
form a run. So
\[
\pi=P\ \,n\ \,a\ \,(n-2),\qquad n-1\in P,\qquad a\le n-4,
\]
and every entry of $P$ other than $b=n-1$ is smaller than $n-2$. Write $P=P_1\,b\,P_2$; then $P_2$
is nonempty (else $b\,x$ is a run), every entry of $P_1$ exceeds every entry of $P_2$ (else
$w_1,b,w_2,n$ is a $1324$), and within $P\setminus b$ every entry above $a$ precedes every entry
below $a$ (else $w,p,a,y$ is a $1324$). Hence $P=H_1\,L_1\,b\,H_2\,L_2$ with $H_i$ above $a$ and
$L_i$ below $a$, and $L_1=\varnothing$ or $H_2=\varnothing$. Now $N(b)=P_2\cup\{a,y\}$ and
$N(a)=H_1\cup H_2\cup\{x,b\}$, where $H_1\cup H_2$ contains $n-3$; so neither $a$ nor $b$ has degree
two, and the third and fourth classes lie in $P$. Counting inversions segment by segment, an entry of
$H_1$ has degree $|L_1|+|P_2|+1$ plus its inversions inside $H_1$, an entry of $L_1$ has degree
$|H_1|+|P_2|$ plus those inside $L_1$, an entry of $H_2$ has degree $|P_1|+|L_2|+2$ plus those inside
$H_2$, and an entry of $L_2$ has degree $|P_1|+|H_2|+1$ plus those inside $L_2$. Entries of degree
two in two of these segments are impossible for $n\ge7$: $H_1$ with $L_1$, $H_1$ with $H_2$,
$H_2$ with $L_2$ and $L_1$ with $H_2$ are excluded outright by the formulas and by
$L_1=\varnothing$ or $H_2=\varnothing$; $H_1$ with $L_2$ forces $|P_1|=|P_2|=1$ and $n=6$; and $L_1$
with $L_2$ forces $H_1=H_2=\varnothing$, contradicting $n-3\in H_1\cup H_2$. So at most one
further class exists in two different segments. Two classes inside one segment would need two entries
of degree two with different neighbourhoods, hence an inversion inside the segment, which the formulas
allow only in $L_1$ with $H_1=\varnothing$ and $|P_2|=1$, or in $L_2$ with $P_1=H_2=\varnothing$; in
the first case $P_2=H_2=\{n-3\}$ would lie below $L_1\subseteq P_1$, contradicting $L_1<a$, and in the
second $H_1\cup H_2$ would be empty. So $\pi$ has at most three classes.

\emph{A pendant pair.} Otherwise $R'=\{r_1,r_2\}$ has two entries and $R=\{x\}$ with $N(x)=R'$ and
$N(R')=\{x,z\}$. Then $\{x,r_1,r_2\}$ is a connected component of $G(\pi)\setminus z$, so $z$ is a
cut vertex and, by Lemma~\ref{lem:onecut} and up to reverse--complement,
$\pi=\alpha_{\mathrm b}\,z\,\alpha_{\mathrm a}\,L\,S$ with $\{x,r_1,r_2\}$ equal to
$\alpha_{\mathrm b}\cup\alpha_{\mathrm a}$ or to $L\cup S$. In the first case $\alpha_{\mathrm b}=\{x\}$
and $\alpha_{\mathrm a}=R'$, so $\pi=3\,v\,1\,2\,L\,S$ with $v\ge6$; the classes are $\{3\}$,
$\{1,2\}$, at most one class inside $L$ (an entry of $L$ has degree two only if $|S|=2$ and it is
inverted with nothing else in $L$, and all such entries share the neighbourhood $S$) and at most one
class inside $S$ (only if $|L|=1$). Four classes therefore force $|L|=1$ and $|S|=2$, that is $n=7$.
The second case is the mirror image, $L=\{x\}$ and $S=R'$, where four classes force
$|\alpha_{\mathrm b}|=1$ and $|\alpha_{\mathrm a}|=2$, again $n=7$. For $n\ge8$ there are at most
three classes.
\end{proof}

The residuals $3612745$ (a pendant pair) and $351624$ (two single entries) show that the bound fails
at $n=7$ and $n=6$.

\begin{lemma}[change of skeleton length under deletion]\label{lem:skeldropthree}
If $\pi'$ is obtained from a permutation $\pi$ by deleting one entry and
standardizing, then $|\sk(\pi)|-|\sk(\pi')|\le3$.
\end{lemma}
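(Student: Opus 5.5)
The plan is to count \emph{adjacencies} rather than runs. Call a position $t$ of a permutation $\tau$ of length $N$ an adjacency if $\tau_{t+1}=\tau_t+1$. The maximal increasing consecutive runs are exactly the blocks separated by non-adjacencies, so $|\sk(\tau)|=N-\mathrm{adj}(\tau)$. Hence, with $|\pi'|=|\pi|-1$,
\[
|\sk(\pi)|-|\sk(\pi')|=1+\mathrm{adj}(\pi')-\mathrm{adj}(\pi),
\]
and it suffices to show $\mathrm{adj}(\pi')\le\mathrm{adj}(\pi)+2$. In words, deleting one entry creates at most two new adjacencies, net of those it destroys.

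Let the deleted entry $x$ sit at position $p$ with value $v$. Write the entries of $\pi'$ with their original positions and values, standardization being order-preserving. A pair of entries of $\pi'$ that are adjacent in position in $\pi'$ and consecutive in value in $\pi'$ must, in $\pi$, satisfy two conditions. First, they are either adjacent in position or they are $\pi_{p-1},\pi_{p+1}$. Second, they are either consecutive in value or they are the values $v-1,v+1$. An adjacency of $\pi'$ that was not an adjacency of $\pi$ must therefore use at least one of the two exceptional relaxations. There are two kinds.

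\begin{itemize}
\item[(a)] The pair $(\pi_{p-1},\pi_{p+1})$, which is at most one pair.
\item[(b)] A positionally adjacent pair in $\pi$ carrying the values $v-1$ then $v+1$, which is again at most one pair, since the values $v-1$ and $v+1$ each occur once.
\end{itemize}

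So at most two new adjacencies arise. Conversely, every adjacency of $\pi$ not involving $x$ survives in $\pi'$, because positional adjacency and value-consecutiveness of two entries other than $x$ are both preserved by the deletion.

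Combining these, $\mathrm{adj}(\pi')\le\mathrm{adj}(\pi)-\#\{\text{adjacencies of }\pi\text{ containing }x\}+2$, which gives the bound $3$. The only care needed is the bookkeeping at the boundary cases: $p\in\{1,n\}$ (no pair of type (a)) and $v\in\{1,n\}$ (no pair of type (b)). In those cases the bound only improves.

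I expect no real obstacle. The one subtle point to check is that no pair is double counted across (a) and (b), and a pair that lies in both only helps. This is consistent with Lemma~\ref{lem:twodrop}, where the drop of three is realized exactly when both (a) and (b) occur and $x$ lies in no run.
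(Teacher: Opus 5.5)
Your proposal is correct and is essentially the paper's own proof: the paper likewise counts bonds (your adjacencies), uses $|\sk(\pi)|=|\pi|-b(\pi)$, and observes that a new bond can arise only from the predecessor and successor of the deleted entry becoming adjacent, or from an adjacent pair with values $v-1,v+1$. This gives $b(\pi')-b(\pi)\le2$ and hence the bound $3$. Your explicit check that every old bond not involving $x$ survives is the same point the paper makes when it says every other surviving pair keeps its bond status.
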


\begin{proof}
Call adjacent positions carrying consecutive increasing values a bond, and
write $b(\pi)$ for the number of bonds. Each maximal increasing consecutive
run of length $r$ contributes $r-1$ bonds, so $|\sk(\pi)|=|\pi|-b(\pi)$.
Deleting an entry of value $v$ can create a bond only in one of two ways.
Its predecessor and successor in position can become adjacent, giving at
most one new bond. Alternatively an already adjacent pair can become
consecutive in value only if its old values were $v-1,v+1$, again giving
at most one new bond. Every other surviving pair keeps its bond status.
Some old bonds may disappear, hence $b(\pi')-b(\pi)\le2$ and
$|\sk(\pi)|-|\sk(\pi')|=1+b(\pi')-b(\pi)\le3$.
\end{proof}

\begin{theorem}\label{thm:deladmissible}
Every residual of length $n\ge23$ admits an admissible deletion. Consequently Conjecture~\ref{conj:del}
holds for every residual of defect at most $10$, and Conjecture~\ref{conj:skel}, Conjecture~\ref{conj:cat}
and the polynomials of Proposition~\ref{prop:polys} hold for $1\le\delta\le10$.
\end{theorem}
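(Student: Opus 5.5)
The plan is a counting argument on the candidates for an admissible deletion, followed by the reduction from Proposition~\ref{prop:induction}. Fix a residual $\pi$ of length $n\ge23$, write $s=|\sk(\pi)|$, and for an entry $x$ put $t(x)=s-|\sk(\pi\setminus x)|$. By Lemma~\ref{lem:skeldropthree}, $t(x)\le3$, so \eqref{eq:admissible} holds automatically whenever $D(x)\ge4$ (it needs $2D(x)\ge7$ at worst). It also holds when $D(x)=3$ and $t(x)\le2$, and when $D(x)=2$ and $t(x)=0$. Degree one cannot occur among non-extremal-deletion candidates in a useful way, since a degree-one entry fails \eqref{eq:admissible} outright. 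The entries that fail \eqref{eq:admissible} are therefore of three kinds: those of degree $\le1$, those of degree three with $t=3$, and those of degree two with $t\ge1$.

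Separately, $x$ is a candidate only if $\pi\setminus x$ is again a residual. By Proposition~\ref{prop:master}, $\delta(\pi\setminus x)=\delta(\pi)+2-D(x)$, so the defect stays $\ge1$ as long as $D(x)\le\delta(\pi)+1$. If that fails for every $x$, every degree is large, and I would use Lemma~\ref{lem:seven}/\ref{lem:M} inversion counts to cap the number of high-degree entries. The main conditions to secure are:
\begin{itemize}
\item $\pi\setminus x$ is $1324$-avoiding (automatic);
\item $\pi\setminus x$ is indecomposable, i.e.\ $x$ is not a cut vertex;
\item $\pi\setminus x$ is not almost decomposable, i.e.\ for none of the four extremal entries $y$ of $\pi\setminus x$ is the further deletion decomposable.
\end{itemize}

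The bad set is bounded as follows:
\begin{itemize}
\item Cut vertices: at most one, by Lemma~\ref{lem:onecut}.
\item Almost-decomposability: the extremal entries of $\pi\setminus x$ are among a bounded set of entries near the four extremes of $\pi$ (the old extremal entries, or their successors when $x$ is itself extremal). For each such $y$, the entries $x$ making $y$ a cut vertex of $\pi\setminus x$, equivalently making $\pi\setminus\{x,y\}$ decomposable, number at most three by Lemma~\ref{lem:threecut} applied to $\pi\setminus y$. This gives the bound $4\cdot3=12$, plus a few extremal entries handled directly.
\item Degree three with $t=3$: at most two, by Lemma~\ref{lem:twodrop}.
\item Degree two with $t\ge1$: lies in one of at most three degree-two classes by Lemma~\ref{lem:threeclasses}. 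Within a class, which is a run by Lemma~\ref{lem:samenbhd}, deleting a non-end entry of a run of length $\ge3$ has $t=0$. So each class contributes at most two bad entries, hence at most six in total.
\item Degree $\le1$: there are only boundedly many, since $\pi$ is indecomposable and not almost decomposable; a leaf lemma such as Lemma~\ref{lem:leaf} would bound these.
\end{itemize}
Summing, roughly $1+12+4+2+6$ plus the leaves gives at most about $22$ bad entries. With $n\ge23$ some entry survives, and its deletion is admissible. The threshold $23$ should come out exactly from tallying these constants, possibly after merging overlaps such as the extremal entries themselves.

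For the consequences, at defect $\le10$ the residuals of length $\le22$ are covered by the exhaustive check recorded after Conjecture~\ref{conj:del}, where failures occur only at lengths $\le9$. Together with $n\ge23$ this proves Conjecture~\ref{conj:del} for $\delta\le10$. I expect that check really covers all $n\ge10$ at all relevant defects; the induction on defects uses Proposition~\ref{prop:master} to see that deletion lowers the defect. Proposition~\ref{prop:induction} then gives Conjecture~\ref{conj:skel} for $\delta\le10$ (strong induction on $n$, staying within $\delta\le10$ since $D(x)\ge2$ means deletion does not raise the defect). Proposition~\ref{prop:catfromskel} then yields Conjecture~\ref{conj:cat}, and the catalogue being complete turns Proposition~\ref{prop:polys} into a statement for all $n\ge n_0$ via Theorem~\ref{thm:crit}, since no outside skeleton contributes.

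The main obstacle is the bookkeeping in the counting step. I must show that the residual-preservation failures really are confined to the $12$ from Lemma~\ref{lem:threecut} plus the unique cut vertex, including the cases where $x$ is itself extremal so that new extremal entries appear. I must also make the constants add to at most $22$. I would first try to treat extremal $x$ separately by a direct structural argument showing those deletions are rarely needed.
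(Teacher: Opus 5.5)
Your strategy is the paper's. You count the entries whose deletion is not a residual, and there are at most $17$ of them: $1$ cut vertex by Lemma~\ref{lem:onecut}, at most $4$ extremal $x$, and at most $3$ for each of the four extremal $y$ by Lemma~\ref{lem:threecut}. You then count the entries failing \eqref{eq:admissible} and compare the total with $n$.

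The gap is that your tally does not reach the threshold you need. You get $1+12+4+2+6=25$, plus an unspecified number of ``leaves''. That proves the first sentence only for $n\ge26$ at best. Hoping that overlaps bring the count down to $22$ is not an argument, and at defects above ten no computation covers $23\le n\le25$.

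Your worry that $\pi\setminus x$ might drop to defect $\le0$ is unnecessary. An indecomposable avoider none of whose extremal deletions is decomposable has defect $\ge1$ by \cite[Theorem~2.5]{LV}. Residuality of $\pi\setminus x$ is therefore exactly the two graph conditions, and Lemmas~\ref{lem:seven} and~\ref{lem:M} are not needed.

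Two observations close the count.
\begin{enumerate}
\item There are no entries of degree $\le1$ at all. Corollary~\ref{cor:mindeg}, which follows from Lemma~\ref{lem:leaf}, shows that every entry of a residual has inversion degree at least two.
\item Your ``two bad entries per degree-two class'' overcounts. Suppose $x$ lies in block $i$ of $\pi=\sigma[b]$ with $b_i\ge2$, whether at an end of the run or not. Then $\pi\setminus x=\sigma[b-e_i]$ still has reduced skeleton $\sigma$, so $t(x)=0$. Hence a degree-two entry with $t\ge1$ is a run of length one. By Lemma~\ref{lem:samenbhd} it is then a degree-two class of size one, and Lemma~\ref{lem:threeclasses} allows at most three of them.
\end{enumerate}
The total becomes $17+3+2=22$, which is exactly what $n\ge23$ requires.

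For the consequences your chain matches the paper's, with one omission. Completeness of the catalogue makes $|\R_{\delta,n}|$ the finite sum \eqref{eq:typesum} over $\Sigma^{(22)}_\delta$. However, agreement with $P_\delta$ for $n\le30$ does not by itself extend to all $n$. That sum is a polynomial only from $n=\max(L+t)+1=3\delta+13$ on, which exceeds $30$ once $\delta\ge6$, so triples with $F=\varnothing$ or late onsets can still act beyond $n=30$. The paper evaluates the sum up to $n=60$ (check C9) to conclude that $|\R_{\delta,n}|=P_\delta(n)$ for all $n\ge n_0(\delta)$.
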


\begin{proof}
Let $\pi\in\R_{\delta,n}$ and let $x$ be an entry whose deletion is not admissible. Either
$\pi\setminus x$ is not a residual, or $2D(x)<4+t$ with $t=|\sk\pi|-|\sk(\pi\setminus x)|\le3$.
The final inequality is Lemma~\ref{lem:skeldropthree}.

If $\pi\setminus x$ is decomposable, $x$ is a cut vertex of $G(\pi)$: at most one entry by
Lemma~\ref{lem:onecut}. If $\pi\setminus x$ is indecomposable but not a residual, one of its four
extremal entries $y'$ has $(\pi\setminus x)\setminus y'$ decomposable. If $x$ is itself extremal in
$\pi$ this accounts for at most four entries. Otherwise $y'$ is an extremal entry $y$ of $\pi$, the
graph $G(\pi)\setminus y$ is connected because $\pi$ is a residual, and $x$ is a cut vertex of it;
since $\pi\setminus y$ is an indecomposable $1324$-avoider, Lemma~\ref{lem:threecut} allows at most
three such $x$ for each of the four choices of $y$. So at most $1+4+12=17$ entries fail the first
condition.

For the second condition, $D(x)\ge2$ by Corollary~\ref{cor:mindeg}, and $D(x)\ge4$ never fails. An
entry of degree two fails only when $t\ge1$, which requires it to be a run of length one, hence a
degree-two class of size one; by Lemma~\ref{lem:threeclasses} there are at most three such entries
(for $n\ge8$). An entry of degree three fails only when $t=3$: at most two by Lemma~\ref{lem:twodrop}.
So at most $17+3+2=22$ entries fail, and for $n\ge23$ some deletion is admissible.

For $10\le n\le30$ and $\delta\le10$ the existence of an admissible deletion was verified directly
(check C12: all $33\,342\,781$ residuals of length at most $30$).
Hence Conjecture~\ref{conj:del} holds at defect at most $10$; Proposition~\ref{prop:induction}, whose
induction stays within defect at most $10$ because a deletion never raises the defect, gives
Conjecture~\ref{conj:skel} there; Proposition~\ref{prop:catfromskel} gives Conjecture~\ref{conj:cat};
and Theorem~\ref{thm:poly} with the complete catalogue gives $|\R_{\delta,n}|$ as the sum
\eqref{eq:typesum}, which agrees with $P_\delta(n)$ for all $n\ge n_0(\delta)$ (check C9, extended to
$n\le60$, beyond the largest $L+t$ of any triple).
\end{proof}

\begin{corollary}\label{cor:2n3}
$a(n,k)\le a(n+1,k)$ for all $n\ge1$ and all $k\le2n+3$.
\end{corollary}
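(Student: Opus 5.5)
The plan is to repeat the proof of Corollary~\ref{cor:2n}, now that its one hypothesis is available. For $k\le2n-5$ the inequality is Theorem~\ref{thm:D}. It remains to treat $k=2n-7+\delta$ with $3\le\delta\le10$. By Theorem~\ref{thm:deladmissible}, $|\R_{\delta,n}|=P_\delta(n)$ for every $n\ge n_0(\delta)$, with $P_\delta$ as listed in Proposition~\ref{prop:polys}.

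\textbf{Step 1: the large-$n$ range.} Start from the identity \eqref{eq:identity}. Lemma~\ref{lem:R1} gives the lower bound $E(n,k)$ on the complement of $\im(f\sqcup g)$, so
\[
a(n+1,k)-a(n,k)\ \ge\ E(n,k)-P_\delta(n)\qquad(n\ge n_0(\delta)).
\]
Evaluating \eqref{eq:E} needs $a(n,j)$ and $a(n-1,j)$ at the indices $j=k-n=n-7+\delta$, $j=k-n+1$, and the small corrections $j=k-2n+1,\dots,k-2n+3$, which are at most $\delta-4$. For $\delta\le10$ every such index satisfies $j\le 2n-7$ once $n$ is moderately large, so \eqref{eq:LVexact} expresses each term through $p_2$. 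The two main terms behave like $2p_2(n-7+\delta)+2p_2(n-6+\delta)$. These grow like $\exp(\tfrac{2\pi}{\sqrt3}\sqrt n)$, while $P_\delta$ is quadratic. The claim is then that the difference is positive and increasing for $n\ge N(\delta)$, with the thresholds $N(3)=12,\dots,N(10)=24$ stated in Corollary~\ref{cor:2n}.

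To make this rigorous I would verify $E(n,k)-P_\delta(n)>0$ directly at $n=N(\delta)$. I would then prove monotonicity from the bound $p_2(m+1)-p_2(m)\ge4$ already used in Theorem~\ref{thm:B}. A stronger version is $p_2(m+1)-p_2(m)\ge$ a linear function of $m$ for $m\ge m_0$. Either way, the increment of the main term dominates the increment $P_\delta(n+1)-P_\delta(n)$, which is linear in $n$; the small correction terms are bounded by comparable $p_2$ values at much smaller arguments.

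\textbf{Step 2: the finite range.} For $n<N(\delta)\le24$, the required values $a(n,k)$ and $a(n+1,k)$ with $k\le2n+3\le 49$ are read from the exhaustive table of Section~\ref{sec:data}. That table covers $n\le26$ and $k\le55$. One must also check that $N(\delta)\ge n_0(\delta)$, so that Step 1 is only applied where Theorem~\ref{thm:deladmissible} supplies the polynomial; this holds, since $n_0\le19<N$ for each $\delta$.

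\textbf{Main obstacle.} The main difficulty is the uniform monotonicity in Step 1. One needs an explicit, checkable lower bound for $p_2(m+1)-p_2(m)$ that beats the linear growth of $P_\delta'(n)$, up to about $6488n$ at $\delta=10$, starting right at $N(\delta)$. I would handle this by first extending the direct numerical comparison up to a moderate $n$, say $n\le100$, via the recursion for $p_2$. Beyond that, a crude bound such as $p_2(m+1)-p_2(m)\ge p(m)$ (from the $i=0$ term), combined with an elementary lower bound on $p(m)$, easily dominates a quadratic.
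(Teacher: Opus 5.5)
Your proposal is correct and takes the paper's route: the paper proves this corollary by observing that Theorem~\ref{thm:deladmissible} supplies the hypothesis of Corollary~\ref{cor:2n}, whose proof is exactly your argument (Theorem~\ref{thm:D} for $k\le2n-5$, the comparison $E(n,k)>P_\delta(n)$ for $n\ge N(\delta)$, and the exhaustive table for smaller $n$). One small slip: the compatibility you need is the pointwise inequality $n_0(\delta)\le N(\delta)$, which holds with equality at $\delta=3$ since $N(3)=12$; it is not true that $19<N(\delta)$ for every $\delta$.
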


\begin{proof}
Theorem~\ref{thm:deladmissible} supplies the hypothesis of Corollary~\ref{cor:2n}.
\end{proof}

\begin{remark}\label{rem:union}
The bound of $22$ in Theorem~\ref{thm:deladmissible} is a sum of five, and the data say that each of
the five is generous. For a residual $\pi$ of length $n$ let $u$ be the number of entries whose deletion
is not a residual and $w$ the number of entries that fail \eqref{eq:admissible}, so that an admissible
deletion exists as soon as $u+w\le n-1$: on all $776\,860$ residuals with $10\le n\le16$ and on uniform
samples of $20\,000$ for each $17\le n\le20$ --- so at defect at most eight --- one finds $u\le6$ and
$w\le3$, with $u+w\le9$, the value $9$ being attained at every length from $10$ to $16$ (check~C12 of
Section~\ref{sec:data}).
\end{remark}

Two further remarks on Conjecture~\ref{conj:del}. First, the connected components of the inversion graph of a
permutation are its sum-indecomposable components \cite{KR}, \cite[Proposition~2.1]{TV}, so $\pi\setminus x$ is
indecomposable exactly when $x$ is not a cut vertex of that graph; the requirements on an admissible deletion are therefore a degree condition and a
$2$-connectivity condition on the four extremal entries. Secondly, the deletion is not the only route to
finiteness: the identity of Lemma~\ref{lem:seven} says that seven entries of $\pi$ meet all but $u$ of its
inversions, and a bound of that shape controls the skeleton directly. For a set $T$ of positions write
$e(\pi\setminus T)$ for the number of inversions of $\pi$ with neither end in $T$.

\begin{lemma}\label{lem:runcover}
For every permutation $\pi$ and every set $T$ of positions,
\[
|\sk(\pi)|\ \le\ 4|T|+4\,e(\pi\setminus T)+1 .
\]
\end{lemma}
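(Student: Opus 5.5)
Recall that $|\sk(\pi)|=n-b(\pi)$, where $b(\pi)$ is the number of bonds (adjacent positions carrying consecutive increasing values), as in the proof of Lemma~\ref{lem:skeldropthree}. The plan is to bound the number of non-bonds, i.e.\ the number of adjacent position pairs $(r,r+1)$ that are \emph{breaks}, so that $|\sk(\pi)|=1+\#\{\text{breaks}\}$. We charge each break either to an element of $T$ or to an inversion avoiding $T$. The target is that each position of $T$ is charged at most four times and each inversion of $\pi\setminus T$ at most four times. This gives $\#\{\text{breaks}\}\le 4|T|+4e(\pi\setminus T)$ and hence the statement.

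First classify breaks. A break at $(r,r+1)$ with both $r,r+1\notin T$ is of one of two kinds. Either $\pi_r>\pi_{r+1}$, and then the pair $(r,r+1)$ is itself an inversion with neither end in $T$, which we charge. Or $\pi_r+1<\pi_{r+1}$, and then some value $w$ with $\pi_r<w<\pi_{r+1}$ sits at a position $y\ne r,r+1$; take $w=\pi_r+1$. If $y<r$, then $(y,r)$ and $(y,r+1)$ are not both needed: $\pi_y=\pi_r+1>\pi_r$ makes $(y,r)$ an inversion. If $y>r+1$, then $(r+1,y)$ is an inversion, since $\pi_{r+1}>w$. In either case we obtain an inversion with one end in $\{r,r+1\}$ and other end $y$. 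If $y\notin T$, charge this inversion. If $y\in T$, charge the position $y$; note that $y$ is the position of $\pi_r+1$, so $r$ is determined by $y$ as the position of $\pi_y-1$, and $y$ receives at most one such charge. Breaks with $r\in T$ or $r+1\in T$ are charged to that element of $T$, giving at most two per position of $T$.

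Next verify the multiplicities. A position $t\in T$ is charged at most twice as an endpoint of a break, and at most once, or twice if we also use $\pi_{r+1}-1$ symmetrically, as the location of a missing value. This totals at most four. An inversion $(x,y)$ avoiding $T$ can be charged as an adjacent descent only if $y=x+1$, which happens at most once. As a witness inversion, it determines the break through the rule ``$r$ is the position of $\pi_y-1$'' when $y<r$, or ``$r+1=x$'' when $y>r+1$. We fix the witness convention so that each inversion arises from at most one or two breaks per endpoint. A careful count gives at most four, and the constant $4$ in the lemma leaves slack here.

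The main obstacle is making the witness map honestly bounded-to-one. The naive choice of ``some value in between'' can be reused by many breaks, so the choice $w=\pi_r+1$, with its inverse-determinability, is essential. I would check the two subcases $y<r$ and $y>r+1$ separately, to see that a given inversion is hit by at most two breaks in each. I expect the bookkeeping to close within the generous constants $4|T|+4e+1$ without further structure of $\pi$.
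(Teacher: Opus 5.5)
Your charging argument is correct, and it takes a genuinely different route from the paper.

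\textbf{The paper's route.} It first discards the set $F$ of positions outside $T$ that meet a $T$-avoiding inversion, so $|F|\le 2e(\pi\setminus T)$. The remaining positions $Z$ carry an increasing subsequence. For each $t\in T$, the entries of $Z$ inverted with $t$ form an interval of indices, so $Z$ splits into at most $2|T|+1$ intervals of constant inversion set. Two position-adjacent entries of $Z$ with the same inversion set are then shown to form a bond. Breaks are counted as those touching $T\cup F$ (at most $2|T|+4e$) or lying between parts (at most $2|T|$).

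\textbf{Your route, and what it actually gives.} You charge each break $(r,r+1)$ locally, through the position $y$ of the value $\pi_r+1$. The bookkeeping you left open closes more tightly than you state. An element $t\in T$ is charged only by $(t-1,t)$, by $(t,t+1)$, and by the single break with $\pi_r=\pi_t-1$. An inversion at positions $x<x'$ avoiding $T$ is charged only three ways:
\begin{itemize}
\item by $(x,x+1)$ as a descent, when $x'=x+1$;
\item by $(x',x'+1)$ as a witness of the shape $(y,r)$;
\item by $(x-1,x)$ as a witness of the shape $(r+1,y)$.
\end{itemize}
So each element of $T$ and each $T$-avoiding inversion receives at most three charges, and the symmetric use of $\pi_{r+1}-1$ is not needed.

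Your argument therefore proves $|\sk(\pi)|\le 3|T|+3e(\pi\setminus T)+1$. This is sharp: $\pi=1324$ with $T=\varnothing$ or $T=\{2\}$ attains it. It is exactly the sharper inequality the paper reports only as numerically supported. By the paper's own remark, it would lower the bound of Theorem~\ref{thm:cover} from $8\delta+25$ to $6\delta+19$.

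\textbf{Comparison.} The paper's route gives a structural picture: the entries of $Z$ fall into at most $2|T|+1$ twin classes, in the spirit of Lemma~\ref{lem:samenbhd}. The cost is that it pays for each $T$-avoiding inversion at both endpoints, and for each element of $T$ both as a neighbour and as an interval boundary. Yours is shorter, purely local, and yields the better constants.
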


\begin{proof}
Let $F$ be the set of positions outside $T$ whose entry is inverted with the entry of another position outside
$T$, so that $|F|\le2e(\pi\setminus T)$, and let $Z$ be the set of the remaining positions. The entries at the
positions of $Z$ are pairwise non-inverted, so they increase in position and in value; list them as
$z_1<\dots<z_r$.

Fix $t\in T$, at position $P$ with value $V$; we index the elements of $Z$ by $l$ and reserve $m$ for positions
of $\pi$. The entry at $z_l$ is inverted with $t$ exactly when $z_l<P$ and $\pi_{z_l}>V$, or $z_l>P$ and
$\pi_{z_l}<V$. Put $\alpha=\#\{l:z_l<P\}$ and $\beta=\min\{l:\pi_{z_l}>V\}$. The first condition holds for
$\beta\le l\le\alpha$ and the second for $\alpha<l<\beta$, so at most one of the two sets is nonempty and each
is an interval of indices. Hence the map $l\mapsto\{t\in T:\ t\ \text{inverted with}\ z_l\}$ is constant on the
parts of a partition of $\{1,\dots,r\}$ into at most $2|T|+1$ intervals.

Suppose two consecutive \emph{positions} $m$ and $m+1$ both lie in $Z$, say $m=z_l$ and $m+1=z_{l+1}$, and the
indices $l,l+1$ lie in the same part. Their entries are then
inverted with exactly the same entries of $\pi$, and not with each other, so $\pi_m<\pi_{m+1}$. If
$\pi_{m+1}>\pi_m+1$, choose a value $w$ with $\pi_m<w<\pi_{m+1}$; its position is neither $m$ nor $m+1$, and if
it precedes $m$ then $w$ is inverted with $\pi_m$ and not with $\pi_{m+1}$, while if it follows $m+1$ the
opposite holds. Either way the two entries have different inversion sets, a contradiction. So
$\pi_{m+1}=\pi_m+1$ and the run decomposition has no break at $m$.

Every break at a position $m$ therefore has $m\in T\cup F$ or $m+1\in T\cup F$, or has $m,m+1\in Z$ lying in
different parts. The first alternative occurs at most $2|T\cup F|\le2|T|+4e(\pi\setminus T)$ times and the
second at most $2|T|$ times, so $|\sk(\pi)|-1\le4|T|+4e(\pi\setminus T)$.
\end{proof}

The constants are not optimal: $|\sk(\pi)|\le3|T|+3e(\pi\setminus T)+1$ has no counterexample among the $2.7$ million pairs $(\pi,T)$ tested in Section~\ref{sec:data}, with $76$ cases of equality, whereas $2|T|+4e+1$ already fails for $\pi=1324$ and $T=\{2\}$. Using the sharper form would replace the bound $8\delta+25$ of Theorem~\ref{thm:cover} by $6\delta+19$; we prove and use only the weaker one.

Lemma~\ref{lem:seven} produces such a set whenever its configuration exists, namely when $\NW\ne\varnothing$ and
$\Sreg(i)\ne\varnothing\ne\Ereg(\pi_i)$ for some $i\in\NW$: there $T=\{1,i,q,p,j_1,j_2,n\}$ and
$e(\pi\setminus T)=u\le\delta-1$. That configuration is not always available --- at higher defect the hypotheses
$a<b$ and $p<q$ of Lemma~\ref{lem:LV} may fail, and even when they hold the two regions may both be nonempty ---
but a bounded cover always is.

\begin{theorem}\label{thm:cover}
Every residual $\pi\in\R_{\delta,n}$ has a set $T$ of at most $\delta+7$ entries with
$e(\pi\setminus T)\le\delta-1$. Consequently
\[
|\sk(\pi)|\le8\delta+25 ,
\]
so $\Sigma_\delta$ consists of reduced permutations of length at most $8\delta+25$ and is finite.
\end{theorem}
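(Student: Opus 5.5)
The second assertion follows at once from the first and Lemma~\ref{lem:runcover}. Given $|T|\le\delta+7$ and $e(\pi\setminus T)\le\delta-1$, we get $|\sk(\pi)|\le4(\delta+7)+4(\delta-1)+1=8\delta+25$. Finiteness of $\Sigma_\delta$ is then immediate, since there are finitely many reduced permutations of bounded length. So the work is to produce the cover $T$. I would construct it by case analysis on the Linusson--Verkama configuration, modelled on the inversion counts of Lemmas~\ref{lem:seven}, \ref{lem:M} and \ref{lem:NWSE}. Each of these writes $\inv(\pi)$ as a sum of left or right inversion counts of a few distinguished positions, minus overlaps, plus nonnegative slacks, plus the uncovered count $u=e(\pi\setminus T)$.

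The general mechanism is as follows. Choose a set $T$ of $r$ distinguished positions such that the counts $R(x)$ and $L(y)$ for $x,y\in T$ sum to at least $2n-6+(\text{overlap correction})$ by $1324$-avoidance. The identity
\[
\inv(\pi)=\textstyle\sum_{T}(\cdot)-\#\{\text{doubly counted}\}+u
\]
then gives $u\le\delta-1-(\text{slack})$. In the main case ($a<b$, $p<q$, $\SE=\varnothing$, $\Sreg(i)\ne\varnothing\ne\Ereg(\pi_i)$) Lemma~\ref{lem:seven} gives seven entries and $u\le\delta-1$ directly. When several northwestern points occur I would take $i=i_m$ as in Lemma~\ref{lem:M}. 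The defect there is absorbed by the positions between $i_1$ and $i_m$. I would add the at most $\delta$ intervening northwestern points to $T$, which kills the inversions they carry and keeps $|T|\le\delta+7$. The case $\NW\ne\varnothing\ne\SE$ is handled the same way using Lemma~\ref{lem:NWSE}: there are six base positions, and extra NW or SE points cost one unit of defect each, by the estimate $v-b-1\ge|\NW|-1$ and its mirror. Since the theorem is symmetric under inversion and reverse--complement, and these preserve residuals and $e$, it suffices to treat one orientation of each case.

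The remaining cases are those where $a>b$ or $p>q$. Here Lemma~\ref{lem:LV}(1) only gives almost decomposability for $k\le2n-5$, so at defect $\ge2$ new configurations appear. For these I would redo the counting from scratch. Take $T=\{1,n,p,q\}$ together with witnesses showing that the four deletions are indecomposable: for each extremal entry $y$, an entry bridging the would-be cut of $\pi\setminus y$. Then bound $R(1)+L(n)+\dots$ below by $2n-O(1)$ using $1324$-avoidance, exactly as in the proof of Lemma~\ref{lem:seven}. I would also use Proposition~\ref{prop:master} as a fallback: deleting entries of high inversion degree lowers the defect. This allows inducting on $\delta$ by removing an entry of degree $\ge3$ into $T$, since each such removal costs one unit of $|T|$ and lowers the defect budget by at least one. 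It is exactly the trade-off $|T|\le 7+\delta$, $e\le\delta-1$.

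The main obstacle is this last group of configurations ($a>b$ or $p>q$) together with the multi-point NW/SE cases. There the seven-index identity has no ready-made analogue, and one must verify carefully that each extra distinguished position is paid for by at least one unit of slack. My first attempt would be the induction via Proposition~\ref{prop:master}. Move an entry $x$ with $D(x)\ge3$ into $T$, observe that $\delta(\pi\setminus x)\le\delta-1$, and reduce to a smaller configuration. The delicate point is that $\pi\setminus x$ need not be a residual, so the induction should be phrased for arbitrary indecomposable $1324$-avoiders in which some seven-entry configuration of Lemma~\ref{lem:seven} type survives.
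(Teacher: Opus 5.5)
Your derivation of $|\sk(\pi)|\le8\delta+25$ from the cover via Lemma~\ref{lem:runcover} is sound, and so is your treatment of the branch $a<b$, $p<q$; that is essentially the paper's Proposition~\ref{prop:caseI}. You need not adjoin the intermediate northwestern points, and you have not shown there are at most $\delta$ of them. The count in the proof of Lemma~\ref{lem:M} already shows that the seven positions $1,i_m,q,p,j_1,j_2,n$ meet at least $2n-4$ inversions. Likewise the identity in the proof of Lemma~\ref{lem:NWSE}, with $i$ the last northwestern and $j$ the first southeastern point, holds whatever the sizes of $\NW$ and $\SE$ and leaves at most $\delta-2$.

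The genuine gap is the branch $\pi_1>\pi_n$ (and $p>q$, by inversion), for which you give only a plan, and the plan does not go through. In this branch every other entry is inverted with $\pi_1$ or $\pi_n$, so these two meet exactly $n-2+\Delta$ inversions, where $\Delta=\pi_1-\pi_n$. The middle pattern $\sigma=\pi_2\cdots\pi_{n-1}$, of length $m=n-2$, carries $\inv(\sigma)=m-3+\delta-\Delta$. So you must show that the inversion graph of $\sigma$, with $m$ vertices and only about $m$ edges, has a bounded vertex cover up to $\delta-1$ leftover edges. That is a global structural property, not something a Lemma~\ref{lem:seven}-style count over a few ``witnesses'' produces. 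The paper gets a four-entry cover only in subcases (i) and (ii) of Proposition~\ref{prop:agtb}, and a long induced path, say, would admit no bounded cover.

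The missing idea is Theorem~\ref{thm:hubs}: an indecomposable $1324$-avoider whose inversion graph has cyclomatic number $\gamma$ has a vertex cover of size $\gamma+2$. Its base case, Theorem~\ref{thm:gamma0}, is where $1324$-avoidance really enters:
\begin{itemize}
\item path inversion graphs come from increasing oscillations, whose length-six members contain $1324$, so there is no induced $P_6$ (Lemma~\ref{lem:path}) and a tree has diameter at most four;
\item the spiders $S(2,2,1)$ and $S(2,2,2)$ are excluded by a finite check;
\item K\H onig's theorem then gives a cover of size two.
\end{itemize}
For indecomposable $\sigma$ this yields $|T|\le\delta-\Delta+2\le\delta+1$ with $e(\pi\setminus T)=0$ (Corollary~\ref{cor:agtb}). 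For decomposable $\sigma=C_1\dsum\cdots\dsum C_r$ you also need the residual conditions on $\pi\setminus\pi_1$ and $\pi\setminus\pi_n$. Each component of $\sigma$ must contain a value above $\pi_n$ and one below $\pi_1$. This forces $|C_1|\ge\pi_n$ and $|C_r|\ge n-\pi_1+1$, and leaves at most $\delta-4$ inversions in $C_2,\dots,C_{r-1}$. Then $\pi_1$, $\pi_n$ and covers of $C_1$ and $C_r$ suffice (Proposition~\ref{prop:agtbdec}).

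Your fallback induction through Proposition~\ref{prop:master} has the right shape but the wrong potential. Once you leave the residuals, as $\pi\setminus x$ may, bounded defect no longer yields a bounded cover. For $n=\binom{k}{2}+6$, the permutation $n,k,k-1,\dots,1,k+1,\dots,n-1$ is an indecomposable $1324$-avoider with $2n-7$ inversions (defect $0$). Yet any $t\le k$ of its entries leave at least $\binom{k-t}{2}$ inversions uncovered. This is exactly a decomposable-middle configuration of the branch $\pi_1>\pi_n$, excluded only by residuality. So no base case of the kind your induction needs exists for arbitrary indecomposable $1324$-avoiders of small defect.

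The potential that works is the cyclomatic number $\inv(\tau)-|\tau|+1$ of a connected inversion graph. Delete a vertex of degree $d\ge2$ that is not a cut vertex; one exists whenever there is a cycle, by Lemma~\ref{lem:noncut}. This keeps the graph connected and lowers the cyclomatic number by $d-1\ge1$, and the tree classification serves as base case. That is the paper's proof of Theorem~\ref{thm:hubs}, applied to the middle part $\sigma$ (or to $C_1$ and $C_r$), not to $\pi$ itself.
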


The consequence is Lemma~\ref{lem:runcover}: $4(\delta+7)+4(\delta-1)+1=8\delta+25$. The proof of the first
assertion occupies the rest of the section and splits into five cases according to the sign pattern of $\pi$ and
the shape of its middle part.

The two sign conditions $a<b$ and $p<q$ of Lemma~\ref{lem:LV} are exchanged by inversion, so a residual that
fails one of them satisfies $\pi_1>\pi_n$ after replacing $\pi$ by $\pi^{-1}$. That branch splits into three
cases, of which we can settle two.

\begin{proposition}\label{prop:agtb}
Let $\pi\in\R_{\delta,n}$ with $a=\pi_1>b=\pi_n$, put $\Delta=a-b$, and let $\sigma$ be the pattern of
$\pi_2\cdots\pi_{n-1}$, of length $m=n-2$. Then $\pi_1$ and $\pi_n$ are inverted with every other entry of
$\pi$, they meet exactly $n-2+\Delta$ inversions, and
\begin{equation}\label{eq:sigmainv}
\inv(\sigma)=m-3+\delta-\Delta .
\end{equation}
Moreover:
\begin{enumerate}
\item[(i)] if $\sigma_1>\sigma_m$, the four entries $\pi_1,\pi_2,\pi_{n-1},\pi_n$ meet all but at most
$\delta-\Delta-1-(\sigma_1-\sigma_m)\le\delta-3$ of the inversions of $\pi$;
\item[(ii)] if the largest entry of $\sigma$ precedes its smallest, then $\pi_1$, $\pi_n$ and the largest and
the smallest of $\pi_2,\dots,\pi_{n-1}$ meet all but at most $\delta-\Delta-2\le\delta-3$;
\item[(iii)] otherwise $\sigma_1<\sigma_m$ and the smallest entry of $\sigma$ precedes the largest; and if
$\sigma$ has entries in both its northwestern and its southeastern region, then $\inv(\sigma)\ge2m-5$ by the
counting in the proof of \cite[Lemma~2.7]{LV}, so that $n\le\delta-\Delta+4$.
\end{enumerate}
In particular the conclusion of Theorem~\ref{thm:cover} holds with four entries in the cases (i) and (ii). A
negative right-hand side in (i) or (ii) --- which happens for small $\delta$, since $\Delta\ge1$ --- is to be read
as saying that the case in question does not occur.
\end{proposition}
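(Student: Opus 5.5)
We begin with the global count. Since $a=\pi_1>b=\pi_n$, every entry $x$ other than $\pi_1,\pi_n$ is inverted with $\pi_1$ or with $\pi_n$. We will show it is inverted with both. Suppose $x<a$ but $x$ is not inverted with $\pi_n$, so $x<b$. Indecomposability together with $1324$-avoidance then has to force a contradiction. The cleanest route is through the residual condition. If some $x\ne\pi_n$ had $x>a$, then $\pi\setminus\pi_1$ would have to be examined. Instead we use Lemma~\ref{lem:LV}(1) in the contrapositive spirit: for a residual the values above $a$ must be handled by a $1324$ with $\pi_1,\cdot,\cdot,\pi_n$. The fact that we really need, and will verify directly, is this: an entry $y>a$ at position $j$ together with an entry $z<b$ at a position before $j$ gives $z,y,\dots$. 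Iterating, we conclude that $\pi_1$ is the maximum and $\pi_n$ the minimum except for the $\Delta-1$ values strictly between $b$ and $a$, each of which is inverted with both.

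Concretely, the plan is to count pairs. The inversions meeting $\{\pi_1,\pi_n\}$ number $R(1)+L(n)-1$, where $R(1)=a-1$ and $L(n)=n-b$. This gives $n-2+\Delta$. Equation \eqref{eq:sigmainv} then follows from $\inv(\pi)=2n-7+\delta$ and $m=n-2$: we get $\inv(\sigma)=2n-7+\delta-(n-2+\Delta)=m-3+\delta-\Delta$. The claim that $\pi_1,\pi_n$ are inverted with everything is needed only qualitatively, and we would read it off from non-almost-decomposability. If $\pi_1$ were not inverted with some entry, then the values above $a$ form a suffix interval. Since $\pi\setminus\pi_n$ is indecomposable, that interval must be linked back, and a $1324$ of the form $\pi_k,\,\cdot,\,\cdot,\,w$ arises. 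We expect this to be the fiddliest point.

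For (i), we apply the same counting one level in: inside $\sigma$ with $\sigma_1>\sigma_m$, the first and last entries meet $m-2+(\sigma_1-\sigma_m)$ inversions of $\sigma$. Hence the four entries miss $\inv(\sigma)-(m-2+\sigma_1-\sigma_m)=\delta-\Delta-1-(\sigma_1-\sigma_m)$. This is at most $\delta-3$ because $\Delta\ge1$ and $\sigma_1-\sigma_m\ge1$. For (ii), the largest entry of $\sigma$, at position $r$, has $R=m-r$ plus the entries after it. More simply, the maximum $M$ is inverted with everything after it and the minimum $\mu$ with everything before it. Because $M$ precedes $\mu$, together they meet at least $m-1$ inversions, counting $(M,\mu)$ once. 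This gives the miss bound $\delta-\Delta-2$. Case (iii) is the complement. There we would invoke the counting of \cite[Lemma~2.7]{LV} verbatim on $\sigma$, which only needs $\sigma\in\Av(1324)$ with both regions nonempty, to get $\inv(\sigma)\ge2m-5$. Combined with \eqref{eq:sigmainv} this gives $m\le\delta-\Delta+2$, that is, $n\le\delta-\Delta+4$. Finally, a negative bound in (i) or (ii) contradicts $e\ge0$, so that case is empty.
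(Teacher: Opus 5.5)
\textbf{Gap in your first paragraph.} You read the opening assertion as saying that \emph{each} of $\pi_1$ and $\pi_n$ is inverted with every other entry. You then try to derive this from $1324$-avoidance and non-almost-decomposability (``suppose $x<a$ \dots\ so $x<b$ \dots\ has to force a contradiction'', ``iterating, \dots\ $\pi_1$ is the maximum''). That statement is false, so no such argument can be completed. The entry $\pi_1$ is inverted with every other entry only if $\pi_1=n$, and $\pi_n$ only if $\pi_n=1$. Whenever $b>1$, the value $1$ is an $x<b$ not inverted with $\pi_n$, and nothing is contradicted.

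A concrete counterexample is $\pi=426153\in\R_{3,6}$. It avoids $1324$, has $8=2n-4$ inversions, and it and its four extremal deletions $25143$, $32514$, $31542$, $42153$ are indecomposable. Here $\pi_1=4>3=\pi_n$, yet $\pi_1$ is not inverted with $6$ and $\pi_n$ is not inverted with $2$. Your own correct count already signals the misreading: if both entries were inverted with everything, they would meet $2n-3$ inversions, not $n-2+\Delta$.

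\textbf{The fix.} The intended assertion is the joint one, and it needs no residual hypothesis. Since $b<a$, every value $x\notin\{a,b\}$ satisfies either $x<a$, in which case it is inverted with $\pi_1$, which precedes it, or $x>b$, in which case it is inverted with $\pi_n$, which follows it. This one line is the paper's argument; replace your first paragraph by it and discard the attempted contradiction.

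Everything else you wrote is correct and coincides with the paper's proof:
\begin{itemize}
\item the count $(a-1)+(n-b)-1$, and hence \eqref{eq:sigmainv};
\item the first/last count $m-2+(\sigma_1-\sigma_m)$ in (i);
\item the maximum/minimum cover of at least $m-1$ inversions in (ii);
\item the use of the counting in \cite[Lemma~2.7]{LV} in (iii).
\end{itemize}
None of these steps uses the qualitative claim in any form. They only use that the inversions of $\pi$ avoiding $\pi_1$ and $\pi_n$ are exactly the inversions of $\sigma$.
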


\begin{proof}
Since $b<a$, every value is smaller than $a$ or larger than $b$, so every entry other than $\pi_1$ and $\pi_n$
is inverted with one of them. The entry $\pi_1$ has $a-1$ right-inversions and $\pi_n$ has $n-b$
left-inversions, and the pair $(1,n)$ is an inversion counted in both; so together they meet
$(a-1)+(n-b)-1=n-2+\Delta$ inversions, and \eqref{eq:sigmainv} follows from $\inv(\pi)=2n-7+\delta$.

(i) Inside $\sigma$ the first entry has $\sigma_1-1$ right-inversions and the last has $m-\sigma_m$
left-inversions, and the pair of them is an inversion counted in both. Hence at most
$\inv(\sigma)-(\sigma_1-1)-(m-\sigma_m)+1=\delta-\Delta-1-(\sigma_1-\sigma_m)$ inversions of $\pi$ avoid all four
entries, and this is at most $\delta-\Delta-2\le\delta-3$ because $\sigma_1>\sigma_m$ and $\Delta\ge1$.

(ii) Let $p'$ and $q'$ be the positions in $\sigma$ of its smallest and its largest entry, so $q'<p'$ by
hypothesis. They have $p'-1$ and $m-q'$ inversions inside $\sigma$ respectively, and the pair $(q',p')$ is an
inversion counted in both, so at most
$\inv(\sigma)-(p'-1)-(m-q')+1=\delta-\Delta-1-(p'-q')\le\delta-\Delta-2\le\delta-3$ inversions of $\pi$ avoid
the four entries.

(iii) is the remaining case, together with the quoted inversion count for a permutation with points in both
regions.
\end{proof}

Among the $564\,634$ residuals of length between $9$ and $18$ and defect at most $8$ with $\pi_1>\pi_n$, case
(i) accounts for $74\,909$, case (ii) for $183\,200$ and case (iii) for $306\,525$; in case (iii) all but $442$
have an empty northwestern or southeastern region for $\sigma$, and each of those $442$ satisfies
$n\le\delta-\Delta+4$, as Proposition~\ref{prop:agtb}(iii) requires. Identity~\eqref{eq:sigmainv} was checked on
all of them. The trichotomy is recorded because cases (i) and (ii) need only four entries;
Corollary~\ref{cor:agtb} and Proposition~\ref{prop:agtbdec} settle the whole branch at once, in all three
cases, by splitting instead on whether $\sigma$ is decomposable.

The tool for that is a statement about $1324$-avoiders of near-minimal inversion number, which has a companion
we can prove outright. Write $G(\pi)$ for the inversion graph of $\pi$: its
vertices are the positions and its edges the inversions, so that the induced subgraph on a set of positions is
the inversion graph of the corresponding pattern.

\begin{lemma}\label{lem:path}
The inversion graph of a $1324$-avoiding permutation contains no induced path on six vertices.
\end{lemma}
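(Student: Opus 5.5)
The plan is to reduce to a pattern of length six and then run a finite check.

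\emph{Reduction.} The induced subgraph of $G(\pi)$ on any six positions is the inversion graph of the corresponding pattern of length six. That pattern avoids $1324$ whenever $\pi$ does. So it suffices to show that no $1324$-avoiding $\tau\in\mathfrak S_6$ has $G(\tau)$ equal to the path $P_6$, with vertices ordered along the path in some way. This is already a finite check over $\mathfrak S_6$, in the sense of the paper's ``computer-assisted'' convention. I would still try to give a hand argument, as for the other structural lemmas of this section.

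\emph{Hand argument.} First, $G(\tau)\cong P_6$ is connected, so $\tau$ is indecomposable.

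Second, I would use the known description of which permutations have a path as inversion graph. Permutation graphs are closed under complement, and an induced $P_k$ arises from essentially two interleaving staircase shapes. Concretely, let $x_1-x_2-\dots-x_6$ be the path. Its endpoints have degree one and the internal vertices have degree two. By Proposition~\ref{prop:master}-type degree counting, $\inv(\tau)=5=2\cdot6-7$.

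Third, I would show that an indecomposable $\tau\in\mathfrak S_6$ with exactly five inversions whose graph is a path is forced, up to the symmetries $\tau\mapsto\tau^{-1}$, $\tau^{\mathrm{rc}}$, $\tau^{\mathrm{r}}$ composed with complement, to be one of the permutations $214365$-like ``zigzags'' shifted into a chain. Indecomposability rules out $214365$ itself. The only indecomposable candidates are $241635$, $315264$, and their images under the symmetries. Explicitly, $\tau=241635$ has inversions $(2,1),(4,1),(4,3),(6,3),(6,5)$, giving the path $2-1-4-3-6-5$.

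Finally, I would check that each candidate contains $1324$; for example $241635$ contains $2\,4\,3\,5$, a copy of $1324$. Since inversion and reverse--complement preserve both $1324$-avoidance and the isomorphism type of $G$, it suffices to treat one representative per orbit.

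\emph{Main obstacle.} The difficulty is the classification step: listing all $\tau\in\mathfrak S_6$ with $G(\tau)\cong P_6$. I would do it by placing the endpoint $x_1$ (degree one) and propagating the forced relative orders along the path, giving a short case tree. Failing a clean argument, I would cite an exhaustive search over $\mathfrak S_6$ as a certificate. Such a search lists the $1324$-avoiders and tests each inversion graph for being a path, and should return none.
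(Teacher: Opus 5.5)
Your proposal is correct and follows essentially the paper's argument: restrict to the six entries of the induced path, note that the only permutations whose inversion graph is $P_6$ are $241635$ and $315264$, and exhibit a $1324$ in each (your occurrence $2\,4\,3\,5$ in $241635$ is the one the paper uses). The one difference is how the length-six classification is obtained: the paper cites Brignall--Vatter's characterization of path inversion graphs as increasing oscillations, whereas your case tree is only sketched, so as written your proof rests on the exhaustive search over $\mathfrak S_6$, which the paper also runs as its cross-check C5.
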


\begin{proof}
By a theorem of Brignall and Vatter \cite[Proposition~1.16]{BV}, the permutations whose inversion graph is a
path are exactly the \emph{increasing oscillations}, that is, the sum-indecomposable permutations order
isomorphic to subsequences of $2,4,1,6,3,8,5,\dots$; there are exactly two of each length at least three, and
the list begins $2413,\,3142,\,24153,\,31524,\,241635,\,315264$. (That $24153$ and $31524$ are the only
permutations whose inversion graph is $P_5$ is also recorded in \cite[p.~4]{ABKLV}.) An induced path on six
vertices of $G(\pi)$ is the inversion graph of the pattern of $\pi$ on those six entries, so such a $\pi$ would
contain $241635$ or $315264$; but the entries in positions $1,2,5,6$ of the first and in positions $2,3,4,5$ of
the second form occurrences of $1324$.
\end{proof}

That the two length-five permutations are exactly the skeletons of Theorem~\ref{thm:A} is not a coincidence:
both statements say that a path of five inversions is the longest rigid chain a $1324$-avoider can carry.

\begin{conjecture}\label{conj:hubs}
Let $\tau$ be an indecomposable $1324$-avoiding permutation of length $m$ with $\inv(\tau)=m-1+\gamma$, so
that $\gamma$ is the cyclomatic number of its inversion graph in the sense of \cite{TV}. Then three entries of
$\tau$ meet all but at most $\gamma$ of its inversions; and $\gamma+2$ entries meet all of them.
\end{conjecture}

The case $\gamma=0$ of both halves is a theorem, and it comes with a complete classification. The permutations
in question --- the sum-indecomposable ones with $m-1$ inversions, equivalently those whose inversion graph is a
tree --- are known: by \cite[Propositions~5.2 and~5.4]{TV} they are exactly the unimodal cycles of $[m]$, and
their inversion graphs are caterpillars. Avoiding $1324$ collapses the caterpillar to a double star. Recall that
a tree has a vertex cover of size two exactly when it has no three pairwise disjoint edges, by K\H onig's
theorem.
Write $S(2,2,1)$ for the tree on six vertices consisting of a centre with two paths of length two and one
pendant edge attached, and $S(2,2,2)$ for the tree on seven vertices with three paths of length two attached to
a centre.

\begin{theorem}\label{thm:gamma0}
Let $\tau$ be an indecomposable $1324$-avoiding permutation of length $m$ with $\inv(\tau)=m-1$. Then two
entries of $\tau$ meet all of its inversions. Moreover $\tau$ is one of
\[
21[\iota_{m-1},1],\quad 21[1,\iota_{m-1}],\quad 2413[\iota_t,1,1,\iota_{m-2-t}],\quad
24153[\iota_t,1,1,\iota_{m-3-t},1]
\]
or the inverse of one of the last two, where $t\ge1$ and the remaining block is nonempty; there are exactly
$4(m-3)$ of them for $m\ge4$.
\end{theorem}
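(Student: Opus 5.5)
The plan is to use the Tenner--Vatter description quoted just before the statement: $\tau$ is sum-indecomposable with $m-1$ inversions, so $G(\tau)$ is a tree, and by \cite[Propositions~5.2 and~5.4]{TV} it is a caterpillar. A caterpillar has a vertex cover of size two exactly when its spine, after removing leaves, has at most two vertices. Equivalently, by K\H onig's theorem, it must contain no three pairwise disjoint edges. So the first assertion reduces to showing that $G(\tau)$ contains no matching of size three.

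\emph{Step 1: excluding long spines.} Suppose the caterpillar has three disjoint edges. Then its spine has at least three vertices, and one can choose an induced subtree of a specific small shape. Because the tree is a caterpillar rather than a spider, the candidates are limited. One is a path $P_6$, which is excluded by Lemma~\ref{lem:path}. The others are caterpillars on six or seven vertices whose spine has three vertices, each carrying a pendant leaf. These are the trees $S(2,2,1)$ and $S(2,2,2)$ named before the statement, together with the spine-of-three caterpillar with two leaves at the ends.

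Each such induced subtree is the inversion graph of the pattern of $\tau$ on those entries. That pattern has a tree as inversion graph, so it is itself an indecomposable pattern with $|{\cdot}|-1$ inversions. I would list the finitely many permutations of length $6$ or $7$ realising each of these trees and check that every one contains $1324$. Lemma~\ref{lem:path} is the model argument here. The list can also be generated from the Brignall--Vatter/Tenner--Vatter description of unimodal cycles, or by the paper's finite search over $\mathfrak S_6,\mathfrak S_7$. This shows that every caterpillar spine has at most two vertices, so $G(\tau)$ is a double star and two entries (the centres) cover all inversions.

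\emph{Step 2: classification.} If the double star is a star, one entry is inverted with all others and nothing else is inverted. Then the remaining entries are increasing and form one block, giving $21[\iota_{m-1},1]$ or $21[1,\iota_{m-1}]$. Otherwise there are two adjacent centres $x,y$ with leaf sets $X,Y$, both nonempty, and all other entries are pairwise non-inverted. I would read off the relative positions and values. The leaves of $x$ form an increasing run of consecutive entries, and likewise for $y$. Placing the two centres relative to these runs gives the skeleton $2413$ or $3142$ when neither centre is extremal in a forced way, and $24153$ or its inverse when one leaf class is split by a pendant condition.

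In more detail, I would use Lemma~\ref{lem:infl} to pass to the reduced skeleton. A reduced indecomposable $1324$-avoider whose inversion graph is a double star has length at most $5$, by a direct check. The only possibilities are $21$, $2413$, $3142$, $24153$ and $31524$. Determining which blocks may be inflated is then forced by the double-star structure: only the leaf blocks may grow, and inflating a centre would create a $4$-cycle or raise the degree. This yields exactly the listed families.

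\emph{Step 3: the count.} For fixed $m\ge4$ the count is as follows. There are $2$ star cases. The $2413$-type and $3142$-type families give $(m-3)$ each, since $t$ ranges over $1,\dots,m-3$. The $24153$-type family and its inverse give $(m-4)$ each. The total is $2+2(m-3)+2(m-4)=4(m-3)$, after checking coincidences at the boundary, for example $t=m-2$ collapsing to a star.

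The main obstacle is Step 1: making sure the list of forbidden induced caterpillars is exhaustive and that each contains $1324$. I would handle it with the explicit finite check rather than a hand argument.
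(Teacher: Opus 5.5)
\textbf{The gap is in Step~1: it aims at a false statement.}

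You claim that a caterpillar has a two-vertex cover exactly when its spine (leaves deleted) has at most two vertices. This already fails for $P_5$. Its spine has three vertices, yet its second and fourth vertices cover every edge.

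$P_5$ is the inversion graph of $24153$ (on positions, $1$--$3$--$2$--$5$--$4$), which is on the list you are proving. Every member of the $24153$ and $31524$ families contains that pattern, so its inversion graph has an induced $P_5$. No double star has one. So the conclusion ``$G(\tau)$ is a double star'' is wrong, and two things break:
\begin{itemize}
\item The finite check you propose would turn up $1324$-avoiding realisers. Your ``spine-of-three caterpillar with two leaves at the ends'' has no three disjoint edges at all, and it is the inversion graph of $24153[\iota_2,1,1,\iota_2,1]=2351674$.
\item Step~2 analyses only double stars, so it can produce only the skeletons $21$, $2413$, $3142$. It never reaches the $24153$ families that Step~3 then counts.
\end{itemize}
Also, $S(2,2,2)$ is not a caterpillar. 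It is exactly the spider that caterpillars exclude, so on your route it needs no check.

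\textbf{How to repair it.} K\H onig's theorem requires only the absence of three disjoint edges. For a caterpillar with spine $s_1\cdots s_k$, three disjoint edges exist exactly in two cases:
\begin{itemize}
\item $k\ge4$, which gives an induced $P_6$ through the end leaves, excluded by Lemma~\ref{lem:path};
\item $k=3$ and $s_2$ carries a leaf, which gives an induced $S(2,2,1)$. Its only realisers are $251364$ and $314625$, both containing $1324$.
\end{itemize}
The survivors are stars, double stars, and spine-three caterpillars with $s_2$ of degree two, covered by $\{s_1,s_3\}$. That last shape is the source of $24153$ and $31524$, and your classification must treat it. The paper reaches the same trichotomy without caterpillars, using diameter at most four and a centre.

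\textbf{A second, smaller gap.} ``Length at most $5$ by a direct check'' needs an a priori bound on the reduced skeleton $\rho$ first. The paper gets $r\le9$ from Lemma~\ref{lem:runcover}, taking $T$ to be the two-vertex cover so that $e=0$, and then checks lengths $6$ to $9$. Alternatively, Lemma~\ref{lem:samenbhd} shows that in a reduced $\rho$ the entries outside a cover $\{x,y\}$ have pairwise distinct nonempty neighbourhoods inside $\{x,y\}$. There are at most three of them, so $r\le5$ at once.

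Your remark that only leaf blocks may be inflated, never both ends of an edge, is correct. The paper derives it from $\sum_i(d_i-1)c_i+\sum_{(i,j)\in\Inv(\rho)}c_ic_j=r-1-|\Inv(\rho)|$ for $\tau=\rho[\mathbf 1+c]$. The same identity also shows that $\rho$ is again an indecomposable avoider with $r-1$ inversions.
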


\begin{proof}
The inversion graph $G=G(\tau)$ has $m$ vertices and $m-1$ edges and is connected because $\tau$ is
indecomposable, so it is a tree. By Lemma~\ref{lem:path} it has no induced path on six vertices; a tree of
diameter at least five contains such a path, so the diameter of $G$ is at most four. If the diameter is at most
three then $G$ is a star or a double star and two vertices cover it. So assume the diameter is four and let $c$
be a centre, every vertex being within distance two of $c$.

Suppose two vertices do not suffice. By K\H onig's theorem $G$ has three pairwise disjoint edges. Every edge of
$G$ joins $c$ to a vertex at distance one, or a vertex at distance one to a vertex at distance two, and at most
one of the three edges meets $c$. If none does, the three edges are $a_ib_i$ with $d(c,a_i)=1$ and
$d(c,b_i)=2$, the $a_i$ distinct; since each $b_i$ is a leaf of $G$, the induced subgraph on
$\{c,a_1,b_1,a_2,b_2,a_3,b_3\}$ is $S(2,2,2)$. If one of them is $ca_3$, the induced subgraph on
$\{c,a_1,b_1,a_2,b_2,a_3\}$ is $S(2,2,1)$. In either case, since an induced subgraph of $G$ on a set of
positions is the inversion graph of the corresponding pattern, $\tau$ would contain a pattern whose inversion
graph is $S(2,2,1)$ or $S(2,2,2)$. A check of the $720$ and the $5040$ permutations of lengths six and seven
shows that $S(2,2,1)$ is the inversion graph of exactly two of them, $251364$ and $314625$, which contain
$1324$ in positions $1,2,4,5$ and $2,3,5,6$ respectively, and that $S(2,2,2)$ is the inversion graph of none. This contradiction proves the first
assertion.

For the classification write $\tau=\rho[b]$ with $\rho$ reduced of length $r$, put $b=\mathbf1+c$ and let $E$ be
the inversion set of $\rho$ and $d_i$ the degrees in it. Expanding
$\inv(\tau)=\sum_{(i,j)\in E}b_ib_j$ and $m=r+\sum_ic_i$ in $\inv(\tau)=m-1$ gives
\[
\sum_i(d_i-1)c_i+\sum_{(i,j)\in E}c_ic_j\;=\;r-1-|E| .
\]
The left-hand side is nonnegative because $d_i\ge1$, and $|E|\ge r-1$ because $\rho$ is indecomposable, so both
sides vanish: $E$ is a tree, $c_i=0$ at every vertex of degree at least two, and $c_ic_j=0$ along every edge.
In particular $\rho$ is itself an indecomposable $1324$-avoider with $r-1$ inversions, and it is reduced, so by
the first assertion and Lemma~\ref{lem:runcover}, applied with $T$ a vertex cover of size two and hence
$e(\rho\setminus T)=0$, we get $r=|\sk(\rho)|\le4\cdot2+1=9$. The indecomposable $1324$-avoiders with $r-1$
inversions number $12,16,20,24$ for $r=6,7,8,9$ and none of them is reduced, so $r\le5$ and
$\rho\in\{21,2413,3142,24153,31524\}$. Inflating only at the leaves of the corresponding tree, and not at two
ends of an edge, yields exactly the list; the count is $2+2(m-3)+2(m-4)=4(m-3)$.
\end{proof}

The second half of Conjecture~\ref{conj:hubs} follows from Theorem~\ref{thm:gamma0} by an induction that
deletes one entry at a time. The step needs a vertex whose removal keeps the inversion graph connected and
destroys a cycle, and such a vertex always exists.

\begin{lemma}\label{lem:noncut}
Let $G$ be the inversion graph of a permutation. If $G$ is connected and contains a cycle, then $G$ has a
vertex of degree at least two that is not a cut vertex.
\end{lemma}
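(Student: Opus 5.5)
We argue by contradiction, via the block structure of $G$ and the fact that inversion graphs are permutation graphs, hence cocomparability graphs, hence \emph{AT-free}. Recall that an asteroidal triple is a set of three pairwise non-adjacent vertices such that any two are joined by a path avoiding the closed neighbourhood of the third. So suppose $G$ is connected and contains a cycle, and that every vertex of degree at least two is a cut vertex. Then the non-cut vertices of $G$ are exactly its leaves, that is, its vertices of degree one.

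The first step is to find a $2$-connected block with no non-cut vertex. Consider the block--cut tree of $G$. Since $G$ contains a cycle, some block $B$ has at least three vertices and is $2$-connected. Every vertex of $B$ has degree at least two inside $B$. By our assumption, every vertex of $B$ is therefore a cut vertex of $G$. For each vertex $u$ of $B$, let $G_u$ be the union of the components of $G\setminus u$ not meeting $B$, together with $u$. Each $G_u$ is nonempty beyond $u$, and the sets $G_u\setminus\{u\}$ are pairwise disjoint and non-adjacent. Inside $G_u$, follow the block--cut tree away from $B$ to a leaf block. That leaf block contains at most one cut vertex, so any of its other vertices is a non-cut vertex of $G$. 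By our assumption such a vertex has degree one, so the leaf block is a single edge and we obtain a pendant vertex $\ell_u\in G_u\setminus\{u\}$. Its unique neighbour lies in $G_u$, and $\ell_u$ has no neighbours outside $G_u$.

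The second step produces an asteroidal triple. Pick three distinct vertices $u_1,u_2,u_3$ of $B$ and set $\ell_i=\ell_{u_i}$. These three leaves are pairwise non-adjacent, and $N[\ell_k]\subseteq G_{u_k}$. Join $\ell_i$ to $\ell_j$ by the following path: go inside $G_{u_i}$ from $\ell_i$ to $u_i$, then inside $B\setminus u_k$ from $u_i$ to $u_j$, which is possible because $B$ is $2$-connected, then inside $G_{u_j}$ from $u_j$ to $\ell_j$. This path avoids $G_{u_k}$ entirely: it meets $B$ only outside $u_k$, and it meets no other branch. In particular it avoids $N[\ell_k]$. Hence $\{\ell_1,\ell_2,\ell_3\}$ is an asteroidal triple.

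The final step is to contradict AT-freeness, and this is the one place where we use that $G$ is an inversion graph. One could cite the standard fact that cocomparability graphs, and in particular permutation graphs, are AT-free. Alternatively one can give a direct permutation argument: order the three leaves by position, and observe that the path between the outer two, drawn in the permutation diagram, is a connected union of inversion segments. Such a union covers the whole position interval between them, and hence is forced to meet the closed neighbourhood of the middle leaf. Proving that covering claim cleanly, rather than merely quoting AT-freeness, is the only real obstacle. The graph-theoretic reduction is routine, and it is sharp: the net, a triangle with a pendant edge at each vertex, shows that the lemma is false for arbitrary graphs.
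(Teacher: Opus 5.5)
Your proof is correct, and after the shared first step it takes a different route from the paper. Both arguments reduce to a $2$-connected block $B$ every vertex of which is a cut vertex of $G$.

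\textbf{The paper's route.} It takes a shortest, hence induced, cycle $C$ of $B$. Since inversion graphs have no induced cycle of length at least five, $|C|\in\{3,4\}$. It then attaches one outside neighbour to each vertex of $C$, which gives an induced triangle or four-cycle with a pendant at every vertex. These two graphs are excluded by exhaustive search over $\mathfrak S_6$ and $\mathfrak S_8$ (check C3).

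\textbf{Your route.} You take three arbitrary vertices of $B$ and one vertex from each of their hanging branches, which gives an asteroidal triple. Permutation graphs cannot contain one. This buys several things:
\begin{itemize}
\item it removes both the induced-cycle citation and the computer check;
\item it works verbatim for every AT-free graph;
\item it shows more: a block with at least three vertices contains at most two cut vertices of $G$;
\item it explains why C3 comes out as it does, since in both corona graphs any three pendant vertices form an asteroidal triple.
\end{itemize}

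\textbf{Two simplifications.} First, the passage to a leaf block is unnecessary. Any $x\in G_u\setminus\{u\}$ already satisfies $N[x]\subseteq G_u$, so it can serve as $\ell_u$.

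Second, AT-freeness needs no covering argument. Suppose $x,y,z$ are pairwise non-inverted with $y$ in the middle. Every entry outside $N[y]$ lies southwest or northeast of $y$ in the diagram of the permutation. No inversion joins a southwest entry to a northeast one. Since $x$ is southwest and $z$ is northeast, every $x$--$z$ path meets $N[y]$.
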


\begin{proof}
Suppose every vertex of degree at least two is a cut vertex. A leaf block of the block tree of $G$ contains a
vertex that is not a cut vertex, and if that block were $2$-connected such a vertex would have degree at least
two; so every leaf block is a single edge, and in particular $G$ has at least two blocks. Since $G$ has a
cycle, some block $B$ is $2$-connected, and by assumption every vertex of $B$ is a cut vertex of $G$.

Let $C$ be a shortest cycle of $B$. It is chordless, hence an induced subgraph of $G$, hence the inversion
graph of a pattern of the permutation; and no inversion graph has an induced cycle of length five or more \cite[Proposition~2.3]{TV}, so $C$ has
length three or four; the only permutations realizing $C_3$ and $C_4$ are $321$ and $3412$. For each $v\in C$ pick a neighbour $w_v$ of $v$ lying in a component of $G\setminus v$
that does not meet $B$. The $w_v$ are distinct; none is adjacent to a vertex of $C$ other than $v$, and no two
of them are adjacent, since either would place them in the block $B$. The induced subgraph of $G$ on
$C\cup\{w_v\}$ is therefore a triangle or a four-cycle with one pendant vertex at each of its vertices. Neither
of those two graphs is the inversion graph of a permutation, as a check of the $720$ permutations of length six
and the $40320$ of length eight shows.
\end{proof}

\begin{theorem}\label{thm:hubs}
Let $\tau$ be an indecomposable $1324$-avoiding permutation of length $m$ with $\inv(\tau)=m-1+\gamma$. Then
some $\gamma+2$ entries of $\tau$ meet all of its inversions.
\end{theorem}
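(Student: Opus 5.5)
We argue by induction on $\gamma$, following the remark before Lemma~\ref{lem:noncut}. The base case $\gamma=0$ is the first assertion of Theorem~\ref{thm:gamma0}: there the inversion graph is a tree and two entries cover all inversions. Note that $\gamma\ge0$ always holds, because $\tau$ is indecomposable and so $G(\tau)$ is connected with $m$ vertices and $m-1+\gamma$ edges.

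For $\gamma\ge1$ the graph $G(\tau)$ is connected and, having more than $m-1$ edges, contains a cycle. Lemma~\ref{lem:noncut} then supplies an entry $x$ of inversion degree $D(x)\ge2$ that is not a cut vertex of $G(\tau)$. Since the connected components of the inversion graph are the sum components, $\tau'=\tau\setminus x$ is indecomposable. It avoids $1324$ as a pattern of $\tau$, it has length $m'=m-1$, and
\[
\inv(\tau')=\inv(\tau)-D(x)=m-1+\gamma-D(x)=m'-1+\gamma',\qquad \gamma'=\gamma+1-D(x)\le\gamma-1 .
\]
Also $\gamma'\ge0$ because $\tau'$ is indecomposable. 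The induction hypothesis, applied with the smaller value $\gamma'$, gives a set $T'$ of $\gamma'+2$ entries of $\tau'$ meeting all its inversions. Every inversion of $\tau$ either involves $x$ or is an inversion of $\tau'$ (deletion and standardization preserve the relative order of the other entries). Hence $T=T'\cup\{x\}$ covers all inversions of $\tau$, and $|T|=\gamma'+3\le\gamma+2$. If fewer than $\gamma+2$ entries are used, we pad with arbitrary entries; this is possible as long as $m\ge\gamma+2$. Otherwise $T$ can be taken to be all entries, which trivially meets everything, so the statement as ``some $\gamma+2$ entries'' still holds; we read it as ``at most $\gamma+2$'' where necessary.

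The induction should be phrased on $\gamma$ (or on $m$, since $m$ also drops), with the hypothesis quantified over all lengths. I expect no real obstacle: the substantive inputs, namely the existence of a non-cut vertex of degree at least two and the tree case, are already Lemma~\ref{lem:noncut} and Theorem~\ref{thm:gamma0}. The only points needing care are that the deletion keeps indecomposability, which is exactly the non-cut-vertex property, and that $D(x)\ge2$ makes $\gamma$ strictly decrease.
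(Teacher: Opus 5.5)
Your proposal is correct and follows essentially the same route as the paper: induction on $\gamma$ with Theorem~\ref{thm:gamma0} as the base case, Lemma~\ref{lem:noncut} supplying a non-cut vertex $x$ of degree at least two, deletion of $x$ lowering $\gamma$ to $\gamma'=\gamma-D(x)+1$ while keeping $\tau\setminus x$ indecomposable, and adjoining $x$ to the cover to get $\gamma'+3\le\gamma+2$ entries. Your added remarks on strong induction and on reading ``some $\gamma+2$'' as ``at most $\gamma+2$'' are harmless refinements of that same argument.
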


\begin{proof}
Induction on $\gamma$, the case $\gamma=0$ being Theorem~\ref{thm:gamma0}. Let $\gamma\ge1$. The inversion
graph $G$ of $\tau$ is connected with $m$ vertices and $m-1+\gamma$ edges, so it has a cycle, and
Lemma~\ref{lem:noncut} supplies a vertex $x$ of degree $d\ge2$ that is not a cut vertex. Then
$\tau\setminus x$ is an indecomposable $1324$-avoider of length $m-1$ with $m-1+\gamma-d=(m-1)-1+\gamma'$
inversions, where $\gamma'=\gamma-d+1\le\gamma-1$ and $\gamma'\ge0$ because $G\setminus x$ is connected. By
induction $\gamma'+2$ entries of $\tau\setminus x$ meet all of its inversions; adjoining $x$ gives
$\gamma'+3\le\gamma+2$ entries meeting all inversions of $\tau$.
\end{proof}

This settles the branch $\pi_1>\pi_n$ whenever the middle part is indecomposable.

\begin{corollary}\label{cor:agtb}
Let $\pi\in\R_{\delta,n}$ with $\pi_1>\pi_n$ and suppose the pattern $\sigma$ of $\pi_2\cdots\pi_{n-1}$ is
indecomposable. Then at most $\delta+1$ entries of $\pi$ meet all of its inversions, and
$|\sk(\pi)|\le4\delta+5$.
\end{corollary}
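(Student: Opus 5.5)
The plan is to combine Proposition~\ref{prop:agtb} with Theorem~\ref{thm:hubs} applied to the middle pattern $\sigma$, and then finish with Lemma~\ref{lem:runcover}.

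\textbf{Step 1: the middle pattern.} Since $\pi$ avoids $1324$, so does $\sigma$. By hypothesis $\sigma$ is indecomposable, of length $m=n-2$. By \eqref{eq:sigmainv},
\[
\inv(\sigma)=m-3+\delta-\Delta=m-1+\gamma,\qquad \gamma=\delta-\Delta-2 .
\]
Indecomposability gives $\inv(\sigma)\ge m-1$, so $\gamma\ge0$. In particular $\Delta\le\delta-2$; if this fails, the case does not occur.

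\textbf{Step 2: a cover of $\sigma$.} Theorem~\ref{thm:hubs} supplies $\gamma+2=\delta-\Delta$ entries of $\sigma$ that meet every inversion of $\sigma$. Let $T$ consist of these entries, viewed as entries of $\pi$ at positions $2,\dots,n-1$, together with $\pi_1$ and $\pi_n$.

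\textbf{Step 3: $T$ covers $\pi$.} Every inversion of $\pi$ either involves $\pi_1$ or $\pi_n$, or has both ends among $\pi_2,\dots,\pi_{n-1}$. An inversion of the second kind is an inversion of $\sigma$, so it is met by the cover from Step~2. Hence $T$ meets all inversions of $\pi$, and
\[
|T|=\delta-\Delta+2\le\delta+1,
\]
using $\Delta\ge1$. This is the first assertion.

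\textbf{Step 4: the skeleton bound.} Apply Lemma~\ref{lem:runcover} with this $T$. Since $e(\pi\setminus T)=0$, we get
\[
|\sk(\pi)|\le4|T|+1\le4(\delta+1)+1=4\delta+5 .
\]

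\textbf{Main obstacle.} I expect the only delicate point to be the bookkeeping in Step~1. One must confirm that \eqref{eq:sigmainv} yields exactly $\gamma=\delta-\Delta-2$, and that $\gamma\ge0$ is forced rather than assumed, which is where indecomposability of $\sigma$ enters. Everything else follows directly from the earlier results. The degenerate small cases, such as $m\le2$, should be checked briefly: for $m=1$ a single entry trivially works, and for $m\ge2$ indecomposability already gives $\inv(\sigma)\ge1$.
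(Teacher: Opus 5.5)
Your proposal is correct and follows the paper's proof essentially step for step. The paper also takes $\gamma=\delta-\Delta-2\ge0$ from \eqref{eq:sigmainv}, applies Theorem~\ref{thm:hubs} to $\sigma$, adjoins $\pi_1$ and $\pi_n$ to get $|T|=\delta-\Delta+2\le\delta+1$ with $e(\pi\setminus T)=0$, and concludes with Lemma~\ref{lem:runcover}. Your side remarks, that $\sigma$ inherits $1324$-avoidance and that the $m=1$ case needs care, are harmless extras the paper leaves implicit.
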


\begin{proof}
By Proposition~\ref{prop:agtb}, $\inv(\sigma)=m-1+\gamma$ with $m=n-2$ and $\gamma=\delta-\Delta-2\ge0$. By
Theorem~\ref{thm:hubs} some $\gamma+2=\delta-\Delta$ entries of $\sigma$ meet all of its inversions; adjoining
$\pi_1$ and $\pi_n$, which are inverted with every other entry, gives a set $T$ with
$|T|=\delta-\Delta+2\le\delta+1$ and $e(\pi\setminus T)=0$. Lemma~\ref{lem:runcover} gives
$|\sk(\pi)|\le4(\delta+1)+1$.
\end{proof}

The first half of Conjecture~\ref{conj:hubs} --- three entries suffice when $\gamma$ inversions may be left over ---
remains open; it was verified for every indecomposable $1324$-avoider of length at most $12$ with $\gamma\le6$,
where the least size of a vertex cover is $2,3,4,4,5,5,6$ for $\gamma=0,\dots,6$, independently of $m$ once
$m\ge8$. Theorem~\ref{thm:hubs} is all that Corollary~\ref{cor:agtb} needs.

\begin{proposition}\label{prop:reduce}
Assume the first half of Conjecture~\ref{conj:hubs} and let $\pi\in\R_{\delta,n}$ with $\pi_1>\pi_n$ be such
that the pattern $\sigma$ of $\pi_2\cdots\pi_{n-1}$ is indecomposable. Then five entries of $\pi$ meet all but
at most $\delta-3$ of its inversions.
\end{proposition}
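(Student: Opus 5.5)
The plan is to combine Proposition~\ref{prop:agtb} with the assumed first half of Conjecture~\ref{conj:hubs}, applied to the middle pattern $\sigma$. This is the same route as Corollary~\ref{cor:agtb}, except that the cover is now of bounded size and leaves a controlled remainder.

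\emph{Step 1: apply the conjecture to $\sigma$.} Write $m=n-2$ and $\Delta=\pi_1-\pi_n\ge1$. By \eqref{eq:sigmainv}, $\inv(\sigma)=m-3+\delta-\Delta$. Since $\sigma$ is indecomposable by hypothesis and is a pattern of $\pi$, it is a $1324$-avoider, so $\inv(\sigma)\ge m-1$. Put $\gamma=\delta-\Delta-2$; then $\gamma\ge0$ and $\inv(\sigma)=m-1+\gamma$. The first half of Conjecture~\ref{conj:hubs} now gives three entries $x_1,x_2,x_3$ of $\sigma$ meeting all but at most $\gamma$ of its inversions.

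\emph{Step 2: assemble the five entries.} Take $T=\{\pi_1,\pi_n,x_1,x_2,x_3\}$, where the $x_i$ are read as entries of $\pi$ at positions $2,\dots,n-1$. By Proposition~\ref{prop:agtb}, $\pi_1$ and $\pi_n$ are inverted with every other entry. Hence every inversion of $\pi$ avoiding $T$ has both ends in the middle, and it is an inversion of $\sigma$ avoiding $x_1,x_2,x_3$. This gives
\[
e(\pi\setminus T)\le\gamma=\delta-\Delta-2\le\delta-3 ,
\]
using $\Delta\ge1$. That is the claim.

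\emph{Main obstacle.} The only delicate point is the bookkeeping: the correspondence between inversions of $\sigma$ and inversions of $\pi$ among the middle entries must be exact, and $\gamma$ must really be nonnegative so that the conjecture applies. The correspondence holds because $\sigma$ is the standardization of $\pi_2\cdots\pi_{n-1}$, which preserves inversions, and nonnegativity comes from indecomposability. If $\gamma$ were negative the case could not occur, consistent with the convention stated after Proposition~\ref{prop:agtb}. Beyond that, the argument is a direct substitution.
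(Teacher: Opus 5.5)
Your proposal is correct and follows the paper's proof essentially verbatim. Like the paper, you obtain $\inv(\sigma)=m-1+\gamma$ with $\gamma=\delta-\Delta-2\ge0$ from Proposition~\ref{prop:agtb} and the connectivity of the inversion graph of the indecomposable $\sigma$, apply the first half of Conjecture~\ref{conj:hubs} to get three entries of $\sigma$, and adjoin $\pi_1,\pi_n$, which are inverted with every other entry, so at most $\gamma\le\delta-3$ inversions are missed.
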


\begin{proof}
By Proposition~\ref{prop:agtb} the entries $\pi_1$ and $\pi_n$ meet $n-2+\Delta$ inversions and
$\inv(\sigma)=m-3+\delta-\Delta$, where $m=n-2$. Thus $\inv(\sigma)=m-1+\gamma$ with $\gamma=\delta-\Delta-2$,
and $\gamma\ge0$ because the inversion graph of the indecomposable $\sigma$ is connected. Three entries of
$\sigma$ therefore meet all but at most $\gamma\le\delta-3$ of the inversions of $\sigma$, and together with
$\pi_1,\pi_n$ they meet all but at most $\delta-3$ of the inversions of $\pi$.
\end{proof}

Corollary~\ref{cor:agtb} is weaker in the number of entries but unconditional, and it is what the finiteness of
$\Sigma_\delta$ requires. When $\sigma$ is
decomposable neither statement applies, but the same five entries --- $\pi_1$, $\pi_n$ and
three of the middle entries --- leave at most $\delta-3$ inversions on each of the $49\,699$ residuals with
$\pi_1>\pi_n$ and $10\le n\le16$ that were sampled, whether or not $\sigma$ is indecomposable.

The decomposable case is settled by the residual conditions themselves, which force the outer components of
$\sigma$ to be large and the inner ones to be cheap.

\begin{proposition}\label{prop:agtbdec}
Let $\pi\in\R_{\delta,n}$ with $a=\pi_1>b=\pi_n$ and $\Delta=a-b$, and suppose $\sigma=C_1\dsum\cdots\dsum C_r$
with $r\ge2$. Then $|C_1|\ge b$, $|C_r|\ge n-a+1$, the components $C_2,\dots,C_{r-1}$ have total length at most
$\Delta-3$ and carry at most $\delta-4$ inversions in all, and $r\le\Delta-1$. Moreover $\pi_1$, $\pi_n$ and
vertex covers of $C_1$ and $C_r$ form a set $T$ with $|T|\le\delta+2$ and $e(\pi\setminus T)\le\delta-4$.
\end{proposition}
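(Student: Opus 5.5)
The plan is to read everything off the sum decomposition of $\sigma$, using only the four residual conditions, the identity \eqref{eq:sigmainv} of Proposition~\ref{prop:agtb}, and Theorem~\ref{thm:hubs}. Throughout, $m=n-2$, and $C_i$ occupies a block of consecutive positions of $\pi_2\cdots\pi_{n-1}$. The values of $C_i$ in $\pi$ are the next $|C_i|$ values of $\{1,\dots,n\}\setminus\{a,b\}$ in increasing order.

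\emph{Step 1: sizes of the outer components.} Suppose $|C_1|<b$. Then $C_1$ carries the values $1,\dots,|C_1|$ of $\pi$, all below $b$, at positions $2,\dots,|C_1|+1$. Hence in $\pi\setminus\pi_1$ the first $|C_1|$ entries carry the $|C_1|$ smallest values. Since $|C_1|\le m<n-1$, this is a proper split, so $\pi\setminus\pi_1$ is decomposable, contradicting residuality. So $|C_1|\ge b$. The symmetric argument, using the largest values and $\pi\setminus\pi_n$ (or reverse--complement, which preserves residuals and swaps these roles), gives $|C_r|\ge n-a+1$.

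\emph{Step 2: the inner components.} Let $L_{\rm in}$ be the total length of $C_2,\dots,C_{r-1}$. Then
\[
L_{\rm in}=m-|C_1|-|C_r|\le(n-2)-b-(n-a+1)=\Delta-3 .
\]
Each inner component is nonempty, so $r-2\le\Delta-3$, that is, $r\le\Delta-1$. For the inversion count I would use two facts. First, each $C_i$ is indecomposable, hence has at least $|C_i|-1$ inversions. Second, there are no inversions between distinct components. Hence, by \eqref{eq:sigmainv}, the inner inversions number at most
\[
\inv(\sigma)-(|C_1|-1)-(|C_r|-1)=L_{\rm in}-1+\delta-\Delta\le\delta-4 .
\]

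\emph{Step 3: the cover.} Write $\inv(C_1)=|C_1|-1+\gamma_1$ and $\inv(C_r)=|C_r|-1+\gamma_r$ with $\gamma_1,\gamma_r\ge0$. Bounding the inner inversions below by $L_{\rm in}-(r-2)$ and using \eqref{eq:sigmainv} gives
\[
\gamma_1+\gamma_r\le\delta-\Delta+r-3\le\delta-4 .
\]
Each $C_j$ is an indecomposable $1324$-avoider, being a pattern of $\pi$. So Theorem~\ref{thm:hubs} supplies a vertex cover of $C_j$ of size $\gamma_j+2$; the trivial cases $|C_j|\le2$ are immediate. Let $T$ consist of $\pi_1$, $\pi_n$ and these two covers. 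Then $|T|\le\gamma_1+\gamma_r+6\le\delta+2$.

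An inversion of $\pi$ avoiding $T$ cannot involve $\pi_1$ or $\pi_n$. It cannot join two different components, since there are no such inversions. It cannot lie inside $C_1$ or $C_r$, since those are covered. So it lies inside an inner component, and by Step~2 there are at most $\delta-4$ of them. This gives $e(\pi\setminus T)\le\delta-4$.

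The only delicate point is Step~1: one must check that the split of $\pi\setminus\pi_1$ is proper, and that the value bookkeeping is right after deleting $a$ and $b$. The rest is arithmetic with \eqref{eq:sigmainv}.
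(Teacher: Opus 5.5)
Your proof is correct and takes essentially the same route as the paper. The outer-size bounds come from the indecomposability of $\pi\setminus\pi_1$ and $\pi\setminus\pi_n$: the paper phrases this as a component becoming isolated in the inversion graph, you phrase it as a proper prefix split, and these are the same argument. The rest is the same arithmetic with \eqref{eq:sigmainv} and Theorem~\ref{thm:hubs}; your bound $\gamma_1+\gamma_r\le\delta-\Delta+r-3$ is just the paper's identity $\sum_i\gamma_i=r-3+\delta-\Delta$ with the inner $\gamma_i$ dropped.
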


\begin{proof}
The inversion graph of $\pi\setminus\pi_1$ is that of $\sigma$ together with the entry $\pi_n$, which is
inverted with exactly the entries of value greater than $b$. A component of $\sigma$ all of whose values lie
below $b$ would be isolated, so every component contains a value exceeding $b$; the components carry increasing
intervals of values, so this constrains only $C_1$, and the $b-1$ middle values below $b$ give $|C_1|\ge b$.
Applying the same argument to $\pi\setminus\pi_n$ gives $|C_r|\ge n-a+1$. Since $\sum_i|C_i|=n-2$, the remaining
components have total length at most $(n-2)-b-(n-a+1)=\Delta-3$; in particular $\Delta\ge3$ and $r\le\Delta-1$.

By \eqref{eq:sigmainv}, $\sum_i\inv(C_i)=n-5+\delta-\Delta$, while $\inv(C_1)\ge|C_1|-1\ge b-1$ and
$\inv(C_r)\ge|C_r|-1\ge n-a$; subtracting leaves at most $\delta-4$ inversions in $C_2,\dots,C_{r-1}$. Writing
$\gamma_i=\inv(C_i)-|C_i|+1\ge0$ we get $\sum_i\gamma_i=r-3+\delta-\Delta\le\delta-4$, so by
Theorem~\ref{thm:hubs} the components $C_1$ and $C_r$ have vertex covers of sizes $\gamma(C_1)+2$ and
$\gamma(C_r)+2$, of total size at most $\delta$. Adjoining $\pi_1$ and $\pi_n$ gives $|T|\le\delta+2$. An
inversion of $\pi$ missing $T$ involves neither $\pi_1$ nor $\pi_n$, hence lies inside a single component of
$\sigma$, and not inside $C_1$ or $C_r$; so $e(\pi\setminus T)$ is at most the number of inversions of
$C_2,\dots,C_{r-1}$.
\end{proof}

It remains to treat the residuals with $a<b$ and $p<q$. Recall from Lemma~\ref{lem:LV} that inversion preserves
these two conditions and exchanges $\NW$ with $\SE$, and that $\NW\cup\SE\ne\varnothing$; so we may assume
$\NW\ne\varnothing$.

\begin{proposition}\label{prop:caseI}
Let $\pi\in\R_{\delta,n}$ with $a<b$, $p<q$ and $\NW\ne\varnothing$, and write $\NW=\{i_1<\dots<i_m\}$,
$v_l=\pi_{i_l}$.
\begin{enumerate}
\item[(i)] If $\SE\ne\varnothing$, then for $i\in\NW$ and $j\in\SE$ the six positions $1,i,q,p,j,n$ meet all but
at most $\delta-2$ of the inversions of $\pi$.
\item[(ii)] If $\SE=\varnothing$ and $m=1$, the seven positions of Lemma~\ref{lem:seven} meet all but at most
$\delta-1$.
\item[(iii)] If $\SE=\varnothing$ and $m\ge2$, then with $j_1\in\Sreg(i_1)$ and $j_2\in\Ereg(v_m)$ the seven
positions $1,i_m,q,p,j_1,j_2,n$ meet all but at most $\delta-3$.
\end{enumerate}
\end{proposition}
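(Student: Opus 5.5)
All three parts follow the template of Lemmas~\ref{lem:seven}, \ref{lem:NWSE} and \ref{lem:M}. In each case we pick a set $T$ of positions and sum the right-inversions of the positions that start inversions in $T$ and the left-inversions of those that end them. We then subtract the inversions counted twice and compare with $\inv(\pi)=2n-7+\delta$. The leftover term $u$, the number of inversions with neither end in $T$, then equals $\delta$ minus a fixed constant minus a sum of nonnegative slacks. So each part reduces to the observation that the exact identities proved there hold without the slack assumptions used in the low-defect sections.

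For (ii) there is essentially nothing new. Lemma~\ref{lem:seven} with $i=i_1$, $j_1\in\Sreg(i)$ and $j_2\in\Ereg(v)$ gives
\[\inv(\pi)=2n-6+\textstyle\sum s_l+u\]
with all $s_l\ge0$. Hence $u\le \inv(\pi)-2n+6=\delta-1$. Nonemptiness of $\Sreg(i)$ and $\Ereg(v)$ comes from Lemma~\ref{lem:deletions}, since $\pi$ is not almost decomposable. The seven positions of $T$ are exactly the ones counted, and $u=e(\pi\setminus T)$.

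For (i), I would rerun the proof of Lemma~\ref{lem:NWSE} with an arbitrary $i\in\NW$ and $j\in\SE$ instead of the last and first ones. The facts $R(i)=v-i$ and $L(j)=j-w$ need care: the argument there used $i=i_m$ and $j$ first. For general $i$, the entries before $i$ still do not exceed $v$, since otherwise, with $\pi_1$ and $n$, we get a $1324$. Symmetrically, no entry after $j$ is below $w$. So the same seven doubly counted pairs give
\[\inv(\pi)=2n-5+(p-i-1)+(v-b-1)+(a-w-1)+(j-q-1)+u,\]
and the bracketed terms are nonnegative. Thus $u\le\delta-2$.

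For (iii), I would return to the inequality chain in the proof of Lemma~\ref{lem:M}. Instead of stopping at "$\ge 2n-4$", I would keep the term $u$ of inversions avoiding $T=\{1,i_m,q,p,j_1,j_2,n\}$, which the inequality $\inv\ge\dots-D$ discarded. The same bookkeeping should then read $\inv(\pi)=\sum(\text{seven counts})-D+u$, where $D=7+[i_m<j_1]$. The lower bounds on $R(1)+R(q)$, $L(j_1)$ and $L(j_2)$ together give
\[\inv(\pi)\ge 2n-5+1+u.\]
Hence $u\le \inv(\pi)-2n+4=\delta-3$.

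The main obstacle is verifying that the doubly counted set in (iii) is exactly the listed $D$ pairs for every configuration. In particular, $(1,j_2)$, $(q,p)$, $(q,j_1)$, $(1,n)$ and $(i_m,j_1)$ when $j_1<i_m$ must not be inversions. We also need that $(i_m,j_2)$ is always an inversion, which holds because $\pi_{j_2}<v_m$. An inversion ending at $j_1$ whose left endpoint lies in $T$ and is not otherwise accounted for would break the identity, so I would check each candidate against the region definitions, as in the proof of Lemma~\ref{lem:seven}.
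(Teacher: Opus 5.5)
Your proposal is correct and is essentially the paper's argument. Part (ii) is Lemma~\ref{lem:seven} read off directly. Part (iii) is the chain in the proof of Lemma~\ref{lem:M} with the discarded remainder kept; the doubly counted set is exactly the $7+[i_m<j_1]$ listed pairs, so that chain is an identity. Part (i) is the double count behind Lemma~\ref{lem:NWSE}, and your check that $R(i)=v-i$ and $L(j)=j-w$ hold for every $i\in\NW$ and $j\in\SE$ is exactly what is needed for arbitrary $i,j$.

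One small correction: in (ii) and (iii) the remainder $u$ is not equal to $e(\pi\setminus T)$. It also counts inversions whose left end is $j_1$ or $j_2$: in the residual $4\,7\,3\,2\,1\,8\,5\,6$, the inversion from $j_1=3$ to position $4$ gives $u=1$ while $e(\pi\setminus T)=0$. Only $e(\pi\setminus T)\le u$ is needed, which is how the paper states it.
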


\begin{proof}
(i) The count in the proof of \cite[Lemma~2.7]{LV} --- right-inversions of $i$, left-inversions of $p$ and of $n$,
right-inversions of $1$ and of $j$, left-inversions of $q$, with five double-counted pairs removed --- produces at
least $2n-5$ distinct inversions, each incident to one of the six positions. Subtracting from
$\inv(\pi)=2n-7+\delta$ leaves at most $\delta-2$.

(ii) Lemma~\ref{lem:deletions} gives $\Sreg(i_1)\ne\varnothing$ and $\Ereg(v_m)\ne\varnothing$, and $m=1$ makes
these regions belong to the same northwestern point, so Lemma~\ref{lem:seven} applies. Every inversion counted
by its remainder $u$ has both ends outside $\{1,i,q\}\cup\{p,j_1,j_2,n\}$ or one end in that set but on the
wrong side, so $e(\pi\setminus T)\le u=\delta-1-\sum_ls_l\le\delta-1$.

(iii) This is the count in the proof of Lemma~\ref{lem:M}, which produces at least $2n-4$ inversions incident
to the seven positions.
\end{proof}

\begin{proof}[Proof of Theorem~\ref{thm:cover}]
Suppose first $a>b$. If $\sigma$ is indecomposable, Corollary~\ref{cor:agtb} gives $|T|\le\delta+1$ and
$e(\pi\setminus T)=0$; if it is decomposable, Proposition~\ref{prop:agtbdec} gives $|T|\le\delta+2$ and
$e(\pi\setminus T)\le\delta-4$. If $a<b$ and $p>q$, apply this to $\pi^{-1}$, which satisfies $a>b$ by
Lemma~\ref{lem:LV} and is again a residual of the same defect; a set of positions of $\pi^{-1}$ corresponds to
the set of entries of $\pi$ carrying those values, and inversion is an isomorphism of inversion graphs, so the
cover transports. Finally, if $a<b$ and $p<q$ we may assume $\NW\ne\varnothing$ after inverting, and
Proposition~\ref{prop:caseI} gives a set of at most seven positions with $e(\pi\setminus T)\le\delta-1$. In
every case $|T|\le\max(7,\delta+2)\le\delta+7$ and $e(\pi\setminus T)\le\delta-1$, and Lemma~\ref{lem:runcover}
gives $|\sk(\pi)|\le4(\delta+7)+4(\delta-1)+1=8\delta+25$. A reduced residual is its own skeleton, so
$\Sigma_\delta$ consists of reduced permutations of length at most $8\delta+25$.
\end{proof}

Two consequences. The first is the polynomial bound in the form in which it is used.

\begin{corollary}\label{cor:polybound}
For every $\delta\ge1$ and every $n$,
\[
|\R_{\delta,n}|\ \le\ (8\delta+26)!\;n^{8\delta+24} .
\]
\end{corollary}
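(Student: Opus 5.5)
The bound will follow by combining the finiteness statement of Theorem~\ref{thm:cover} with the exact description of Theorem~\ref{thm:crit}. By Theorem~\ref{thm:crit}, every $\pi\in\R_{\delta,n}$ is written uniquely as $\sigma[b]$, where $\sigma=\sk(\pi)$ is reduced and $b\in\mathbb Z_{\ge1}^s$ is a composition of $n$ into $s=|\sigma|$ parts. Hence the map $\pi\mapsto(\sigma,b)$ is injective, and
\[
|\R_{\delta,n}|\ \le\ \sum_{\sigma\in\Sigma_\delta}\#\{b\in\mathbb Z_{\ge1}^{|\sigma|}:\ \textstyle\sum_ib_i=n\}\ =\ \sum_{\sigma\in\Sigma_\delta}\binom{n-1}{|\sigma|-1}.
\]
Here we discard the quadratic constraint entirely; it costs only in the exponent.

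Next I would bound the two factors separately. Theorem~\ref{thm:cover} gives $|\sigma|\le S:=8\delta+25$ for every $\sigma\in\Sigma_\delta$. The number of permutations of length exactly $s$ is $s!$, so $|\Sigma_\delta|\le\sum_{s\le S}s!\le (S+1)!=(8\delta+26)!$. The crude estimate $\sum_{s=1}^{S}s!\le (S+1)!$ holds because $\sum_{s\le S}s!\le S!\,(1+1/S+1/S(S-1)+\cdots)\le 2\,S!\le (S+1)!$ for $S\ge1$. For each such $\sigma$ the number of compositions satisfies $\binom{n-1}{s-1}\le n^{s-1}\le n^{S-1}=n^{8\delta+24}$ when $n\ge1$. (The case of small $n$, e.g.\ $n<s$, only makes the binomial zero.) Multiplying the two bounds gives $|\R_{\delta,n}|\le(8\delta+26)!\,n^{8\delta+24}$.

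A slightly sharper route avoids the factorial sum: bound the number of pairs $(\sigma,b)$ with $|\sigma|=s$ by $s!\,n^{s-1}$ and sum over $s\le S$. For $n\ge2$ the sum $\sum_{s\le S}s!\,n^{s-1}$ is at most $n^{S-1}\sum_{s\le S}s!\le (S+1)!\,n^{S-1}$, and the case $n=1$ is trivial since $\R_{\delta,1}=\varnothing$. Either version yields the stated constant.

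There is no real obstacle here: all the substance sits in Theorem~\ref{thm:cover}, which gives the uniform length bound $8\delta+25$ on skeletons. The only points needing care are the uniqueness of the skeleton decomposition, which makes the count an upper bound rather than a multiset count, and checking that the bookkeeping with the factorial sum fits inside $(8\delta+26)!$.
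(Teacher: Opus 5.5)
Your proof is correct and is essentially the paper's own argument: the uniqueness of the decomposition $\pi=\sk(\pi)[b]$, the skeleton length bound $8\delta+25$ from Theorem~\ref{thm:cover}, at most $\sum_{r\le s}r!\le(s+1)!$ candidate skeletons, and at most $\binom{n-1}{r-1}\le n^{s-1}$ block vectors for each skeleton. The remark in your second route that $\R_{\delta,1}=\varnothing$ is unnecessary, and it depends on convention: under the literal definitions the one-point permutation has defect $5$ and no decomposable deletion. It does no harm, because the bound is trivial at $n=1$ anyway.
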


\begin{proof}
Every residual is $\sigma[b]$ for a unique reduced $\sigma$ and a unique composition $b$ of $n$ into $|\sigma|$
parts. By Theorem~\ref{thm:cover} the skeleton has length at most $s=8\delta+25$, and the reduced permutations
of length at most $s$ number at most $\sum_{r\le s}r!\le(s+1)!$, while for each of them the compositions of $n$
into $r\le s$ parts number $\binom{n-1}{r-1}\le n^{s-1}$.
\end{proof}

\begin{theorem}\label{thm:largen}
For every $\delta\ge1$ there is an explicit $N(\delta)$ such that
\[
a(n,2n-7+\delta)\ \le\ a(n+1,2n-7+\delta)\qquad\text{for all } n\ge N(\delta).
\]
\end{theorem}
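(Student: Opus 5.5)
The plan is to use the identity \eqref{eq:identity}, which turns the inequality at $(n,k)$ with $k=2n-7+\delta$ into the comparison
\[
|\R_{\delta,n}|\le\big|\Av^k_{n+1}(1324)\setminus\im(f\sqcup g)\big| .
\]
I will bound the right side from below by Lemma~\ref{lem:R1} and the left side from above by Corollary~\ref{cor:polybound}. The claim then follows once an exponential lower bound beats a polynomial upper bound, with an explicit crossover point.

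\emph{Lower bound.} By Lemma~\ref{lem:R1} the complement of the image contains at least $E(n,k)$ elements. For $n$ large relative to $\delta$ we have $k=2n-7+\delta\le2n-4$ as long as $\delta\le3$. In general I will not need the simplified form: I will keep only the term $2a(n,k-n+1)$ and subtract the three correction terms in \eqref{eq:E}.

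\begin{itemize}
\item The argument $k-n+1=n-6+\delta$ satisfies $k-n+1\le n-2$ once $\delta\le4$. For larger $\delta$ I will instead drop to the term $a(n,k-n)$ with $k-n=n-7+\delta$; this needs $n-7+\delta\le n-2$, that is $\delta\le5$.
\item Neither term sits on the plateau for general fixed $\delta$. So, rather than rely on \eqref{eq:E}, I will directly exhibit many permutations in the complement. The set of $\sigma$ with $\sigma_1=n+1$ is in bijection with $\Av^{k-n}_n(1324)$, and this gives at least $a(n,n-7+\delta)$ such permutations.
\item I then bound $a(n,j)\ge a(j+2,j)=p_2(j)$ for $j\le n-2$ using monotonicity in the plateau region. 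This is available from \eqref{eq:LVexact}, or simply from the injection $\pi\mapsto\pi\dsum1$, which preserves avoidance of $1324$ only when the first entry is not $1$. To avoid that subtlety I will use the injection $\pi\mapsto 1\ominus$-free form.
\end{itemize}

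Concretely, the cleanest choice is to take $j=n-7$. Prepending the maximum to an arbitrary member of $\Av^{n-7}_{n}$ and then adding $\delta$ inversions is awkward, so I will instead take the count $a(n,k-n)$ with $k-n=n-7+\delta$. I will lower-bound it by $p_2(n-7+\delta-\delta')$ for the plateau part, using the elementary injection that increases length while preserving the inversion count, namely $\pi\mapsto\pi\dsum1$ on avoiders whose last component is not order-isomorphic to something creating $1324$. It is simplest to restrict to avoiders ending in their maximum, where appending the new maximum is safe.

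\emph{Comparison and threshold.} This yields
\[
a(n+1,k)-a(n,k)\ \ge\ c\,p_2(m(n))-(8\delta+26)!\,n^{8\delta+24}
\]
with $m(n)=n-O(\delta)$. I will then use the explicit elementary lower bound $p_2(m)\ge p(m)\ge e^{2\sqrt m}/C$, or even the crude bound $p_2(m)\ge 2^{\lfloor\sqrt m\rfloor}$ obtained by counting two-coloured partitions into distinct parts at most $\sqrt m$. This gives an explicit $N(\delta)$ beyond which the exponential term dominates; it suffices to take logarithms and solve
\[
\sqrt{n}\,\log 2\ge(8\delta+24)\log n+\log((8\delta+26)!)+O(\delta).
\]

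\emph{Main obstacle.} The step I expect to be the main obstacle is making the lower bound on the complement genuinely valid for all fixed $\delta$, not just $\delta\le3$. Lemma~\ref{lem:R1} is stated for every $k$, so its first two disjointness properties give the bound $|\{\sigma:\sigma_1=n+1\}|=a(n,k-n)$ unconditionally. The remaining work is then only the monotone lower bound $a(n,n-7+\delta)\ge p_2(n-7+\delta-c_\delta)$ or any superpolynomial substitute. For that I would first try the injection $\pi\mapsto\pi\dsum1$ applied $\delta$ times to members of the plateau set $\Av^{j}_{j+2}(1324)$ with $j=n-7+\delta$ reduced appropriately, checking carefully that appending a new maximum at the end never creates a $1324$. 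It does not, because a final maximum can only play the role of the $4$, and then the other three entries would already form a $132$ that must be excluded. If that check fails, I would fall back on permutations of the explicit form $\iota\ominus$-inflations, whose count is clearly superpolynomial.
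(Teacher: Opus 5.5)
Your framework matches the paper's: the identity \eqref{eq:identity}, the single class $\sigma_1=n+1$ from Lemma~\ref{lem:R1} giving $a(n+1,k)-a(n,k)\ge a(n,n-7+\delta)-|\R_{\delta,n}|$, Corollary~\ref{cor:polybound} for the residuals, and an elementary $2^{c\sqrt j}$ bound on partitions. For $\delta\le5$ your argument works, since then $j=n-7+\delta\le n-2$ and $a(n,j)=p_2(j)$.

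The gap is the step you yourself flag: a superpolynomial lower bound on $a(n,j)$ when $\delta\ge6$. In that case $j+2=n-5+\delta>n$, so $(n,j)$ lies below the plateau, and monotonicity in $n$ gives $a(n,j)\le a(j+2,j)$ rather than $\ge$. The injection $\pi\mapsto\pi\dsum1$ cannot repair this, because it preserves the inversion number and increases the length. Starting from $\Av^j_{j+2}(1324)$ overshoots length $n$. Starting from a plateau set with $j$ ``reduced appropriately'' lands at the wrong inversion number. No composition of such maps turns $p_2(j-\delta')$ into a count of length-$n$ avoiders with exactly $j$ inversions. The fallback to ``$\iota\ominus$-inflations'' is never specified. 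So the bound $c\,p_2(m(n))$ in your comparison is unsupported for every $\delta\ge6$, and these are exactly the defects the theorem is about.

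The missing observation is that \eqref{eq:LVexact}, which you cite only for the plateau, holds on the whole range $k\le2n-7$. Since $n-7+\delta\le2n-7$ once $n\ge\delta$, it gives
\[
a(n,n-7+\delta)=p_2(n-7+\delta)-c(\delta),\qquad c(\delta)=4p_2(\delta-6)+6\sum_{i\le\delta-7}p_2(i),
\]
where $c(\delta)$ does not depend on $n$. The paper absorbs $c(\delta)\le(8\delta+26)!$ into the residual bound and then compares $p_2(n-7+\delta)$ with $2(8\delta+26)!\,n^{8\delta+24}$. A direct count would also work: $1324$-avoiders ending in their maximum are $132$-avoiders with a maximum appended, and these are encoded by partitions with $\lambda_i\le n-1-i$. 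But that count would have to be carried out.

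Two smaller points:
\begin{itemize}
\item For an explicit $N(\delta)$ valid for all larger $n$, you must show the logarithmic difference is eventually increasing, as the paper does via $H'(n)>0$ beyond an explicit point. Solving for a single crossover is not enough.
\item The ``distinct parts at most $\sqrt m$'' construction needs an extra filler part, as in the paper's proof of $p(j)\ge2^{\sqrt j-2}$, to reach weight exactly $m$.
\end{itemize}
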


\begin{proof}
Write $k=2n-7+\delta$ and $m=n-7+\delta$. By \eqref{eq:identity} the difference $a(n+1,k)-a(n,k)$ is the number
of members of $\Av^k_{n+1}(1324)$ outside $\im(f\sqcup g)$ minus $|\R_{\delta,n}|$. We use a single one of the
four classes of Lemma~\ref{lem:R1}, so that no inclusion--exclusion is needed: the members of
$\Av^k_{n+1}(1324)$ beginning with $n+1$ lie outside the image, and deleting that first entry, which is
inverted with every other one, is a bijection onto $\Av^{m}_n(1324)$. Hence
\[
a(n+1,k)-a(n,k)\ \ge\ a(n,m)-|\R_{\delta,n}| .
\]
For $n\ge\delta$ we have $m\le2n-7$, so \eqref{eq:LVexact} gives $a(n,m)=p_2(m)-c(\delta)$ with
$c(\delta)=4p_2(\delta-6)+6\sum_{i\le\delta-7}p_2(i)$ independent of $n$ (and $c(\delta)=0$ for $\delta\le5$),
while Corollary~\ref{cor:polybound} gives $|\R_{\delta,n}|\le(8\delta+26)!\,n^{8\delta+24}$. It is therefore
enough to produce an $N(\delta)$ with
\begin{equation}\label{eq:crude}
p_2(n-7+\delta)-c(\delta)\ >\ (8\delta+26)!\,n^{8\delta+24}\qquad\text{for all }n\ge N(\delta).
\end{equation}
For this we use the elementary bound $p_2(j)\ge p(j)\ge2^{\sqrt j-2}$, valid for $j\ge1$: with
$r=\lfloor\sqrt j\rfloor-1$ every subset $S\subseteq\{1,\dots,r\}$ has $\sum S\le r(r+1)/2<j-r$, so adjoining
the part $j-\sum S$, which exceeds $r$, turns $S$ into a partition of $j$ into distinct parts, and distinct
subsets give distinct partitions; hence $p(j)\ge2^{r}\ge2^{\sqrt j-2}$. The constant $c(\delta)$ is absorbed by
a factor two: $c(\delta)\le(8\delta+26)!\le(8\delta+26)!\,n^{8\delta+24}$ for every $n\ge1$, so
\eqref{eq:crude} follows from $p_2(n-7+\delta)>2\,(8\delta+26)!\,n^{8\delta+24}$ and therefore holds as soon as
\[
H(n):=\big(\sqrt{n-7+\delta}-2\big)\log2-(8\delta+24)\log n-\log\big(2\,(8\delta+26)!\big)\ >\ 0 .
\]
Now $H'(n)=\log2/\big(2\sqrt{n-7+\delta}\big)-(8\delta+24)/n>0$ once $n>2(8\delta+24)\sqrt{n-7+\delta}/\log2$,
which holds for all $n\ge\lceil(2(8\delta+24)/\log2)^2\rceil+\delta$. Beyond that point $H$ is increasing, so
the set of $n$ where $H(n)>0$ is a final segment and $N(\delta)$ may be taken to be its least element. For
$\delta=1,2,3$ this gives $N(\delta)=548\,967$, $933\,556$ and $1\,437\,767$, the monotonicity of $H$ setting in
no later than at the explicit points $n=8527$, $13\,323$ and $19\,185$ given by that bound, all three
well below the thresholds. The programs of \cite{project}
evaluate the same function $H$ that is displayed here.
\end{proof}

Both ingredients are very crude --- the data of Proposition~\ref{prop:polys} say that $|\R_{\delta,n}|$ is in
truth quadratic in $n$, and $2^{\sqrt j}$ is far below $p_2(j)$ --- and the thresholds are correspondingly large.
Replacing the elementary bound by exact partition numbers moves the comparison down by a factor of about
forty: \eqref{eq:crude} is satisfied at $n=12\,425$, $21\,466$ and $33\,491$ for $\delta=1,2,3$ and fails at
$n-1$ in each case (the program is in \cite{project}). We claim nothing more about these three numbers: they
are two directly verified evaluations of \eqref{eq:crude}, not thresholds --- we have not shown that
\eqref{eq:crude} fails at every smaller $n$, nor that it holds at every larger one. The thresholds we prove are
the ones in the theorem. For $\delta\le2$ Theorems~\ref{thm:B} and
\ref{thm:D} are of course much better, giving the inequality for every $n$. What Theorem~\ref{thm:largen}
adds is that the Claesson--Jel\'inek--Steingr\'imsson inequality holds at \emph{every} fixed defect once $n$ is
large, with no hypothesis.

\section{Polynomiality of the residual counts}\label{sec:poly}

Theorem~\ref{thm:cover} makes $\Sigma_\delta$ finite, and Corollary~\ref{cor:polybound} turns that into a
polynomial \emph{bound} on $|\R_{\delta,n}|$. In this section we show that $|\R_{\delta,n}|$ is, for every
fixed $\delta$ and all large $n$, exactly a polynomial in $n$, of degree at most two, with an explicit
description as a finite sum of binomial coefficients. The two ingredients are a lemma on entries of
inversion degree one, which removes the only negative terms from the master identity, and a compression
argument that reduces the degree bound to a finite check.

\subsection{Entries of degree one}

\begin{lemma}\label{lem:leaf}
Let $\sigma$ be a $1324$-avoiding permutation of length $s\ge3$ and let the position $i$ have inversion
degree exactly one, its unique neighbour in the inversion graph being the position $j$. Then
$j\in I(\sigma)=\{1,s,\sigma^{-1}(1),\sigma^{-1}(s)\}$ and $\sigma\setminus j$ is decomposable. In
particular $j\in D(\sigma)$ whenever $\sigma$ is reduced and indecomposable.
\end{lemma}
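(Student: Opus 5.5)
By the symmetry of the statement I would first reduce to the case $i<j$. Reverse--complement maps $1324$-avoiders to $1324$-avoiders and preserves the inversion graph up to relabelling positions by $x\mapsto s+1-x$, which reverses the order of positions. It also maps $I(\sigma)$ onto $I(\sigma^{\mathrm{rc}})$ and preserves decomposability of deletions, since $(\sigma\setminus x)^{\mathrm{rc}}=\sigma^{\mathrm{rc}}\setminus x'$ for the corresponding entry $x'$. So if $j<i$ we may pass to $\sigma^{\mathrm{rc}}$, and from now on $i<j$ and $\sigma_i>\sigma_j$.

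The key observation is that degree one pins down all other entries relative to $\sigma_i$. Every entry before position $i$ is smaller than $\sigma_i$, and every entry after position $i$ other than $\sigma_j$ is larger than $\sigma_i$, since otherwise it would form a second inversion with $i$.

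\emph{Decomposability of $\sigma\setminus j$.} In $\sigma\setminus j$ the first $i$ entries (positions $1,\dots,i$) are all $\le\sigma_i$, and the remaining entries are all $>\sigma_i$.
\begin{itemize}
\item If some entry other than $\sigma_j$ follows $i$, then $\sigma\setminus j$ splits after its $i$th entry and is decomposable.
\item Otherwise $j=s$, and $\sigma\setminus j$ ends with its maximum $\sigma_i$. Since its length is $s-1\ge2$, it splits before that last entry and is again decomposable.
\end{itemize}

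\emph{Membership $j\in I(\sigma)$.} This is where $1324$-avoidance enters, and it is the only real step. Suppose $j\ne s$ and $\sigma_j\ne1$.
\begin{itemize}
\item Pick a value $w<\sigma_j$. Its entry cannot lie after $i$: every such entry except $\sigma_j$ exceeds $\sigma_i>\sigma_j$. Hence $w$ lies before $i$.
\item Since $j<s$, there is an entry $z$ after $j$. It lies after $i$ and differs from $\sigma_j$, so $z>\sigma_i$.
\end{itemize}
Then $w,\sigma_i,\sigma_j,z$ occur in this order with $w<\sigma_j<\sigma_i<z$, which is an occurrence of $1324$, a contradiction. Hence $j=s$ or $\sigma_j=1$, and either way $j\in I(\sigma)$.

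Finally, if $\sigma$ is reduced and indecomposable, we have shown $j\in I(\sigma)$ with $\sigma\setminus j$ decomposable. This is exactly the defining condition of $D(\sigma)$ in Theorem~\ref{thm:crit}, so $j\in D(\sigma)$.

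I expect no serious obstacle. The only points needing care are the boundary case $j=s$, where the split of $\sigma\setminus j$ occurs before the last entry rather than after position $i$, and checking that the reverse--complement reduction transports $I(\sigma)$ and the decomposability of deletions correctly.
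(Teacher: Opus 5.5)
Your proof is correct and follows essentially the same route as the paper: reduce to $j>i$ by reverse--complement, note that degree one forces every earlier entry below $\sigma_i$ and every later entry other than $\sigma_j$ above it, get $j=s$ or $\sigma_j=1$ from an occurrence $w,\sigma_i,\sigma_j,z$ of $1324$, and split $\sigma\setminus j$ after position $i$ (or before its last entry when $i=s-1$). The differences are cosmetic: the paper first computes $\sigma_i=i+1$, takes the value $1$ as $w$, and organizes the decomposability step by the cases $\sigma_j=1$ and $j=s$, whereas your split on whether anything follows $i$ makes that step independent of the $1324$ argument.
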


\begin{proof}
Suppose first $j>i$. Every entry left of $i$ is smaller than $\sigma_i$ and every entry right of $i$
other than $\sigma_j$ is larger, since otherwise it would be inverted with $\sigma_i$. Hence the values
below $\sigma_i$ are the $i-1$ entries to the left of $i$ together with $\sigma_j$, so $\sigma_i=i+1$ and
positions $1,\dots,i-1$ carry $\{1,\dots,i\}\setminus\{\sigma_j\}$. If $\sigma_j\ne1$ then the value $1$
lies left of $i$, and every entry $x$ to the right of $j$ has value greater than $i+1$; so
$1,\ \sigma_i,\ \sigma_j,\ x$ would be an occurrence of $1324$. Therefore $j=s$ or $\sigma_j=1$, and in
both cases $j\in I(\sigma)$.

Delete $j$ and standardize. If $\sigma_j=1$, positions $1,\dots,i$ carried $\{2,\dots,i+1\}$ and now
carry $\{1,\dots,i\}$. If $j=s$ and $\sigma_j=v\in\{2,\dots,i\}$, positions $1,\dots,i$ carried
$\{1,\dots,i+1\}\setminus\{v\}$ and now carry $\{1,\dots,i\}$. This is a proper prefix of
$\sigma\setminus j$ unless $i=s-1$, in which case $j=s$, $\sigma_{s-1}=s$, and positions $1,\dots,s-2$
carry $\{1,\dots,s-2\}$ after the deletion; since $s\ge3$ that prefix is proper. So $\sigma\setminus j$
is decomposable.

If $j<i$, apply reverse--complement, which fixes the pattern $1324$, maps inversions to inversions,
preserves $I(\sigma)$ and decomposability, and exchanges the two cases.
\end{proof}

\begin{corollary}\label{cor:mindeg}
Every entry of a residual is inverted with at least two other entries.
\end{corollary}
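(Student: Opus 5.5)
We argue by contradiction, using Lemma~\ref{lem:leaf} to supply a neighbour that is forced into $D(\sigma)$. Let $\pi$ be a residual and let $x$ be an entry with $D(x)\le1$. The case $D(x)=0$ is immediate. Indeed, the inversion graph of $\pi$ is connected, since $\pi$ is indecomposable and, as recalled before Lemma~\ref{lem:runcover}, the connected components of the inversion graph are the sum components. Moreover a residual has length at least $5$ by Theorem~\ref{thm:A} and Lemma~\ref{lem:LV}; in fact residuals of length at most $5$ have length at least $3$ in every case. So an isolated entry cannot exist, and we may assume $D(x)=1$. Let $y$ be the unique entry inverted with $x$.

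The main step is to apply Lemma~\ref{lem:leaf} to $\pi$ itself, with $s=n\ge3$, which is legitimate because $\pi$ avoids $1324$. It yields that $y$ lies in $I(\pi)$, so $y$ is the first entry, the last entry, the value $1$ or the value $n$. It also yields that $\pi\setminus y$ is decomposable. In particular $y$ is one of the four extremal entries whose deletion is, by the definition of almost decomposability, required to be indecomposable. The residual condition therefore fails, which is a contradiction.

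Alternatively, one can argue through Theorem~\ref{thm:crit}, working at the level of the skeleton $\sigma=\sk(\pi)$ and its block vector $b$. An entry of block $i$ has degree $\sum_{j\in N(i)}b_j$. If this degree equals $1$, then $i$ has a single neighbour $j$ with $b_j=1$. Lemma~\ref{lem:leaf} then gives $j\in D(\sigma)$, whereas Theorem~\ref{thm:crit} demands $b_j\ge2$, again a contradiction. The direct argument above is shorter, and I would present that one.

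The only point needing care is the hypothesis $s\ge3$ of Lemma~\ref{lem:leaf}. For $n\le2$ there are no residuals, because $21$ deleted at an extremal entry gives $1$, which is indecomposable, but its inversion count $1$ would require $\delta=4$. This possibility should be dispatched directly: either the residual sets are empty for $n\le2$ by the definition of almost decomposability, or the claim can be checked by hand. I expect no real obstacle beyond this small-length bookkeeping.
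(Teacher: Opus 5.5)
Your core argument is the paper's proof. Indecomposability rules out an isolated vertex. Applying Lemma~\ref{lem:leaf} to $\pi$ itself turns a degree-one entry into an extremal neighbour $y$ with $\pi\setminus y$ decomposable, which contradicts residuality. Your alternative via Theorem~\ref{thm:crit} is also fine in its range.

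The small-length paragraph is wrong, and it cannot be repaired the way you suggest.
\begin{itemize}
\item The claim that a residual has length at least $5$ does not follow from Theorem~\ref{thm:A}, which concerns $\delta=1$ only. It is also false: $321\in\R_{4,3}$ and $4321\in\R_{5,4}$.
\item The assertion that there are no residuals with $n\le2$ is false. The permutation $21$ has $1=2\cdot2-7+4$ inversions and is indecomposable. Each of its four extremal deletions is the indecomposable permutation $1$. Hence $21\in\R_{4,2}$; defect $4$ is no obstruction.
\item Likewise $1\in\R_{5,1}$, since its deletions are empty and the empty permutation is not decomposable.
\end{itemize}

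Both $21$ and $1$ are counterexamples to the statement as literally phrased. So no argument can ``dispatch'' $n\le2$: the corollary must be read for residuals of length $n\ge3$.

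The paper's one-line proof makes this restriction tacitly. Its degree-zero step needs $n\ge2$, and Lemma~\ref{lem:leaf} needs $s\ge3$. Every application of the corollary lies in that range. In Proposition~\ref{prop:base} the base residual $\sigma[\ell]$ has length at least $3$ (it is $3412$ when $\sigma=21$). Theorem~\ref{thm:deladmissible} uses the corollary only for $n\ge23$.

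So state $n\ge3$ as a hypothesis and drop the paragraph about small lengths.
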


\begin{proof}
A residual $\pi$ is indecomposable, so no entry has degree zero (an isolated vertex of the inversion
graph is a sum component). If an entry had degree one, Lemma~\ref{lem:leaf} would give a neighbour
$y\in\{\pi_1,\pi_n,1,n\}$ with $\pi\setminus y$ decomposable, so $\pi$ would be almost decomposable.
\end{proof}

\subsection{The base residual and the nonnegative form of the master identity}

Let $\sigma$ be reduced, indecomposable and $1324$-avoiding, of length $s\ge2$, and put
\[
\ell_i=1+[\,i\in D(\sigma)\,]\qquad(1\le i\le s),\qquad L=\sum_i\ell_i,\qquad
\rho=\sigma[\ell],\qquad \delta_0=\delta(\rho).
\]
By Theorem~\ref{thm:crit}, $\sigma[\ell+x]$ is a residual for every $x\in\mathbb Z_{\ge0}^s$, and every
residual with skeleton $\sigma$ is of this form; we call $\rho$ the \emph{base residual} of $\sigma$.
By \cite[Theorem~2.5]{LV}, $\delta_0\ge1$.

\begin{proposition}\label{prop:base}
With $a_i=\sum_{j\in N(i)}\ell_j-2$, where $N(i)$ is the neighbourhood of $i$ in the inversion graph of
$\sigma$,
\begin{equation}\label{eq:base}
\delta\big(\sigma[\ell+x]\big)=\delta_0+Q(x),\qquad
Q(x)=\sum_{i=1}^{s}a_ix_i+\sum_{(i,j)\in\Inv(\sigma)}x_ix_j,
\end{equation}
and $a_i\ge0$ for every $i$. Consequently $\delta_0\le\delta$ for every $\sigma\in\Sigma_\delta$, and
$\sigma\in\Sigma_\delta$ if and only if $h=\delta-\delta_0\ge0$ and $Q(x)=h$ has a solution
$x\in\{0,1,\dots,h\}^s$.
\end{proposition}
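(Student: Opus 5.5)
The plan is to derive \eqref{eq:base} by the same expansion that gave Proposition~\ref{prop:master}, this time around the base point $\ell$ instead of $\mathbf 1$. Then nonnegativity of the $a_i$ will follow from Lemma~\ref{lem:leaf}. The two consequences are then read off directly.

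\emph{Step 1 (the identity).} By Lemma~\ref{lem:infl}(3),
\[
\inv(\sigma[\ell+x])=\sum_{(i,j)\in\Inv(\sigma)}(\ell_i+x_i)(\ell_j+x_j)
=\inv(\rho)+\sum_{(i,j)\in\Inv(\sigma)}(\ell_jx_i+\ell_ix_j)+\sum_{(i,j)\in\Inv(\sigma)}x_ix_j .
\]
Each $x_i$ collects $\sum_{j\in N(i)}\ell_j$ from the middle sum. The length is $L+\sum_ix_i$, so subtracting $2|\sigma[\ell+x]|-7$ removes $2x_i$ from each linear coefficient. This gives $\delta(\sigma[\ell+x])=\delta_0+\sum_i a_ix_i+\sum_{\Inv}x_ix_j$.

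\emph{Step 2 ($a_i\ge0$).} Since $\sigma$ is indecomposable of length $s\ge2$, every vertex has degree $d_i\ge1$.
\begin{itemize}
\item If $d_i\ge2$, then $a_i\ge d_i-2\ge0$ because each $\ell_j\ge1$.
\item If $d_i=1$ with unique neighbour $j$, then $a_i=\ell_j-2$, so it suffices that $j\in D(\sigma)$. For $s\ge3$ this is exactly Lemma~\ref{lem:leaf}, since $\sigma$ is reduced and indecomposable. For $s=2$ we have $\sigma=21$, and $D(\sigma)=I(\sigma)=\{1,2\}$ by definition, so again $\ell_j=2$.
\end{itemize}
This degree-one case is the only place anything beyond algebra is needed, and it is handled entirely by the leaf lemma.

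\emph{Step 3 (consequences).} By Theorem~\ref{thm:crit}, the residuals with skeleton $\sigma$ are exactly the $\sigma[\ell+x]$ with $x\ge0$, and their defects are $\delta_0+Q(x)$. All coefficients of $Q$ are nonnegative, so $Q\ge0$, and membership $\sigma\in\Sigma_\delta$ forces $\delta_0\le\delta$. Conversely, $\sigma\in\Sigma_\delta$ if and only if $Q(x)=h$ has a solution in $\mathbb Z_{\ge0}^s$.

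It remains to confine $x$ to $\{0,\dots,h\}^s$. The plan is to show that any coordinate $x_i\ge1$ contributes at least $x_i$ to $Q(x)$, so $x_i\le h$. This holds if $a_i\ge1$. If instead $a_i=0$, then $x_i$ contributes only through the terms $x_ix_j$ with $j\in N(i)$.

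So a solution could have a large $x_i$ with $a_i=0$ and all neighbours zero. There $x_i$ is a free direction, and Step 3 will \emph{replace} $x_i$ by $1$ (or by $0$) without changing $Q$. Concretely:
\begin{itemize}
\item For each $i$ with $a_i=0$ and $x_j=0$ on $N(i)$, set $x_i$ to $1$; $Q$ is unchanged.
\item All remaining coordinates satisfy $x_i\le a_ix_i+\sum_{j\in N(i)}x_ix_j\le Q(x)=h$.
\item The reset coordinates now satisfy $x_i=1\le h$ when $h\ge1$. When $h=0$, every coordinate must vanish, since $x_i\ge1$ with $a_i=0$ still gives $Q\ge0$; in that case choose $x=0$, which also solves $Q=0$.
\end{itemize}

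The main obstacle, mild as it is, is this bookkeeping in the final bounded-solution claim; everything else is formal.
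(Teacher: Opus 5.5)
Your argument is correct and is essentially the paper's: the same expansion of $\inv$ and $n$ about the base point $\ell$, nonnegativity of $a_i$ from Lemma~\ref{lem:leaf} (the paper reaches it through Corollary~\ref{cor:mindeg} applied to the base residual $\rho$, whereas you apply the lemma directly to $\sigma$ and treat $\sigma=21$ by hand), and truncation of the zero-cost coordinates. The only blemishes are in the last step, and neither is a gap. The reset to $1$ must be restricted to coordinates in the support, since resetting two adjacent zero coordinates would add an edge term to $Q$. Also, for $h=0$ it is false that every coordinate vanishes, although this is harmless because $x=0$ solves $Q=0$. Replacing the free coordinates by $0$, as the paper does, avoids both points and handles every $h$ at once.
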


\begin{proof}
Expanding $\inv(\sigma[\ell+x])=\sum_{\Inv(\sigma)}(\ell_i+x_i)(\ell_j+x_j)$ and $n=L+\sum_ix_i$ in
$\delta=\inv-2n+7$ gives \eqref{eq:base}. The number $a_i+2=\sum_{j\in N(i)}\ell_j$ is the inversion
degree, in $\rho$, of each entry of the $i$th block, so $a_i\ge0$ by Corollary~\ref{cor:mindeg}.
(Alternatively: if $a_i<0$ then $\sigma[\ell+te_i]$ is a residual of defect $\delta_0+ta_i$, which is
nonpositive for large $t$, contradicting \cite[Theorem~2.5]{LV}.) Since every term of $Q$ is
nonnegative, $Q(x)=h$ forces $x_i\le h$ at every $i$ with $a_i\ge1$ and $x_ix_j\le h$ on every edge; a
coordinate exceeding $h$ therefore has $a_i=0$ and all its neighbours at zero, contributes nothing to $Q$,
and may be replaced by $0$.
\end{proof}

Proposition~\ref{prop:master} is the case $\ell=\mathbf 1$ of \eqref{eq:base}, in which the entries of
degree one contribute the negative coefficients $d_i-2=-1$; the change of base point moves those
entries' neighbours, which by Lemma~\ref{lem:leaf} lie in $D(\sigma)$, to block size two, and the
negative terms disappear. Note that the base point also covers $\sigma=21$, where $\ell=(2,2)$,
$\rho=3412$, $\delta_0=3$ and $Q(x)=x_1x_2$.

\subsection{Eventual polynomiality}

\begin{theorem}\label{thm:poly}
For every $\delta\ge1$ there is an integer $n_1(\delta)\le8\delta+30$ such that $|\R_{\delta,n}|$ agrees
with a polynomial in $n$ for all $n\ge n_1(\delta)$. Explicitly,
\begin{equation}\label{eq:typesum}
|\R_{\delta,n}|=\sum_{\sigma\in\Sigma_\delta}\ \sum_{(T,x_T,F)}
\binom{n-L-t-1}{|F|-1},
\end{equation}
where, for each $\sigma$ with $h=\delta-\delta_0$, the inner sum runs over the finitely many triples
consisting of a set $T\subseteq\{1,\dots,s\}$, a vector $x_T\in\mathbb Z_{\ge1}^{T}$ with
$Q(x_T,0)=h$ and $\sum_{i\in T}x_i=t\le2h$, and a set $F$ of positions $i\notin T$ with $a_i=0$ and
$N(i)\cap(T\cup F)=\varnothing$, such that every $i\in T$ has $a_i\ge1$ or a neighbour in $T\cup F$
(this makes the triple of a solution unique); for $F=\varnothing$ the binomial coefficient is to be read as
$[\,n=L+t\,]$. The degree of the polynomial is $\max|F|-1$ over the triples occurring.
\end{theorem}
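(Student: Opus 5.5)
The plan is to count residuals skeleton by skeleton. By Theorem~\ref{thm:crit} and Theorem~\ref{thm:cover}, $\R_{\delta,n}$ is the disjoint union over the finite set $\Sigma_\delta$ of the sets $\{\sigma[\ell+x]:x\in\mathbb Z_{\ge0}^s,\ \sum_i x_i=n-L\}$. By Proposition~\ref{prop:base}, $\sigma[\ell+x]\in\R_{\delta,n}$ exactly when $Q(x)=h$ with $h=\delta-\delta_0$. So it is enough to show that, for each fixed $\sigma$, the number of solutions $x$ of $Q(x)=h$ with $\sum x_i=n-L$ equals the inner sum in \eqref{eq:typesum}.

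The key step is a canonical decomposition of a solution $x$ into a triple $(T,x_T,F)$. Let $S$ be the support of $x$. Call $i\in S$ \emph{free} if $a_i=0$ and no neighbour of $i$ lies in $S$. Let $F$ be the set of free indices and $T=S\setminus F$.
\begin{itemize}
\item A free coordinate contributes nothing to $Q$, since its linear coefficient is zero and it meets no other support element. Hence $Q(x)=Q(x_T,0)=h$.
\item Every $i\in T$ has $a_i\ge1$ or a neighbour in $S=T\cup F$. A neighbour in $F$ is impossible by the definition of $F$, but the condition as stated in the theorem is still satisfied.
\item Conversely, fix any $(T,x_T,F)$ meeting the stated conditions and any $x_F\in\mathbb Z_{\ge1}^F$. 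The resulting $x$ has support $T\cup F$. Its free set is exactly $F$: each $i\in F$ is free because $a_i=0$ and $N(i)\cap(T\cup F)=\varnothing$, and no $i\in T$ is free by the condition on $T$. This gives a bijection between solutions and pairs (triple, $x_F$).
\item For a fixed triple, the vectors $x_F\in\mathbb Z_{\ge1}^F$ with $\sum x_F=n-L-t$ number $\binom{n-L-t-1}{|F|-1}$, which reads $[n=L+t]$ when $F=\varnothing$.
\end{itemize}

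Next comes finiteness and the bound $t\le2h$. For $i\in T$, either $a_i\ge1$, in which case $x_i\le a_ix_i\le h$, or $i$ has a neighbour $j$ in $T$, in which case $x_ix_j\le h$. The second case needs a little care to keep the total at most $2h$. My first attempt is to charge each $i\in T$ either to its linear term $a_ix_i$ or to an edge term $x_ix_j$, using $x_i\le x_ix_j$ and $x_i+x_j\le x_ix_j+1$. This should give $t\le 2Q(x_T,0)=2h$, since an isolated edge with $x_i=x_j=1$ contributes $2\le2\cdot1$. Finiteness of the triples then follows because $T$ ranges over subsets of $\{1,\dots,s\}$ and $x_T$ is bounded.

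Each summand $\binom{n-L-t-1}{|F|-1}$ is a polynomial in $n$ once $n\ge L+t+1$, for $|F|\ge1$. The terms with $F=\varnothing$ vanish once $n>L+t$. Thus $n_1(\delta)=\max(L+t)+1$ over all triples. The degree claim holds because each binomial is a polynomial of degree $|F|-1$ with positive leading coefficient, so there is no cancellation.

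The main obstacle is the explicit bound $n_1(\delta)\le8\delta+30$, that is, $L+t\le8\delta+29$. I would get it from the cover argument. Consider a residual $\pi=\sigma[\ell+x]$ with $F=\varnothing$, of length $L+t$. The sets $T$ from the proof of Theorem~\ref{thm:cover} have $|T|\le\delta+7$ and $e\le\delta-1$. I would try to bound $n$ itself, rather than only $|\sk(\pi)|$, through a block-sensitive version of Lemma~\ref{lem:runcover}. Without free coordinates, every enlarged block meets a positive term of $Q$, so the excess $t$ is charged to $h\le\delta-1$. Combining this with $L\le2|\sigma|\le2(8\delta+25)$ is too weak, so I would instead bound $L$ by $|\sigma|+|D(\sigma)|$ with $|D(\sigma)|\le4$. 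This gives $L+t\le8\delta+25+4+2(\delta-\delta_0)$. To reach $8\delta+29$ I would absorb the $2h$ into the cover bound, using that $\delta_0$ is the defect of the base residual, so the cover for $\rho$ carries $\delta_0$ in place of $\delta$. Then $L\le8\delta_0+29$, and adding $t\le2h$ keeps the total within $8\delta+29$.
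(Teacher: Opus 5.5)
Your proposal is correct and follows the paper's proof essentially step for step: the same canonical split of the support into the zero-cost independent set $F$ and the paid set $T$, the same edge-charging argument giving $t\le 2h$, and the same threshold bound from applying Theorem~\ref{thm:cover} to the base residual, so that $L\le s+4\le 8\delta_0+29$ and $L+t\le 8\delta_0+29+2(\delta-\delta_0)\le 8\delta+29$. The detours in your last paragraph (a block-sensitive cover lemma, the bound $L\le 2|\sigma|$) are not needed, since the chain you finally settle on is exactly the paper's.
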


\begin{proof}
Fix $\sigma\in\Sigma_\delta$ and $h=\delta-\delta_0\ge0$. For a solution $x\in\mathbb Z_{\ge0}^s$ of
$Q(x)=h$ let $S$ be its support and put
\[
F=\{\,i\in S:\ a_i=0\ \text{and}\ N(i)\cap S=\varnothing\,\},\qquad T=S\setminus F .
\]
Every $i\in T$ has $a_i\ge1$ or a neighbour $j\in S$, and in the second case $x_i\le x_ix_j$; each edge
term is charged to at most two positions, so
\[
\sum_{i\in T}x_i\ \le\ \sum_{i\in T}a_ix_i+2\sum_{(i,j)\in\Inv(\sigma),\ i,j\in T}x_ix_j\ \le\ 2h,
\]
using that the coordinates in $F$ contribute nothing to $Q$ (they have $a_i=0$ and no neighbour in $S$).
Hence $(T,x_T)$ ranges over a finite set, and for fixed $(T,x_T,F)$ the solutions are exactly the
vectors with $x_F\in\mathbb Z_{\ge1}^F$ arbitrary and all other coordinates zero: $Q$ does not depend on
$x_F$. Distinct triples give disjoint sets of solutions, since $S=T\cup F$ and $x_T$ are read off from
$x$. By Theorem~\ref{thm:crit} the residuals of length $n$ with skeleton $\sigma$ correspond bijectively
to the solutions with $\sum_ix_i=n-L$, and the number of $x_F\ge\mathbf 1$ with $\sum_Fx_i=n-L-t$ is
$\binom{n-L-t-1}{|F|-1}$ when $F\ne\varnothing$ and $[n=L+t]$ when $F=\varnothing$. Summing over the
finite set $\Sigma_\delta$ (Theorem~\ref{thm:cover}) gives \eqref{eq:typesum}.

For $F\ne\varnothing$ the binomial coefficient $\binom{N-1}{r-1}$ agrees with the polynomial
$(N-1)(N-2)\cdots(N-r+1)/(r-1)!$ for every $N\ge1$, so each term is a polynomial in $n$ for
$n\ge L+t+1$, while the terms with $F=\varnothing$ vanish for $n>L+t$. Thus $n_1(\delta)$ may be taken
as $1+\max(L+t)$ over all triples. By Theorem~\ref{thm:cover} applied to the base residual,
$s\le8\delta_0+25$, so $L\le s+4\le8\delta_0+29$, and $t\le2h=2(\delta-\delta_0)$; hence
$L+t\le8\delta+29$.
\end{proof}

Theorem~\ref{thm:poly} proves Theorem~\ref{conj:poly} in its qualitative form. The polynomials of
Proposition~\ref{prop:polys} are the sums \eqref{eq:typesum} taken over the catalogue
$\Sigma^{(22)}_\delta$ instead of $\Sigma_\delta$, and the two agree for all large $n$ exactly when no
skeleton outside the catalogue contributes a triple with $F\ne\varnothing$ at defect $\delta$: a triple
with $F=\varnothing$ affects only one value of $n$. For $\delta\le8$ the sums over the catalogue were
evaluated directly: they reproduce every archived value of $|\R_{\delta,n}|$, $n\le22$, and agree with
the polynomials of Proposition~\ref{prop:polys} from $n_0(\delta)$ on. The numbers of canonical
triples $(T,x_T,F)$ by the free rank $|F|$ are
\begin{center}
\begin{tabular}{lrrrrrr}
\toprule
$\delta$ & 3 & 4 & 5 & 6 & 7 & 8\\
\midrule
$|F|=3$ & 8 & 40 & 128 & 336 & 784 & 1680\\
$|F|=2$ & 135 & 468 & 1374 & 3536 & 8286 & 17996\\
$|F|=1$ & 342 & 1333 & 4230 & 11493 & 28142 & 63410\\
$|F|=0$ & 174 & 891 & 3424 & 10657 & 28780 & 69863\\
\bottomrule
\end{tabular}
\end{center}
and the leading coefficient of $P_\delta$ is half the number of triples with $|F|=3$, as
\eqref{eq:typesum} predicts. Here $F$ is the zero-cost independent free set
in \eqref{eq:typesum}; it is not the large-coordinate set used in the older
C9 stratification. The two stratifications give the same residual
polynomials but different lower-rank cell counts.
The rank-zero counts at defects $6$ and $7$ include, respectively, the four
solutions from the decreasing skeleton $4321$ and the single solution from
$54321$.  These short decreasing terms have base length five and contribute
only at $n=5$; they do not affect the residual table or polynomial identities
used for $n\ge6$.

\subsection{The degree is at most two}

\begin{lemma}\label{lem:samenbhd}
In a $1324$-avoiding permutation, two entries with the same set of inverted partners lie in a common
maximal run of consecutive positions carrying consecutive values.
\end{lemma}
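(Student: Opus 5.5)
The plan is to reduce to the case where the two entries $x,y$ are adjacent in position and consecutive in value. From there, the identical inversion sets will force them to sit in one run.

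\emph{Step 1: $x$ and $y$ are not inverted.} Let $x$ be at position $i$ with value $u$, and $y$ at position $j>i$ with value $w$. If $(i,j)$ were an inversion, then $y\in N(x)$. But $y\notin N(y)$, so the two neighbourhoods would differ. Hence $u<w$.

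\emph{Step 2: no entry is strictly between them in position.} Take an entry $z$ at position $k$ with $i<k<j$.
\begin{itemize}
\item If $\pi_k<u$, then $z$ is inverted with $x$. It is not inverted with $y$, since $\pi_k<w$ and $k<j$.
\item If $\pi_k>w$, then $z$ is inverted with $y$ and not with $x$.
\end{itemize}
In both cases the neighbourhoods differ. So every such $z$ has $u<\pi_k<w$.

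\emph{Step 3: no value is strictly between them.} Take an entry $z$ with $u<\pi_z<w$ at position $k$.
\begin{itemize}
\item If $k<i$, then $z$ is inverted with $x$ but not with $y$.
\item If $k>j$, then $z$ is inverted with $y$ but not with $x$.
\end{itemize}
So every such value lies strictly between $i$ and $j$ in position.

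\emph{Step 4: combine the two steps.} By Steps 2 and 3, the entries strictly between $i$ and $j$ in position are exactly the entries with values strictly between $u$ and $w$. Together with $x$ and $y$, they form an interval of positions carrying an interval of values, with $x$ first and lowest and $y$ last and highest.

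Now use $1324$-avoidance. If the middle block $M$ is nonempty, I want an entry outside $[i,j]$ to combine with $x$, $y$ and a middle entry. As a first attempt, I expect that any inversion inside $M$, together with $x$ and $y$, gives $x,m_1,m_2,y$ in the pattern $1324$. Hence $M$ is increasing.

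If $M$ is increasing and nonempty, then the whole block from $x$ to $y$ is an increasing run of consecutive values, which is exactly the claim. The conclusion holds in that case too, because a maximal run contains all of it.

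So in either case $x$ and $y$ lie in a common maximal run. The only real step is the observation that an inversion inside $M$ produces $1324$ with $x$ and $y$, since $x$ is below everything in $M$ and precedes it, and $y$ is above everything in $M$ and follows it. I expect no serious obstacle.
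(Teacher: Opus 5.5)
Your proof is correct and is essentially the paper's own argument. Both proofs first show the pair is not inverted. Both then use the two ``inverted with exactly one of them'' exclusions to show that positions $i,\dots,j$ carry exactly the values $u,\dots,w$. Finally, an inversion $m_1,m_2$ inside the middle block would give the occurrence $x,m_1,m_2,y$ of $1324$. Your opening sentence about reducing to the adjacent case, and the tentative remark about using an entry outside $[i,j]$, do not describe what you actually do and can simply be dropped.
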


\begin{proof}
Let the positions $i<j$ have the same inversion neighbourhood. They are not inverted with each other,
so $\pi_i<\pi_j$. An entry at a position strictly between $i$ and $j$ with value outside
$(\pi_i,\pi_j)$ would be inverted with exactly one of the two, and so would a value strictly between
$\pi_i$ and $\pi_j$ at a position outside $(i,j)$. Hence the positions $i,\dots,j$ carry exactly the
values $\pi_i,\dots,\pi_j$. If two of the intermediate entries were inverted, they would form, with
$\pi_i$ and $\pi_j$, an occurrence of $1324$. So the positions $i,\dots,j$ carry the values
$\pi_i,\dots,\pi_j$ in increasing order, which is a run.
\end{proof}

\begin{theorem}\label{thm:deg2}
For every $\delta\ge1$ the polynomial of Theorem~\ref{thm:poly} has degree at most two. Equivalently, no
triple $(T,x_T,F)$ in \eqref{eq:typesum} has $|F|\ge4$.
\end{theorem}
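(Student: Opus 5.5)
We argue by contradiction. Suppose some triple $(T,x_T,F)$ in \eqref{eq:typesum} has $|F|\ge4$, for a skeleton $\sigma\in\Sigma_\delta$. We then produce a bounded pattern of a $1324$-avoider with an impossible degree configuration, and rule it out by a finite search. This is the compression argument announced at the start of Section~\ref{sec:poly}, and it is the same mechanism that the proof of Lemma~\ref{lem:threeclasses} invokes as ``check C8''.

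\emph{Step 1: what $F$ says about the residual.} Take a solution $x$ with this triple and $x_F$ large, and put $\pi=\sigma[\ell+x]$. For $i\in F$ we have $a_i=0$ and $x_j=0$ on $N(i)$. So, exactly as in the proof of Proposition~\ref{prop:delfree}, every entry of block $i$ has inversion degree $\sum_{j\in N(i)}\ell_j=2$ in $\pi$. Moreover $N(i)\cap F=\varnothing$, so the blocks indexed by $F$ are pairwise non-inverted. Since $\sigma$ is reduced, distinct blocks of $\sigma$ have distinct neighbourhoods by Lemma~\ref{lem:samenbhd}, applied to $\sigma$ itself. Choosing one entry from each of four blocks of $F$ therefore gives four entries $y_1,\dots,y_4$ of $\pi$ with the following properties:
\begin{itemize}
\item each $y_k$ has degree two;
\item the $y_k$ are pairwise non-inverted;
\item their neighbourhoods are pairwise distinct, although two of them may share a vertex.
\end{itemize}

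\emph{Step 2: compression.} Let $W$ consist of the four $y_k$ together with their at most eight neighbours, so $|W|\le12$, and let $\tau$ be the pattern of $\pi$ on $W$. Induced subgraphs of $G(\pi)$ are inversion graphs of patterns. Hence in $\tau$ each $y_k$ still has degree exactly two with the same neighbourhood, the $y_k$ remain pairwise non-inverted, and $\tau$ avoids $1324$.

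\emph{Step 3: the finite statement.} The finite statement C8 asserts that no $1324$-avoiding permutation of length at most $12$ contains four pairwise non-inverted entries of degree two with pairwise distinct neighbourhoods. This contradicts Step 2, so $|F|\le3$, and the degree bound $\max|F|-1\le2$ follows from Theorem~\ref{thm:poly}.

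\emph{Main obstacle.} The hard step is C8 itself. A naive search over $\mathfrak S_{12}$ is too large, so the search must be organised around the configuration. First fix the relative order of the four increasing entries $y_k$. Then insert their neighbours one by one, pruning with $1324$-avoidance and with the constraint that each $y_k$ acquires exactly two partners and that no $y_k$ is inverted with another. If the raw search still proves too large, I would first try to eliminate structurally the cases in which neighbourhoods overlap. Lemma~\ref{lem:onecut} and the chain structure used in Lemma~\ref{lem:threeclasses} restrict how a vertex adjacent to three of the $y_k$ can sit. After that elimination, only the disjoint-neighbourhood configurations of length exactly $12$ would be left to the computer.
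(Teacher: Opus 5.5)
Your proposal is correct and follows the paper's proof essentially step for step. You pick four entries from four free blocks; they have degree two, are pairwise non-inverted and have distinct neighbourhoods (you apply Lemma~\ref{lem:samenbhd} to the reduced skeleton $\sigma$ rather than to $\pi$, which is equivalent), and restricting to them and their at most eight neighbours contradicts C8. C8 itself needs no special pruning: the paper proves it by plain exhaustive enumeration of the roughly $3\times10^7$ members of $\Av_m(1324)$ with $m\le12$, so no search over all of $\mathfrak S_{12}$ arises.
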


\begin{proof}
Suppose a triple with $|F|\ge4$ occurs for some $\sigma\in\Sigma_\delta$, and let $\pi=\sigma[\ell+x]$
be a residual of that type with $x_i\ge1$ for all $i\in F$. Choose four positions $i_1,\dots,i_4\in F$
and one entry $y_r$ of the block $i_r$ for each. In $\pi$, the inversion degree of $y_r$ is
$\sum_{j\in N(i_r)}(\ell_j+x_j)=a_{i_r}+2=2$, because $N(i_r)\cap S=\varnothing$. The four entries are
pairwise non-inverted, since $F$ is an independent set of the inversion graph of $\sigma$ and entries
in non-inverted blocks are not inverted. Their inversion neighbourhoods are pairwise distinct: equal
neighbourhoods would put two of them in one run by Lemma~\ref{lem:samenbhd}, that is, in one block.

Let $U$ consist of the four entries and their at most eight neighbours, so $|U|\le12$, and let $\tau$
be the pattern of $\pi$ on $U$. Then $\tau$ avoids $1324$, and since the inversion graph of a pattern
is the induced subgraph, the four entries are still pairwise non-inverted in $\tau$, still have
inversion degree exactly two, and still have pairwise distinct neighbourhoods. This contradicts the
following statement, verified by exhaustive enumeration of the $29\,961\,493$ permutations in
$\Av_m(1324)$, $m\le12$ (Section~\ref{sec:data}, check C8):
\begin{quote}
no $1324$-avoiding permutation of length at most $12$ has four pairwise non-inverted entries of inversion
degree two with pairwise distinct inversion neighbourhoods.
\end{quote}
Hence $|F|\le3$ for every triple, and the degree of \eqref{eq:typesum} is at most two.
\end{proof}

The maximum in the finite statement is attained: from length $7$ on there are permutations with three
such entries, the first being $3725416$, and the number of reduced indecomposable $1324$-avoiders of
length $m$ carrying three is $4,16,24,40,92,200,484,1176$ for $m=7,\dots,14$; none carries four up to
$m=14$, which is consistent with, but not needed for, Theorem~\ref{thm:deg2}. We remark that the
condition on the neighbourhoods cannot be dropped from the structural description: without it (that is,
counting pairwise non-inverted entries of degree at most two in $\sigma$ without reference to $D(\sigma)$)
sets of size five occur among the skeletons of the catalogue.

\subsection{The base inequality}

Proposition~\ref{prop:base} bounds $\delta_0$ by $\delta$, but it says nothing about the length of
$\sigma$; a bound on the length of the base residual $\rho=\sigma[\ell]$ would supply one, since
$|\sigma|\le L=|\rho|$. A residual satisfies $\inv=2n-7+\delta$, so a bound $|\rho|\le\delta(\rho)+c$
is the same thing as a lower bound on $\inv(\rho)$ that is linear in $|\rho|$ with slope three. The
data single out $c=8$.

\begin{conjecture}[base inequality]\label{conj:K}
Every base residual $\rho$ satisfies $|\rho|\le\delta(\rho)+8$; equivalently $\inv(\rho)\ge3|\rho|-15$.
\end{conjecture}

The inequality was tested on every base residual of length at most $20$ and defect at most $12$, and
on every base residual of length at most $22$ and defect at most $10$ (check C11); it holds
throughout, and it is sharp. Equality occurs exactly four times, in two pairs exchanged by inversion:
\[
4\,3\,5\,8\,1\,2\,9\,11\,10\,6\,7,\quad 5\,6\,2\,1\,3\,10\,11\,4\,7\,9\,8
\qquad(\delta=3,\ |\rho|=11),
\]
\[
4\,3\,5\,7\,2\,8\,10\,1\,11\,13\,12\,9\,6,\quad 8\,5\,2\,1\,3\,13\,4\,6\,12\,7\,9\,11\,10
\qquad(\delta=5,\ |\rho|=13).
\]
The maximum of $|\rho|-\delta(\rho)$ over the base residuals of defect $\delta$ in that range is
\[
8,\ 7,\ 8,\ 7,\ 7,\ 7,\ 6,\ 6,\ 6,\ 5\qquad(\delta=3,\dots,12),
\]
so the bound is attained only at $\delta=3$ and $\delta=5$, and the margin grows from $\delta=9$ on.

The conjecture has an inductive form that asks only for one well-placed entry at a time.

\begin{proposition}\label{prop:Kind}
Suppose that every base residual $\rho$ with $|\rho|\ge14$ and $\delta(\rho)\le|\rho|-9$ has an entry
$x$ of inversion degree at least three such that $\rho\setminus x$ is again a base residual. Then
Conjecture~\ref{conj:K} holds.
\end{proposition}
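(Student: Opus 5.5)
We argue by strong induction on $|\rho|$, in the same way as Proposition~\ref{prop:induction}. The quantity to control is $|\rho|-\delta(\rho)$, and the target is $|\rho|-\delta(\rho)\le8$ for every base residual $\rho$.

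\emph{Base case.} For base residuals with $|\rho|\le13$ the inequality is part of the finite check C11. That check covers every base residual of length at most $20$ and defect at most $12$. Since $\delta(\rho)\le|\rho|-8$ is exactly what is being asserted, a base residual of length at most $13$ violating the bound would have defect at most $|\rho|-9\le4$, so it lies inside the checked range. The verification therefore settles all lengths $\le13$, and also every base residual with $\delta(\rho)\ge|\rho|-8$, for which there is nothing to prove.

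\emph{Inductive step.} Let $|\rho|\ge14$ and suppose $\delta(\rho)\le|\rho|-9$; otherwise the bound already holds. The hypothesis gives an entry $x$ of inversion degree $D(x)\ge3$ such that $\rho'=\rho\setminus x$ is again a base residual. The deletion rule in Proposition~\ref{prop:master} gives
\[
\delta(\rho)=\delta(\rho')+D(x)-2\ \ge\ \delta(\rho')+1 ,
\]
and $|\rho'|=|\rho|-1$. Hence
\[
|\rho|-\delta(\rho)\ \le\ (|\rho'|+1)-(\delta(\rho')+1)=|\rho'|-\delta(\rho').
\]
The induction hypothesis applied to the shorter base residual $\rho'$ gives $|\rho'|-\delta(\rho')\le8$, and so $|\rho|-\delta(\rho)\le8$.

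The equivalent form $\inv(\rho)\ge3|\rho|-15$ follows by substituting $\delta=\inv-2|\rho|+7$.

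\emph{Main obstacle.} The induction hypothesis must be available for $\rho'$, so $\rho'$ must genuinely be a base residual of smaller length. This is why the hypothesis demands that $\rho\setminus x$ be a base residual and not merely a residual. The other point needing care is that the base case covers all lengths $\le13$ unconditionally, which rests on the defect bound inside the check range as explained above. No other ingredient is needed, since the degree condition $D(x)\ge3$ makes the quantity $|\rho|-\delta(\rho)$ non-increasing along the deletion.
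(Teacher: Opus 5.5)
Your proof is correct and is essentially the paper's argument: induction on length, with the base case $|\rho|\le13$ supplied by check C11 (a violator there would have defect at most $|\rho|-9\le4$), and the inductive step given by the deletion rule $\delta(\rho)=\delta(\rho\setminus x)+D(x)-2$ with $D(x)\ge3$ applied to the shorter base residual $\rho\setminus x$. One slip in your base case: the inequality being asserted is $\delta(\rho)\ge|\rho|-8$, not $\delta(\rho)\le|\rho|-8$, although the rest of your sentence uses it correctly.
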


\begin{proof}
We show $\delta(\rho)\ge n-8$ for every base residual $\rho$ of length $n$, by induction on $n$. For
$n\le13$ this is the finite check C11: a base residual violating it would have
$\delta(\rho)\le n-9\le4$, and every base residual of length at most $20$ and defect at most $12$ was
examined. Let $n\ge14$ and assume the statement for length $n-1$. If $\delta(\rho)\ge n-8$ there is
nothing to prove, so suppose $\delta(\rho)\le n-9$ and let $x$ be an entry as in the hypothesis. Then
$\rho\setminus x$ is a base residual of length $n-1$, so $\delta(\rho\setminus x)\ge(n-1)-8$ by the
induction hypothesis, and the deletion rule of Proposition~\ref{prop:master} gives
\[
\delta(\rho)=\delta(\rho\setminus x)+D(x)-2\ \ge\ \big((n-1)-8\big)+3-2\ =\ n-8,
\]
contradicting $\delta(\rho)\le n-9$. Hence $\delta(\rho)\ge n-8$ in every case, which is
Conjecture~\ref{conj:K}.
\end{proof}

\begin{conjecture}\label{conj:Kdel}
Every base residual $\rho$ with $|\rho|\ge14$ and $\delta(\rho)\le|\rho|-9$ has an entry of inversion
degree at least three whose deletion is again a base residual.
\end{conjecture}

What Proposition~\ref{prop:Kind} buys is that Conjecture~\ref{conj:K}, a global inequality about a set
of permutations of unbounded length, follows from Conjecture~\ref{conj:Kdel}, a local statement about
one permutation at a time. The local statement was tested on all $114\,356$ base residuals of length at
most $20$ and defect at most $12$ (check C11). Exactly $352$ of them have no entry of inversion degree
at least three whose deletion is again a base residual, and every one of the $352$ has length at most
$13$: from length $14$ on every base residual in the range has such an entry, with no hypothesis on the
defect at all. The least inversion degree that works is three in $82.7$ per cent of the cases and never
exceeds six.

\section{Uniform coefficients and expanded regions}\label{sec:uniform}

We now prove the headline results.  This section is deliberately placed after
the skeleton and polynomiality machinery: the terms \emph{paid}, \emph{free}
and \emph{rank} refer to the canonical cells in the proof of
Theorem~\ref{thm:poly}.  For $r\ge1$, a rank-$r$ cell with minimum length
$m$ contributes
\begin{equation}\label{eq:cellcount}
 \binom{n-m+r-1}{r-1}
\end{equation}
once it is active.  Thus only rank-three cells contribute to the coefficient
of $n^2$, and each contributes $1/2$.

The algebra behind this assertion is short and we record it explicitly.
Fix a skeleton with its unique lower block vector $\ell$, nonnegative
coefficients $a_i$, and eligible graph $E$.  Its excess variables satisfy
\begin{equation}\label{eq:canonicalQ}
 Q(y)=\sum_i a_i y_i+\sum_{ij\in E}y_i y_j=h,
 \qquad y_i\ge0.
\end{equation}
From a solution $y$, let $F(y)$ consist of the positive coordinates $i$ for
which $a_i=0$ and every neighbour has coordinate zero, and set $x_i=0$ on
$F(y)$ and $x_i=y_i$ elsewhere.  The set $F(y)$ is independent and removing
its coordinates changes neither $Q$ nor the paid status of another positive
coordinate.  Thus $Q(x)=h$.  Every positive $x_i$ has $a_i\ge1$ or a positive
paid neighbour, so some term of $Q(x)$ is at least $x_i$ and
$1\le x_i\le h$.

Conversely, take a bounded vector $x$ with $Q(x)=h$ such that every positive
$x_i$ has $a_i\ge1$ or a positive neighbour.  Choose an independent subset
$F$ of the vertices with $x_i=a_i=0$ and no positive neighbour, choose
arbitrary $t_i\ge1$ on $F$, and put $y=x+t_F$.  Every newly added linear and
quadratic term vanishes, so $Q(y)=h$, and the preceding extraction recovers
exactly $(x,F)$.  The two operations are inverse.  If
$b=\sum_i\ell_i+\sum_i x_i$ and $r=|F|$, the cell contributes
$z^{b+r}/(1-z)^r$; for $r\ge1$ its actual coefficient is
$\binom{n-b-1}{r-1}$ on $n\ge b+r$, and $r=0$ is a point mass.  Together
with the separately proved finiteness of the skeleton set, $a_i\ge0$, and
$r\le3$, this proves the paid/free decomposition and the rank-to-degree
bridge without further finite computation.

\subsection{The rank-three core series}

For clarity we separate the finite classification from the canonical-cell
bijection.  Let $\mathcal M$ be the class of residuals with exactly three
inversion-degree-two maximal increasing consecutive runs, all of length two;
mark the first point of each run.  This
definition does not use paid/free coordinates.  Retaining the marked points
and their neighbours gives one of $36$ marked seven-point cores.  For each
core the possible middle cells are classified by their full relative
position, adjacency to the three runs, and $1324$-avoidance constraints.
There are $196$ finite occupancy representatives.  Exactly $100$ pass the
residuality, deletion and marking tests, and their rational defect series is
\begin{equation}\label{eq:middle36}
 \frac{8q^3(1+q)}{(1-q)^2}.
\end{equation}
The reduction treats occupancies $0$, $1$ and at least $2$ separately, so
\eqref{eq:middle36} is an all-parameter finite-state sum rather than an
interpolation from the first few defects.

\begin{lemma}[exclusion of additional degree-two runs]\label{lem:exactthree}
Suppose a $1324$-avoider has three pairwise noninverted distinct maximal
increasing consecutive runs of length at least two, whose entries have
inversion degree two. Then these are all its degree-two runs.
\end{lemma}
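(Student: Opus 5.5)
The plan is to argue by contradiction and to reduce every case either to the finite statement C8 used in Theorem~\ref{thm:deg2} or to a bounded pattern search of the same kind. Let $R_1,R_2,R_3$ be the three given runs and suppose $R_4$ is a further maximal run whose entries have inversion degree two. By Lemma~\ref{lem:samenbhd} and the fact that entries of one increasing consecutive run have identical inversion neighbourhoods, each $R_k$ has a single neighbourhood $N(R_k)$. Since $|R_k|\ge2$ for $k\le3$ and degree two is forced, $|N(R_k)|=2$ and $N(R_k)\cap R_k=\varnothing$. Distinct maximal runs have distinct neighbourhoods, again by Lemma~\ref{lem:samenbhd}. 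I split according to whether $R_4$ is inverted with one of $R_1,R_2,R_3$.

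\emph{Case 1: $R_4$ is inverted with none of the three.} Pick one entry from each $R_k$, $k=1,\dots,4$. These are four pairwise non-inverted entries of degree two with pairwise distinct neighbourhoods. Exactly as in the proof of Theorem~\ref{thm:deg2}, restrict to the pattern on these four entries and their at most eight neighbours. This gives a $1324$-avoider of length at most $12$ that contradicts C8. This case is routine.

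\emph{Case 2: $R_4$ is inverted with some $R_i$.} Then $R_4\subseteq N(R_i)$, so $|R_4|\le2$. Also $R_i\subseteq N(R_4)$, and since $|R_i|\ge2$ while $R_4$ has degree two, $N(R_4)=R_i$ exactly. If $|R_4|=2$, then $R_i\cup R_4$ is closed under taking inversion neighbours, hence a connected component of the inversion graph. In the indecomposable setting in which the lemma is applied (residuals, members of $\mathcal M$) this forces the permutation to have length four, which contradicts the presence of $R_1,R_2,R_3$. In the general case I would note that the component then has four entries and check directly that it cannot coexist with the other two runs in the required pattern. So assume $R_4=\{y\}$ with $N(y)=R_i$, and write $N(R_i)=\{y,z\}$. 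This is the ``pendant pair'' configuration of Lemma~\ref{lem:threeclasses}: $z$ is a cut vertex separating $R_i\cup\{y\}$. Lemma~\ref{lem:onecut} (or Lemma~\ref{lem:threecut} without the residual hypothesis) then pins $R_i\cup\{y\}$ down as one end segment, $\alpha_{\mathrm b}\cup\alpha_{\mathrm a}$ or $L\cup S$ up to reverse--complement, giving $\pi=3\,v\,1\,2\,L\,S$ or its mirror. The two remaining runs $R_j,R_k$ must lie on the other side of $z$. The degree formulas in that proof show that degree-two entries of $L$ all share the neighbourhood $S$, and those of $S$ share $L$, and each requires the opposite segment to be tiny. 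Two distinct runs there would need $|L|=1$ and $|S|=2$, but then they have length one, contradicting $|R_j|,|R_k|\ge2$.

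As a safeguard against gaps in this hand analysis, I would also close Case 2 by a finite check. Keep two entries of each of $R_1,R_2,R_3$, so that run length at least two is witnessed. Add the entry $y$ and all neighbours. This gives a pattern of length at most $6+6+1=13$ in which degrees, non-inversions and distinctness of the runs are preserved, and exhaustive enumeration of $\Av_m(1324)$, $m\le13$, would rule out the configuration. The main obstacle is precisely this point. Maximality of the runs and the ``length at least two'' property are not automatically inherited by patterns, since extra entries may merge or split runs. I would therefore phrase the finite statement in terms of two non-inverted entries with equal neighbourhood, which is pattern-stable, rather than in terms of runs.
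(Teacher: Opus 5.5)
Your Case~1 is exactly the closing step of the paper's proof. Your reduction of Case~2 to an entry $y$ with $N(y)=R_i=\{f_i,c_i\}$ is also correct.

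\textbf{How the paper excludes Case~2.} Here you part ways with the paper. The paper does no pendant analysis. It restricts to the marks and their neighbours, $U=F\cup N(F)$, and uses the already computed list of $36$ cores. That list carries an extra verified predicate: every unmarked anchor has degree at least two inside the core. Each anchor is also adjacent to the companion of a mark, and that companion lies outside $U$. Hence every neighbour of a mark has degree at least three in $\pi$. Your $y$ is precisely an unmarked anchor whose only core neighbour is $f_i$, since its other neighbour $c_i$ is not in $U$. So this predicate disposes of both your sub-cases, $|R_4|=1$ and $|R_4|=2$, at once.

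\textbf{The gap in your hand analysis.} Your hand analysis of Case~2 does not prove the lemma at the generality in which it is stated, namely an arbitrary $1324$-avoider. The remark after the lemma says this nonresidual form is what the certificate's acceptance predicate and extension argument use. Lemma~\ref{lem:onecut} is a statement about residuals: the bounds $|\alpha_{\mathrm a}|,|S|\ge2$ and the non-extremality of the cut vertex all come from residuality. In general:
\begin{itemize}
\item $\pi$ may be decomposable.
\item $z$ may be an extremal cut vertex of its component, and there Lemma~\ref{lem:threecut} gives no $\alpha_{\mathrm b}\,c\,\alpha_{\mathrm a}\,L\,S$ shape.
\item If $|S|=1$, a degree-two entry of $L$ may have one internal inversion, so its neighbourhood need not be $S$.
\item The sub-case $|R_4|=2$ is only asserted, not checked.
\end{itemize}
A smaller point: even for residuals, entries of $S$ are also inverted with $z$, so their neighbourhood is $L\cup\{z\}$, not $L$. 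Your conclusion survives this.

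\textbf{Your fallback search.} What actually closes Case~2 is your fallback finite search, and it is well designed. Recording a run as two non-inverted entries with equal neighbourhood is pattern-stable. Its empty outcome is implied by the core certificate, via the argument above. However, it is a new exhaustive search. It goes up to length $12$, not $13$, because $y\in N(R_i)$ is already retained; keeping only $c_i$ would bring it down to $10$. The paper's route instead reuses the length-seven core list. Once that search is carried out, or replaced by the core-degree argument, your proof is complete.
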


\begin{proof}
Choose one marked entry from each run. Their neighbourhoods are distinct
by Lemma~\ref{lem:samenbhd}. Retain the marks and all their neighbours.
The finite core classification gives four unmarked neighbours, all in
distinct singleton runs: their incidence signatures into the three marks
are the four distinct vertex signatures of a four-vertex path.
The same explicit list of $36$ cores verifies that each unmarked vertex
has degree at least two within the retained core. This additional finite
predicate is included in the core certificate. Each such vertex also
neighbours an unretained companion of at least one mark; the companion lies
in the same run as that mark, has the same external neighbourhood, and is not
one of the retained core vertices. Thus the unmarked vertex has degree at
least three in the original permutation.

Any degree-two entry in another run therefore lies outside the retained
neighbour set and is noninverted with all three marks. Its neighbourhood
differs from theirs, again by Lemma~\ref{lem:samenbhd}. These four entries
would be pairwise noninverted, degree two, and have distinct neighbourhoods.
The bounded restriction and C8 argument in the proof of
Theorem~\ref{thm:deg2} exclude precisely this configuration at every length.
\end{proof}

\begin{remark}
The application below is to a residual of length at least eight: the three
marked runs have six entries and degree two supplies at least two further
entries.  Their neighbourhoods are pairwise distinct in the canonical-cell
construction, so those runs are three distinct degree-two classes and
Lemma~\ref{lem:threeclasses} already shows that they are all the degree-two
classes in this application.  Lemma~\ref{lem:exactthree} is retained in its
more general, nonresidual form because it is also the condition used by the
finite certificate's acceptance predicate and extension argument.
\end{remark}

The two outer decorations are independent, and their decoding is unique.
For a $132$-avoider the Lehmer code is weakly decreasing: decomposing at its
maximum gives the code $(c(\alpha)+b),b,c(\beta)$, where $b=|\beta|$, and the
converse follows because weak decrease leaves the $b$ smallest unused values
for $\beta$. Thus the nonzero code is a partition $\lambda$, and its least
possible padded length is $\max_i(\lambda_i+i)$. An isolated inversion-graph
vertex before a later inversion would begin a $132$, so all isolated vertices
form a terminal increasing-maxima suffix. Removing that entire suffix gives
one and only one nonisolated kernel for each partition, including the empty
kernel for the empty partition. Its inversion series is therefore $P(q)$.
Reverse-complement gives the corresponding unique $213$ kernel and isolated
prefix on the upper side.

In the normalized marked object, the lower isolated suffix and upper isolated
prefix are already fixed by the adjacent marked runs, so the remaining two
kernels are extracted independently. Every vertex of a nonempty kernel has
positive internal degree and two fixed external incidences. It therefore has
total degree at least three and creates no new degree-two class. Adding or
removing either kernel preserves residuality and the three marked
neighbourhoods by the outer twin-path lemma. Each added point has exactly two
old neighbours, so its two cross inversions cancel its contribution to
$-2n$ in the defect; the defect increment is exactly the internal inversion
number of the kernel. This proves, with unique decoding and no multiplicity,
the factor $P(q)^2$.

We next establish the normalization without assuming it in the definition
or enumeration of $\mathcal M$. In a canonical rank-three cell let $F$
be its free support and let $\ell_i\in\{1,2\}$ be the intrinsic lower
block lengths. The three free blocks have degree two, are pairwise
noninverted, and have singleton neighbour blocks, by the marked-core
incidence lemma. First put every selected block at length two, retaining
the other block lengths. This operation is allowed even if some
$\ell_i=2$: the new length still satisfies $b_i\ge\ell_i$, including
the required $b_i\ge2$ for $i\in D(\sigma)$. For $i\in F$, the linear
coefficient is zero and every neighbour has zero excess. Thus replacing
its excess by $2-\ell_i\ge0$ leaves the quadratic defect form unchanged.
The residual criterion of Theorem~\ref{thm:crit} still holds. The reduced
skeleton is unchanged, since all block lengths remain positive.
Lemma~\ref{lem:exactthree} shows that the resulting permutation belongs
to $\mathcal M$, with exactly the three specified degree-two runs. When
$\ell_i=2$ the new free excess is zero; this construction need not remain a
point of the original rank-three cell, since only membership in $\mathcal M$
is required for the contradiction.

The independent finite enumeration of $\mathcal M$ gives marked lower
vector $(1,1,1)$ for every accepted representative. This property extends
to every allowed middle tail and outer kernel. Enlarging a middle run
beyond length two leaves the reduced skeleton unchanged. An outer kernel
attaches each new point to two unmarked anchors; removing any selected
run therefore leaves these points attached. A marked quotient vertex
whose deletion was connected cannot acquire a disconnecting deletion,
and an originally nonextremal marked vertex cannot become extremal by
adding outer points. These are the normalization predicates recorded in
the accompanying extension lemmas and finite certificate.

If any original selected lower length had been two, the permutation just
constructed would have the same reduced skeleton and the same intrinsic
lower vector, yet would have marked lower length one by this enumeration
and extension argument. This contradiction proves $\ell_i=1$ on $F$.
Consequently setting the positive free excesses to one gives the unique
length-two representative of each rank-three cell.

Conversely, contract all maximal increasing consecutive runs of an element
of $\mathcal M$. The skeleton and block lengths are unique. Its three
marked runs are pairwise noninverted: two inverted length-two degree-two
runs would form a separate four-vertex component, contrary to residuality
and the presence of the third run. The core normalization gives lower
length one on each marked block, hence excess one there. In the canonical
extraction, a further positive zero-cost isolated excess would be another
degree-two run, contradicting the definition of $\mathcal M$. The free
support is therefore exactly the three marked blocks. Extracting the paid
vector and restoring arbitrary positive free excesses are inverse
operations. This proves the claimed bijection. The class $\mathcal M$ and
its finite enumeration were established before this identification, so the
argument does not assume the rank-three formula it proves.

Consequently
\begin{equation}\label{eq:rank3cells}
 \sum_{\delta\ge0}N_{3,\delta}q^\delta
 =\frac{8q^3(1+q)}{(1-q)^2}P(q)^2,
\end{equation}
where $N_{3,\delta}$ is the number of canonical rank-three cells at defect
$\delta$.

\begin{proof}[Proof of Theorem~\ref{thm:uniformA}]
By Theorems~\ref{thm:poly} and~\ref{thm:deg2}, every fixed-defect residual
count is eventually a polynomial of degree at most two.  Equation
\eqref{eq:cellcount} shows that a rank-three cell contributes $1/2$ to its
quadratic coefficient and that cells of lower rank contribute zero.  Hence
$A_\delta=N_{3,\delta}/2$.  Dividing \eqref{eq:rank3cells} by two gives
\eqref{eq:uniformA}.
\end{proof}

The conclusion is exactly a leading-coefficient identity.  In particular it
does not determine $B_\delta$ or $C_\delta$, enumerate all residuals, prove a
uniform stabilization onset, or imply the unrestricted inequality
\eqref{eq:conj}. For comparison, Linusson--Verkama define
\begin{align*}
b_{r,n}={}&\big(a(n+3,2n+r-3)-a(n+2,2n+r-3)\big)\\
&-\big(a(n+2,2n+r-4)-a(n+1,2n+r-4)\big)\\
&-\big[\big(a(n+2,2n+r-5)-a(n+1,2n+r-5)\big)\\
&\hspace{35mm}-\big(a(n+1,2n+r-6)-a(n,2n+r-6)\big)\big].
\end{align*}
Their Conjecture~25 says that, for fixed $r$, this is a constant $b_r$ once
$n\ge10+r$, and that
\[
 \sum_{r\ge0}b_rx^r=2(1-x^2)(2-x^2)P(x)^2.
\]
This is the formula in the published version; the older arXiv display had a
typographical error that is not a result of the present work.

The relation can be made exact in defect notation. Put
\[
 \Delta_\delta(m)=a(m+1,2m-7+\delta)-a(m,2m-7+\delta).
\]
By \eqref{eq:identity}, $\Delta_\delta(m)$ is the size of the complement of
$\im(f\sqcup g)$ at that parameter minus $|\R_{\delta,m}|$, and direct
substitution gives
\[
 b_{r,n}=\Delta_r(n+2)-\Delta_{r+1}(n+1)
          -\Delta_r(n+1)+\Delta_{r+1}(n).
\]
Thus Conjecture~25 concerns a repeated difference of the full complement
minus residual counts, whereas Theorem~\ref{thm:uniformA} determines only the
quadratic coefficient of the residual polynomial. The proof here neither
assumes nor proves the full Linusson--Verkama conjecture.

There is also a useful length refinement.  For a decreasing partition
$\lambda$, set
\[
 m(\lambda)=\max_i(\lambda_i+i),\qquad m(\varnothing)=0,
 \quad K(q,z)=\sum_\lambda q^{|\lambda|}z^{m(\lambda)}.
\]
The elementary inequality $m(\lambda)\le|\lambda|+1$ is sharp only for a
single row or a single column (with the usual coincidence at weight one).
If
\[
 M(q,z)=\frac{2q^3(1+q)z^{10}(1+z)^2}{(1-qz)^2},
\]
then the exact rank-three contribution is
\begin{equation}\label{eq:H3}
 H_3(q,z)=\frac{M(q,z)K(q,z)^2}{(1-z)^3}.
\end{equation}

\begin{proposition}[rank-three onset]\label{prop:rank3onset}
The contribution of the canonical rank-three cells agrees with its eventual
quadratic polynomial from $n=10$ at defect $3$, from $n=12$ at defect $4$,
and from $n=\delta+9$ at every defect $\delta\ge5$.  These are not onsets for
the complete residual count.
\end{proposition}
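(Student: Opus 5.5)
The plan is to extract everything from the bivariate series \eqref{eq:H3}. By the cell decomposition, the rank-three contribution at defect $\delta$ and length $n$ is the coefficient $[q^\delta z^n]H_3(q,z)$. Each cell of minimum length $m$ contributes $\binom{n-m+2}{2}$ for $n\ge m$ and $0$ for $n<m$. The polynomial $\binom{n-m+2}{2}$ vanishes at $n=m-1$ and $n=m-2$, but equals $1$ at $n=m-3$. So the eventual polynomial $\sum_{\text{cells}}\binom{n-m+2}{2}$ agrees with the actual count exactly from $n=M_\delta-2$ on, where $M_\delta$ is the largest minimum length of a cell at defect $\delta$. Failure at $n=M_\delta-3$ needs the cells attaining $M_\delta$ to contribute a nonzero total there. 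Since every cell contributes with positive sign, this is automatic once such a cell exists. Thus the onset is precisely $M_\delta-2$, and the task reduces to computing
\[
M_\delta=\max\{m:\ [q^\delta z^m]\,M(q,z)K(q,z)^2\ne0\},
\]
the $z$-degree of the $q^\delta$-coefficient of $MK^2$. The factor $(1-z)^{-3}$ is exactly the rank-three binomial.

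The key steps are as follows. First I will read off the $z$-degrees of the two factors. In $M(q,z)=2q^3(1+q)z^{10}(1+z)^2\sum_{j\ge0}(j+1)q^jz^j$, the $q^{e}$-coefficient, for $e\ge3$, has top $z$-degree $10+2+(e-3)=e+9$. This comes from $j=e-3$ via the $q^3$ term, and it beats the $q^4$ term, whose top is $e+8$. At $e=3$ the top is $12$.

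Second, for $K$, the top $z$-degree of $[q^w]K$ is $\max_{|\lambda|=w}m(\lambda)=w+1$ for $w\ge1$, and $0$ for $w=0$, by the stated inequality $m(\lambda)\le|\lambda|+1$, which is attained by a single row. So $[q^w]K^2$ has top degree $w+2$ when both parts are nonempty, and $w+1$ when one part is empty. Hence it has top degree $w+2$ for $w\ge2$, top degree $2$ for $w=1$, and top degree $0$ for $w=0$.

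Third, I will combine the factors. With $\delta=e+w$ and $e\ge3$, the candidate top degrees are $(e+9)+\deg_z[q^w]K^2$. These are $\delta+9$ for $w=0$, $\delta+10$ for $w=1$, and $\delta+11$ for $w\ge2$, the last requiring $\delta\ge5$. All leading coefficients are positive, so there is no cancellation.

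This gives $M_3=12$, $M_4=\max(13,14)=14$, and $M_\delta=\delta+11$ for $\delta\ge5$. The resulting onsets $M_\delta-2$ are $10$, $12$ and $\delta+9$, as claimed.

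The main obstacle is the first step: justifying that \eqref{eq:H3} really encodes the minimum lengths of individual cells, rather than just the generating function. That is, the $z$-exponent before dividing by $(1-z)^3$ must be exactly the cell's base length $b+r$ in the notation of the canonical-cell bijection. I would argue this from the construction preceding the proposition. The marked seven-point core together with the middle occupancy contributes $z^{10}(1+z)^2/(1-qz)^2$. Here the normalized length-two representatives give a base of length $10$, and each middle-run enlargement costs one point and one unit of defect. Each outer kernel $\lambda$ contributes exactly $m(\lambda)$ points, by the Lehmer-code decoding. A secondary point is to confirm that the failure just below onset is genuine: the extremal cells contribute $\binom{1}{2}$ at $n=M_\delta-1$, which is consistent, and $\binom{-1}{2}=1\ne0$ at $n=M_\delta-3$. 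The final remark, that these are not onsets for the full residual count, needs only the observation that lower-rank and rank-zero cells, with their own larger base lengths (Proposition~\ref{prop:polys} gives, e.g., $n_0(3)=12$), are excluded.
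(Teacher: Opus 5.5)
Your proposal is correct and follows the paper's proof essentially step for step. The paper likewise reads $m-\delta\le9$ off the middle numerator of \eqref{eq:H3}, adds at most one unit per nonempty outer kernel via $m(\lambda)\le|\lambda|+1$ (attained by a row) to get the maxima $12$, $14$ and $\delta+11$, and then uses that a cell of minimum $m$ contributes $\binom{n-m+2}{2}$, which stabilizes at $n=m-2$ and takes the polynomial value $1$ at $n=m-3$, with no cancellation possible within rank three. Your top-$z$-degree computation for $[q^\delta]MK^2$ is simply a more explicit packaging of that same slack bound together with its extremal witness.
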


\begin{proof}
The positive middle numerator in \eqref{eq:H3} has
$m-\delta\le9$, and its two geometric extensions preserve this difference.
The partition inequality above adds at most one unit for each nonempty outer
kernel, so a normalized minimum has $m\le\delta+11$.  Equality occurs for
every $\delta\ge5$ by taking a $q^3z^{12}$ middle term, one weight-one outer
kernel and a row partition of weight $\delta-4$ at the other end.  Directly
from the same numerator, the maxima are $12$ and $14$ at defects $3$ and $4$.
A rank-three cell of minimum $m$ contributes
$\binom{n-m+2}{2}$, so it reaches its polynomial at $n=m-2$.  At $n=m-3$
each cell of maximal $m$ is absent while its polynomial value is one; all
shorter cells have stabilized, so cancellation within rank three is
impossible.  The stated onsets follow.  Lower-rank cells may have later
onsets or may cancel this last discrepancy, which is why the proposition
does not address the full residual count.
\end{proof}

\subsection{The fixed strip through defect thirteen}

The sharp skeleton bound required here holds at every defect:
\begin{equation}\label{eq:sharpskel}
 |\sk(\pi)|\le2\delta+3\qquad(\pi\in\R_{\delta,n}).
\end{equation}
Indeed the result is already proved for $\delta\le10$.  For
$\delta\ge11$ and $n\le22$ it is automatic.  If $n\ge23$, choose the
admissible deletion supplied by Theorem~\ref{thm:deladmissible}; writing
$s-s'=t$, $D$ for the degree of the deleted point and
$\delta'=\delta-D+2$, induction gives
$s'\le2\delta'+3$ and admissibility gives $t\le2D-4$.  Their sum is
$s\le2\delta+3$.  Notice that this proves a length bound, not completeness of
an already truncated catalogue.  The catalogue conjecture was proved only
through defect ten; the defects below instead use this all-defect length bound
together with a fresh exhaustive generator and its inversion cutoff.

Within the finite domains permitted by \eqref{eq:sharpskel}, exhaustive
skeleton generation and the canonical paid/free sum give
\begin{center}
\begin{tabular}{crrl}
\toprule
$\delta$ & catalogue size & eventual $|\R_{\delta,n}|$ & onset\\
\midrule
$11$ & $68\,251$ & $5976n^2-77520n+222980$ & $20$\\
$12$ & $112\,057$ & $10648n^2-150106n+490423$ & $21$\\
$13$ & $179\,071$ & $18444n^2-279621n+1010891$ & $22$\\
\bottomrule
\end{tabular}
\end{center}
The skeleton generator uses $s\le2\delta+3$ and
$\inv(\sigma)\le2s-7+\delta$.  Its three runs used
$(\delta,N,K_{\max})=(11,25,54),(12,27,59),(13,29,64)$ and returned the
catalogue sizes displayed above. The exact SHA-256 values are recorded here
(source first, then output):
\begin{quote}\small
\noindent $\delta=11$ source:\\
\texttt{e4ff49d7ebe507b699cab96ba57c931907d02f4985d246a90d75d1be9a8f4795}\\
$\delta=11$ output:\\
\texttt{344c051aa3fb7b86bac61976255f7af463a782a4460e2b2818b82ab7df5b71b5}\\
$\delta=12,13$ source:\\
\texttt{cfa2c74b03ac105024fdd16a99276b4371acd6c624a4b089ad2fdcf2b316b741}\\
$\delta=12$ output:\\
\texttt{8f20246d68ae60d355115d8a4ad78d070ca46983ddbf140dbd39a5475e51f4c2}\\
$\delta=13$ output:\\
\texttt{290c2a05c3e9b7023f214be1cb2127df8db64612cb2d85f2c2c8cc136bb47f27}.
\end{quote}
The length theorem and exhaustive generation establish catalogue
completeness.  The canonical sum then enumerates bounded paid vectors
satisfying the quadratic defect equation and independent free supports, so
the displayed rational functions are exact sums rather than fitted
polynomials.  A separate replay for defects $12$ and $13$ reproduced the
denominator, numerator, polynomial, onset and values through $n=60$ from the
supplied member files. Its defect-$12$ and defect-$13$ output hashes are
\begin{quote}\small
\noindent\texttt{71833be9a7e1d8c7c581eae03535015b180d15b480e9e20148e1f661d1ec41e0}\\
\texttt{5d62473bb07c121eb43657e4c91a5d988e0b427b0cfaac156079c664140eec9c}.
\end{quote}
This replay checks the canonical summation;
it is not an independent enumeration and does not by itself establish
catalogue completeness.

We record the overlap correction used at the remaining boundary points.  Let
$\mathcal C_{\delta,n}=\Av^k_{n+1}(1324)\setminus\im(f\sqcup g)$, where
$k=2n-7+\delta$, and let $U_{\delta,n}$ be the union of the four elementary
classes with first or second entry maximal, or last entry equal to $1$ or
$2$. Put $E_\delta(n)=|U_{\delta,n}|=E(n,k)$, with $E(n,k)$ given by
\eqref{eq:E}. Lemma~\ref{lem:R1} gives
$U_{\delta,n}\subseteq\mathcal C_{\delta,n}$.
Also $\R_{\delta-2,n+1}\subseteq\mathcal C_{\delta,n}$: a residual is not in
$\im(g)$ because it is indecomposable, and every member of $\im(f)$ has an
extremal deletion with at least three direct-sum components, whereas all four
extremal deletions of a residual are indecomposable.  If
$\operatorname{ind}(n,j)$ denotes the number of indecomposable members of
$\Av_n^j(1324)$, deletion of the distinguished extremal entry and inversion
give the intersection upper bound
\[
 |U_{\delta,n}\cap\R_{\delta-2,n+1}|\le J_\delta(n),\qquad
 J_\delta(n)=2\operatorname{ind}(n,k-n)+
             2\operatorname{ind}(n,k-n+1).
\]
The symbol $J_\delta$ denotes this upper bound; equality is not asserted.
The indecomposable formula of \cite[Theorem~1.3]{Meng} is
\[
 \operatorname{ind}(n,n-1+m)=4S(m)n-c_m,qquad
 S(m)=\sum_{j=0}^m p_2(j),\qquad n\ge m+6.
\]
For the present two terms $m=\delta-6,\delta-5$, so both formulas apply when
$n\ge\delta+1$. The needed values are
\[
 (S(6),c_6)=(139,3344),\quad
 (S(7),c_7)=(249,6434),\quad
 (S(8),c_8)=(434,11974).
\]
Inclusion--exclusion and \eqref{eq:identity} therefore give
\begin{equation}\label{eq:strengthened-complement}
a(n+1,k)-a(n,k)\ge
 E_\delta(n)+|\R_{\delta-2,n+1}|-J_\delta(n)-|\R_{\delta,n}|.
\end{equation}

For defect $11$, the standard complement bound dominates the displayed
polynomial from $n=25$ onward, while archived total counts cover
$7\le n\le24$ and the source is empty for $n\le6$.  At defect $12$ the
standard comparison leaves $n=26$; $n=25$ needs no analytic repair because
then $k=55$, within the archived table $n\le26$, $k\le55$.
The displayed indecomposable formulas give $J_{12}(n)=3104n-19556$, hence
$J_{12}(26)=61148$, and
\eqref{eq:strengthened-complement} gives the positive margin
$1\,242\,312$.  At defect $13$, $J_{13}(n)=5464n-36816$, and the same
inequality gives positive margins
$258\,828$, $1\,031\,781$ and $2\,148\,286$ at $n=25,26,27$;
the ordinary analytic tail starts at $n=28$, and archived total counts cover
$n\le24$.  Together with Corollary~\ref{cor:2n3}, these exact comparisons
prove the fixed-strip assertion of Theorem~\ref{thm:headline}.

\subsection{Square-root strips}

For every $m\ge2$,
\begin{equation}\label{eq:p2box}
 p_2(m)\ge\frac12\cdot5^{\sqrt{2m}-4}.
\end{equation}
To prove it, put $s=\lfloor\sqrt{m/2}\rfloor$ and $r=s-1$.  For each part
$1,\ldots,r$ choose independently a red and a blue multiplicity from
$\{0,1,2,3,4\}$.  Complementing every multiplicity pairs total weights
$W$ and $4r(r+1)-W$, so at least half of the $5^{2r}$ choices have
$W\le2r(r+1)$.  Add one red part $L=m-W$.  Since
$m\ge2(r+1)^2$, we have $L>r$, making $L$ uniquely recoverable.  This proves
\eqref{eq:p2box}.

The structural estimates also give the uniform residual bound
\begin{equation}\label{eq:Mdelta}
 |\R_{\delta,n}|\le M_\delta n^2,
 \qquad M_\delta=2^{36}(2\delta+4)^4\cdot68^\delta
 \qquad(\delta\ge1,\ n\ge10),
\end{equation}
as follows.  For a base of defect $r$, the sharp skeleton bound gives
$s\le2r+3$, and inversion-sequence counting gives fewer than
$31\cdot66^r$ candidate skeletons at the terminal length. Summing over
lengths $2\le s\le2r+3$ and including the exceptional skeleton $21$ costs
at most a further factor $2r+4\le2\delta+4$. If
$h=\delta-r$, a canonical paid vector has mass at most $h+3$; padding to
$2r+3$ coordinates bounds their number by
$\binom{2r+h+6}{h+3}$, and there are at most $(2\delta+4)^3$ free supports.
Together these are the four factors of $2\delta+4$ in \eqref{eq:Mdelta}.
For
\[
 S_\delta=\sum_{r=1}^\delta66^r
             \binom{2r+\delta-r+6}{\delta-r+3},
\]
nonnegative coefficient extraction at $z=1/68$ gives
$S_\delta\le C68^\delta$, where
$C=68^3(68/67)^4\cdot4488$; the exact integer inequality
$31C<2^{36}$ then bounds the number of canonical types by the coefficient
in \eqref{eq:Mdelta}.  Each type contributes at most $n^2$ at length $n$.
To pass from the residual bound to monotonicity, use the single complement
class in the proof of Theorem~\ref{thm:largen}. For $n\ge\delta$ it gives
\[
a(n+1,k)-a(n,k)\ge
p_2(n+\delta-7)-c(\delta)-|\R_{\delta,n}|,
\]
where
$c(\delta)=4p_2(\delta-6)+6\sum_{i\le\delta-7}p_2(i)$.
The elementary encoding of a partition by a composition gives
$p(j)\le2^j$, hence
$c(\delta)\le(6\delta+4)(\delta+1)2^\delta<M_\delta$.
Together with \eqref{eq:Mdelta}, and using $n\ge2$, this shows that for
positive defect and $n\ge\max\{10,\delta\}$ the sufficient condition
\begin{equation}\label{eq:sufficientstrip}
 p_2(n+\delta-7)>2M_\delta n^2
\end{equation}
implies strict monotonicity.

For the strongest explicit strip set $t=\sqrt n\ge1024$ and
$1\le\delta\le\lfloor t/2\rfloor$.  The integer inequalities
$\sqrt2>7/5$, $5^{16}>2^{37}$ and $68^8<2^{49}$ give
\begin{align*}
 \log_2 p_2(n+\delta-7)&>\frac{259}{80}t-\frac{1079}{80},\\
 \log_2(2M_\delta n^2)&<41+8\log_2t+\frac{49}{16}t.
\end{align*}
Their difference is
\[
 g(t)=\frac7{40}t-\frac{4359}{80}-8\log_2t.
\]
Here $g(1024)=3577/80>0$ and
$g'(t)>7/40-16/t>0$.  This proves the $n\ge2^{20}$,
$\sqrt n/2$ assertion. These parameters satisfy $n\ge\delta$, as do the
two regions below.

For the $\sqrt n/3$ strip, a simpler bounded-multiplicity construction is
enough. Put $r=\lfloor\sqrt m/2\rfloor-1$, choose independently a red and a
blue multiplicity in $\{0,1,2,3,4\}$ for each part $1,\ldots,r$, and add a
unique red filler part larger than $r$. This gives
$p_2(m)\ge5^{\sqrt m-4}$. If $t=\sqrt n\ge512$ and
$1\le\delta\le\lfloor t/3\rfloor$, then $m=n+\delta-7$ satisfies
$\sqrt m\ge t-1$ and $2\delta+4\le t$. Using
$5^{16}>2^{37}$ and $68^8<2^{49}$, the logarithmic margin in
\eqref{eq:sufficientstrip} is at least
\[
g_3(t)=\frac{13}{48}t-\frac{777}{16}-8\log_2t.
\]
Now $g_3(512)=869/48>0$ and
$g_3'(t)>13/48-16/t>0$, proving the $n\ge2^{18}$ assertion.

For the $\sqrt n/4$ strip, choose independently red and blue subsets of
$\{1,\ldots,r\}$ with $r=\lfloor\sqrt m\rfloor-1$ and again add the unique
large red filler. Thus $p_2(m)\ge2^{2\lfloor\sqrt m\rfloor-2}$. The
logarithm of \eqref{eq:sufficientstrip} follows from
\[
\sqrt m>\frac{49}{16}\delta+\frac{41}{2}
       +2\log_2(2\delta+4)+\log_2 n.
\]
For $t=\sqrt n\ge256$ and $\delta\le\lfloor t/4\rfloor$, the left side is
at least $t-1$ and the right side at most
$49t/64+41/2+4\log_2t$. It remains to check
$15t/64>43/2+4\log_2t$, which holds at $t=256$ and thereafter because the
difference has derivative greater than $15/64-8/t>0$. This proves the
$n\ge2^{16}$ assertion independently of the two larger-threshold strips.

Finally, uniformly for $1\le\delta\le c\sqrt n$, equations
\eqref{eq:p2box} and \eqref{eq:Mdelta} give
\[
 \log p_2(n+\delta-7)\ge\sqrt2\log(5)\sqrt n-O(1),
 \quad
 \log(2M_\delta n^2)\le c\log(68)\sqrt n+O_c(\log n).
\]
The first leading coefficient is larger precisely when
$c<\sqrt2\log(5)/\log(68)$.  This proves the asymptotic assertion in
Theorem~\ref{thm:headline}; the endpoint is not claimed.

\subsection{Proof-bearing certificates}\label{sec:newcert}

The new theorems use finite computation at sharply delimited points.  The
uniform coefficient proof uses the $36$ marked cores, the $196$ finite
occupancy states and the $100$ accepted states, together with the outer
kernel and canonical-normalization checks.  The fixed strip uses the complete
defect-$11,12,13$ catalogues inside the bounds $25,27,29$, their exact
paid/free rational sums, the separate defect-$12,13$ summation replays and the
integer boundary comparisons above. The accompanying archival supplement
records their sources, parameters and outputs, with hashes. Its portable quick
entry point is \texttt{certificates/run\_quick\_verification.sh}; the portable
components are \texttt{core\_replay/}, \texttt{table/}, and
\texttt{six\_point\_portable/}. Files under
\texttt{certificates/accepted\_sources/} are historical source records rather
than portable entry points and may retain machine-specific paths. The
supplement is cited below.
Section~\ref{sec:data} describes the common verification
conventions. The proof dependence is on the enumerated finite
statements themselves and on the all-parameter extension lemmas, never on an
\texttt{ACCEPTED} label or on agreement with interpolated values.

\section{Computations}\label{sec:data}

\subsection*{Finite checks that carry proof weight}
Most of the computations recorded below are supporting evidence, but twelve of them are steps of proofs, and we
list them separately. Ten carry proof weight:
\begin{itemize}\itemsep2pt
\item[C1] the membership part of Theorem~\ref{thm:C}. The check prints $D(\sigma)$ for each of the
twenty-one skeletons, enumerates the block vectors satisfying the defect-two equation together with the
lower bounds $b_i\ge2$ on $D(\sigma)$ that Theorem~\ref{thm:crit} attaches to it (entries up to $14$;
$6061$ members), and then \emph{verifies} that every member so produced avoids $1324$, is indecomposable
and is not almost decomposable; nothing is filtered on the property being certified, so a member failing
any of the three conditions would be reported as a failure. The same run confirms that the members of
length at most $12$ are exactly the archived $\R_{2,n}$ and number $32n-214$, that the four extremal
deletions have skeletons depending only on which blocks are singletons, of size two, or of size at least
three (all $231$ realizable class vectors), and that the $43$ skeletons so obtained are indecomposable.
The enumeration is over the families themselves, not over a bounded sample of residuals.
\item[C2] the two spider exclusions in step~(1) of Theorem~\ref{thm:gamma0}, over $\mathfrak S_6$ and
$\mathfrak S_7$.
\item[C3] the two corona exclusions in Lemma~\ref{lem:noncut}, over $\mathfrak S_6$ and $\mathfrak S_8$.
\item[C4] the classification step of Theorem~\ref{thm:gamma0}: the $12,16,20,24$ candidates of lengths
$6,7,8,9$, none of them reduced.
\item[C7] the small-$n$ comparisons that Theorem~\ref{thm:B} ($n\le7$) and Theorem~\ref{thm:D} ($n\le9$) read
off a table rather than prove.
\item[C8] the finite statement used in the proof of Theorem~\ref{thm:deg2}: no $1324$-avoiding permutation
of length at most $12$ has four pairwise non-inverted entries of inversion degree two with pairwise distinct
inversion neighbourhoods. The search is exhaustive over the $29\,961\,493$ members of $\Av_m(1324)$ with
$m\le12$, generated by the insertion recursion; it was carried out by three programs written independently
against the same specification, which agree on the number of configurations found at every length, the
maximum being three from length seven on.
\item[C9] the last step of Theorem~\ref{thm:deladmissible}: for $3\le\delta\le10$ the triples
$(T,x_T,F)$ of \eqref{eq:typesum} are enumerated over the catalogue $\Sigma^{(22)}_\delta$ and summed at
every length. The sums reproduce the table of Section~\ref{sec:poly}, every archived value of
$|\R_{\delta,n}|$ with $n\le22$, and the polynomial $P_\delta(n)$ of Proposition~\ref{prop:polys} at every
$n$ with $n_0(\delta)\le n\le60$, that is in all 364 cases. The largest first length $L+t$ of a triple is
$3\delta+12$, hence at most $42$, so each type contributes its full binomial from $n=43$ on and the sum is a
polynomial in $n$ there; agreement up to $n=60$ therefore forces $|\R_{\delta,n}|=P_\delta(n)$ for all
$n\ge n_0(\delta)$. The enumeration is a Python program independent of the one that produced the archived
counts.
\item[C10] Proposition~\ref{prop:catfromskel}. For each $1\le\delta\le10$ every reduced indecomposable
$1324$-avoider $\sigma$ with $|\sigma|\le\max(2\delta+3,\delta+8)$ and $\inv(\sigma)\le2|\sigma|-7+\delta$ is
generated by the insertion recursion and tested for membership in $\Sigma_\delta$ by the bounded search of
Proposition~\ref{prop:base}; the skeleton $21$, which the inversion bound would wrongly drop, is added by
hand. The length bound is the larger of the two that the two conjectures supply, so one enumeration settles
both; it exceeds $2\delta+3$ only at $\delta\le4$, where it is $9$, $10$, $11$, $12$. The numbers of
candidates are $79$, $342$, $979$, $2358$, $5118$, $10\,089$, $17\,956$, $30\,860$,
$51\,304$, $83\,184$ for $\delta=1,\dots,10$, of which $2$, $21$, $113$, $422$, $1236$, $2942$, $6296$,
$12\,495$, $22\,978$, $40\,269$ are members, and the members are in every case exactly $\Sigma^{(22)}_\delta$
together with the three decreasing skeletons of Proposition~\ref{prop:catalogue} at the defects listed there.
The enumeration is done for all ten defects by a C program, \texttt{fable\_sigma\_enum.c}, and independently
for $\delta\le4$ inside the certificate, where all $3758$ candidates are regenerated and membership is
decided by a search over $\{0,\dots,h\}^s$ pruned only on the partial value of $Q$; the two agree on every
candidate.
\item[C11] the base case of Proposition~\ref{prop:Kind}, together with the evidence for
Conjectures~\ref{conj:K} and~\ref{conj:Kdel}. The base residuals of length at most $20$ and defect at most
$12$ were listed --- $114\,356$ of them, filling $101$ cells $(\delta,n)$ --- from the archived residual
lists for $\delta\le8$ and from the inversion-pruned enumerator for $9\le\delta\le12$, the twenty of length
at most five being added by direct search, since the enumerator only lists from length six on. Every one of
them satisfies $|\rho|\le\delta(\rho)+8$, with equality exactly in the four cases displayed in
Section~\ref{sec:poly}; in particular no base residual of length at most $13$ has $\delta(\rho)\le|\rho|-9$,
which is what the induction of Proposition~\ref{prop:Kind} needs, and the same holds over the wider range
$|\rho|\le22$, $\delta\le10$. Exactly $352$ of the $114\,356$ have no entry of inversion degree at least
three whose deletion is again a base residual, and all $352$ have length at most $13$. The two scans are
Python programs reading the same lists; their skeleton, $D(\sigma)$ and deletion code was checked against
\texttt{fable\_skeleton\_reduction.py} on the skeletons of the archived catalogue, and samples of the lists
were re-verified against the definitions.
\item[C12] the finite range of Theorem~\ref{thm:deladmissible} and the union bound of
Remark~\ref{rem:union}. For each residual $\pi$ and each of its $n$
entries $z$ the program decides whether the deletion of $z$ is admissible, that is whether
$\pi\setminus z$ is a residual and \eqref{eq:admissible} holds, and records $u$, $w$ and the structural
split of each; the proof-bearing part is the exhaustive run over every residual with $n\le30$ and
$\delta\le10$ described in the text, which finds an admissible deletion for every residual of length at
least $10$. It was run on all $776\,860$ residuals with
$10\le n\le16$ and on a uniform random sample of $20\,000$ residuals for each $17\le n\le20$, drawn from
the archived lists, that is on $856\,860$ residuals and $12.37$ million pairs $(\pi,z)$; the sample is
fixed by the seed recorded in the program. The maxima are $u\le6$, $w\le3$ and $u+w\le9$ at every length
and at every defect $\delta\le8$, and $u+w=9$ occurs at each of $n=10,\dots,16$, for
$6,26,44,64,52,60,24$ residuals respectively. The union bound $u+w\le n-1$ fails at no length except
$n=10$, where six residuals of defect four or five have $u+w=9=n-1$; all six do in fact admit three
admissible deletions. The three parts of $u$ have maxima $1$, $6$, $6$ and the two parts of $w$ have
maxima $3$, $2$; the last of the three parts of $u$ is bounded by
$16$ in general (Lemma~\ref{lem:threecut}), and the first, the number of cut vertices, by $1$ on residuals
(Lemma~\ref{lem:onecut}) and by $3$ on indecomposable $1324$-avoiders (Lemma~\ref{lem:threecut}, the value
$3$ being attained from length $5$ on). The part of this check that carries proof weight is the last: \eqref{eq:admissible} itself
has been verified, exhaustively and without sampling, for all $33\,342\,781$ residuals of length at most
$30$ and defect at most $10$ (above), and it is this that closes the range $10\le n\le22$ left open by
Theorem~\ref{thm:deladmissible}. As a check, $33\,000$ of the residuals were
re-tested with a brute-force membership test and with the skeleton of $\pi\setminus z$ recomputed from
scratch, with no mismatch. The two counts that come from $1324$-avoidance
rather than from residuality were obtained by a separate sweep over all $173\,453\,058$ members of
$\Av_m(1324)$ with $m\le13$, generated by the insertion recursion: over the indecomposable ones of
minimum inversion degree two the maxima are $6$ and $2$, constant for $9\le m\le13$, while the two
residual-only counts reach $3$ and $4$ there. The script is
\texttt{fable\_x2\_union\_bound.py} of \cite{project}, and the avoider sweep is archived beside it.
\end{itemize}
Two more are independent cross-checks of statements that are proved in full here or quoted from the
literature: C5 re-derives the length-five and length-six part of Lemma~\ref{lem:path}, which follows from
\cite{BV} and \cite{ABKLV} with the two containments exhibited in the text, and C6 re-verifies
Lemma~\ref{lem:bdry} for all $m\le10$. All are exhaustive searches with no sampling, apart from the
supplementary sample inside C12, whose proof-bearing part is exhaustive; C11
is run by two separate programs over the archived residual lists, and
the first seven are
reproduced by one self-contained program, \texttt{fable\_certificates.py} of \cite{project}, whose complete
output --- including the list of all $43$ skeletons and the realizers of each graph --- is stored there as
\texttt{data/fable\_certificates.txt} together with its SHA-256 digest. The theorems marked
\emph{computer-assisted} in the table of Section~\ref{sec:intro} are exactly those whose proofs use C1--C4,
C7--C12, together with those that inherit such a use: Lemma~\ref{lem:threeclasses} through C8,
Theorem~\ref{thm:deladmissible} through Lemma~\ref{lem:threeclasses}, C9 and C12,
Corollary~\ref{cor:2n3} through Theorem~\ref{thm:deladmissible}, Proposition~\ref{prop:catfromskel} through
C10, Proposition~\ref{prop:Kind} through C11, Theorem~\ref{thm:D} through Theorem~\ref{thm:C},
Lemma~\ref{lem:noncut} through C3, Theorem~\ref{thm:hubs} through Theorem~\ref{thm:gamma0},
Theorem~\ref{thm:cover}, Corollary~\ref{cor:polybound} and Theorem~\ref{thm:largen} through
Theorem~\ref{thm:hubs}, Theorem~\ref{thm:deg2} through C8, and Lemma~\ref{lem:leaf},
Corollary~\ref{cor:mindeg}, Proposition~\ref{prop:base}, Lemma~\ref{lem:samenbhd},
Theorems~\ref{thm:poly} and~\ref{conj:poly} through Theorem~\ref{thm:cover} and C8;
Proposition~\ref{prop:delfree} inherits its first sentence from Theorem~\ref{thm:poly}, and its second
sentence uses in addition the test of \eqref{eq:admissible} for all $n\le30$ and $\delta\le10$ recorded
below. The programs for C8--C12 are archived with the others at \cite{project}.

The remaining computations of this section are consistency checks and exploratory
evidence, and no theorem depends on them.

\subsection*{The enumerations}

The counts $a(n,k)$ for $n\le16$, the numbers of indecomposable and of almost decomposable members of every $\Av^k_n(1324)$, and hence $|\R_{\delta,n}|$, were computed by a depth-first generation of $\Av_n(1324)$ (insert the new maximum after any $132$-avoiding prefix; check the four deletions of each indecomposable node). The maps $f$ and $g$ were implemented from their definitions and checked, for $n\le10$ and $\delta\in\{0,1,2\}$, to be injective with disjoint images, to preserve $1324$-avoidance and the inversion number, and to have the complement described in the proof of Theorem~\ref{thm:B}; the identity $f(\pi^{-1})=f(\pi)^{-1}$ used in Lemma~\ref{lem:R1} was verified on all $21906$ almost decomposable $1324$-avoiders of length at most $9$, and $f(\pi^{\mathrm{rc}})=f(\pi)^{\mathrm{rc}}$ was found to fail, so the reverse--complement case really does need the separate argument given there. Combining the implementation with the inversion-pruned generator gives the complement of $\im(f\sqcup g)$ exactly on the low-inversion line: for all $8\le n\le13$ and $0\le\delta\le7$ the four classes of Lemma~\ref{lem:R1} were confirmed to lie in the complement and to number exactly $E(n,k)$; at $\delta=1$ the class $\mathrm{R3}$ has $4(n-7)$ elements for $n=8,9,10$ and consists of the permutations $(\iota_x\ssum\iota_y)\dsum(\iota_z\ssum\iota_w)$, and at $\delta=0$ it is empty. The same program splits the complement at $k=2n-6$ into the four Linusson--Verkama classes and confirms, for $8\le n\le11$, that $|\mathrm{R1}|=2p_2(n-6)$, $|\mathrm{R2a}|=|\mathrm{R2b}|=2p_2(n-5)$ and $|\mathrm{R3}|=4(n-7)$, the four adding up to $E(n,2n-6)$ --- the counts $(10,20,20,4)$, $(20,40,40,8)$, $(40,72,72,12)$, $(72,130,130,16)$ for $n=8,9,10,11$; this is the boundary case in which the counts of \cite[Section~4]{LV} are used one step beyond their stated range. The classification of Theorem~\ref{thm:A} was compared with the exhaustive list of residuals for $n\le11$ and with the counts $8(n-7)$ for $n\le16$. The twenty-one families of Theorem~\ref{thm:C} were generated from their parametrizations for $n\le16$: every member was checked to be a residual, their union coincides with the exhaustive list of residuals for $n\le11$, and its size agrees with the enumerated $|\R_{2,n}|$ for $n\le16$ and with $32n-214$ for $n\ge10$. The finite check in the membership part of Theorem~\ref{thm:C} is check C1 of the certificate described below: it enumerates, for each of the twenty-one skeletons, every block vector with entries at most $14$ that satisfies the defect-two equation and the lower bounds $b_i\ge2$ on $D(\sigma)$, verifies that each of the $6061$ members so produced avoids $1324$, is indecomposable and is not almost decomposable, and lists the $43$ skeletons of the extremal deletions, all of them indecomposable. The residual test is applied to the members, never used to select them. The bound $14$ on the block sizes is not a restriction: raising it to $20$ and then to $26$ leaves both the $231$ realizable patterns and the $43$ skeletons unchanged, the number of members examined growing from $6061$ to $12\,505$ and $21\,253$. Of the $43$, the $21$ skeletons of $\Sigma^{(22)}_2$ arise from deleting an entry of a block of size at least two, the other $22$ from deleting a singleton block. For the results of Section~\ref{sec:skel} the enumeration was redone with a second, independent program that prunes on the inversion number: since $\inv$ is nondecreasing along the insertion recursion, restricting to $\inv\le K$ makes the search polynomially small where the full search grows like $11.6^n$, and the residual line $k=2n-7+\delta$ lies far below the bulk. The two programs agree on every one of the $340$ triples $(n,k)$ with $n\le16$ and $k\le30$, and a full enumeration of $\Av_{17}(1324)$, whose $458\,374\,397\,312$ members were generated in about eighteen hours on four cores, reproduces the whole table for $n\le17$. The pruned program reaches $n=24$ for $k\le51$ in about half an hour on one core and $n=26$ for $k\le55$ in about half an hour on the workstation, and supplies the table below, the sets $\Sigma^{(22)}_\delta$ of Proposition~\ref{prop:catalogue} and the polynomials of Proposition~\ref{prop:polys}; the streaming run with $N=30$ and $K=63$ described below extends the residual counts $|\R_{\delta,n}|$, and with them Proposition~\ref{prop:polys}, to $n=30$. The skeleton sets were extracted from the complete residual lists of length at most $22$; no skeleton appears for the first time after length $16$, and the resulting catalogue, with the shortest residual realising each skeleton, is the file \texttt{data/fable\_skeleton\_catalogue\_n22.txt} of \cite{project}, while \texttt{data/fable\_residual\_counts\_n22.tsv} lists $|\R_{\delta,n}|$ for $\delta\le10$ and $n\le22$ and reproduces the polynomials of Proposition~\ref{prop:polys} in all $88$ cases with $n_0(\delta)\le n\le22$; Theorem~\ref{thm:crit} was checked in two ways, first by verifying on $336597$ inflations of the skeletons of length at most $7$ that membership in $\R_{\delta,n}$ depends only on $\sigma$ and on which blocks are singletons, and then by regenerating $\R_{\delta,n}$ from the criterion and comparing it, as a set, with the exhaustive list for $1\le\delta\le4$ and $10\le n\le16$: the two agree in all $28$ cases. A second run with $N=25$ and $K=49$ supplies the counts needed at defect eight. The base case of Proposition~\ref{prop:induction} was checked by generating all $84\,998$ residuals of length at most $9$ from the full symmetric groups. Condition \eqref{eq:admissible} was tested on every residual produced by the runs with $N=20$ and $K=41$ and with $N=20$, $K=43$ restricted to defects $9$ and $10$, and then, in a streaming re-run that consumes the enumerator's output directly, on every residual produced by $N=30$, $K=63$ at defects $1$ to $10$, split into four runs by defect range and executed in parallel, that is on all $33\,342\,781$ residuals of length at most $30$ and defect at most $10$, by trying each of the $n$ deletions in turn; the run took about four hours on one core and found the same $74$ exceptions, all of length at most $9$. Lemma~\ref{lem:runcover} was checked on $2\,709\,762$ pairs $(\pi,T)$ --- all permutations of length at most $7$ with all $T$ of size at most $4$, together with $400$ random permutations of each of the lengths $9,11,13,16,20,25$ with $|T|\le3$ --- with no violation and seven cases of equality; on the same sample the unproved variant $3|T|+3e+1$ has no violation and $76$ equalities, while the variants with $3|T|+2e+1$ and with $4|T|+2e+1$ in place of $4|T|+4e+1$ fail, the smallest counterexample in each case being $\pi=1324$ with $T=\varnothing$. (The sample is fixed by the seed recorded in the program, so these counts are reproducible verbatim.) Theorem~\ref{thm:cover} was tested by greedy removal of the seven entries of largest inversion degree. For Proposition~\ref{prop:agtb} the $564\,634$ residuals of length between $9$ and $18$, defect at most $8$ and $\pi_1>\pi_n$ were split into the three cases and the bounds of (i) and (ii) verified on each; the six entries $\pi_1,\pi_2,\pi_{n-1},\pi_n$ together with the largest and the smallest middle entry leave at most $\delta-3$ inversions in every one of them, case (iii) included. Conjecture~\ref{conj:hubs} was checked by generating, for each $m\le14$ and each $\gamma\le6$, all indecomposable $1324$-avoiders of length $m$ with $m-1+\gamma$ inversions and searching exhaustively over subsets of size at most four; the finite checks inside Theorem~\ref{thm:gamma0} --- that the permutations whose inversion graph is $S(2,2,1)$ are exactly $251364$ and $314625$, both containing $1324$, and that no permutation has inversion graph $S(2,2,2)$ --- were made by testing every permutation of length six and seven against the two targets; the same program reproduces the lists of \cite{BV} and \cite{TV} used in Lemma~\ref{lem:path} and in the proof of Lemma~\ref{lem:noncut}, namely the two increasing oscillations of each length up to eight and the fact that among cycles only $C_3$ and $C_4$ occur, realized by $321$ and $3412$, and the check that none of the $12,16,20,24$ indecomposable $1324$-avoiders of lengths $6,7,8,9$ with $m-1$ inversions is reduced was made on the same lists; for Lemma~\ref{lem:noncut} the same isomorphism test shows that a triangle or a four-cycle with one pendant vertex at each of its vertices is the inversion graph of no permutation of length six or eight; the existence of a non-cut vertex of degree at least two was also observed directly on all $26\,093$ indecomposable $1324$-avoiders with $4\le m\le12$ and $1\le\gamma\le6$; the five cases of the proof of Theorem~\ref{thm:cover}, with the sets $T$ they prescribe and the bounds they claim on $e(\pi\setminus T)$, were checked one residual at a time on the complete list of residuals of length at most $22$ and defect at most $10$, that is on $10\,210\,333$ residuals, with no failure in any of the nine cases that the classification produces and with $4|T|+4e(\pi\setminus T)+1$ never exceeding $8\delta+21$; and the reduction of Proposition~\ref{prop:reduce} was tested on $49\,699$ residuals with $\pi_1>\pi_n$ and $10\le n\le16$, split according to whether $\sigma$ is indecomposable. Finally $a(n+1,k)\ge a(n,k)$ was verified directly for all $n\le25$ and all $k\le2n+3$, and Lemma~\ref{lem:infl} was checked on all $19647$ pairs $(\tau,b)$ with $|\tau|\le5$, blocks of size at most three and $|\tau[b]|\le10$. The quantities $\nu(s)$ and $\mu(s)$ of Section~\ref{sec:skel} were computed with a third program that enumerates reduced permutations directly: besides the bound on $\inv$, it prunes on the number of adjacent pairs $(\pi_i,\pi_i+1)$, since such a pair can only be destroyed by inserting a later value between its two entries and each remaining insertion destroys at most one of them. This reduces the search for reduced residuals of length $17$ with at most $39$ inversions to $4.2\times10^7$ nodes. All programs and the tables they produce are available at \cite{project}.

\[
\renewcommand{\arraystretch}{1.02}
\begin{array}{c|rrrrrrrr}
n\backslash\delta & 1 & 2 & 3 & 4 & 5 & 6 & 7 & 8\\\hline
6 & 0 & 1 & 15 & 40 & 50 & 45 & 29 & 14\\
7 & 2 & 10 & 48 & 129 & 205 & 251 & 254 & 219\\
8 & 8 & 34 & 135 & 345 & 628 & 934 & 1186 & 1339\\
9 & 16 & 70 & 274 & 745 & 1495 & 2521 & 3707 & 4912\\
10 & 24 & 106 & 433 & 1244 & 2749 & 5133 & 8475 & 12663\\
11 & 32 & 138 & 586 & 1755 & 4113 & 8283 & 14875 & 24388\\
12 & 40 & 170 & 735 & 2263 & 5483 & 11509 & 21817 & 38028\\
13 & 48 & 202 & 894 & 2795 & 6913 & 14841 & 28949 & 52306\\
14 & 56 & 234 & 1061 & 3375 & 8463 & 18453 & 36599 & 67454\\
15 & 64 & 266 & 1236 & 3995 & 10157 & 22413 & 45029 & 84098\\
16 & 72 & 298 & 1419 & 4655 & 11979 & 26733 & 54275 & 102486\\
17 & 80 & 330 & 1610 & 5355 & 13929 & 31389 & 64337 & 122602\\
18 & 88 & 362 & 1809 & 6095 & 16007 & 36381 & 75183 & 144438\\
19 & 96 & 394 & 2016 & 6875 & 18213 & 41709 & 86813 & 167954\\
20 & 104 & 426 & 2231 & 7695 & 20547 & 47373 & 99227 & 193150\\
21 & 112 & 458 & 2454 & 8555 & 23009 & 53373 & 112425 & 220026\\
22 & 120 & 490 & 2685 & 9455 & 25599 & 59709 & 126407 & 248582\\
23 & 128 & 522 & 2924 & 10395 & 28317 & 66381 & 141173 & 278818\\
24 & 136 & 554 & 3171 & 11375 & 31163 & 73389 & 156723 & 310734\\
\end{array}
\]

\clearpage
\appendix
\raggedbottom

\section{The marked three-free finite certificate}\label{app:marked-three-free}

This appendix supplies the finite information used in
\eqref{eq:middle36}.  It states the reduction, gives the complete list of
marked cores, specifies the state encoding and acceptance predicate, and
explains why the finite states represent arbitrary occupancies.  The
certificate is a finite exhaustive calculation; the extension from its
states to arbitrary block lengths is proved separately below.

For a permutation $\pi$, write $G_\pi$ for its inversion graph.  A
\emph{marked three-free configuration} is a pair $(\pi,F)$ in which $\pi$
avoids $1324$ and $F=\{f_1,f_2,f_3\}$ consists of three pairwise nonadjacent
vertices of $G_\pi$, each of degree two, with pairwise distinct
neighbourhoods.  The order on $F$ is its left-to-right order in $\pi$.

\subsection{Reduction to at most nine points and the 36 cores}

Put
\[
 U=F\cup N_{G_\pi}(F).
\]
Since each mark has degree two, $|U|\leq 3+2\cdot3=9$.  Restricting $\pi$
to $U$ preserves $1324$-avoidance.  It also preserves the degree and the
entire neighbourhood of every mark: by definition every neighbour of a
mark was retained.  Pairwise nonadjacency and distinctness of the three
marked neighbourhoods are therefore preserved.  Thus the unrestricted
problem reduces exactly to the following finite search: for each
$m\leq9$, enumerate $\operatorname{Av}_m(1324)$ and retain every marked
triple with degree two, pairwise nonadjacent and pairwise distinct
neighbourhoods, whose union with its neighbours is the full ground set.

Two finite enumerators implement this search in different ways.  The first
generates only $1324$-avoiders by legal insertion of a new maximum; the
second scans all permutations through length nine and tests $1324$ by four
nested index loops.  Both return the same 36 marked objects, and every one
has length seven.  Consequently $|U|=7$ and the three neighbourhoods have
four vertices in their union.  The avoider counts produced by the two
enumerators for $m=1,\ldots,9$ are
\[
 1,2,6,23,103,513,2762,15793,94776.
\]
The complete marked list follows.  Positions in $F$ are one-based.  The
last two columns give the number of finite occupancy representatives and
the number accepted by the predicate in the next subsection.  No quotient
by the 12 symmetry orbits is taken here.

\begin{center}
\scriptsize
\begin{tabular}{rllrr}
\toprule
$i$ & core $\gamma_i$ & marked positions $F_i$ & representatives & accepted\\
\midrule
1 & $3724615$ & $\{1,4,7\}$ & 9 & 9\\
2 & $3274615$ & $\{1,4,7\}$ & 3 & 0\\
3 & $3274615$ & $\{2,4,7\}$ & 9 & 6\\
4 & $3724165$ & $\{1,4,6\}$ & 9 & 6\\
5 & $3724165$ & $\{1,4,7\}$ & 3 & 0\\
6 & $3274165$ & $\{1,4,6\}$ & 3 & 0\\
7 & $3274165$ & $\{1,4,7\}$ & 1 & 0\\
8 & $3274165$ & $\{2,4,6\}$ & 9 & 4\\
9 & $3274165$ & $\{2,4,7\}$ & 3 & 0\\
10 & $3257164$ & $\{1,3,6\}$ & 9 & 6\\
11 & $3257164$ & $\{2,3,6\}$ & 3 & 0\\
12 & $3251764$ & $\{1,3,5\}$ & 9 & 4\\
13 & $3251764$ & $\{1,3,6\}$ & 3 & 0\\
14 & $3251764$ & $\{2,3,5\}$ & 3 & 0\\
15 & $3251764$ & $\{2,3,6\}$ & 1 & 0\\
16 & $4257163$ & $\{2,3,6\}$ & 9 & 9\\
17 & $4251763$ & $\{2,3,5\}$ & 9 & 6\\
18 & $4251763$ & $\{2,3,6\}$ & 3 & 0\\
19 & $4271365$ & $\{2,5,6\}$ & 3 & 0\\
20 & $4271365$ & $\{2,5,7\}$ & 9 & 6\\
21 & $4217365$ & $\{2,5,6\}$ & 1 & 0\\
22 & $4217365$ & $\{2,5,7\}$ & 3 & 0\\
23 & $4217365$ & $\{3,5,6\}$ & 3 & 0\\
24 & $4217365$ & $\{3,5,7\}$ & 9 & 4\\
25 & $5214763$ & $\{2,4,5\}$ & 3 & 0\\
26 & $5214763$ & $\{2,4,6\}$ & 9 & 4\\
27 & $5214763$ & $\{3,4,5\}$ & 1 & 0\\
28 & $5214763$ & $\{3,4,6\}$ & 3 & 0\\
29 & $6214753$ & $\{2,4,5\}$ & 9 & 6\\
30 & $6214753$ & $\{3,4,5\}$ & 3 & 0\\
31 & $5271364$ & $\{2,5,6\}$ & 9 & 9\\
32 & $5217364$ & $\{2,5,6\}$ & 3 & 0\\
33 & $5217364$ & $\{3,5,6\}$ & 9 & 6\\
34 & $5314762$ & $\{3,4,5\}$ & 3 & 0\\
35 & $5314762$ & $\{3,4,6\}$ & 9 & 6\\
36 & $6314752$ & $\{3,4,5\}$ & 9 & 9\\
\midrule
& & total & 196 & 100\\
\bottomrule
\end{tabular}
\end{center}

For later use, the same list verifies one additional predicate.  Every
unmarked anchor has degree at least two inside the seven-point core.  The
minimum anchor degree equals two for 32 marked cores and three for the
remaining four.  This is the finite input used when excluding an additional
degree-two run; it is checked directly by the portable verifier rather than
inferred from the saved list.

\subsection{Grid cells, finite states, and acceptance}

Fix a listed core $\gamma=\gamma_1\cdots\gamma_7$ and its marked set $F$.
Index an open grid cell by $(a,b)$ with $0\leq a,b\leq7$: a new point lies
after exactly $a$ core positions and above exactly $b$ core values.  Its
neighbourhood in the core is
\[
 N_\gamma(a,b)=
 \{j\leq a:\gamma_j>b\}\cup
 \{j>a:\gamma_j\leq b\},\qquad 1\leq j\leq7.
\]
A cell is retained precisely when $N_\gamma(a,b)\cap F=\varnothing$ and
inserting one point in the cell still avoids $1324$.  This definition fixes
the coordinate encoding without reference to a drawing.

For this middle calculation the southwest outer kernel is empty and the
northeast outer kernel is empty beyond its one mandatory clone point; the
two nontrivial outer kernels are restored only after the finite middle sum.
For every $f\in F$ there is one retained cell northeast of $f$ whose core
neighbourhood is $N_{G_\gamma}(f)$.  It is the \emph{clone cell} of $f$.
The normalized object contains exactly one point in each of these three
clone cells, so that the three marked points begin increasing runs of length
two.  A retained cell of core degree one must be empty by the minimum-degree
property of residuals.  A retained degree-two point either lies in one of
the three marked runs, by equality of neighbourhoods, or would give a fourth
independent degree-two class; the four-free certificate excludes the latter.
Hence all nonclone degree-two cells are empty.  Every remaining middle cell
has core degree three, contains an increasing block, and there are at most
two such cells for each marked core.

If the remaining degree-three cells are $c_1,\ldots,c_r$, where
$0\leq r\leq2$, encode their occupancies by
\[
 s=(s_1,\ldots,s_r)\in\{0,1,2\}^r,
 \qquad
 s_j=0,1,2\quad\Longleftrightarrow\quad
 |c_j|=0,1,\text{ or at least }2.
\]
For the finite representative, the last case is realized with exactly two
points.  Thus core $i$ contributes $3^{r_i}$ representatives, and the sum
over the 36 listed cores is 196.  These are representatives of disjoint
occupancy ranges; they are not orbit representatives under inverse or
reverse--complement.

The representative permutation is constructed by inserting the three
clone points and the indicated increasing blocks and then standardizing.
It is accepted if and only if all three of the following hold:
\begin{enumerate}
\item it avoids $1324$;
\item it is indecomposable, and deletion followed by standardization is
      indecomposable for each of its first, last, minimum, and maximum entries;
\item the starts of all maximal increasing consecutive runs of length two
      whose entries have inversion degree two are exactly the three marked
      starts.
\end{enumerate}
Thus the test is the definition of the normalized residual class, with the
``exactly three'' condition included; Lemma~\ref{lem:exactthree} excludes an
additional degree-two run, while the cell classification above excludes a
longer degree-two occupancy inside a finite representative. Direct evaluation accepts 100 of the
196 representatives.  In fact all 196 pass the avoidance and marked-run
tests; the rejected 96 fail residuality.

Let $d$ be the defect of a representative and let $r$ now denote the number
of its degree-three cells encoded by $2$.  The multiplicities of the 100
accepted representatives are
\begin{center}
\begin{tabular}{c|rrrrrrrrrrrr}
$(d,r)$
 &(3,0)&(4,0)&(4,1)&(5,0)&(5,1)&(5,2)
 &(6,0)&(6,1)&(6,2)&(7,1)&(7,2)&(8,2)\\
\midrule
multiplicity
 &8&16&8&10&20&2&2&16&6&4&6&2
\end{tabular}
\end{center}
and the entries sum to 100.

\subsection{From finite representatives to arbitrary occupancies}

Consider a degree-three middle cell whose finite code is $2$.  Increasing
its occupancy from two to $t\geq2$ inflates the same quotient vertex by an
increasing run.  The quotient skeleton and all singleton versus nonsingleton
statuses remain unchanged; the block remains of inversion degree three and
cannot merge with a degree-two clone run.  Increasing inflation preserves
$1324$-avoidance.  The residual deletion criterion depends on the same
quotient skeleton, so it is preserved in both directions.  Each added entry
creates three inversions and increases the length by one, and hence raises
$\operatorname{inv}-2n+7$ by one.  A representative of defect $d$ with $r$
such tails therefore contributes exactly
\[
 \frac{q^d}{(1-q)^r}.
\]
The three occupancy ranges $0$, $1$, and $\geq2$ are disjoint and exhaustive,
so this is an all-occupancy conclusion, not interpolation from bounded
defects.

Putting the 100 terms over the common denominator $(1-q)^2$ gives
\[
\begin{aligned}
 &(8q^3+16q^4+10q^5+2q^6)(1-q)^2\\
 &\quad +(8q^4+20q^5+16q^6+4q^7)(1-q)\\
 &\quad +(2q^5+6q^6+6q^7+2q^8)
   =8q^3+8q^4.
\end{aligned}
\]
Hence the middle/core series is
\[
 \frac{8q^3(1+q)}{(1-q)^2}.
\]

It remains to restore the two outer cells.  In the southwest cell, a
$132$-avoider has weakly decreasing Lehmer code; after its terminal isolated
increasing maxima are removed, the remaining kernel is uniquely encoded by
a partition and has inversion generating function
$P(q)=\prod_{j\geq1}(1-q^j)^{-1}$.  Conversely every partition has a unique
least-length Lehmer decoding, so this is a bijection rather than a stable
count.  Reverse--complement and inverse give the corresponding northeast
$213$ kernel and a second factor $P(q)$.

Each outer quotient block is joined to the same two unmarked old anchors and
to no other old vertex.  Those anchors already have an independent path
through the doubled marked run. More explicitly, delete any old vertex $x$.
If neither anchor is deleted, the surviving member of the doubled run still
connects them and every outer block attaches to that old component; if one
anchor is deleted, every outer block remains attached through the other.
Consequently the connected components on the old vertices after deleting
$x$ are unchanged by adding or removing the outer blocks. Each new outer
vertex is itself noncut because the old connected graph survives its
deletion. A nonextremal marked vertex cannot become extremal upon adding
outer points; if an old extremum is replaced, the new extremum is noncut and
the replaced old extremum lies in its doubled marked run. This component
equivalence proves preservation of the residual criterion and marked lower
vector in both directions. Its two
cross inversions per added point cancel the $-2n$ shift, leaving precisely
the internal inversion number of the kernel as the defect increment.  The
outer decorations are independently and uniquely extracted, and therefore
multiply the middle/core series by $P(q)^2$.

Consequently the generating function for canonical rank-three cells is
\[
 \sum_{\delta\geq0}N_{3,\delta}q^\delta
 =\frac{8q^3(1+q)}{(1-q)^2}P(q)^2.
\]
Since a rank-three cell contributes leading coefficient $1/2$ and cells of
rank at most two contribute none, this certificate gives the finite part of
\[
 \sum_{\delta\geq0}A_\delta q^\delta
 =\frac{4q^3(1+q)}{(1-q)^2}P(q)^2.
\]

\subsection{Reproducibility boundary}

The accompanying archival supplement contains the two C enumerators, the
explicit core list, a standard-library verifier that reconstructs the grid
cells and all 196 states, and a one-command temporary-directory replay.  The
replay establishes the finite outputs stated in this appendix.  The
all-parameter conclusions additionally use the mathematical inflation and
outer-kernel arguments above, together with the manuscript's canonical
paid/free decomposition and four-free theorem.  Saved logs from the larger
defect-11--13 catalogue runs are frozen separately; their hashes and exact
generator parameters are recorded, but those expensive enumerations are not
part of the quick replay.

\section*{Use of AI assistants}
I used AI assistants --- Anthropic's Claude, OpenAI's ChatGPT, Moonshot's Kimi and Google's Gemini ---
as tools in this work. They helped me write and run the enumeration, verification and analysis
programs, search the literature, and prepare the manuscript, and they were also set against one
another as adversarial referees, each asked to find errors in the others' arguments and drafts. I set
the direction of the project, provided key ideas, decided what to pursue and what to discard, checked
the arguments, and take responsibility for everything stated here.  The
finite checks supporting the results are reproduced by the programs and data
in the cited archive and archival supplement.


\begin{thebibliography}{9}
\bibitem{AA} M.~H.~Albert, M.~D.~Atkinson, Simple permutations and pattern restricted permutations, \emph{Discrete Math.} 300 (2005) 1--15.
\bibitem{BBEP} D.~Bevan, R.~Brignall, A.~Elvey Price, J.~Pantone, A structural characterisation of $\mathrm{Av}(1324)$ and new bounds on its growth rate, \emph{European J. Combin.} 88 (2020) 103115.
\bibitem{ABKLV} A.~Atminas, R.~Brignall, N.~Korpelainen, V.~Lozin, V.~Vatter, Well-quasi-order for permutation graphs omitting a path and a clique, \emph{Electron. J. Combin.} 22(2) (2015) \#P2.20.
\bibitem{BV} R.~Brignall, V.~Vatter, Labelled well-quasi-order for permutation classes, \emph{Comb. Theory} 2(3) (2022) \#11.
\bibitem{CJS} A.~Claesson, V.~Jel\'inek, E.~Steingr\'imsson, Upper bounds for the Stanley--Wilf limit of 1324 and other layered patterns, \emph{J. Combin. Theory Ser. A} 119 (2012) 1680--1691.
\bibitem{KR} Y.~Koh, S.~Ree, Connected permutation graphs, \emph{Discrete Math.} 307 (2007) 2628--2635.
\bibitem{LV} S.~Linusson, E.~Verkama, Enumerating 1324-avoiders with few inversions, \emph{Electron. J. Combin.} 32(3) (2025) P3.44; internal numbering follows arXiv:2408.15075.
\bibitem{Meng} L.~Meng, The two ends of the inversion spectrum of 1324-avoiding permutations, preprint (2026).
\bibitem{project} L.~Meng, Source code, data, and certificates for the inversion spectrum of $\Av(1324)$, Zenodo (2026), version DOI: \texttt{10.5281/zenodo.22667738}; concept DOI: \texttt{10.5281/zenodo.22290168}. The versioned deposit contains the reproducibility materials for the results stated here.
\bibitem{TV} B.~E.~Tenner, V.~Vatter, Cyclomatic numbers and permutations, preprint, arXiv:2607.06198 (2026).
\end{thebibliography}
\end{document}